\documentclass{article}%
\usepackage{amssymb}
\usepackage{amsmath}
\usepackage{amsfonts}
\usepackage{graphicx}%
\setcounter{MaxMatrixCols}{30}
\newtheorem{theorem}{Theorem}

\newtheorem{corollary}[theorem]{Corollary}

\newtheorem{definition}[theorem]{Definition}

\newtheorem{lemma}[theorem]{Lemma}

\newtheorem{proposition}[theorem]{Proposition}
\newtheorem{remark}[theorem]{Remark}

\newenvironment{proof}[1][Proof]{\noindent\textbf{#1.} }{\ \rule{0.5em}{0.5em}}
\begin{document}

\title{Carleson Measures for the Drury-Arveson Hardy space and other Besov-Sobolev
Spaces on Complex Balls}
\author{N. Arcozzi\thanks{Partially supported by the COFIN project Analisi Armonica,
funded by the Italian Minister for Research}\\Dipartimento do Matematica\\Universita di Bologna\\40127 Bologna, ITALY
\and R. Rochberg\thanks{This material is based upon work supported by the National
Science Foundation under Grant No. 0400962.}\\Department of Mathematics\\Washington University\\St. Louis, MO 63130, U.S.A.
\and E. Sawyer\thanks{This material based upon work supported by the National
Science and Engineering Council of Canada.}\\Department of Mathematics \& Statistics\\McMaster University\\Hamilton, Ontairo, L8S 4K1, CANADA}

\begin{abstract}
For $0\leq\sigma<1/2$ we characterize Carleson measures $\mu$ for the analytic
Besov-Sobolev spaces $B_{2}^{\sigma}$ on the unit ball $\mathbb{B}_{n}$ in
$\mathbb{C}^{n}$ by the discrete tree condition
\[
\sum_{\beta\geq\alpha}\left[  2^{\sigma d\left(  \beta\right)  }I^{\ast}%
\mu\left(  \beta\right)  \right]  ^{2}\leq CI^{\ast}\mu\left(  \alpha\right)
<\infty,\;\;\;\;\;\alpha\in\mathcal{T}_{n},
\]
on the associated Bergman tree $\mathcal{T}_{n}$. Combined with recent results
about interpolating sequences this leads, for this range of $\sigma, $ to a
characterization of universal interpolating sequences for $B_{2}^{\sigma}$ and
also for its multiplier algebra.

However, the tree condition is \emph{not} necessary for a measure to be a
Carleson measure for the Drury-Arveson Hardy space $H_{n}^{2}=B_{2}^{1/2}$. We
show that $\mu$ is a Carleson measure for $B_{2}^{1/2}$ if and only if both
the simple condition
\[
2^{d\left(  \alpha\right)  }I^{\ast}\mu\left(  \alpha\right)  \leq
C,\;\;\;\;\;\alpha\in\mathcal{T}_{n},
\]
and the split tree condition
\[
\sum_{k\geq0}\sum_{\gamma\geq\alpha}2^{d\left(  \gamma\right)  -k}%
\sum_{\substack{\left(  \delta,\delta^{\prime}\right)  \in\mathcal{G}^{\left(
k\right)  }\left(  \gamma\right)  }}I^{\ast}\mu\left(  \delta\right)  I^{\ast
}\mu\left(  \delta^{\prime}\right)  \leq CI^{\ast}\mu\left(  \alpha\right)
,\;\;\;\;\;\alpha\in\mathcal{T}_{n},
\]
hold. This gives a sharp estimate for Drury's generalization of von Neumann's
operator inequality to the complex ball, and also provides a universal
characterization of Carleson measures, up to dimensional constants, for
Hilbert spaces with a complete continuous Nevanlinna-Pick kernel function.

We give a detailed analysis of the split tree condition for measures supported
on embedded two manifolds and we find that in some generic cases the condition
simplifies. We also establish a connection between function spaces on embedded
two manifolds and Hardy spaces of plane domains.

\end{abstract}
\maketitle
\tableofcontents

\section{Overview}

We give a description of Carleson measures for certain Hilbert spaces of
holomorphic functions on the ball in $\mathbb{C}^{n}.$ In the next section we
give background and a summary. We also describe ways the characterization can
be used and how the characterization simplifies in some special cases. In the
following section we collect certain technical tools.$\ $The main work of
characterizing the Carleson measures is in the section after that.$\ $A brief
final appendix has the real variable analog of our main results.

\section{Introduction}

\subsection{Function Spaces}

Let $\mathbb{B}_{n}$ be the unit ball in $\mathbb{C}^{n}$. Let $dz$ be
Lebesgue measure on $\mathbb{C}^{n}$ and let $d\lambda_{n}\left(  z\right)
=(1-\left\vert z\right\vert ^{2})^{-n-1}dz$ be the invariant measure on the
ball. For integer $m\geq0$, and for $0\leq\sigma<\infty,$ $1<p<\infty,$
$m+\sigma>n/p$ we define the analytic Besov-Sobolev spaces $B_{p}^{\sigma
}\left(  \mathbb{B}_{n}\right)  $ to consist of those holomorphic functions
$f$ on the ball such that
\begin{equation}
\left\{  \sum_{k=0}^{m-1}\left\vert f^{\left(  k\right)  }\left(  0\right)
\right\vert ^{p}+\int_{\mathbb{B}_{n}}\left\vert \left(  1-\left\vert
z\right\vert ^{2}\right)  ^{m+\sigma}f^{\left(  m\right)  }\left(  z\right)
\right\vert ^{p}d\lambda_{n}\left(  z\right)  \right\}  ^{\frac{1}{p}}<\infty.
\label{Hilbertnorm}%
\end{equation}
Here $f^{\left(  m\right)  }$ is the $m^{th}$ order complex derivative of $f$.
The spaces $B_{p}^{\sigma}\left(  \mathbb{B}_{n}\right)  $ are independent of
$m$ and are Banach spaces with norms given in (\ref{Hilbertnorm}).

For $p=2$ these are Hilbert spaces with the obvious inner product. This scale
of spaces includes the Dirichlet spaces $B_{2}\left(  \mathbb{B}_{n}\right)
=B_{2}^{0}\left(  \mathbb{B}_{n}\right)  ,$ weighted Dirichlet-type spaces
with $0<\sigma<1/2,$ the Drury-Arveson Hardy spaces $H_{n}^{2}=B_{2}%
^{1/2}\left(  \mathbb{B}_{n}\right)  $ (also known as the symmetric Fock
spaces over $\mathbb{C}^{n})$ (\cite{Dru}, \cite{Arv}, \cite{Che}), the Hardy
spaces $H^{2}\left(  \mathbb{B}_{n}\right)  =B_{2}^{n/2}\left(  \mathbb{B}%
_{n}\right)  $, and the weighted Bergman spaces with $\sigma>n/2.$

Alternatively these Hilbert spaces can be viewed as part of the Hardy-Sobolev
scale of spaces $J_{\gamma}^{2}\left(  \mathbb{B}_{n}\right)  $, $\gamma
\in\mathbb{R}$, consisting of all holomorphic functions $f$ in the unit ball
whose radial derivative $R^{\gamma}f$ of order $\gamma$ belongs to the Hardy
space $H^{2}\left(  \mathbb{B}_{n}\right)  $ ($R^{\gamma}f=\sum_{k=0}^{\infty
}\left(  k+1\right)  ^{\gamma}f_{k}$ if $f=\sum_{k=0}^{\infty}f_{k}$ is the
homogeneous expansion of $f$). The Hardy-Sobolev scale coincides with the
Besov-Sobolev scale and we have
\[
B_{2}^{\sigma}\left(  \mathbb{B}_{n}\right)  =J_{\gamma}^{2}\left(
\mathbb{B}_{n}\right)  ,\;\;\;\;\;\sigma+\gamma=\frac{n}{2}\;,0\leq\sigma
\leq\frac{n}{2}.
\]
Thus $\sigma$ measures the order of antiderivative required to belong to the
Dirichlet space $B_{2}\left(  \mathbb{B}_{n}\right)  $, and $\gamma=\frac
{n}{2}-\sigma$ measures the order of the derivative that belongs to the Hardy
space $H^{2}\left(  \mathbb{B}_{n}\right)  $.

\subsection{Carleson Measures}

By a Carleson measure for $B_{p}^{\sigma}\left(  \mathbb{B}_{n}\right)  $ we
mean a positive measure defined on $\mathbb{B}_{n}$ such that the following
Carleson embedding holds; for $f\in B_{p}^{\sigma}\left(  \mathbb{B}%
_{n}\right)  $%
\begin{equation}
\int_{\mathbb{B}_{n}}\left\vert f\left(  z\right)  \right\vert ^{p}d\mu\leq
C_{\mu}\left\Vert f\right\Vert _{B_{p}^{\sigma}\left(  \mathbb{B}_{n}\right)
}^{p}. \label{carldef}%
\end{equation}
The set of all such is denoted $CM(B_{p}^{\sigma}\left(  \mathbb{B}%
_{n}\right)  )$ and we define the Carleson measure norm $\left\Vert
\mu\right\Vert _{Carleson}$ to be the infimum of the possible values of
$C_{\mu}^{1/p}.$ In \cite{ArRoSa2} we described the Carleson measures for
$B_{p}^{\sigma}\left(  \mathbb{B}_{n}\right)  $ for $\sigma=0$ and
$1<p<2+\frac{1}{n-1}.$ Here we consider $\sigma>0$ and focus our attention on
the Hilbert space cases, $p=2.$ We show that, \textit{mutatis mutandis,} the
results for $\sigma=0$ extend to the range $0\leq\sigma<1/2.$ Fundamental to
this extension is the fact that for $0\leq\sigma<1/2$ the real part of the
reproducing kernel for $B_{2}^{\sigma}\left(  \mathbb{B}_{n}\right)  $ is
comparable to its modulus.\ In fact, in the appendix when we use the modulus
of the reproducing kernel in defining nonisotropic potential spaces, results
similar to those for $\sigma=0$ continue to hold for $0\leq\sigma<n/2$.
However even though the reproducing kernel for $B_{2}^{1/2}\left(
\mathbb{B}_{n}\right)  =H_{n}^{2}$ has positive real part, its real part is
not comparable to its modulus. For that space a new type of analysis must be
added and by doing that we describe the Carleson measures for $B_{2}%
^{1/2}\left(  \mathbb{B}_{n}\right)  $. For $1/2<\sigma<n/2$ the real part of
the kernel is not positive, our methods don't apply, and that range remains
mysterious. For $\sigma\geq n/2$ we are in the realm of the classical Hardy
and Bergman spaces and the description of the Carleson measures is well
established \cite{Rud}, \cite{Zhu}.

Let $\mathcal{T}_{n}$ denote the Bergman tree associated to the ball
$\mathbb{B}_{n}$ as in \cite{ArRoSa2}. We show (Theorem \ref{Besov'}) that the
tree condition,
\begin{equation}
\sum_{\beta\geq\alpha}\left[  2^{\sigma d\left(  \beta\right)  }I^{\ast}%
\mu\left(  \beta\right)  \right]  ^{2}\leq CI^{\ast}\mu\left(  \alpha\right)
<\infty,\;\;\;\;\;\alpha\in\mathcal{T}_{n}, \label{treecondo}%
\end{equation}
characterizes Carleson measures for the Besov-Sobolev space $B_{2}^{\sigma
}\left(  \mathbb{B}_{n}\right)  $ in the range $0\leq\sigma<1/2$. To help
place this condition in context we compare it with the corresponding simple
condition. The condition%
\begin{equation}
2^{2\sigma d\left(  \alpha\right)  }I^{\ast}\mu\left(  \alpha\right)  \leq C
\tag{SC$(\sigma)$}\label{SCsigma}%
\end{equation}
is necessary for $\mu$ to be a Carleson measure as is seen by testing the
embedding (\ref{carldef}) on reproducing kernels or by starting with
(\ref{treecondo}) and using the infinite sum there to dominate the single term
with $\beta=\alpha.$\ Although SC($\sigma$) is not sufficient to insure that
$\mu$ is a Carleson measure, slight strengthenings of it are sufficient, see
Lemma \ref{simptree} below. In particular, for any\ $\varepsilon>0$ the
condition SC($\sigma+\varepsilon)$ is sufficient.

On the other hand if $\sigma\geq1/2$ then, by the results in \cite{CaOr}
together with results in the Appendix, there are positive measures $\mu$ on
the ball that are Carleson for $J_{\frac{n}{2}-\sigma}^{2}\left(
\mathbb{B}_{n}\right)  =B_{2}^{\sigma}\left(  \mathbb{B}_{n}\right)  $ but
fail to satisfy the tree condition (\ref{treecondo}). Our analysis of Carleson
measures for the \textquotedblleft endpoint\textquotedblright\ case
$B_{2}^{1/2}\left(  \mathbb{B}_{n}\right)  $, the Drury-Arveson Hardy space,
proceeds in two stages. First, by a functional analytic argument we show that
the norm $\left\Vert \mu\right\Vert _{Carleson}$ is comparable, independently
of dimension, with the best constant $C$ in the inequality
\begin{equation}
\int_{\mathbb{B}_{n}}\int_{\mathbb{B}_{n}}\left(  \operatorname{Re}\frac
{1}{1-\overline{z}\cdot z^{\prime}}\right)  f\left(  z^{\prime}\right)
d\mu\left(  z^{\prime}\right)  g\left(  z\right)  d\mu\left(  z\right)  \leq
C\left\Vert f\right\Vert _{L^{2}\left(  \mu\right)  }\left\Vert g\right\Vert
_{L^{2}\left(  \mu\right)  }. \label{bilin}%
\end{equation}
We then proceed with a geometric analysis of the conditions under which this
inequality holds. If in (\ref{bilin}) we were working with the integration
kernel $\left\vert \frac{1}{1-\overline{z}\cdot z^{\prime}}\right\vert $
rather than $\operatorname{Re}\frac{1}{1-\overline{z}\cdot z^{\prime}}$ we
could do an analysis similar to that for $\sigma<1/2$ and would find the
inequality characterized by the tree condition with $\sigma=1/2:$%
\begin{equation}
\sum_{\beta\geq\alpha}2^{d\left(  \beta\right)  }I^{\ast}\mu\left(
\beta\right)  ^{2}\leq CI^{\ast}\mu\left(  \alpha\right)  <\infty
,\;\;\;\;\;\alpha\in\mathcal{T}_{n}, \label{treeArv}%
\end{equation}
see Subsection \ref{related} and the Appendix. However, as we will show, for
$n>1,$ the finiteness of $\left\Vert \mu\right\Vert _{Carleson}$ is equivalent
\emph{neither} to the tree condition (\ref{treeArv}), \emph{nor} to the simple
condition
\begin{equation}
2^{d\left(  \alpha\right)  }I^{\ast}\mu\left(  \alpha\right)  \leq
C,\;\;\;\;\;\alpha\in\mathcal{T}_{n}, \label{simp}%
\end{equation}
(SC$(1/2)$ in the earlier notation).

To proceed we will introduce additional structure on the Bergman tree
$\mathcal{T}_{n}.$ For $\alpha$ in $\mathcal{T}_{n}$, we denote by $\left[
\alpha\right]  $ an equivalence class in a certain quotient tree
$\mathcal{R}_{n}$ of \textquotedblleft rings\textquotedblright\ consisting of
elements in a \textquotedblleft common slice\textquotedblright\ of the ball
having common distance from the root. Using this additional structure we will
show in Theorem \ref{Arvcom} that the Carleson measures are characterized by
the simple condition (\ref{simp}) together with the \textquotedblleft
split\textquotedblright\ tree condition
\begin{equation}
\sum_{k\geq0}\sum_{\gamma\geq\alpha}2^{d\left(  \gamma\right)  -k}%
\sum_{\left(  \delta,\delta^{\prime}\right)  \in\mathcal{G}^{\left(  k\right)
}\left(  \gamma\right)  }I^{\ast}\mu\left(  \delta\right)  I^{\ast}\mu\left(
\delta^{\prime}\right)  \leq CI^{\ast}\mu\left(  \alpha\right)
,\;\;\;\;\;\alpha\in\mathcal{T}_{n}, \label{splittree}%
\end{equation}
and moreover we have the norm estimate
\begin{align}
\left\Vert \mu\right\Vert _{Carleson}  &  \approx\sup_{\alpha\in
\mathcal{T}_{n}}\sqrt{2^{d\left(  \alpha\right)  }I^{\ast}\mu\left(
\alpha\right)  }\label{s'}\\
&  +\sup_{\substack{\alpha\in\mathcal{T}_{n}\\I^{\ast}\mu\left(
\alpha\right)  >0}}\sqrt{\frac{1}{I^{\ast}\mu\left(  \alpha\right)  }%
\sum_{k\geq0}\sum_{\gamma\geq\alpha}2^{d\left(  \gamma\right)  -k}%
\sum_{\substack{\left(  \delta,\delta^{\prime}\right)  \in\mathcal{G}^{\left(
k\right)  }\left(  \gamma\right)  }}I^{\ast}\mu\left(  \delta\right)  I^{\ast
}\mu\left(  \delta^{\prime}\right)  }.\nonumber
\end{align}
The restriction $\left(  \delta,\delta^{\prime}\right)  \in\mathcal{G}%
^{\left(  k\right)  }\left(  \gamma\right)  $ in the sums above means that we
sum over all pairs $\left(  \delta,\delta^{\prime}\right)  $ of grand$^{k}%
$-children of $\gamma$ that have $\gamma$ as their minimum, that lie in
well-separated rings in the quotient tree, but whose predecessors of order
two, $A^{2}\delta$ and $A^{2}\delta^{\prime}$, lie in a common ring. That is,
the ring tree geodesics to $\delta$ and to $\delta^{\prime}$ have recently
split, at distance roughly $k$ from $\gamma$. Note that if we were to extend
the summation to all pairs $\left(  \delta,\delta^{\prime}\right)  $ of
grand$^{k}$-children of $\gamma$ then this condition would be equivalent to
the tree condition (\ref{treeArv}). More formally,

\begin{definition}
\label{grand}The set $\mathcal{G}^{\left(  k\right)  }\left(  \gamma\right)  $
consists of pairs $\left(  \delta,\delta^{\prime}\right)  $ of grand$^{k}%
$-children of $\gamma$ in $\mathcal{G}^{\left(  k\right)  }\left(
\gamma\right)  \times\mathcal{G}^{\left(  k\right)  }\left(  \gamma\right)  $
which satisfy $\delta\wedge\delta^{\prime}=\gamma$, $\left[  A^{2}%
\delta\right]  =\left[  A^{2}\delta^{\prime}\right]  $ (which implies
$d\left(  \left[  \delta\right]  ,\left[  \delta^{\prime}\right]  \right)
\leq4$) and $d^{\ast}\left(  \left[  \delta\right]  ,\left[  \delta^{\prime
}\right]  \right)  =4$.
\end{definition}

Here
\[
d^{\ast}\left(  \left[  \alpha\right]  ,\left[  \beta\right]  \right)
=\inf_{U\in\mathcal{U}_{n}}d\left(  \left[  t\left(  Uc_{\alpha}\right)
\right]  ,\left[  t\left(  Uc_{\beta}\right)  \right]  \right)  ,
\]
and $\mathcal{U}_{n}$ denotes the group of unitary rotations of the ball
$\mathbb{B}_{n}$. For $\alpha$ in the Bergman tree $\mathcal{T}_{n}$,
$c_{\alpha}$ is the \textquotedblleft center\textquotedblright\ of the Bergman
kube $K_{\alpha}$. For $z\in\mathbb{B}_{n}$, $t\left(  z\right)  $ denotes the
element $\alpha^{\prime}\in\mathcal{T}_{n}$ such that $z\in K_{\alpha^{\prime
}}$. Thus $d^{\ast}\left(  \left[  \alpha\right]  ,\left[  \beta\right]
\right)  $ measures an \textquotedblleft invariant\textquotedblright\ distance
between the rings $\left[  \alpha\right]  $ and $\left[  \beta\right]  $. Note
that $\mathcal{G}^{\left(  0\right)  }\left(  \gamma\right)  =\mathcal{G}%
\left(  \gamma\right)  $ is the set of grandchildren of $\gamma$. Further
details can be found in Subsection \ref{ring} below on a modified Bergman tree
and its quotient tree.

We noted before that for $0\leq\sigma<1/2$ the tree condition (\ref{treecondo}%
) implies the corresponding simple condition \ref{SCsigma}. However the split
tree condition (\ref{splittree}) does not imply the simple condition
(\ref{simp}). In fact, measures supported on a slice, i.e., on the
intersection of the ball with a complex line through the origin, satisfy the
split tree condition vacuously. This is because for measure supported on a
single slice and $\delta\ $and $\delta^{\prime}$ in different rings at most
one of the factors $I^{\ast}\mu\left(  \delta\right)  $, $I^{\ast}\mu\left(
\delta^{\prime}\right)  $ can be nonzero. However such measures may or may not
satisfy (\ref{simp}). Similarly the split tree condition is vacuously
satisfied when $n=1.$ In that case we have the classical Hardy space and
Carleson's classical condition SC$(1/2).$

In our proof of (\ref{s'}) the implicit constants of equivalence depend on the
dimension $n.$ One reason for attention to possible dimensional dependence of
constants arises in Subsubsection \ref{complete}. Roughly, a large class of
Hilbert spaces with reproducing kernels have natural realizations as subspaces
of the various $H_{n}^{2}$ and this occurs in ways that lets us use the
characterization of Carleson measures for $H_{n}^{2}$ to obtain descriptions
of the Carleson measures for these other spaces. However in the generic case,
as well as for the most common examples, $n=\infty.$ When $n=\infty$ we can
pull back characterizations of Carleson measures of the form (\ref{carldef})
or (\ref{bilin}) but, because of the dimensional dependence of the constants,
we cannot obtain characterizations using versions of (\ref{simp}) and
(\ref{splittree}).

Finally, we mention two technical refinements of these results. First, it
suffices to test the bilinear inequality (\ref{bilin}) over $f=g=\chi
_{T\left(  w\right)  }$ where $T\left(  w\right)  $ is a nonisotropic Carleson
region with vertex $w$. This holds because in Subsection \ref{split}, when
proving the necessity of the split tree condition, we only use that special
case of the bilinear inequality. However that observation commits us to a
chain of implications which uses (\ref{s'}) and thus we don't know that the
constants in the restricted condition are independent of dimension. Second,
the condition (\ref{splittree}) can be somewhat simplified by further
restricting the sum over $k$ and $\gamma$ on the left side to $k\leq
\varepsilon d\left(  \gamma\right)  $ for any fixed $\varepsilon>0$; the
resulting $\varepsilon$-split tree condition is
\begin{equation}
\sum_{\gamma\geq\alpha:0\leq k\leq\epsilon d\left(  \gamma\right)
}2^{d\left(  \gamma\right)  -k}\sum_{\left(  \delta,\delta^{\prime}\right)
\in\mathcal{G}^{\left(  k\right)  }\left(  \gamma\right)  }I^{\ast}\mu\left(
\delta\right)  I^{\ast}\mu\left(  \delta^{\prime}\right)  \leq CI^{\ast}%
\mu\left(  \alpha\right)  ,\;\;\;\;\;\alpha\in\mathcal{T}_{n}.
\label{epsilonsplit}%
\end{equation}
The reason (\ref{simp}) and (\ref{epsilonsplit}) suffice is that the sum in
(\ref{splittree}) over $k>\varepsilon d\left(  \gamma\right)  $ is dominated
by the left side of (\ref{treecondo}) with $\sigma=(1-\varepsilon)/2$, and
that this condition is in turn implied by the simple condition (\ref{simp}).
See Lemma \ref{simptree} below.

Finally, as we mentioned, the characterization of Carleson measures for
$B_{2}^{\sigma}\left(  \mathbb{B}_{n}\right)  $ remains open in the range
$1/2<\sigma<n/2$. The Carleson measures for the Hardy space, $\sigma=n/2,$ and
the weighted Bergman spaces, $\sigma>n/2$, are characterized by \ref{SCsigma};
see \cite{Rud} and \cite{Zhu}.

\subsection{Applications and special cases}

Before proving the characterizations of Carleson measures we present some uses
of those results and also describe how the general results simplify in some
cases.\ In doing this we will use the results and notation of later sections
but we will not use results from this section later.

We describe the multiplier algebra $M_{B_{2}^{\sigma}\left(  \mathbb{B}%
_{n}\right)  }$ of $B_{2}^{\sigma}\left(  \mathbb{B}_{n}\right)  $ for
$0\leq\sigma\leq1/2.$ For the smaller range $0\leq\sigma<1/2$ we describe the
interpolating sequences for $B_{2}^{\sigma}\left(  \mathbb{B}_{n}\right)  $
and for $M_{B_{2}^{\sigma}\left(  \mathbb{B}_{n}\right)  }.$ We give an
explicit formula for the norm which arises in Drury's generalization of von
Neumann's operator inequality to the complex ball $\mathbb{B}_{n}$. We give a
universal characterization of Carleson measures for Hilbert spaces with a
complete Nevanlinna-Pick kernel function.

To understand the split tree condition (\ref{splittree}) better we investigate
the structure of the Carleson measures for $B_{2}^{1/2}\left(  \mathbb{B}%
_{n}\right)  $ which are supported on real 2-manifolds embedded in
$\mathbb{B}_{n}$. This will also give information about Carleson measures for
spaces of functions on those manifolds. Suppose we have a $C^{1}$ embedding of
a real 2-manifold $\mathcal{S}$ into $\mathbb{B}_{n}$ and that $\mathcal{\bar
{S}}$ meets the boundary of the ball transversally in a curve $\Gamma.$
Suppose we have a Carleson measure for $B_{2}^{1/2}\left(  \mathbb{B}%
_{n}\right)  $ supported in $\mathcal{S}.$ We find that

\begin{itemize}
\item If $\Gamma$ is transverse to the complex tangential boundary directions
then (\ref{epsilonsplit}) becomes vacuous for small $\varepsilon$ and the
Carleson measures are described by the simple condition (\ref{simp}). In
particular this applies to $C^{1}$ embedded holomorphic curves and shows that
the Carleson measures for the associated spaces coincide with the Carleson
measures for the Hardy spaces of those curves.\ For planar domains we show
that if the embedding is $C^{2}$ then these spaces coincide with the Hardy spaces.

\item If $\Gamma$ is a complex tangential curve, that is if its tangent lies
in the complex tangential boundary directions then (\ref{epsilonsplit})
reduces to the tree condition (\ref{treeArv}) and the Carleson measures are
described by the tree condition. A similar result holds for measures supported
on embedded real $k$-manifolds which meet the boundary transversely and in the
complex tangential directions.
\end{itemize}

On the other hand, the embedding $\mathcal{S}\ $of the unit disk into
$\mathbb{B}_{\infty}$ associated with a space $B_{2}^{\sigma}\left(
\mathbb{B}_{1}\right)  ,$ $0\leq\sigma<1/2,$ extends to $\mathcal{\bar{S}}$,
is Lipschitz continuous of order $\sigma$ but not $C^{1}$ and is not
transverse to the boundary. In this more complicated situation neither of the
two simplifications occur.

\subsubsection{Multipliers}

A holomorphic function $f$ on the ball is called a multiplier for the space
$B_{2}^{\sigma}\left(  \mathbb{B}_{n}\right)  $ if the multiplication operator
$M_{f}$ defined by $M_{f}(g)=fg$ is a bounded linear operator on
$B_{2}^{\sigma}\left(  \mathbb{B}_{n}\right)  .$ In that case the multiplier
norm of $f$ is defined to be the operator norm of $M_{f}.$ The space of all
such is denoted $M_{B_{2}^{\sigma}\left(  \mathbb{B}_{n}\right)  }.$

Ortega and Fabrega \cite{OrFa} have characterized multipliers for the
Hardy-Sobolev spaces using Carleson measures. We refine their result by
including a geometric characterization of those measures.

\begin{theorem}
\label{multipliers}Suppose $0\leq\sigma\leq1/2.$ Then $f$ is in $M_{B_{2}%
^{\sigma}\left(  \mathbb{B}_{n}\right)  }$ if and only if $f$ is bounded and
for some, equivalently for any, $k>n/2-\sigma$
\[
d\mu_{f,k}=\left\vert (1-\left\vert z\right\vert ^{2})^{k}f^{(k)}\right\vert
^{2}(1-\left\vert z\right\vert ^{2})^{2\sigma}d\lambda_{n}(z)\in
CM(B_{2}^{\sigma}\left(  \mathbb{B}_{n}\right)  ).
\]
In that case we have%

\[
\left\|  f\right\|  _{M_{B_{2}^{\sigma}\left(  \mathbb{B}_{n}\right)  }}%
\sim\left\|  f\right\|  _{H^{\infty}\left(  \mathbb{B}_{n}\right)  }+\left\|
d\mu_{f,k}\right\|  _{CM(B_{2}^{\sigma}\left(  \mathbb{B}_{n}\right)  )}.
\]
If $0\leq\sigma<1/2$ the second summand can be evaluated using Theorem
\ref{Besov'}. For $\sigma=1/2$ the second summand can be evaluated using
Theorem \ref{Arvcom}.
\end{theorem}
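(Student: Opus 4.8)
The plan is to deduce the statement from the Carleson-measure characterization of pointwise multipliers of the Hardy--Sobolev spaces due to Ortega and Fabrega \cite{OrFa}, and then to insert the two Carleson-measure descriptions proved in this paper. Concretely, the core fact to be recalled is that, for any integer $k>n/2-\sigma$, a holomorphic $f$ on $\mathbb{B}_n$ lies in $M_{B_2^\sigma(\mathbb{B}_n)}$ if and only if $f\in H^\infty(\mathbb{B}_n)$ and $d\mu_{f,k}\in CM(B_2^\sigma(\mathbb{B}_n))$, in which case $\|f\|_{M_{B_2^\sigma(\mathbb{B}_n)}}\sim\|f\|_{H^\infty(\mathbb{B}_n)}+\|d\mu_{f,k}\|_{CM(B_2^\sigma(\mathbb{B}_n))}$ with constants depending only on $n$ and $\sigma$ (in particular independent of the admissible $k$). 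Once this is in hand the theorem is immediate by substituting for $\|d\mu_{f,k}\|_{CM(B_2^\sigma)}$ the characterizations already proved; I carry this out at the end.

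To recall the core fact in the Hilbert range $p=2$, $0\le\sigma\le1/2$, fix an integer $m>n/2-\sigma$ and use the equivalent norm coming from (\ref{Hilbertnorm}),
\[
\|g\|_{B_2^\sigma(\mathbb{B}_n)}^2\sim\sum_{l=0}^{m-1}\bigl|g^{(l)}(0)\bigr|^2+\int_{\mathbb{B}_n}\bigl(1-|z|^2\bigr)^{2(m+\sigma)}\bigl|g^{(m)}(z)\bigr|^2\,d\lambda_n(z),
\]
which is independent of $m$. Applying this to $fg$, expanding $(fg)^{(m)}=\sum_{j=0}^m\binom{m}{j}f^{(j)}g^{(m-j)}$ by the Leibniz rule (suitably interpreted for the $\mathbb{C}^n$-derivative), and bookkeeping the powers of $1-|z|^2$, one finds that the $(j,m-j)$ contribution to the integral equals, up to constants, $\int_{\mathbb{B}_n}(1-|z|^2)^{2(m-j)}|g^{(m-j)}(z)|^2\,d\mu_{f,j}(z)$. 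The extreme terms are transparent: $j=m$ gives $\int|g|^2\,d\mu_{f,m}$, the Carleson embedding for $\mu_{f,m}$; $j=0$ gives at most $\|f\|_{H^\infty}^2\int(1-|z|^2)^{2(m+\sigma)}|g^{(m)}|^2\,d\lambda_n\lesssim\|f\|_{H^\infty}^2\|g\|_{B_2^\sigma}^2$, since $d\mu_{f,0}\le\|f\|_{H^\infty}^2(1-|z|^2)^{2\sigma}\,d\lambda_n$ pointwise. For the intermediate terms $1\le j\le m-1$ one invokes two facts: (a) for every $\nu\in CM(B_2^\sigma(\mathbb{B}_n))$ and every integer $l\ge0$, $\int(1-|z|^2)^{2l}|h^{(l)}|^2\,d\nu\lesssim\|\nu\|_{CM(B_2^\sigma)}\|h\|_{B_2^\sigma}^2$ (from the Cauchy estimate $(1-|z|^2)^l|h^{(l)}(z)|\lesssim\sup_{B(z)}|h|$ on a fixed-radius Bergman ball, the sub-mean-value property of $|h|^2$, a passage to the Bergman tree, and the stability of the Carleson condition under spreading a measure over bounded neighborhoods); and (b) the self-improvement that $f\in H^\infty(\mathbb{B}_n)$ together with $d\mu_{f,k}\in CM(B_2^\sigma(\mathbb{B}_n))$ forces $d\mu_{f,j}\in CM(B_2^\sigma(\mathbb{B}_n))$ for all $1\le j\le k$, with $\|d\mu_{f,j}\|_{CM(B_2^\sigma)}\lesssim\|f\|_{H^\infty}+\|d\mu_{f,k}\|_{CM(B_2^\sigma)}$.

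Granting (a) and (b), sufficiency is immediate: take $m=k$ above, bound the $j=m$ term by $\|d\mu_{f,k}\|_{CM(B_2^\sigma)}\|g\|_{B_2^\sigma}^2$, the $j=0$ term by $\|f\|_{H^\infty}^2\|g\|_{B_2^\sigma}^2$, and each intermediate term by $\|d\mu_{f,j}\|_{CM(B_2^\sigma)}\|g\|_{B_2^\sigma}^2$ using (a), (b), so $\|fg\|_{B_2^\sigma}\lesssim(\|f\|_{H^\infty}+\|d\mu_{f,k}\|_{CM(B_2^\sigma)})\|g\|_{B_2^\sigma}$. For necessity, $f\in H^\infty$ with $\|f\|_{H^\infty}\le\|M_f\|$ follows by testing the adjoint on the nonvanishing reproducing kernels, $M_f^\ast k_w=\overline{f(w)}k_w$. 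The remaining assertion $d\mu_{f,k}\in CM(B_2^\sigma)$ is the harder half: from $\|M_fg\|_{B_2^\sigma}\le\|M_f\|\|g\|_{B_2^\sigma}$, reading the Leibniz expansion backwards as $f^{(k)}g=(fg)^{(k)}-\sum_{j=0}^{k-1}\binom{k}{j}f^{(j)}g^{(k-j)}$, the term $\|(fg)^{(k)}(1-|z|^2)^{k+\sigma}\|_{L^2(\lambda_n)}$ is at most $\|M_f\|\|g\|_{B_2^\sigma}$, while $\|f^{(k)}g(1-|z|^2)^{k+\sigma}\|_{L^2(\lambda_n)}^2=\int|g|^2\,d\mu_{f,k}$, so one must absorb the cross terms $f^{(j)}g^{(k-j)}$, $0\le j\le k-1$, which is done using $f\in H^\infty$ and a bootstrap and is where Ortega and Fabrega do their main work. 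The ``some $\iff$ any'' clause then follows, the passage to larger $k$ being the elementary Cauchy direction and the passage to smaller $k$ a case of (b).

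I expect the related facts (b) and the cross-term absorption above to be the main obstacle; both are the interpolation phenomenon isolated in \cite{OrFa}, in which the intermediate derivative $f^{(j)}$ is written by a fractional-integration (radial-primitive) formula in terms of $f^{(k)}$ plus a polynomial part controlled by $\|f\|_{H^\infty}$, and the resulting integral operator is estimated against $d\mu_{f,k}$ by a Schur-type argument to yield the Carleson bound for $d\mu_{f,j}$; everything else is Leibniz bookkeeping with standard Bergman-ball and Bergman-tree estimates. With the core fact established, the theorem follows by specializing its last factor: for $0\le\sigma<1/2$, Theorem \ref{Besov'} rewrites $\|d\mu_{f,k}\|_{CM(B_2^\sigma)}$ as the tree condition (\ref{treecondo}) for $\mu=\mu_{f,k}$; for $\sigma=1/2$, Theorem \ref{Arvcom} rewrites it as the conjunction of the simple condition (\ref{simp}) and the split tree condition (\ref{splittree}) (equivalently its $\varepsilon$-form (\ref{epsilonsplit})) for $\mu=\mu_{f,k}$. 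Since $\mu_{f,k}$ is absolutely continuous with density $|(1-|z|^2)^kf^{(k)}(z)|^2(1-|z|^2)^{2\sigma}$ relative to $d\lambda_n$, the quantities $I^\ast\mu_{f,k}(\alpha)$ entering those conditions are explicit weighted integrals of $f^{(k)}$ over the Bergman kubes $K_\alpha$, making the tests concrete in terms of $f$.
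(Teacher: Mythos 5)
Your proposal is correct and follows essentially the same route as the paper: the paper gives no independent proof of this theorem, but rather cites the Ortega--Fabrega multiplier characterization (boundedness plus the Carleson condition on $d\mu_{f,k}$, with the stated norm equivalence) and then evaluates the Carleson norm via Theorem \ref{Besov'} for $0\leq\sigma<1/2$ and Theorem \ref{Arvcom} for $\sigma=1/2$, exactly as you do. Your sketch of how the Leibniz expansion, the intermediate-derivative bootstrap, and the necessity via reproducing kernels enter the Ortega--Fabrega argument is accurate supplementary detail, but not something the paper itself reproves.
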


In the familiar case of the one variable Hardy space, $n=1,$ $\sigma=1/2,$ and
$k=1;$ the Carleson measure condition need not be mentioned because it is
implied by the boundedness of $f,\ $for instance because of the inclusion
$H^{\infty}\left(  \mathbb{B}_{1}\right)  \subset BMO(\mathbb{B}_{1})$ and the
characterization of $BMO$ in terms of Carleson measures. Thus the multiplier
algebra consists of all bounded functions. However for $n>1$ and $0\leq
\sigma\leq1/2$ as well as $n=1$ and $0\leq\sigma<1/2,$ there are bounded
functions which are not multipliers. Because the constant functions are in all
the $B_{2}^{\sigma}$ we can establish this by exhibiting bounded functions not
in the $B_{2}^{\sigma}.$ In \cite{Che} Chen constructs such functions for
$n>1,$ $\sigma=1/2.$ If $\sigma<1/2$ then $B_{2}^{\sigma}\subset B_{2}^{1/2}$
and hence Chen's functions also fail to be in $B_{2}^{\sigma}.$ Similar but
simpler examples work for $n=1,0\leq\sigma<1/2.$ Other approaches to this are
in \cite{Dru} and \cite{Arv}.

\subsubsection{Interpolating sequences\label{IS}}

Given $\sigma,$ $0\leq\sigma<1/2$ and a discrete set $Z=\{z_{i}\}_{i=1}%
^{\infty}\subset\mathbb{B}_{n}$ we define the associated measure $\mu_{Z}%
=\sum_{j=1}^{\infty}(1-\left\vert z_{j}\right\vert ^{2})^{2\sigma}%
\delta_{z_{j}}.$ We say that $Z$ is an interpolating sequence for
$B_{2}^{\sigma}\left(  \mathbb{B}_{n}\right)  $ if the restriction map $R$
defined by $Rf(z_{i})=f(z_{i})$ for $z_{i}\in Z$ maps $B_{2}^{\sigma}\left(
\mathbb{B}_{n}\right)  $ into and onto $\ell^{2}(Z,\mu_{Z}).$ We say that $Z$
is an interpolating sequence for $M_{B_{2}^{\sigma}\left(  \mathbb{B}%
_{n}\right)  }$ if $R$ maps $M_{B_{2}^{\sigma}\left(  \mathbb{B}_{n}\right)
}$ into and onto $\ell^{\infty}(Z,\mu_{Z}).$ Using results of B. B\"{o}e
\cite{Boe}, J. Agler and J. E. M$^{c}$Carthy \cite{AgMc}, D. Marshall and C.
Sundberg \cite{MaSu}, along with the above Carleson measure characterization
for $B_{2}^{\sigma}\left(  \mathbb{B}_{n}\right)  $ we now characterize those
sequences.\ Denote the Bergman metric on the complex ball $\mathbb{B}_{n}$ by
$\beta$.

\begin{theorem}
\label{interpolation}Suppose $\sigma,Z,$ and $\mu_{Z}$ are as described. Then
$Z$ is an interpolating sequence for $B_{2}^{\sigma}\left(  \mathbb{B}%
_{n}\right)  $ \emph{if and only if} $Z$ is an interpolating sequence for the
multiplier algebra $M_{B_{2}^{\sigma}\left(  \mathbb{B}_{n}\right)  }$
\emph{if and only if} $Z$ satisfies the separation condition $\inf_{i\neq
j}\beta\left(  z_{i},z_{j}\right)  >0$ and $\mu_{Z}$ is a $B_{2}^{\sigma
}\left(  \mathbb{B}_{n}\right)  $ Carleson measure, i.e. it satisfies the tree
condition (\ref{treecondo}).
\end{theorem}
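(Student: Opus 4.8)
The plan is to identify $B_2^\sigma(\mathbb{B}_n)$, for $0\le\sigma<1/2$, as a reproducing kernel Hilbert space with a \emph{complete Nevanlinna--Pick} kernel, and then to combine the general interpolation theory for such spaces with the Carleson measure characterization in Theorem \ref{Besov'}. Up to equivalence of norms, $B_2^\sigma(\mathbb{B}_n)$ is the reproducing kernel Hilbert space with kernel $k^\sigma(z,w)=(1-\langle z,w\rangle)^{-2\sigma}$ when $0<\sigma<1/2$, and with the classical Dirichlet kernel $k^0(z,w)=\langle z,w\rangle^{-1}\log\frac{1}{1-\langle z,w\rangle}$ when $\sigma=0$; the function $(1-t)^{-s}$ generates a complete NP kernel on $\mathbb{B}_n$ for $0<s\le1$, and $k^0$ is the prototypical complete NP kernel. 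Since ``interpolating sequence for $B_2^\sigma$'', ``interpolating sequence for $M_{B_2^\sigma}$'', and ``Carleson measure up to comparable constants'' are all invariant under passage to an equivalent norm, we may work with the model kernels $k^\sigma$; note that for $0<\sigma<1/2$ one has $\|k^\sigma_z\|^{-2}\approx(1-|z|^2)^{2\sigma}$, so $\mu_Z\approx\sum_j\|k^\sigma_{z_j}\|^{-2}\delta_{z_j}$ is the natural measure attached to $Z$.

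With this setup the three conditions are linked by the cited works. If $Z$ is Bergman separated and $\mu_Z\in CM(B_2^\sigma(\mathbb{B}_n))$, then by B\"{o}e's interpolation theorem for complete NP spaces \cite{Boe} (together with \cite{MaSu}) $Z$ is an interpolating sequence for $M_{B_2^\sigma(\mathbb{B}_n)}$. In the complete NP setting an interpolating sequence for the multiplier algebra is automatically one for the space itself (Agler--McCarthy \cite{AgMc}; interpolation for $M_H$ forces $\mu_Z$ to be Carleson, and the $\ell^2$ interpolant is then built from a system of interpolating multipliers). Conversely, if $Z$ is interpolating for $B_2^\sigma$, then boundedness of the restriction map $R\colon B_2^\sigma\to\ell^2(Z,\mu_Z)$ is precisely the statement that $\mu_Z$ is a Carleson measure, and surjectivity of $R$ (hence a bounded right inverse) forces $\sup_{i\ne j}|\langle\widehat{k^\sigma_{z_i}},\widehat{k^\sigma_{z_j}}\rangle|<1$, i.e.\ weak separation of $Z$. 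Chaining these implications closes the circle, provided we make two translations.

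The two translations are: (i) for $0\le\sigma<1/2$, weak separation of $Z$ is equivalent to Bergman separation $\inf_{i\ne j}\beta(z_i,z_j)>0$; and (ii) $\mu_Z\in CM(B_2^\sigma(\mathbb{B}_n))$ is equivalent to the tree condition (\ref{treecondo}) for $\mu_Z$, which is exactly Theorem \ref{Besov'} and is the only step where the hypothesis $\sigma<1/2$ is genuinely used. For (i) one computes that $\langle\widehat{k^\sigma_{z_i}},\widehat{k^\sigma_{z_j}}\rangle\approx\big(1-|\varphi_{z_i}(z_j)|^2\big)^{\sigma}$, where $1-|\varphi_z(w)|^2=\frac{(1-|z|^2)(1-|w|^2)}{|1-\langle z,w\rangle|^2}$ (a logarithmic expression replaces the power when $\sigma=0$), and this stays bounded away from $1$ exactly when $|\varphi_{z_i}(z_j)|$, equivalently $\beta(z_i,z_j)$, stays bounded away from $0$.

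I expect the main obstacle to be not a single hard estimate but the bookkeeping needed to borrow the interpolation machinery cleanly: checking that $B_2^\sigma(\mathbb{B}_n)$ --- especially the logarithmic kernel at $\sigma=0$ --- meets the exact hypotheses under which \cite{Boe}, \cite{AgMc} and \cite{MaSu} are stated (normalization and irreducibility of the kernel, their precise definitions of ``weakly separated'' and of the Carleson measure associated with a sequence), and verifying that the measure used there agrees, up to constants, with the counting-type measure $\mu_Z=\sum_j(1-|z_j|^2)^{2\sigma}\delta_{z_j}$ fixed here (the normalization carried over from \cite{ArRoSa2}). Once these compatibilities are in place, the equivalences in the theorem follow formally from the chain above together with Theorem \ref{Besov'}.
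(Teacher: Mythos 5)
Your route is essentially the paper's route: establish the complete Nevanlinna--Pick property of the kernel $\left(1-\overline{w}\cdot z\right)^{-2\sigma}$, identify interpolating sequences for $M_{B_{2}^{\sigma}}$ with those for $B_{2}^{\sigma}$ via the Marshall--Sundberg/Agler--M$^{c}$Carthy equivalence, invoke B\"{o}e's theorem to pass between interpolation (in the presence of separation) and boundedness of the normalized Grammian, use Proposition 9.5 of \cite{AgMc2} to identify boundedness of the Grammian with the Carleson property of $\mu_{Z}$, and finally use Theorem \ref{Besov'} to convert the Carleson property into the tree condition (\ref{treecondo}). Your rearrangement of the chain of implications, the direct argument that ontoness of $R$ forces weak separation, the identification of weak separation with Bergman separation via $\left\vert\left\langle \hat{k}_{z_{i}},\hat{k}_{z_{j}}\right\rangle\right\vert=\left(1-\left\vert\varphi_{z_{i}}\left(z_{j}\right)\right\vert^{2}\right)^{\sigma}$, and your treatment of $\sigma=0$ through the Dirichlet kernel (the paper instead quotes \cite{MaSu} for $n=1$ and \cite{ArRoSa2} for $n>1$) are harmless variations on the same machinery.

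The one substantive omission is the extra hypothesis in B\"{o}e's theorem beyond the complete N-P property: one must know that whenever the normalized Grammian (\ref{GR}) is bounded on $\ell^{2}$, the matrix of its absolute values is bounded as well. You flag ``checking B\"{o}e's hypotheses'' as the expected obstacle but never discharge it, and you assert that the only genuine use of $\sigma<1/2$ is in the Carleson-measure characterization of Theorem \ref{Besov'}. That is not accurate: the paper verifies B\"{o}e's extra hypothesis precisely by the comparability $\operatorname{Re}\left(\frac{1}{1-\overline{z_{j}}\cdot z_{i}}\right)^{2\sigma}\approx\left\vert\frac{1}{1-\overline{z_{j}}\cdot z_{i}}\right\vert^{2\sigma}$, which holds for $0\leq\sigma<1/2$ but fails at $\sigma=1/2$, so this is a second, independent place where the restriction on $\sigma$ is essential (it is exactly what blocks the argument for the Drury--Arveson space). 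Your outline is correct, but to be complete it needs this comparability stated and used to verify B\"{o}e's hypothesis, rather than left as an unexamined compatibility check.
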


%

\proof
The case $\sigma=0$ was proved in \cite{MaSu} when $n=1$ and in \cite{ArRoSa2}
when $n>1$. If $0<\sigma<1/2$, then Corollary 1.12 of \cite{AgMc} shows that
the reproducing kernel $k\left(  z,w\right)  =\left(  \frac{1}{1-\overline
{w}\cdot z}\right)  ^{2\sigma}$ has the complete Nevanlinna-Pick property.
Indeed, the corollary states that $k$ has the complete Nevanlinna-Pick
property if and only if for any finite set $\left\{  z_{1},z_{2}%
,...,z_{m}\right\}  $, the matrix $H_{m}$ of reciprocals of inner products of
reproducing kernels $k_{z_{i}}$ for $z_{i}$, i.e.
\[
H_{m}=\left[  \frac{1}{\left\langle k_{z_{i}},k_{z_{j}}\right\rangle }\right]
_{i,j=1}^{m},
\]
has exactly one positive eigenvalue counting multiplicities. We may expand
$\left\langle k_{z_{i}},k_{z_{j}}\right\rangle ^{-1}$ by the binomial theorem
as
\[
\left(  1-\overline{z_{j}}\cdot z_{i}\right)  ^{2\sigma}=1-\sum_{\ell
=1}^{\infty}c_{\ell}\left(  \overline{z_{j}}\cdot z_{i}\right)  ^{\ell},
\]
where $c_{\ell}=(-1)^{\ell+1}\left(
\begin{array}
[c]{c}%
2\sigma\\
\ell
\end{array}
\right)  \geq0$ for $\ell\geq1$ and $0<2\sigma<1$. Now the matrix $\left[
\overline{z_{j}}\cdot z_{i}\right]  _{i,j=1}^{m}$ is nonnegative semidefinite
since
\[
\sum_{i,j=1}^{m}\zeta_{i}\left(  \overline{z_{j}}\cdot z_{i}\right)
\overline{\zeta_{i}}=\left\vert \left(  \zeta_{1}z_{1},...,\zeta_{m}%
z_{m}\right)  \right\vert ^{2}\geq0.
\]
Thus by Schur's theorem so is $\left[  \left(  \overline{z_{j}}\cdot
z_{i}\right)  ^{\ell}\right]  _{i,j=1}^{m}$ for every $\ell\geq1,$ and hence,
also, so is the sum with positive coefficients. Thus the positive part of the
matrix $H_{m}$ is $\left[  1\right]  _{i,j=1}^{m}$ which has rank $1$, and
hence the sole positive eigenvalue of $H_{m}$ is $m$. Once we know that
$B_{2}^{\sigma}\left(  \mathbb{B}_{n}\right)  $ has the Pick property then it
follows from a result of Marshall and Sundberg (Theorem 9.19 of \cite{AgMc2})
that the interpolating sequences for $M_{B_{2}^{\sigma}\left(  \mathbb{B}%
_{n}\right)  }$ are the same as those for $B_{2}^{\sigma}\left(
\mathbb{B}_{n}\right)  .$ Thus we need only consider the case of
$B_{2}^{\sigma}\left(  \mathbb{B}_{n}\right)  .$

We now invoke a theorem of B. B\"{o}e \cite{Boe} which says that for certain
Hilbert spaces with reproducing kernel, in the presence of the separation
condition (which is necessary for an interpolating sequence, see Ch. 9 of
\cite{AgMc2}) a necessary and sufficient condition for a sequence to be
interpolating is that the Grammian matrix associated with $Z$ is bounded. That
matrix is built from normalized reproducing kernels; it is%

\begin{equation}
\left[  \left\langle \frac{k_{z_{i}}}{\left\Vert k_{z_{i}}\right\Vert }%
,\frac{k_{z_{j}}}{\left\Vert k_{z_{j}}\right\Vert }\right\rangle \right]
_{i,j=1}^{\infty}. \label{GR}%
\end{equation}
The spaces to which B\"{o}e's theorem applies are those where the kernel has
the complete Nevanlinna-Pick property, which we have already noted holds in
our case, and which have the following additional technical property. Whenever
we have a sequence for which the matrix (\ref{GR}) is bounded on $\ell^{2}$
then the matrix with absolute values%

\[
\left[  \left\vert \left\langle \frac{k_{z_{i}}}{\left\Vert k_{z_{i}%
}\right\Vert },\frac{k_{z_{j}}}{\left\Vert k_{z_{j}}\right\Vert }\right\rangle
\right\vert \right]  _{i,j=1}^{\infty}%
\]
is also bounded on $\ell^{2}.$ This property holds in our case because, for
$\sigma$ in the range of interest, $\operatorname{Re}\left(  \frac
{1}{1-\overline{z_{j}}\cdot z_{i}}\right)  ^{2\sigma}\approx\left\vert
\frac{1}{1-\overline{z_{j}}\cdot z_{i}}\right\vert ^{2\sigma}$ which, as noted
in \cite{Boe}, insures that the Gramm matrix has the desired property. (It is
this step that precludes considering $\sigma=1/2.)$ Finally, by Proposition
9.5 of \cite{AgMc2}, the boundedness on $\ell^{2}$ of the Grammian matrix is
equivalent to $\mu_{Z}=\sum_{j=1}^{\infty}\left\Vert k_{z_{j}}\right\Vert
^{-2}\delta_{z_{j}}=\sum_{j=1}^{\infty}(1-\left\vert z_{j}\right\vert
^{2})^{2\sigma}\delta_{z_{j}}$ being a Carleson measure. Thus the obvious
generalization to higher dimensions of the interpolation theorem of B\"{o}e in
\cite{Boe} completes the proof. (B\"{o}e presents his work in dimension $n=1,$
but, as he notes, the proof extends to spaces with the above properties.$)$

(When we defined "interpolating sequence" we required that $R$ map into and
onto $\ell^{2}(Z,\mu_{Z}).$ In the most well known case, the classical Hardy
space, $n=1,\sigma=1/2,$ if $R$ is onto it must be into. However for the
classical Dirichlet space the map can be onto without being into. Hence one
can ask for a characterization of those maps for which $R$ is onto. The
question is open; partial results are in \cite{Bis}, \cite{Boe}, and
\cite{ArRoSa3}.)

\subsubsection{The Drury-Arveson Hardy space and von Neumann's inequality}

It is a celebrated result of von Neumann \cite{Neu} that if $T$ is a
contraction on a Hilbert space and $f$ is a complex polynomial then
$\left\Vert f(T)\right\Vert \leq\sup\left\{  \left\vert f(\gamma)\right\vert
:|\gamma|=1\right\}  .$ An extension of this to $n$-tuples of operators was
given by Drury \cite{Dru}. Let $A=\left(  A_{1},...,A_{n}\right)  $ be an
$n$-(row)-contraction on a complex Hilbert space $\mathcal{H}$, i.e. an
$n$-tuple of commuting linear operators on $\mathcal{H}$ satisfying
\[
\sum_{j=1}^{n}\left\Vert A_{j}h\right\Vert ^{2}\leq\left\Vert h\right\Vert
^{2}\text{ for all }h\in\mathcal{H}.
\]
Drury showed in \cite{Dru} that if $f$ is a complex polynomial on
$\mathbb{C}^{n}$ then
\begin{equation}
\sup_{\substack{A\;\text{an } \\n\text{-contraction}}}\left\Vert f\left(
A\right)  \right\Vert =\left\Vert f\right\Vert _{M_{\mathcal{K}\left(
\mathbb{B}_{n}\right)  }}, \label{Druryshift}%
\end{equation}
where $\left\Vert f\left(  A\right)  \right\Vert $ is the operator norm of
$f\left(  A\right)  $ on $\mathcal{H}$, and $\left\Vert f\right\Vert
_{M_{\mathcal{K}\left(  \mathbb{B}_{n}\right)  }}$ denotes the multiplier norm
of the polynomial $f$ on Drury's Hardy space of holomorphic functions
\[
\mathcal{K}\left(  \mathbb{B}_{n}\right)  =\left\{  \sum_{k}a_{k}z^{k}%
,\;z\in\mathbb{B}_{n}:\sum_{k}\left\vert a_{k}\right\vert ^{2}\frac
{k!}{\left\vert k\right\vert !}<\infty\right\}  .
\]
This space is denoted $H_{n}^{2}$ by Arveson in \cite{Arv} (who also proves
(\ref{Druryshift}) in Theorem 8.1). For $n=1$, $M_{\mathcal{K}\left(
\mathbb{B}_{n}\right)  }=H^{\infty}\left(  \mathbb{B}_{n}\right)  $ and this
is the classical result of von Neumann. However, as we mentioned, for $n\geq2$
the multiplier space $M_{\mathcal{K}\left(  \mathbb{B}_{n}\right)  }$ is
strictly smaller than $H^{\infty}\left(  \mathbb{B}_{n}\right)  $.\ 

Chen \cite{Che} has shown that the Drury-Arveson Hardy space $\mathcal{K}%
\left(  \mathbb{B}_{n}\right)  =H_{n}^{2}$ is isomorphic to the Besov-Sobolev
space $B_{2}^{1/2}\left(  \mathbb{B}_{n}\right)  $ which can be characterized
as consisting of those holomorphic functions $\sum_{k}a_{k}z^{k}$ in the ball
with coefficients $a_{k}$ satisfying
\[
\sum_{k}\left\vert a_{k}\right\vert ^{2}\frac{\left\vert k\right\vert
^{n-1}\left(  n-1\right)  !k!}{\left(  n-1+\left\vert k\right\vert \right)
!}<\infty.
\]
Indeed, the coefficient multipliers in the two previous conditions are easily
seen to be comparable for $k>0$. The comparability of the multiplier norms
follows:
\[
\left\Vert f\right\Vert _{M_{\mathcal{K}\left(  \mathbb{B}_{n}\right)  }%
}\approx\left\Vert f\right\Vert _{M_{B_{2}^{1/2}\left(  \mathbb{B}_{n}\right)
}}.
\]
Hence using Theorem \ref{Arvcom}, i.e. (\ref{s'}), and Theorem
\ref{multipliers} we can give explicit estimates for the function norm in
Drury's result. Note however that we only have equivalence of the Hilbert
space norms and multiplier space norms, not equality, and that distinction
persists in, for instance, the theorem which follows.

\begin{theorem}
For any $m>\frac{n-1}{2}$ set $d\mu_{f}^{m}\left(  z\right)  =\left\vert
f^{\left(  m\right)  }\left(  z\right)  \right\vert ^{2}\left(  1-\left\vert
z\right\vert ^{2}\right)  ^{2m-n}dz.$ We have
\begin{align}
\sup_{\substack{A\;\text{an } \\n\text{-contraction}}}\left\Vert f\left(
A\right)  \right\Vert  &  \approx\left\Vert f\right\Vert _{\infty}%
+\sup_{\alpha\in\mathcal{T}_{n}}\sqrt{2^{d\left(  \alpha\right)  }I^{\ast}%
\mu_{f}^{m}\left(  \alpha\right)  }\label{vND}\\
&  +\sup_{\alpha\in\mathcal{T}_{n}}\sqrt{\frac{1}{I^{\ast}\mu_{f}^{m}\left(
\alpha\right)  }\sum_{k\geq0}\sum_{\gamma\geq\alpha}2^{d\left(  \gamma\right)
-k}\sum_{\substack{\delta,\delta^{\prime}\in\mathcal{G}^{\left(  k\right)
}\left(  \gamma\right)  }}I^{\ast}\mu_{f}^{m}\left(  \delta\right)  I^{\ast
}\mu_{f}^{m}\left(  \delta^{\prime}\right)  },\nonumber
\end{align}
for all polynomials $f$ on $\mathbb{C}^{n}$.
\end{theorem}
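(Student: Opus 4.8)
The plan is to read off the statement by composing four facts that are already in place, so that essentially no new analysis is needed. The first step is to reduce to a multiplier norm: by Drury's identity (\ref{Druryshift}), $\sup_{A}\left\Vert f(A)\right\Vert=\left\Vert f\right\Vert_{M_{\mathcal{K}(\mathbb{B}_n)}}$ for every polynomial $f$, the supremum being over $n$-contractions $A$; and by Chen's theorem the coefficient multipliers defining $\mathcal{K}(\mathbb{B}_n)=H_n^2$ are comparable to those defining $B_2^{1/2}(\mathbb{B}_n)$, whence $\left\Vert f\right\Vert_{M_{\mathcal{K}(\mathbb{B}_n)}}\approx\left\Vert f\right\Vert_{M_{B_2^{1/2}(\mathbb{B}_n)}}$. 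This is the one place the equality in Drury's theorem degenerates into an equivalence, and it is why the displayed relation (\ref{vND}) is $\approx$ rather than $=$.

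The second step is to apply the multiplier characterization. A polynomial $f$ is bounded on $\mathbb{B}_n$, so Theorem \ref{multipliers} with $\sigma=1/2$ applies and, for any integer $k>n/2-1/2=(n-1)/2$,
\[
\left\Vert f\right\Vert_{M_{B_2^{1/2}(\mathbb{B}_n)}}\sim\left\Vert f\right\Vert_{H^{\infty}(\mathbb{B}_n)}+\left\Vert d\mu_{f,k}\right\Vert_{CM(B_2^{1/2}(\mathbb{B}_n))},
\]
where $d\mu_{f,k}=\left\vert(1-\left\vert z\right\vert^2)^k f^{(k)}\right\vert^2(1-\left\vert z\right\vert^2)\,d\lambda_n(z)$. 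Using $d\lambda_n(z)=(1-\left\vert z\right\vert^2)^{-n-1}\,dz$ this simplifies to $d\mu_{f,k}=\left\vert f^{(k)}(z)\right\vert^2(1-\left\vert z\right\vert^2)^{2k-n}\,dz$; taking $k=m$ makes this exactly $d\mu_f^m$, and the admissibility condition $k>(n-1)/2$ is precisely the hypothesis $m>(n-1)/2$. (In passing, $d\mu_f^m$ is a finite measure, since $f^{(m)}$ is bounded on $\overline{\mathbb{B}_n}$ and $\int_{\mathbb{B}_n}(1-\left\vert z\right\vert^2)^{2m-n}\,dz<\infty$ because $2m-n>-1$, so both suprema on the right of (\ref{vND}) are finite, as they must be.)

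The third step is to insert the geometric characterization: apply the norm estimate (\ref{s'}) of Theorem \ref{Arvcom} to the measure $\mu=d\mu_f^m$, which rewrites $\left\Vert d\mu_f^m\right\Vert_{CM(B_2^{1/2}(\mathbb{B}_n))}$ as the sum of the simple-condition supremum $\sup_{\alpha}\sqrt{2^{d(\alpha)}I^{\ast}\mu_f^m(\alpha)}$ and the split-tree supremum appearing on the right of (\ref{vND}). Combining these three steps and identifying $\left\Vert f\right\Vert_{\infty}$ with $\left\Vert f\right\Vert_{H^{\infty}(\mathbb{B}_n)}$ yields (\ref{vND}). The only bookkeeping needed is that the implicit constants in (\ref{s'}) depend on $n$, which is immaterial here since $n$ is fixed, and that whereas (\ref{s'}) restricts the split-tree supremum to indices $\alpha$ with $I^{\ast}\mu_f^m(\alpha)>0$, the formula (\ref{vND}) is written over all $\alpha\in\mathcal{T}_n$; the two agree under the convention that the split-tree term is $0$ when $\mu_f^m\equiv0$, i.e.\ when $\deg f<m$, a case in which the right side of (\ref{vND}) collapses to $\left\Vert f\right\Vert_{\infty}$, consistent with $\left\Vert f\right\Vert_{M_{B_2^{1/2}}}\sim\left\Vert f\right\Vert_{H^{\infty}}$ there. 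There is no substantive obstacle: all the work is contained in Theorems \ref{multipliers} and \ref{Arvcom}, and this statement merely repackages them, via the theorems of Drury and Chen, into an explicit value for the von Neumann-type constant.
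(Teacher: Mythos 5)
Your proposal is correct and follows the same route the paper intends: Drury's identity (\ref{Druryshift}) plus Chen's comparability of $\mathcal{K}(\mathbb{B}_n)$ with $B_2^{1/2}(\mathbb{B}_n)$, then Theorem \ref{multipliers} with $\sigma=1/2$, $k=m$ (your normalization check that $d\mu_{f,k}=d\mu_f^m$ and that $k>n/2-\sigma$ is exactly $m>(n-1)/2$ is right), and finally the norm estimate (\ref{s'}) of Theorem \ref{Arvcom}. This matches the paper's argument, which presents the theorem as precisely this assembly of Drury, Chen, Theorem \ref{multipliers}, and Theorem \ref{Arvcom}.
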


The right side of (\ref{vND}) can of course be transported onto the ball using
that $\cup_{\beta\geq\alpha}K_{\beta}$ is an appropriate nonisotropic tent in
$\mathbb{B}_{n}$, and that $2^{-d\left(  \alpha\right)  }\approx1-\left\vert
z\right\vert ^{2}$ for $z\in K_{\alpha}$.

In passing we mention that, inspired partly by the work of Arveson in
\cite{Arv}, the space $H_{n}^{2}$ plays a substantial role in modern operator
theory. For more recent work see for instance, \cite{AgMc2}, \cite{BTV}, and
\cite{EP} .

\subsubsection{Carleson measures for Hilbert spaces with a complete \emph{N-P}
kernel\label{complete}}

The \emph{universal} complete Nevanlinna-Pick property of the Drury-Arveson
space $H_{n}^{2}$ allows us to use our description of Carleson measures for
$H_{n}^{2}$ to describe Carleson measures for certain other Hilbert spaces. In
\cite{AgMc}, Agler and M$^{c}$Carthy consider Hilbert spaces with a complete
Nevanlinna-Pick kernel $k\left(  x,y\right)  $. We recall their setup, keeping
in mind the classical model of the Szeg\"{o} kernel $k\left(  x,y\right)
=\frac{1}{1-\overline{x}y}$ on the unit disc $\mathbb{B}_{1}$. Let $X$ be an
infinite set and $k\left(  x,y\right)  $ be a positive definite kernel
function on $X$, i.e. for all finite subsets $\left\{  x_{i}\right\}
_{i=1}^{m}$of $X$,
\[
\sum_{i,j=1}^{m}a_{i}\overline{a_{j}}k\left(  x_{i},x_{j}\right)  \geq0\text{
with equality }\Leftrightarrow\text{ all }a_{i}=0.
\]
Denote by $\mathcal{H}_{k}$ the Hilbert space obtained by completing the space
of finite linear combinations of $k_{x_{i}}$'s, where $k_{x}\left(
\cdot\right)  =k\left(  x,\cdot\right)  $, with respect to the inner product
\[
\left\langle \sum_{i=1}^{m}a_{i}k_{x_{i}},\sum_{j=1}^{m}b_{j}k_{y_{j}%
}\right\rangle =\sum_{i,j=1}^{m}a_{i}\overline{b_{j}}k\left(  x_{i}%
,y_{j}\right)  .
\]
The kernel $k$ is called a \emph{complete Nevanlinna-Pick kernel} if the
solvability of the matrix-valued Nevanlinna-Pick problem is characterized by
the contractivity of a certain family of adjoint operators $R_{x,\Lambda}$ (we
refer to \cite{AgMc}, \cite{AgMc2} for an explanation of this generalization
of the classical Pick condition).

Let $\mathbb{B}_{n}$ be the open unit ball in $n$-dimensional Hilbert space
$\ell_{n}^{2}$; for $n=\infty$, $\ell_{\infty}^{2}=\ell^{2}(\mathbb{Z}^{+}).$
For $x,y\in\mathbb{B}_{n}$ set $a_{n}\left(  x,y\right)  =\frac{1}%
{1-\left\langle y,x\right\rangle }$ and denote the Hilbert space
$\mathcal{H}_{a_{n}}$ by $H_{n}^{2}$ (so that $H_{n}^{2}=B_{2}^{1/2}\left(
\mathbb{B}_{n}\right)  $ when $n$ is finite). Theorem 4.2 of \cite{AgMc} shows
that if $k$ is an irreducible kernel on $X$, and if for some fixed point
$x_{0}\in X$, the Hermitian form
\[
F\left(  x,y\right)  =1-\frac{k\left(  x,x_{0}\right)  k\left(  x_{0}%
,y\right)  }{k\left(  x,y\right)  k\left(  x_{0},y_{0}\right)  }%
\]
has rank $n$, then $k$ is a complete Nevanlinna-Pick kernel if and only if
there is an injective function $f:X\rightarrow\mathbb{B}_{n}$ and a nowhere
vanishing function $\delta$ on $X$ such that
\[
k\left(  x,y\right)  =\overline{\delta\left(  x\right)  }\delta\left(
y\right)  a_{n}\left(  f\left(  x\right)  ,f\left(  y\right)  \right)
=\frac{\overline{\delta\left(  x\right)  }\delta\left(  y\right)
}{1-\left\langle f\left(  y\right)  ,f\left(  x\right)  \right\rangle }.
\]
Moreover, if this happens, then the map $k_{x}\rightarrow\overline
{\delta\left(  x\right)  }\left(  a_{n}\right)  _{f\left(  x\right)  }$
extends to an isometric linear embedding $T$ of $\mathcal{H}_{k}$ into
$H_{n}^{2}$. If in addition there is a topology on $X$ so that $k$ is
continuous on $X\times X$, then the map $f$ will be a continuous embedding of
$X$ into $\mathbb{B}_{n} $. If $X$ has holomorphic structure and the $k_{x}$
are holomorphic then $f$ will be holomorphic.

\emph{For the remainder of this subsubsection we will assume that }%
$X$\emph{\ is a topological space and that the kernel function }$k$\emph{\ is
continuous on }$X\times X.$

In that context we can define a Carleson measure for $\mathcal{H}_{k}$ to be a
positive Borel measure on $X$ for which we have the embedding
\begin{equation}
\int_{X}\left\vert h\left(  x\right)  \right\vert ^{2}d\mu\left(  x\right)
\leq C\left\Vert h\right\Vert _{\mathcal{H}_{k}}^{2},\;\;\;\;\;h\in
\mathcal{H}_{k}, \label{HkCar}%
\end{equation}
with the standard definition of the Carleson norm. We can now use the
description of the Carleson measure norm for $H_{n}^{2}=B_{2}^{1/2}\left(
\mathbb{B}_{n}\right)  ,$ given in (\ref{splittree}) or in (\ref{s'}) if $n$
is finite and by (\ref{bilin}) in any case, to give a necessary and sufficient
condition for $\mu$ defined on $X$ to be a Carleson measure for $\mathcal{H}%
_{k}.$ To see this, consider first the case where the Hermitian form $F$ above
has finite rank ($F$ is positive semi-definite if $k$ is a complete
Nevanlinna-Pick kernel by Theorem 2.1 in \cite{AgMc}). Denote by $f_{\ast}\nu$
the pushforward of a Borel measure $\nu$ on $X$ under the continuous map $f$.
If $\mu$ is a positive Borel measure on $X$ then $\mu$ is $\mathcal{H}_{k}%
$-Carleson, i.e. (\ref{HkCar}), if and only if the measure $\mu^{\natural
}=f_{\ast}\left(  \left\vert \delta\right\vert ^{2}\mu\right)  $ is $H_{n}%
^{2}$-Carleson, i.e.
\begin{equation}
\int_{\mathbb{B}_{n}}\left\vert G\right\vert ^{2}d\mu^{\natural}\leq
C\left\Vert G\right\Vert _{B_{2}^{1/2}\left(  \mathbb{B}_{n}\right)  }%
^{2},\;\;\;\;\;G\in B_{2}^{1/2}\left(  \mathbb{B}_{n}\right)  . \label{BCar}%
\end{equation}
Indeed, the functions $h=\sum_{i=1}^{m}c_{i}k_{x_{i}}$ are dense in
$\mathcal{H}_{k}.$ Setting $H=Th=\sum_{i=1}^{m}c_{i}\overline{\delta\left(
x_{i}\right)  }\left(  a_{n}\right)  _{f\left(  x_{i}\right)  }$ we have:%
\begin{align*}
\left\Vert h\right\Vert _{\mathcal{H}_{k}}^{2}  &  =\left\langle \sum
_{i=1}^{m}c_{i}k_{x_{i}},\sum_{i=1}^{m}c_{i}k_{x_{i}}\right\rangle
_{\mathcal{H}_{k}}=\sum_{i,j=1}^{m}c_{i}\overline{c_{j}}k\left(  x_{i}%
,x_{j}\right) \\
&  =\sum_{i,j=1}^{m}c_{i}\overline{c_{j}}\overline{\delta\left(  x_{i}\right)
}\delta\left(  x_{j}\right)  a_{n}\left(  f\left(  x_{i}\right)  ,f\left(
x_{j}\right)  \right) \\
&  =\left\langle \sum_{i=1}^{m}c_{i}\overline{\delta\left(  x_{i}\right)
}\left(  a_{n}\right)  _{f\left(  x_{i}\right)  },\sum_{i=1}^{m}c_{i}%
\overline{\delta\left(  x_{i}\right)  }\left(  a_{n}\right)  _{f\left(
x_{i}\right)  }\right\rangle _{H_{n}^{2}}\\
&  =\left\Vert H\right\Vert _{H_{n}^{2}}^{2}.
\end{align*}
Also, the change of variable $f$ yields
\begin{align*}
\int_{X}\left\vert h\left(  y\right)  \right\vert ^{2}d\mu\left(  y\right)
&  =\int_{X}\left\vert \sum_{i=1}^{m}c_{i}k\left(  x_{i},y\right)  \right\vert
^{2}d\mu\left(  y\right) \\
&  =\int_{X}\left\vert \sum_{i=1}^{m}c_{i}\overline{\delta\left(
x_{i}\right)  }a_{n}\left(  f\left(  x_{i}\right)  ,f\left(  y\right)
\right)  \right\vert ^{2}\left\vert \delta\left(  y\right)  \right\vert
^{2}d\mu\left(  y\right) \\
&  =\int_{f\left(  X\right)  }\left\vert H\right\vert ^{2}d\mu^{\natural}%
=\int_{\mathbb{B}_{n}}\left\vert H\right\vert ^{2}d\mu^{\natural},
\end{align*}
and it follows immediately that (\ref{BCar}) implies (\ref{HkCar}).

For the converse, we observe that if $G\in H_{n}^{2}=B_{2}^{1/2}\left(
\mathbb{B}_{n}\right)  $, then we can write $G=H+J$ where $H\in T\left(
\mathcal{H}_{k}\right)  $ and $J$ is orthogonal to the closed subspace
$T\left(  \mathcal{H}_{k}\right)  $. Now since $J$ is orthogonal to all
functions $\overline{\delta\left(  x\right)  }\left(  a_{n}\right)  _{f\left(
x\right)  }$ with $x\in X$, and since $\delta$ is nonvanishing on $X$, we
obtain that $J$ vanishes on the subset $f\left(  X\right)  $ of the ball
$\mathbb{B}_{n}$. Since $\mu^{\natural}$ is carried by $f\left(  X\right)  $
and orthogonal projections have norm $1$, we then have with $H=Th$,
\begin{align*}
\int_{\mathbb{B}_{n}}\left\vert G\right\vert ^{2}d\mu^{\natural}  &
=\int_{\mathbb{B}_{n}}\left\vert H\right\vert ^{2}d\mu^{\natural}=\int
_{X}\left\vert h\right\vert ^{2}d\mu,\\
&  \text{and}\\
\left\Vert h\right\Vert _{\mathcal{H}_{k}}  &  =\left\Vert H\right\Vert
_{H_{n}^{2}}\leq\left\Vert G\right\Vert _{H_{n}^{2}}.
\end{align*}
It follows immediately that (\ref{HkCar}) implies (\ref{BCar}).

We now extend the above characterization to the case of infinite rank. We
first characterize Carleson measures on $H_{\infty}^{2}$ as follows. Given a
finite dimensional subspace $L$ of $\mathbb{C}^{\infty}$, let $P_{L}$ denote
orthogonal projection onto $L$ and set $\mathbb{B}_{L}=\mathbb{B}_{\infty}\cap
L$, which we identify with the complex ball $\mathbb{B}_{n}$, $n=\dim L$. We
say that a positive measure $\nu$ on $\mathbb{B}_{L}$ is $H_{n}^{2}\left(
\mathbb{B}_{L}\right)  $-Carleson if, when viewed as a measure on
$\mathbb{B}_{n}$, $n=\dim L$, it is $H_{n}^{2}\left(  \mathbb{B}_{n}\right)  $-Carleson.

\begin{lemma}
\label{unifdim}A positive Borel measure $\nu$ on $\mathbb{B}_{\infty}$ is
$H_{\infty}^{2}$-Carleson if and only if $\left(  P_{L}\right)  _{*}\nu$ is
uniformly $H_{n}^{2}\left(  \mathbb{B}_{L}\right)  $-Carleson, $n=\dim L$, for
all finite-dimensional subspaces $L$ of $\mathbb{C}^{\infty}$.
\end{lemma}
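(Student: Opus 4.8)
The plan is to prove the lemma by exploiting two facts: that Carleson-ness of a measure on $H_n^2$ is detected by the bilinear inequality (\ref{bilin}) with a constant independent of dimension (this is the functional-analytic reduction asserted in the Overview), and that the reproducing kernel $a_\infty$ of $H_\infty^2$ is, in a precise sense, an increasing limit of the kernels $a_n$ of the finite-dimensional sections. So the heart of the matter is to relate the bilinear form on $\mathbb{B}_\infty$ driven by $\operatorname{Re}\frac{1}{1-\langle z',z\rangle}$ to the analogous forms on the slices $\mathbb{B}_L$.

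First I would record the \emph{sufficiency} direction, which is the easier half. Suppose $(P_L)_*\nu$ is $H_n^2(\mathbb{B}_L)$-Carleson with a constant $C$ not depending on $L$. Take $h\in H_\infty^2$; it suffices to bound $\int|h|^2\,d\nu$ for $h$ a finite linear combination of kernel functions $(a_\infty)_{w_j}$, since these are dense. All the points $w_j$ lie in a common finite-dimensional subspace $L$, and on $L$ one has $(a_\infty)_{w}|_{\mathbb{B}_L} = (a_n)_{w}$ as reproducing kernels, with matching norms; hence the restriction $h|_{\mathbb{B}_L}$ lies in $H_n^2(\mathbb{B}_L)$ with $\|h|_{\mathbb{B}_L}\|_{H_n^2}\le\|h\|_{H_\infty^2}$ (restriction to a coordinate subspace is a contraction between these Drury--Arveson spaces — this is the $n$-variable analogue of the obvious fact, and follows from the kernel identity together with the orthogonal-projection argument already used in the excerpt for $T(\mathcal H_k)$). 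Then $\int_{\mathbb{B}_\infty}|h|^2\,d\nu = \int_{\mathbb{B}_L}|h|^2\,d((P_L)_*\nu)$ whenever $\nu$ is supported "in $L$"; for general $\nu$ one approximates $\nu$ by its restrictions to increasing finite-dimensional cylinders and uses that $|h(z)|$ depends only on finitely many coordinates of $z$ in the dense class, passing to the limit by monotone convergence. This yields $\int|h|^2\,d\nu\le C\|h\|_{H_\infty^2}^2$.

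For the \emph{necessity} direction, assume $\nu$ is $H_\infty^2$-Carleson with norm $C$. Fix a finite-dimensional $L$ and let $\pi=P_L$. The claim is that $\pi_*\nu$ is $H_n^2(\mathbb{B}_L)$-Carleson with norm $\le C$. Here I would use the bilinear characterization: it is enough to show
\[
\int_{\mathbb{B}_L}\int_{\mathbb{B}_L}\Bigl(\operatorname{Re}\frac{1}{1-\langle z',z\rangle}\Bigr) f(z')\,d(\pi_*\nu)(z')\,\overline{g(z)}\,d(\pi_*\nu)(z)\le C'\|f\|_{L^2(\pi_*\nu)}\|g\|_{L^2(\pi_*\nu)}
\]
with $C'\lesssim C^2$. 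Pulling back $f,g$ through $\pi$ to $\tilde f=f\circ\pi$, $\tilde g=g\circ\pi$ on $\mathbb{B}_\infty$, the right side is unchanged (the $L^2$ norms transfer exactly under pushforward), and on the left the kernel becomes $\operatorname{Re}\frac{1}{1-\langle \pi w',\pi w\rangle}$ rather than $\operatorname{Re}\frac{1}{1-\langle w',w\rangle}$. The key point — and the step I expect to be the main obstacle — is that the "contracted" kernel $\operatorname{Re}\,a_n(\pi w,\pi w')$ is dominated by a positive combination of iterates of the ambient kernel in a way that preserves the bilinear bound. The cleanest route is: since $\langle \pi w',\pi w\rangle = \langle w', \pi w\rangle$ and $\pi$ is a contraction, one can write $a_n(\pi w,\pi w') = \sum_{\ell\ge0}\langle \pi w',\pi w\rangle^\ell$, compare term-by-term with $\sum_\ell\langle w',w\rangle^\ell$, and use that the difference is again a positive-definite kernel (a Schur-product / Cauchy--Schwarz argument, exactly the mechanism used in the proof of Theorem~\ref{interpolation} above to see $[\overline{z_j}\cdot z_i]^\ell$ is PSD). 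Positive definiteness of the difference means the bilinear form against $\operatorname{Re}(\text{ambient kernel})$ dominates the one against $\operatorname{Re}(\text{contracted kernel})$ for \emph{nonnegative} $f,g$; splitting general $f,g$ into real and imaginary, positive and negative parts costs only a fixed constant. Feeding $\tilde f,\tilde g$ into the $H_\infty^2$ bilinear inequality (valid because $\nu$ is $H_\infty^2$-Carleson) then closes the estimate. Finally, since the constant produced is $\lesssim C^2$ uniformly in $L$, the family $\{(P_L)_*\nu\}$ is uniformly Carleson, completing the proof.

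I should note one technical care-point woven through both directions: whether $f(X)$ or $\pi_*\nu$ is a Borel measure and whether the pushforward is well-defined — these are routine since $\pi$ is continuous and linear, but the monotone-limit argument in sufficiency does require checking that cylinder restrictions of $\nu$ converge appropriately, which I would dispatch by a standard $\sigma$-finiteness and monotone convergence remark rather than in detail.
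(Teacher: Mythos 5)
Your proposal is sound in outline, but its necessity half takes a genuinely different, and considerably heavier, route than the paper's. The paper proves both directions from a single observation: if $g=\sum_{i}c_{i}\,a_{\infty}\left(  w_{i},\cdot\right)  $ with $\left\{  w_{i}\right\}  \subset\mathbb{B}_{L}$ (for sufficiency one simply takes $L$ to be the span of the nodes), then $g\circ P_{L}=g$ and $\left\Vert g\right\Vert _{H_{\infty}^{2}}=\left\Vert g\right\Vert _{H_{n}^{2}\left(  \mathbb{B}_{L}\right)  }$, since the Gram matrix $\left[  a_{\infty}\left(  w_{i},w_{j}\right)  \right]  $ only involves inner products of vectors in $L$; hence the pushforward identity $\int_{\mathbb{B}_{\infty}}\left\vert g\right\vert ^{2}d\nu=\int_{\mathbb{B}_{L}}\left\vert g\right\vert ^{2}d\left(  P_{L}\right)  _{\ast}\nu$ transfers the embedding inequality verbatim, with the \emph{same} constant, in both directions, and density of such $g$ in $H_{\infty}^{2}$ (resp.\ in $H_{n}^{2}\left(  \mathbb{B}_{L}\right)  $) finishes. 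In particular no cylinder approximation is needed in your sufficiency argument: since $\left\vert g\right\vert ^{2}$ is the pullback under $P_{L}$ of its restriction to $\mathbb{B}_{L}$, the change-of-variables identity holds for every Borel $\nu$, so your monotone-convergence detour is superfluous (and, as described, not really meaningful). For necessity you instead invoke the quadratic-form characterization of Lemma \ref{carllem} together with positive definiteness of the difference kernel $a_{\infty}\left(  w,w^{\prime}\right)  -a_{\infty}\left(  P_{L}w,P_{L}w^{\prime}\right)  $, which does hold: expanding $\left\langle w^{\prime},w\right\rangle ^{\ell}=\left(  \left\langle P_{L}w^{\prime},P_{L}w\right\rangle +\left\langle Q_{L}w^{\prime},Q_{L}w\right\rangle \right)  ^{\ell}$ with $Q_{L}=I-P_{L}$ and using Schur products shows each term of the difference is positive semidefinite. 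One step, however, is mis-stated: positive definiteness of the difference controls the \emph{quadratic} forms (test $f=g$ real), not the bilinear form with two distinct nonnegative $f,g$ --- a positive definite matrix can have negative off-diagonal real entries. The repair is routine: prove the quadratic bound and recover the bilinear estimate by Cauchy--Schwarz for the positive semidefinite form generated by $\operatorname{Re}a_{n}\left(  P_{L}\cdot,P_{L}\cdot\right)  $, or note that Lemma \ref{carllem} only requires the quadratic bound in the first place; also the constant then transfers unchanged, not as $C^{2}$. The trade-off: the paper's argument is shorter and exactly constant-preserving because kernel combinations with nodes in $\mathbb{B}_{L}$ are simultaneously dense in $H_{n}^{2}\left(  \mathbb{B}_{L}\right)  $ and invariant under composition with $P_{L}$, a fact your necessity argument never exploits; your compression argument is more robust, in that it would control pushforwards under more general contractions where no such invariant dense class is available, at the price of the extra kernel-positivity lemma.
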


%

\proof
Suppose that $\left(  P_{L}\right)  _{\ast}\nu$ is uniformly $H_{n}^{2}\left(
\mathbb{B}_{L}\right)  $-Carleson for all finite-dimensional subspaces $L$ of
$\mathbb{C}^{\infty}$, $n=\dim L$. Let
\begin{equation}
g\left(  z\right)  =\sum_{i=1}^{m}c_{i}a_{\infty}\left(  w_{i},z\right)
=\sum_{i=1}^{m}c_{i}\frac{1}{1-\left\langle z,w_{i}\right\rangle }
\label{fform}%
\end{equation}
for a finite sequence $\left\{  w_{i}\right\}  _{i=1}^{m}\subset
\mathbb{B}_{\infty}$ (such functions are dense in $H_{\infty}^{2}$). If we let
$L$ be the linear span of $\left\{  w_{i}\right\}  _{i=1}^{m}$ in
$\mathbb{C}^{\infty} $, then since $g\left(  P_{L}z\right)  =g\left(
z\right)  $, we can view $g$ as a function on both $\mathbb{B}_{\infty}$ and
$\mathbb{B}_{L}$, and from our hypothesis we have
\begin{equation}
\int_{\mathbb{B}_{\infty}}\left\vert g\right\vert ^{2}d\nu=\int_{\mathbb{B}%
_{L}}\left\vert g\right\vert ^{2}d\left(  P_{L}\right)  _{\ast}\nu\leq
C\left\Vert g\right\Vert _{H_{n}^{2}\left(  \mathbb{B}_{L}\right)  }%
^{2}=C\left\Vert g\right\Vert _{H_{\infty}^{2}}^{2}, \label{feq}%
\end{equation}
with a constant $C$ independent of $g$. Since such functions $g$ are dense in
$H_{\infty}^{2}$, we conclude that $\nu$ is $H_{\infty}^{2}$-Carleson.
Conversely, given a subspace $L$ and a measure $\nu$ that is $H_{\infty}^{2}%
$-Carleson, functions of the form (\ref{fform}) with $\left\{  w_{i}\right\}
_{i=1}^{m}\subset\mathbb{B}_{L}$ are dense in $H_{n}^{2}\left(  \mathbb{B}%
_{L}\right)  $ and so (\ref{feq}) shows that $\left(  P_{L}\right)  _{\ast}%
\nu$ is a $H_{n}^{2}\left(  \mathbb{B}_{L}\right)  $-Carleson measure on
$\mathbb{B}_{L}$ with constant $C$ independent of $L$, $n=\dim L$.

\medskip

The above lemma together with Lemma \ref{carllem} below now yields the
following characterization of Carleson measures on any Hilbert space
$\mathcal{H}_{k}$ with a complete continuous irreducible Nevanlinna-Pick
kernel $k$. Note that the irreducibility assumption on $k$ can be removed
using Lemma 1.1 of \cite{AgMc}.

\begin{theorem}
\label{completeNP}With notation as above let $k$ be a complete continuous
irreducible Nevanlinna-Pick kernel on a set $X$ and $rank\left(  F\right)
=n.$

If $n<\infty$ then a positive measure $\mu$ on $X$ is $\mathcal{H}_{k}%
$-Carleson if and only if $\mu^{\natural}=f_{\ast}(\left\vert \delta
\right\vert ^{2}\mu)$ is $B_{2}^{1/2}\left(  \mathbb{B}_{n}\right)
$-Carleson. That will hold if and only if $\mu^{\natural}$ satisfies
(\ref{bilin}) or, equivalently, (\ref{simp}) and (\ref{splittree}).

For $n=\infty,$ for each finite dimensional subspace $L$ of $\mathbb{C}%
^{\infty}$ set
\[
\mu_{L}=\left(  P_{L}\right)  _{\ast}f_{\ast}(\left\vert \delta\right\vert
^{2}\mu)=\left(  P_{L}\circ f\right)  _{\ast}(\left\vert \delta\right\vert
^{2}\mu).
\]
A measure $\mu$ on $X$ is $\mathcal{H}_{k}$-Carleson if and only if there is a
positive constant $C$ such that for all $L$
\[
\left\Vert \mu_{L}\right\Vert _{Carleson}\leq C,
\]
Here $\left\Vert \nu\right\Vert _{Carleson}$ denotes the norm of the embedding
$H_{\dim L}^{2}\left(  \mathbb{B}_{L}\right)  \subset L^{2}\left(  \nu\right)
.$ This holds if and only if (\ref{bilin}) holds (with $\mathbb{B}_{n}$ taking
the role of $\mathbb{B}_{L})$ uniformly in $L.$
\end{theorem}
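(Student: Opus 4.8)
The plan is to assemble the theorem from pieces already in place: the embedding $T$ of Theorem 4.2 of \cite{AgMc}, the characterization of $H_\infty^2$-Carleson measures in Lemma \ref{unifdim}, the dimension-free functional-analytic comparison (Lemma \ref{carllem}, the argument that produces (\ref{bilin})), and the geometric description in Theorem \ref{Arvcom}, i.e. (\ref{s'}). For the finite-rank case I would just chain the equivalences. Theorem 4.2 of \cite{AgMc} supplies the injective continuous $f:X\to\mathbb{B}_n$, the nowhere-vanishing $\delta$, and the isometric embedding $T:\mathcal{H}_k\to H_n^2$ with $T k_x=\overline{\delta(x)}(a_n)_{f(x)}$. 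The computation carried out above---density of the finite combinations $h=\sum c_i k_{x_i}$, the change of variables $f$ turning $\int_X|h|^2\,d\mu$ into $\int_{\mathbb{B}_n}|Th|^2\,d\mu^\natural$ with $\mu^\natural=f_*(|\delta|^2\mu)$, and, for the converse implication, the observation that the orthogonal complement of $T(\mathcal{H}_k)$ in $H_n^2$ consists of functions vanishing on $f(X)$, a set carrying $\mu^\natural$---shows that $\mu$ is $\mathcal{H}_k$-Carleson if and only if $\mu^\natural$ is $B_2^{1/2}(\mathbb{B}_n)$-Carleson, with comparable norms. The equivalence of the latter with (\ref{bilin}) is Lemma \ref{carllem}, and its equivalence with the pair (\ref{simp}), (\ref{splittree}) is Theorem \ref{Arvcom}.

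For the infinite-rank case I would first observe that the embedding argument of the previous paragraph uses finiteness of $n$ nowhere: it needs only that $T$ is isometric, that $\delta$ never vanishes (so that $J\perp T(\mathcal{H}_k)$ forces $J|_{f(X)}=0$), and density of finite combinations of kernels. Hence, exactly as before, $\mu$ is $\mathcal{H}_k$-Carleson if and only if $\mu^\natural=f_*(|\delta|^2\mu)$ is $H_\infty^2$-Carleson. Applying Lemma \ref{unifdim} to $\nu=\mu^\natural$ converts this into: $(P_L)_*\mu^\natural$ is uniformly $H_{\dim L}^2(\mathbb{B}_L)$-Carleson over all finite-dimensional $L\subset\mathbb{C}^\infty$. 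Since $(P_L)_*\mu^\natural=(P_L)_*f_*(|\delta|^2\mu)=(P_L\circ f)_*(|\delta|^2\mu)=\mu_L$, this is precisely $\sup_L\|\mu_L\|_{Carleson}<\infty$, where $\|\mu_L\|_{Carleson}$ is the norm of the embedding $H_{\dim L}^2(\mathbb{B}_L)\subset L^2(\mu_L)$.

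It remains to put in the bilinear form. Identifying $\mathbb{B}_L=\mathbb{B}_\infty\cap L$ with $\mathbb{B}_n$, $n=\dim L$, the kernels agree ($a_\infty=a_n$ on $L\times L$), so $H_n^2(\mathbb{B}_L)=B_2^{1/2}(\mathbb{B}_n)$; by Lemma \ref{carllem} the norm $\|\mu_L\|_{Carleson}$ is comparable---with constants not depending on $n$---to the best constant $C$ in the bilinear inequality (\ref{bilin}) for $\mu_L$ on $\mathbb{B}_L$. Therefore $\sup_L\|\mu_L\|_{Carleson}<\infty$ if and only if (\ref{bilin}) holds for $\mu_L$ on $\mathbb{B}_L$ with a constant independent of $L$, i.e. uniformly in $L$. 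Combining this with the previous paragraph gives the infinite-rank statement. The hypothesis of irreducibility can be dropped by reducing to the irreducible case via Lemma 1.1 of \cite{AgMc}, as noted in the statement.

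The one place where genuine care is required is the dimension-independence of the comparison furnished by Lemma \ref{carllem}; this is exactly what makes the $n=\infty$ part meaningful, since without it the condition ``$(P_L)_*\mu^\natural$ uniformly $H_{\dim L}^2(\mathbb{B}_L)$-Carleson'' could not be recast in terms of (\ref{bilin}). Everything else is bookkeeping: that $f$ and $P_L\circ f$ are continuous so that their pushforwards of $|\delta|^2\mu$ are Borel measures; that the identification $\mathbb{B}_L\cong\mathbb{B}_n$ is compatible with the kernel functions; and that the embedding-and-orthogonal-projection argument, being insensitive to whether $n$ is finite, transfers verbatim from the finite- to the infinite-rank setting.
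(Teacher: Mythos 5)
Your proposal is correct and follows essentially the same route as the paper: the finite-rank case via the isometric embedding $T$ from Theorem 4.2 of \cite{AgMc} together with the density/change-of-variables computation and the orthogonal-complement argument, then Lemma \ref{unifdim} plus the dimension-free Lemma \ref{carllem} (Proposition \ref{realreduction}) for $n=\infty$, with Theorem \ref{Arvcom} supplying the geometric conditions in finite rank and Lemma 1.1 of \cite{AgMc} removing irreducibility. You also correctly identify the one delicate point the paper itself emphasizes, namely that dimension-independence of the bilinear characterization (and not of (\ref{s'})) is what makes the $n=\infty$ statement meaningful.
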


Because the comparability constants implicit in our proof of (\ref{s'}) depend
on dimension we cannot use the right side of (\ref{s'}) in place of
$\left\Vert \mu_{L}\right\Vert _{Carleson}$ above.

\subsubsection{Measures supported on embedded two-manifolds\label{curves}}

In the previous discussion we began with a set $\Omega$ and kernel function
$k$ which satisfied conditions which insured that $k$ could be obtained
through a function $f$ mapping $\Omega$ into some $\mathbb{B}_{n}.$
Alternatively we can start the analysis with $\Omega$ and $f$. Given a set
$\Omega$ and an injective map $f$ of $\Omega$ into $\mathbb{B}_{n}$ set
$k(x,y)=a_{n}(f(x),f(y)).$ These kernels generate a Hilbert space
$\mathcal{H}_{k}$ with a complete Nevanlinna-Pick kernel and the previous
theorem describes the Carleson measures of $\mathcal{H}_{k}.$ During that
proof we also showed that the map $T$ which takes $k(x,\cdot)$ to
$a_{n}(f(x),\cdot)$ extends to an isometric isomorphism of $\mathcal{H}_{k}$
to the closed span of $\left\{  \left(  a_{n}\right)  _{f\left(  x\right)
}:x\in\Omega\right\}  $ in $H_{n}^{2}.$ The orthogonal complement of that set
is $V_{f(\Omega)},$ the subspace of $H_{n}^{2}$ consisting of functions which
vanish on $f(\Omega).$ We have
\begin{align}
T\left(  \mathcal{H}_{k}\right)   &  =\text{closed span of }\left\{  \left(
a_{n}\right)  _{f\left(  x\right)  }:x\in\Omega\right\} \label{tee}\\
&  =\{h\in H_{n}^{2}:h(f(x))=0\text{ }\forall x\in\Omega\}^{\bot}\nonumber\\
&  =\left(  V_{f(\Omega)}\right)  ^{\bot}=H_{n}^{2}/V_{f(\Omega)}.\nonumber
\end{align}
The quotient $H_{n}^{2}/V_{f(\Omega)}$ can be regarded as a space of functions
on $f(\Omega)$ normed by the quotient norm. That space is isometrically
isomorphic to $\mathcal{H}_{k}$ under the mapping which takes $[h]$ in
$H_{n}^{2}/V_{f(\Omega)}$ to $h\circ f$ in $\mathcal{H}_{k}.$

We now investigate such embeddings for simple $\Omega.$ The $L^{2}$ Sobolev
space on $\left[  0,1\right]  $ is an example with 1-dimensional $\Omega
$.\ However for this space, and similar 1-dimensional examples, the Carleson
measure theory is trivial; a measure is a Carleson measure if and only if it
has finite mass. This is reflected in the fact that the associated mapping $f
$ of $\left[  0,1\right]  $ into $\mathbb{B}_{\infty}$ maps the interval into
a \emph{proper }sub-ball. (The mappings $f$ associated with this and similar
examples are described in the final section of \cite{BTV}.)

Suppose $\Omega\ $is a bounded domain in the plane and $\partial\Omega$
consists of finitely many smooth curves. (We leave to the reader the
straightforward extension to nonplanar domains.) Let $f$ be a nonsingular
$C^{1}$ embedding of $\Omega$ into $\mathbb{B}_{n};$ $\mathcal{S=}%
f\mathcal{(}\Omega\mathcal{)}$. Suppose $f$ extends to a $C^{1}$ map of
$\bar{\Omega}$ into $\overline{\mathbb{B}_{n}}$ with $\Gamma=\partial
\mathcal{\bar{S}=}f(\partial\bar{\Omega})\subset\partial\overline
{\mathbb{B}_{n}}.$ We will say $\mathcal{S}$ meets the boundary transversally
if
\begin{equation}
\operatorname{Re}\left\langle f^{\prime}\left(  x\right)  \mathbf{n},f\left(
x\right)  \right\rangle \neq0,\;\;\;\;\;x\in\partial\bar{\Omega},
\label{transembed}%
\end{equation}
where $\mathbf{n}$ denotes the unit outward normal vector to $\partial
\bar{\Omega}$, and $f\left(  x\right)  $ is of course the unit outward normal
vector to $\partial\mathbb{B}_{n}$. In order to discuss various geometric
notions of contact at the boundary, we also introduce the unit tangent vector
$\mathbf{T}$ to $\partial\bar{\Omega}$ that points in the positive direction,
i.e. $\mathbf{T}=i\mathbf{n}$ if the tangent space to $\mathbb{R}^{2}$ is
identified with the complex plane in the usual way. Since the vector
$f^{\prime}\left(  x\right)  \mathbf{T}$ is tangent to $\Gamma$, we always
have
\[
\operatorname{Re}\left\langle f^{\prime}\left(  x\right)  \mathbf{T},f\left(
x\right)  \right\rangle =0,\ x\in\partial\bar{\Omega}.
\]
It may also hold that the curve $\Gamma$ is a complex tangential curve, that
is, its tangent lies in the complex tangential tangent direction. This means
that the tangent to $\Gamma$ is perpendicular to the tangential slice
direction $if\left(  x\right)  $, i.e. $\operatorname{Re}\left\langle
f^{\prime}\left(  x\right)  \mathbf{T},if\left(  x\right)  \right\rangle =0$
for all $x\in\partial\bar{\Omega}$, or equivalently
\begin{equation}
\operatorname{Im}\left\langle f^{\prime}\left(  x\right)  \mathbf{T},f\left(
x\right)  \right\rangle =0,\text{ \ }x\in\partial\bar{\Omega}.
\label{comptanembed}%
\end{equation}
We will say that at the boundary $\mathcal{S}$ is perpendicular to the
tangential slice direction and it meets the boundary in the \emph{complex
tangential directions}. At the other extreme it may be that $\mathcal{S}$
satisfies (\ref{transembed}) and meets the boundary \emph{transverse} to the
complex tangential directions, i.e. $f^{\prime}\left(  x\right)  \mathbf{T}$,
the tangent to $\Gamma,$ always has a component in the direction $if\left(
x\right)  $;
\begin{equation}
\operatorname{Im}\left\langle f^{\prime}\left(  x\right)  \mathbf{T},f\left(
x\right)  \right\rangle =\operatorname{Re}\left\langle f^{\prime}\left(
x\right)  \mathbf{T},if\left(  x\right)  \right\rangle \neq0,\text{ \ }%
x\in\partial\bar{\Omega}, \label{transcomp}%
\end{equation}
In particular this applies to a \emph{holomorphic} curve, i.e. $\Omega
\subset\mathbb{C}$ and $f$ is holomorphic, that satisfies (\ref{transembed})
since then we have that $f^{\prime}(z)$ is \emph{complex} linear and
\begin{align}
\operatorname{Im}\left\langle f^{\prime}\left(  z\right)  \mathbf{T},f\left(
z\right)  \right\rangle  &  =\operatorname{Im}\left\langle f^{\prime}\left(
z\right)  i\mathbf{n},f\left(  z\right)  \right\rangle =\operatorname{Im}%
i\left\langle f^{\prime}\left(  z\right)  \mathbf{n},f\left(  z\right)
\right\rangle \label{holoembed}\\
&  =\operatorname{Re}\left\langle f^{\prime}\left(  z\right)  \mathbf{n}%
,f\left(  z\right)  \right\rangle \neq0,\text{ \ }z\in\partial\bar{\Omega
}.\nonumber
\end{align}

Suppose that $\mu$ is a positive measure supported on $\mathcal{S}$ that is
transverse at the boundary. We will show that if we have additional geometric
information about the embedding geometry then the condition for $\mu$ to be a
Carleson measure for $H_{n}^{2}$ can be simplified. Also, as indicated in the
previous subsection, this description can be pulled back to give a description
of measures on $\Omega$ which are Carleson measures for $\mathcal{H}_{k}.$
More precisely we will show that if $\mathcal{S}$ meets the boundary in the
complex tangential directions then $\mu$ is $H_{n}^{2}$-Carleson if and only
if $\mu$ satisfies the tree condition (\ref{treeArv}). On the other hand we
show that if $\mathcal{S}$ meets the boundary transverse to the complex
tangential directions then $\mu$ is $H_{n}^{2}$-Carleson if and only if $\mu$
satisfies the simple condition (\ref{simp}). Finally we will show that if $f$
extends continuously but not differentiably to $\partial\bar{\Omega}$ then
more complicated situations arise.

To prove these results we use the refined tree structure described in
Subsubsection \ref{ring}. It is convenient to begin the analysis with the
second of the two cases.

\paragraph{$\mathcal{S}$ meets the boundary transverse to the complex
tangential directions}

By Theorem \ref{Arvcom}, it is enough to show that when $\mathcal{S}$
satisfies (\ref{transembed}) and (\ref{transcomp}), and $\mu$ is supported on
$\mathcal{S}$ and satisfies the simple condition (\ref{simp}) then for some
$\varepsilon>0$ the $\varepsilon$-split tree condition (\ref{epsilonsplit}) is
satisfied. The transversality hypothesis on $\mathcal{S}$ will permit us to
establish a geometric inequality of the following form:
\[
d^{\ast}\left(  \left[  \alpha\right]  ,\left[  \beta\right]  \right)  \leq
d\left(  \alpha,\beta\right)  -\log_{2}\frac{1}{\left\vert \alpha
-\beta\right\vert }+c,\;\;\;\;\;\text{when }\mathcal{S}\cap K_{\alpha}\neq
\phi\text{, }\mathcal{S}\cap K_{\beta}\neq\phi,
\]
at least for $\alpha,\beta\in\mathcal{T}_{n}$ with $d\left(  \alpha\right)
\approx d\left(  \beta\right)  $ sufficiently large. This in turn will show
that the left side of the $\varepsilon$-split tree condition
(\ref{epsilonsplit}) vanishes for $\varepsilon$ small enough and $d\left(
\alpha\right)  $ large enough, in fact $0<\varepsilon<1/4$ will suffice.

Denote by $P_{z}w$ the projection of $w$ onto the slice $S_{z}$. Suppose that
$\mathcal{S}$ satisfies (\ref{transembed}) and (\ref{transcomp}) and fix
$z,w\in\mathcal{S}\cap\mathbb{B}_{n}$ with $1-\left\vert z\right\vert
\approx1-\left\vert w\right\vert $, where for the remainder of this subsection
the symbol $\approx$ means that the error is small compared to $\left\vert
z-w\right\vert $ times the quantity $\inf_{x\in\partial\Omega}\left\vert
\operatorname{Im}\left\langle f^{\prime}\left(  x\right)  \mathbf{T},f\left(
x\right)  \right\rangle \right\vert $ appearing in (\ref{transcomp}). Then for
$1-\left\vert z\right\vert $ small enough and $\left\vert z-w\right\vert \geq
c\left(  1-\left\vert z\right\vert \right)  $, we have
\begin{equation}
\left\vert z-P_{z}w\right\vert \geq c\left\vert w-P_{z}w\right\vert .
\label{finobtain}%
\end{equation}
Indeed, if $z=f\left(  u\right)  $ and $w=f\left(  v\right)  $, then using
$f\in C^{1}\left(  \overline{\Omega}\right)  $ with (\ref{transembed}) and
(\ref{transcomp}) we obtain $c\left\vert z-w\right\vert \leq\left\vert
u-v\right\vert \leq C\left\vert z-w\right\vert $ and
\[
z-w=f\left(  u\right)  -f\left(  v\right)  \approx f^{\prime}\left(  u\right)
\left(  u-v\right)  .
\]
Now let $x\in\partial\Omega$ be closest to $u$. Using that $u-v\approx
\mathbf{T}\left\vert u-v\right\vert $ we then have
\[
f^{\prime}\left(  u\right)  \left(  u-v\right)  \approx f^{\prime}\left(
x\right)  \left(  u-v\right)  \approx f^{\prime}\left(  x\right)
\mathbf{T}\left\vert u-v\right\vert .
\]
Since $\left\vert z-w\right\vert \geq c\left(  1-\left\vert z\right\vert
\right)  $, we also have $f\left(  x\right)  \approx f\left(  u\right)  =z$,
and altogether then (\ref{transcomp}) yields
\[
\left\vert \operatorname{Im}\left\langle z-w,z\right\rangle \right\vert
\approx\left\vert \operatorname{Im}\left\langle f^{\prime}\left(  x\right)
\mathbf{T},f\left(  x\right)  \right\rangle \right\vert \left\vert
u-v\right\vert \geq c\left\vert u-v\right\vert \geq c\left\vert z-w\right\vert
.
\]
Thus we obtain (\ref{finobtain}) as follows:
\begin{align*}
\left\vert z-P_{z}w\right\vert  &  =\left\vert z-\frac{\left\langle
w,z\right\rangle }{\left\langle z,z\right\rangle }z\right\vert =\frac
{1}{\left\vert z\right\vert }\left\vert \left\langle z,z\right\rangle
-\left\langle w,z\right\rangle \right\vert =\frac{1}{\left\vert z\right\vert
}\left\vert \left\langle z-w,z\right\rangle \right\vert \\
&  \geq\left\vert \operatorname{Im}\left\langle z-w,z\right\rangle \right\vert
\geq c\left\vert w-z\right\vert \geq c\left\vert w-P_{z}w\right\vert .
\end{align*}

For $x,y\in\mathbb{B}_{n}$, define $d\left(  x,y\right)  $ to be the
corresponding distance in the Bergman tree $\mathcal{T}_{n}$, i.e. $d\left(
x,y\right)  =d\left(  \alpha,\beta\right)  $ where $x\in K_{\alpha}$ and $y\in
K_{\beta}$, and $d\left(  \left[  x\right]  ,\left[  y\right]  \right)  $ to
be the corresponding distance in the ring tree $\mathcal{R}_{n}$. Recalling
that $1-\left\vert z\right\vert \approx1-\left\vert w\right\vert $, and using
$A\asymp B$ to mean that $A-B$ is bounded
\begin{align}
d^{\ast}\left(  \left[  z\right]  ,\left[  w\right]  \right)   &  \asymp
d^{\ast}\left(  \left[  P_{z}w\right]  ,\left[  w\right]  \right)  \asymp
\log_{\sqrt{2}}\frac{\left\vert w-P_{z}w\right\vert }{\sqrt{1-\left\vert
z\right\vert }}=\log_{2}\frac{\left\vert w-P_{z}w\right\vert ^{2}%
}{1-\left\vert z\right\vert },\label{dapp}\\
d\left(  z,w\right)   &  \geq\max\left\{  d\left(  \left[  z\right]  ,\left[
w\right]  \right)  ,d\left(  z,P_{z}w\right)  \right\} \\
&  \geq\max\left\{  \log_{2}\frac{\left\vert w-P_{z}w\right\vert ^{2}%
}{1-\left\vert z\right\vert },\log_{2}\frac{\left\vert z-P_{z}w\right\vert
}{1-\left\vert z\right\vert }\right\}  -c.
\end{align}
Combined with (\ref{finobtain}) this yields
\begin{align*}
d^{\ast}\left(  \left[  z\right]  ,\left[  w\right]  \right)   &  \leq\log
_{2}\frac{\left\vert w-P_{z}w\right\vert ^{2}}{1-\left\vert z\right\vert
}+C=\log_{2}\frac{\left\vert w-P_{z}w\right\vert }{1-\left\vert z\right\vert
}+\log_{2}\left\vert w-P_{z}w\right\vert +C\\
&  \leq\log_{2}\frac{\left\vert z-P_{z}w\right\vert }{1-\left\vert
z\right\vert }+\log_{2}\left\vert w-z\right\vert +C\\
&  \leq d\left(  z,w\right)  -\log_{2}\frac{1}{\left\vert w-z\right\vert }+C.
\end{align*}
Using
\begin{align*}
d\left(  z,w\right)   &  =d\left(  z\right)  +d\left(  w\right)  -2d\left(
z\wedge w\right)  ,\\
d^{\ast}\left(  \left[  z\right]  ,\left[  w\right]  \right)   &  =d\left(
\left[  z\right]  \right)  +d\left(  \left[  w\right]  \right)  -2d^{\ast
}\left(  \left[  z\right]  \wedge\left[  w\right]  \right)  ,\\
d\left(  z\right)   &  =d\left(  \left[  z\right]  \right)  ,
\end{align*}
together with $d\left(  z\right)  \asymp d\left(  w\right)  $, we obtain
\begin{align}
d\left(  z\wedge w\right)  -d^{\ast}\left(  \left[  z\right]  \wedge\left[
w\right]  \right)   &  =\frac{1}{2}\left[  d^{\ast}\left(  \left[  z\right]
,\left[  w\right]  \right)  -d\left(  z,w\right)  \right] \label{wedges}\\
&  \leq\frac{1}{2}\left[  C-\log_{2}\frac{1}{\left\vert w-z\right\vert
}\right]  ,\nonumber
\end{align}
for $z,w\in\mathcal{S}\cap\partial\mathbb{B}_{n}$ with $1-\left\vert
z\right\vert \approx1-\left\vert w\right\vert $ sufficiently small.

Now let $\alpha,\gamma,\delta,\delta^{\prime}$ and $k$ be as in the left side
of the split tree condition (\ref{splittree}) with $K_{\delta}\cap
\mathcal{S}\neq\phi$ and $K_{\delta^{\prime}}\cap\mathcal{S}\neq\phi$. Thus
$\delta\wedge\delta^{\prime}=\gamma$, $d\left(  \delta\right)  =d\left(
\delta^{\prime}\right)  =d\left(  \gamma\right)  +k+2$, $\left[  A^{2}%
\delta\right]  =\left[  A^{2}\delta^{\prime}\right]  $ and $d^{\ast}\left(
\left[  \delta\right]  ,\left[  \delta^{\prime}\right]  \right)  =4$. Clearly
$\left\vert \delta-\delta^{\prime}\right\vert \leq2^{-\frac{1}{2}d\left(
\gamma\right)  }$ since $\delta,\delta^{\prime}\geq\gamma$. On the other hand
(\ref{wedges}) yields
\[
d\left(  \gamma\right)  -\left(  d\left(  \gamma\right)  +k\right)  \leq
\frac{1}{2}\left[  C-\log_{2}\frac{1}{\left\vert \delta-\delta^{\prime
}\right\vert }\right]  ,
\]
or $\left\vert \delta-\delta^{\prime}\right\vert \geq c2^{-2k}$. Combining
these two inequalities for $\left\vert \delta-\delta^{\prime}\right\vert $
yields
\[
k\geq\frac{1}{4}d\left(  \gamma\right)  -C.
\]
Thus the $\varepsilon$-split tree condition (\ref{epsilonsplit}) for a measure
$\mu$ supported on $\mathcal{S}$ is vacuous (i.e. the left side vanishes) if
$0<\varepsilon<\frac{1}{4}$ and $\alpha\in\mathcal{T}_{n}$ is restricted to
$d\left(  \alpha\right)  $ large enough. Note that we used only the following
consequence of our hypotheses (\ref{transembed}) and (\ref{transcomp}): there
are positive constants $C,\varepsilon,\delta$ such that $\mathcal{S}$ is a
subset of $\mathbb{B}_{n}$ satisfying
\begin{equation}
\left\vert x-P_{x}y\right\vert \geq\varepsilon\left\vert y-P_{x}y\right\vert
,\;\;\;\;\;x,y\in\mathcal{S}, \label{moregenstat}%
\end{equation}
whenever $\left\vert x\right\vert =\left\vert y\right\vert $, $\left\vert
x-y\right\vert \geq C\left(  1-\left\vert x\right\vert \right)  $ and
$1-\left\vert x\right\vert <\delta$. We have thus proved the following proposition.

\begin{proposition}
\label{embedd}Suppose $\mathcal{S}$ is a $C^{1}$ surface that meets
$\partial\mathbb{B}_{n}$ transversely$,$ i.e. (\ref{transembed}) holds, and
suppose further that the curve of intersection $\Gamma$ is transverse to the
complex tangential directions, i.e. (\ref{transcomp}) holds. In particular,
$\mathcal{S}$ could be a holomorphic curve embedded in $\mathbb{B}_{n}$ that
is transverse at the boundary $\partial\mathbb{B}_{n}$. \emph{More generally},
suppose there are positive constants $C,\varepsilon,\delta$ such that
$\mathcal{S}$ is a subset of $\mathbb{B}_{n}$ satisfying (\ref{moregenstat})
whenever $\left\vert x\right\vert =\left\vert y\right\vert $, $\left\vert
x-y\right\vert \geq C\left(  1-\left\vert x\right\vert \right)  $ and
$1-\left\vert x\right\vert <\delta$. Let $\mu$ be a positive measure supported
on $\mathcal{S}$. Then $\mu$ is $H_{n}^{2}$-Carleson if and only if $\mu$
satisfies the simple condition (\ref{simp}).
\end{proposition}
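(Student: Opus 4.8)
The plan is to treat the two implications separately, the necessity being soft and the sufficiency carrying the whole geometric content. Necessity of the simple condition (\ref{simp}) needs nothing about $\mathcal{S}$: $\mathrm{SC}(1/2)$ is necessary for \emph{every} Carleson measure for $B_{2}^{1/2}(\mathbb{B}_{n})=H_{n}^{2}$, either by testing the embedding (\ref{carldef}) on reproducing kernels or by keeping the $\beta=\alpha$ term in the tree condition, exactly as noted for (\ref{SCsigma}). For the sufficiency I assume $\mu$ is supported on $\mathcal{S}$ and satisfies (\ref{simp}), and invoke Theorem \ref{Arvcom} in the reduced form recorded after (\ref{epsilonsplit}): since (\ref{simp}) already holds, to conclude that $\mu$ is $H_{n}^{2}$-Carleson it is enough to verify, for \emph{some} fixed $\varepsilon>0$, the $\varepsilon$-split tree condition (\ref{epsilonsplit}); the part of the split sum over $k>\varepsilon d(\gamma)$ is dominated by the left side of (\ref{treecondo}) with $\sigma=(1-\varepsilon)/2$, which by Lemma \ref{simptree} is implied by (\ref{simp}). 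I will show that any $\varepsilon\in(0,1/4)$ works, by proving that the left side of (\ref{epsilonsplit}) \emph{vanishes} once $d(\alpha)$ is large, and that the finitely many remaining shallow vertices $\alpha$ cost at most a bounded multiple of $I^{\ast}\mu(\alpha)$.

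The geometric heart is the estimate (\ref{moregenstat}), which I would establish exactly as in the computation (\ref{finobtain})--(\ref{wedges}) preceding the statement, working with the refined ring-tree structure of Subsubsection \ref{ring}. Concretely, for $x=f(u)$ and $y=f(v)$ in $\mathcal{S}$ with $|x|=|y|$, $|x-y|\geq C(1-|x|)$ and $1-|x|$ small, the hypothesis $f\in C^{1}(\overline{\Omega})$ together with the transversality (\ref{transembed}) gives $|u-v|\approx|x-y|$; linearizing $x-y\approx f'(u)(u-v)\approx f'(x_{0})\mathbf{T}\,|u-v|$ at the nearest boundary point $x_{0}\in\partial\Omega$ and using (\ref{transcomp}) yields $|\operatorname{Im}\langle x-y,x\rangle|\gtrsim|u-v|\gtrsim|x-y|$; since $|x-P_{x}y|=|x|^{-1}|\langle x-y,x\rangle|\geq|\operatorname{Im}\langle x-y,x\rangle|$ and $|y-P_{x}y|\leq|x-y|$, this gives (\ref{moregenstat}). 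Pushing it through the dictionary (\ref{dapp}) and the wedge identities used in deriving (\ref{wedges}) converts it into $d(z\wedge w)-d^{\ast}([z]\wedge[w])\leq\tfrac{1}{2}\bigl(C-\log_{2}(1/|z-w|)\bigr)$ for $z,w\in\mathcal{S}$ at comparable heights near the boundary.

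It remains to run this against (\ref{epsilonsplit}). Fix $\gamma\geq\alpha$, $k$, and a pair $(\delta,\delta')\in\mathcal{G}^{(k)}(\gamma)$ with $\mathcal{S}\cap K_{\delta}\neq\emptyset$ and $\mathcal{S}\cap K_{\delta'}\neq\emptyset$, and choose $z\in\mathcal{S}\cap K_{\delta}$, $w\in\mathcal{S}\cap K_{\delta'}$. Then $d(z\wedge w)=d(\gamma)$, while $d^{\ast}([\delta],[\delta'])=4$ together with $d(\delta)=d(\delta')=d(\gamma)+k+2$ force $d^{\ast}([z]\wedge[w])=d(\gamma)+k$, so the last displayed inequality gives $|z-w|\geq c\,2^{-2k}$; combined with $|z-w|\leq2^{-d(\gamma)/2}$ (immediate from $\delta,\delta'\geq\gamma$) this forces $k\geq\tfrac{1}{4}d(\gamma)-C$. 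Hence, for $0<\varepsilon<1/4$, no such pair can occur once $d(\gamma)$ exceeds a threshold $N_{0}=N_{0}(\varepsilon,\mathcal{S})$; since $\mu$ is carried by $\mathcal{S}$, every term on the left of (\ref{epsilonsplit}) with $d(\gamma)>N_{0}$ vanishes, so for $d(\alpha)>N_{0}$ that left side is $0\leq CI^{\ast}\mu(\alpha)$. For the finitely many $\alpha$ with $d(\alpha)\leq N_{0}$, only the finitely many $\gamma\geq\alpha$ with $d(\gamma)\leq N_{0}$ contribute, each contributing at most a constant depending on $N_{0}$ times $I^{\ast}\mu(\gamma)^{2}$ (using that $\sum_{\delta\in\mathcal{G}^{(k)}(\gamma)}I^{\ast}\mu(\delta)\leq I^{\ast}\mu(\gamma)$ and $2^{d(\gamma)-k}\leq2^{N_{0}}$); since (\ref{simp}) gives $I^{\ast}\mu(\gamma)\leq C$ and $I^{\ast}\mu(\gamma)\leq I^{\ast}\mu(\alpha)$, the whole left side is $\leq C'I^{\ast}\mu(\alpha)$. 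Thus (\ref{simp}) and (\ref{epsilonsplit}) both hold and Theorem \ref{Arvcom} gives that $\mu$ is $H_{n}^{2}$-Carleson.

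The step I expect to need real care is the first, geometric one: making the linearization $x-y\approx f'(x_{0})\mathbf{T}\,|x-y|$ quantitative and \emph{uniform} along the curve, with error genuinely small relative to $|x-y|\,\inf_{x_{0}\in\partial\Omega}|\operatorname{Im}\langle f'(x_{0})\mathbf{T},f(x_{0})\rangle|$. This is exactly where $C^{1}$-smoothness of $f$ up to $\partial\overline{\Omega}$, the transversality (\ref{transembed}) (needed to keep $|u-v|\approx|x-y|$ and so to control the error uniformly), and the localization to $1-|x|$ small with $|x-y|\gtrsim1-|x|$ all enter; everything afterward is bookkeeping with (\ref{dapp})--(\ref{wedges}) and the arithmetic of grand$^{k}$-children. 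A secondary point to check is that, $\mu$ being carried by the transverse surface $\mathcal{S}$, any nonzero summand $I^{\ast}\mu(\delta)I^{\ast}\mu(\delta')>0$ in (\ref{epsilonsplit}) does come from a pair of the above kind, possibly after moving $\delta,\delta'$ by a bounded tree-distance to kubes that $\mathcal{S}$ actually meets --- an adjustment that does not affect the lower bound on $k$.
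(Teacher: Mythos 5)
Your proof is essentially the paper's own argument: necessity by testing on reproducing kernels, then reduction via Theorem \ref{Arvcom} and Lemma \ref{simptree} (with $\sigma=(1-\varepsilon)/2$) to the $\varepsilon$-split tree condition, the geometric estimate (\ref{finobtain})--(\ref{wedges}) coming from (\ref{transembed}) and (\ref{transcomp}), and the arithmetic $k\geq\tfrac{1}{4}d(\gamma)-C$ forcing the left side of (\ref{epsilonsplit}) to vanish for $0<\varepsilon<\tfrac14$ once $d(\alpha)$ is large. Your two extra touches --- the explicit bookkeeping for the finitely many shallow $\alpha$ via the simple condition, and the flagged caveat that nonvanishing of $I^{\ast}\mu(\delta)I^{\ast}\mu(\delta')$ only requires $\mathcal{S}$ to meet the successor sets $S(\delta),S(\delta')$ rather than the kubes $K_{\delta},K_{\delta'}$ used in the geometric step --- address points the paper's write-up passes over silently, so they are added care rather than a different route.
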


\begin{corollary}
Suppose that $\mathcal{S}=f\left(  \Omega\right)  $ is a $C^{1}$ surface that
meets the boundary $\partial\mathbb{B}_{n}$ transversely and that the curve of
intersection $\Gamma$ is transverse to the complex tangential directions. Let
$\mathcal{H}_{k}$ denote the Hilbert space generated by the kernels $k\left(
z,w\right)  =a_{n}\left(  f\left(  z\right)  ,f\left(  w\right)  \right)  $,
$z,w\in\Omega$. Then the Carleson measures for $\mathcal{H}_{k}$ are
characterized by the simple condition (\ref{classiccar}). In particular this
applies to a Riemann surface $S$ and a $C^{1}$ embedding $f$ of $\bar{S}$ into
$\overline{\mathbb{B}_{n}}$, holomorphic on $S$, with $f\left(  \partial
\bar{S}\right)  \subset\partial\mathbb{B}_{n}$ so that $\mathcal{S}=f\left(
S\right)  $ is transverse at the boundary.
\end{corollary}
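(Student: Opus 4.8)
\proof

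The plan is to obtain the corollary by combining Proposition \ref{embedd} with the transfer principle for Carleson measures from Theorem \ref{completeNP} and the identification (\ref{tee}). First I would observe that, since the kernel here is taken to be $k(z,w)=a_n(f(z),f(w))$ \emph{directly}, the factor $\delta$ appearing in the discussion preceding Theorem \ref{completeNP} is identically $1$, so the relevant pushforward is simply $\mu^{\natural}=f_{\ast}\mu$, a positive measure on $\mathbb{B}_n$ carried by $\mathcal{S}=f(\Omega)$. Because $f$ is injective and maps into the \emph{finite}-dimensional ball $\mathbb{B}_n$, Theorem \ref{completeNP} applies (equivalently, one may invoke the isometry $\mathcal{H}_k\cong H_n^2/V_{f(\Omega)}$ of (\ref{tee}) together with the orthogonal-projection argument used to prove it), and it reduces the problem to: $\mu$ is $\mathcal{H}_k$-Carleson if and only if $\mu^{\natural}=f_{\ast}\mu$ is $B_2^{1/2}(\mathbb{B}_n)=H_n^2$-Carleson. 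If $f(\Omega)$ happens to lie in a proper affine slice one first restricts to that slice, which is harmless.

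Next I would apply Proposition \ref{embedd} to the measure $\mu^{\natural}$ on $\mathcal{S}$. The corollary's hypotheses are precisely that $\mathcal{S}$ meets $\partial\mathbb{B}_n$ transversely, i.e. (\ref{transembed}), and that the intersection curve $\Gamma$ is transverse to the complex tangential directions, i.e. (\ref{transcomp}); hence Proposition \ref{embedd} gives that $\mu^{\natural}$ is $H_n^2$-Carleson if and only if it satisfies the simple condition (\ref{simp}), namely $2^{d(\alpha)}I^{\ast}\mu^{\natural}(\alpha)\leq C$ for all $\alpha\in\mathcal{T}_n$. Finally I would rewrite this intrinsically on $\Omega$: since $I^{\ast}\mu^{\natural}(\alpha)=\mu^{\natural}(\cup_{\beta\geq\alpha}K_\beta)=\mu(f^{-1}(\cup_{\beta\geq\alpha}K_\beta))$ and $\cup_{\beta\geq\alpha}K_\beta$ is comparable to a nonisotropic Carleson tent $T(w)$ with $1-|w|^2\approx 2^{-d(\alpha)}$, the condition (\ref{simp}) for $\mu^{\natural}$ is equivalent to the classical Carleson condition (\ref{classiccar}) for $\mu$ on $\Omega$, which proves the first assertion.

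For the further assertion about a Riemann surface $S$, with $f$ holomorphic on $S$, of class $C^1$ up to $\partial\bar S$, and $f(\partial\bar S)\subset\partial\mathbb{B}_n$: I would note that transversality at the boundary, i.e. (\ref{transembed}), already forces (\ref{transcomp}). Indeed holomorphy of $f$ makes $f^{\prime}$ complex-linear, and the short computation recorded in (\ref{holoembed}) then converts (\ref{transembed}) into (\ref{transcomp}); since that computation is local it applies verbatim on a Riemann surface. Thus the hypotheses of the main part are met and the conclusion follows.

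The hard part will be the geometric translation in the second step: verifying that pulling the nonisotropic tent condition (\ref{simp}) for $f_{\ast}\mu$ back through the $C^1$, transverse map $f$ reproduces \emph{exactly} the intrinsic Carleson condition (\ref{classiccar}) on $\Omega$. This rests on the fact that transversality makes $f$ carry ordinary boundary balls of $\Omega$ to nonisotropic balls of $\mathbb{B}_n$, and conversely, with two-sided, uniformly controlled distortion, so that preimages of tents and intrinsic Carleson boxes of $\Omega$ are comparable; the remaining bookkeeping for Theorem \ref{completeNP} (the lower-dimensional-slice case) is routine.
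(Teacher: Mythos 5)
Your proposal is correct and follows exactly the route the paper intends (the corollary is stated without a separate proof precisely because it is the combination of Theorem \ref{completeNP} with $\delta\equiv1$, Proposition \ref{embedd} applied to $f_{\ast}\mu$, the identification of the simple condition (\ref{simp}) with the classical condition (\ref{classiccar}) under the transverse $C^1$ map, and the computation (\ref{holoembed}) showing that for holomorphic $f$ transversality (\ref{transembed}) already yields (\ref{transcomp})). No gaps to report.
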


\paragraph{$\mathcal{S}$ meets the boundary in the complex tangential
directions}

We now suppose $\mathcal{S}=f\left(  \Omega\right)  $ meets the boundary
transversely and in the \emph{complex tangential directions}, i.e.
$\left\langle f^{\prime}\left(  x\right)  \mathbf{T},f\left(  x\right)
\right\rangle =0$ for $x\in\partial\bar{\Omega}$. It follows from (2.4) of
\cite{AhCo} that
\[
1-\left\langle f\left(  x\right)  ,f\left(  x+\delta\mathbf{T}\right)
\right\rangle =\delta^{2}\frac{\left\vert f^{\prime}\left(  x\right)
\right\vert ^{2}}{2}+o\left(  \delta^{2}\right)  ,\;\;\;\;\;\text{for }%
x\in\partial\bar{\Omega}\text{, as\ }\delta\rightarrow0,
\]
where by $x+\delta\mathbf{T}$ we mean the point in $\partial\bar{\Omega}$ that
is obtained by flowing along $\partial\bar{\Omega}$ from $x$ a distance
$\delta$ in the direction of $\mathbf{T}$. From this we obtain
\begin{equation}
\left\vert z-P_{z}w\right\vert \leq C\left\vert w-P_{z}w\right\vert ^{2}
\label{square}%
\end{equation}
for $z,w\in\mathcal{S}\cap\mathbb{B}_{n}$ with $1-\left\vert z\right\vert
\approx1-\left\vert w\right\vert $ sufficiently small, and $\left\vert
z-w\right\vert \geq c\left(  1-\left\vert z\right\vert \right)  $. Then we
obtain from (\ref{dapp}) that for such $z,w$ we have
\[
d^{\ast}\left(  \left[  z\right]  ,\left[  w\right]  \right)  \asymp d\left(
z,w\right)  .
\]
Hence for $\mu$ supported on $\mathcal{S}$, the operator $T_{\mu}$ in
(\ref{deffracnew}) below satisfies
\[
T_{\mu}g\left(  \alpha\right)  \approx\sum_{\beta\in\mathcal{T}_{n}%
}2^{d\left(  \alpha\wedge\beta\right)  }g\left(  \beta\right)  \mu\left(
\beta\right)  ,\;\;\;\;\;\alpha\in\mathcal{T}_{n},
\]
whose boundedness on $\ell^{2}\left(  \mu\right)  $ is equivalent, by Theorem
\ref{Besov'}, to the tree condition (\ref{treeArv}) with $\sigma=1/2$ i.e.
(\ref{simp}). Thus Theorem \ref{Achar} completes the proof of the following
proposition (once we note that if the simple condition holds for a fixed
Bergman tree then it holds uniformly for all unitary rotations as well).

\begin{proposition}
\label{embedd'}Suppose that $\mathcal{S}$ is a real $2$-manifold embedded in
the ball $\mathbb{B}_{n}$ that meets the boundary transversely and in the
complex tangential directions, i.e. both (\ref{transembed}) and
(\ref{comptanembed}) hold. \emph{More generally}, suppose there are positive
constants $C,c,\delta$ such that $\mathcal{S}$ is a subset of $\mathbb{B}_{n}
$ satisfying (\ref{square}) whenever $\left\vert x\right\vert =\left\vert
y\right\vert $, $\left\vert x-y\right\vert \geq c\left(  1-\left\vert
x\right\vert \right)  $ and $1-\left\vert x\right\vert <\delta$. Let $\mu$ be
a positive measure supported on $\mathcal{S}$. Then $\mu$ is $H_{n}^{2}%
$-Carleson if and only if $\mu$ satisfies the tree condition (\ref{treeArv}).
\end{proposition}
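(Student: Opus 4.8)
The plan is to reduce the proposition to its stated \emph{more general} form, in which $\mathcal{S}$ is assumed only to obey the metric condition (\ref{square}); the $C^{1}$ case follows from the computation already carried out above, where the vanishing of $\left\langle f^{\prime}\left(  x\right)  \mathbf{T},f\left(  x\right)  \right\rangle $ and the second-order expansion of (2.4) of \cite{AhCo} produce (\ref{square}). So I would assume throughout that $\mathcal{S}\subset\mathbb{B}_{n}$ satisfies (\ref{square}).

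The geometric core, essentially contained in the discussion preceding the statement, is that (\ref{square}) forces $d^{\ast}\left(  \left[  z\right]  ,\left[  w\right]  \right)  \asymp d\left(  z,w\right)  $ whenever $z,w\in\mathcal{S}$, $1-\left\vert z\right\vert \approx1-\left\vert w\right\vert $ is small, and $\left\vert z-w\right\vert \geq c\left(  1-\left\vert z\right\vert \right)  $. Here $d^{\ast}\leq d$ holds automatically ($d^{\ast}$ is an infimum over unitary rotations and the ring tree is a quotient of the Bergman tree), while (\ref{dapp}) rewrites $d^{\ast}\left(  \left[  z\right]  ,\left[  w\right]  \right)  $ in terms of $\left\vert w-P_{z}w\right\vert ^{2}/(1-\left\vert z\right\vert )$ and (\ref{square}) lets one bound $d\left(  z,w\right)  $ from below by the same quantity up to an additive constant. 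The restriction to $1-\left\vert z\right\vert $ small is harmless: the part of $\mathcal{T}_{n}$ of bounded depth, equivalently the part of $\mathcal{S}$ lying in a fixed compact subset of $\mathbb{B}_{n}$, contributes a bounded operator and only boundedly many terms, which are absorbed into constants.

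With this comparability the Carleson problem collapses to a tree inequality. By the functional-analytic reduction behind (\ref{bilin}) and its discrete reformulation (Theorem \ref{Achar}), $\mu$ is $H_{n}^{2}$-Carleson if and only if the operator $T_{\mu}$ of (\ref{deffracnew}), whose kernel is built from $2^{d^{\ast}\left(  \left[  \alpha\right]  \wedge\left[  \beta\right]  \right)  }$, is bounded on $\ell^{2}\left(  \mu\right)  $. For $\mu$ supported on $\mathcal{S}$ the previous step replaces $d^{\ast}\left(  \left[  \alpha\right]  \wedge\left[  \beta\right]  \right)  $ by $d\left(  \alpha\wedge\beta\right)  $ up to an additive constant, so $T_{\mu}$ is comparable to the plain tree operator $g\mapsto\sum_{\beta}2^{d\left(  \alpha\wedge\beta\right)  }g\left(  \beta\right)  \mu\left(  \beta\right)  $, whose boundedness on $\ell^{2}\left(  \mu\right)  $ is, by Theorem \ref{Besov'} in the case $\sigma=1/2$ (the modulus-kernel version, valid in that range as noted in the introduction and appendix), exactly the tree condition (\ref{treeArv}). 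This yields both implications at once; the direction (\ref{treeArv})$\Rightarrow$Carleson moreover requires no hypothesis on $\mathcal{S}$, because (\ref{treeArv}) is the Carleson condition for the modulus kernel $\left\vert 1-\overline{z}\cdot w\right\vert ^{-1}$, which pointwise dominates the positive kernel $\operatorname{Re}\left(  1-\overline{z}\cdot w\right)  ^{-1}$ of (\ref{bilin}).

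The point I expect to need the most care is uniformity over the unitary group: since $d^{\ast}$, hence the characterization in Theorem \ref{Achar}, involves an infimum over $\mathcal{U}_{n}$, one needs $d^{\ast}\asymp d$ on $\mathcal{S}$ with constants independent of the rotation and of the generation. This is exactly why it matters that (\ref{square}) is stated only through $\left\vert x\right\vert $, $\left\vert y\right\vert $, and slice projections and is therefore $\mathcal{U}_{n}$-invariant: every rotate $U\mathcal{S}$ obeys (\ref{square}) with the same constants, so the previous two steps apply to it verbatim, and one also uses that the tree condition (\ref{treeArv}) is itself stable under unitary rotations up to dimensional constants. Once this bookkeeping, together with control of the $o(\delta^{2})$ and "$1-\left\vert z\right\vert $ small" caveats, is in place, Theorem \ref{Achar} and Theorem \ref{Besov'} finish the proof.
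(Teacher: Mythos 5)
Your proposal follows the paper's own argument essentially verbatim: derive (\ref{square}) from the second--order expansion, deduce $d^{\ast}\left(\left[z\right],\left[w\right]\right)\asymp d\left(z,w\right)$ for points of $\mathcal{S}$ via (\ref{dapp}), conclude that the operator $T_{\mu}$ of (\ref{deffracnew}) is comparable on $\operatorname*{supp}\mu$ to the plain tree kernel $2^{d\left(\alpha\wedge\beta\right)}$, and finish with Theorem \ref{Besov'} (really Theorem \ref{Tars}) and Theorem \ref{Achar}, keeping track of uniformity over unitary rotations exactly as the paper's closing parenthetical does. One small correction of wording: since $d^{\ast}\leq d$ is the automatic inequality, the substantive half of the comparability is the upper bound $d\left(z,w\right)\leq d^{\ast}\left(\left[z\right],\left[w\right]\right)+C$, so your sentence about bounding $d\left(z,w\right)$ from below has the direction reversed, although the estimate actually needed does come from (\ref{square}) in just the way the paper indicates.
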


\begin{remark}
This proposition generalizes easily to the case where $\mathcal{S}=f\left(
\Omega\right)  $, $\Omega\subset\mathbb{R}^{k}$, is a real $k$-manifold
embedded in the ball $\mathbb{B}_{n}$ that meets the boundary transversely and
in the complex tangential directions, i.e.
\[
\left\langle f^{\prime}\left(  x\right)  \mathbf{T},f\left(  x\right)
\right\rangle =0,\text{ \ }x\in\partial\bar{\Omega},
\]
for all tangent vectors $\mathbf{T}$ to $\partial\bar{\Omega}$ at $x$.
\end{remark}

For an example of such an embedding let $\Omega=\mathbb{B}_{1}$ with
coordinate $z=x+iy$ and define a mapping into $\mathbb{B}_{2}$ by
$f(z)=(x,y).$ The space $\mathcal{H}_{k}$ is the Hilbert space of functions on
the unit disk with reproducing kernel $k(z,w)=\frac{1}{1-\operatorname{Re}%
(\bar{z}w)}.$ The sublevel sets of this kernel are intersections of the disk
with halfplanes and testing against these kernel functions quickly shows that
the classic Carleson condition (\ref{classiccar}) does not describe the
Carleson measures for this space. However the previous proposition together
with Theorem \ref{completeNP} gives a description of those measures which turn
out to form a subset of the classical Carleson measures. We now provide the details.

Pulling back the kube decomposition from $\mathbb{B}_{2}$ will give a kube
decomposition of $\mathbb{B}_{1}$ and a tree structure on that set of kubes.
However this structure will not be the familiar one from, for instance, Hardy
space theory or from \cite{ArRoSa}. The familiar structure is the following.
We define a set of kubes on $\mathbb{B}_{1}$ by splitting the disk at radii
$r_{n}=1-2^{-n}$ and splitting each ring $\left\{  r_{n}<\left\vert
z\right\vert \leq r_{n+1}\right\}  $ into $2^{n}$ congruent kubes with radial
cuts. The tree structure, $\mathcal{T}$, on this set of kubes is described by
declaring that $\alpha$ is a successor of $\beta$ if the radius through the
center of $\alpha$ cuts $\beta.$ On the other hand $\mathcal{F}$, the kube and
tree structure pulled back from $\mathbb{B}_{2}$ by $f,$ is the following. We
again split the disk into the same rings and again divide each ring into
congruent kubes with radial cuts, but now the number of kubes in that ring is
to be $\left[  2^{n/2}\right]  $. Again the tree structure is described by
declaring that $\alpha$ is a successor of $\beta$ if the radius through the
center of $\alpha$ cuts $\beta.$ Thus the successor sets $S(\alpha
)=\cup_{\beta\succeq\alpha}\beta$ are approximately rectangles of dimension
$2^{-n}\times2^{-n/2},$ roughly comparable to the complements of sublevel sets
of the reproducing kernels for $\mathcal{H}_{k}.$ Note that the number of
descendents of a vertex after $n$ generations is quite different for the two
trees; in the terminology of \cite{ArRoSa2} $\mathcal{F}$ has tree dimension
$1/2$ and $\mathcal{T}$ has tree dimension $1$.

We now compare the classes of measures described by (\ref{treeArv}) for the
two different tree structures. We define $B_{2}^{1/2}(\mathcal{Q)}$ on a tree
$\mathcal{Q}$ by the norm
\[
\left\Vert f\right\Vert _{B_{2}^{1/2}(\mathcal{Q)}}^{2}=\sum_{\alpha
\in\mathcal{Q}:\alpha\neq o}2^{-d\left(  \alpha\right)  }\left\vert f\left(
\alpha\right)  -f\left(  A\alpha\right)  \right\vert ^{2}+\left\vert f\left(
o\right)  \right\vert ^{2},
\]
for $f$ on the tree $\mathcal{Q}$. Here $A\alpha$ denotes the immediate
predecessor of $\alpha$ in the tree $\mathcal{Q}$. We set
\begin{align}
I_{\mathcal{Q}}f\left(  \alpha\right)   &  =\sum_{\beta\in\mathcal{Q}%
:\beta\leq\alpha}f\left(  \beta\right)  ,\label{deflll}\\
I_{\mathcal{Q}}^{\ast}\left(  g\right)  \left(  \alpha\right)   &
=\sum_{\beta\in\mathcal{Q}:\beta\geq\alpha}g\left(  \beta\right)  .\nonumber
\end{align}
We say that $\mu$ is a $B_{2}^{1/2}(\mathcal{Q)}$-Carleson measure on the tree
$\mathcal{Q}$ if $B_{2}^{1/2}(\mathcal{Q)}$ imbeds continuously into $L_{\mu
}^{2}(\mathcal{Q)}$, i.e.
\begin{equation}
\sum_{\alpha\in\mathcal{Q}}I_{\mathcal{Q}}f\left(  \alpha\right)  ^{2}%
\mu\left(  \alpha\right)  \leq C\sum_{\alpha\in\mathcal{Q}}2^{-d\left(
\alpha\right)  }f\left(  \alpha\right)  ^{2},\;\;\;\;\;f\geq0.
\label{Carlesonppem}%
\end{equation}
We know from \cite{ArRoSa} that a necessary and sufficient condition for
(\ref{Carlesonppem}) is the discrete tree condition
\begin{equation}
\sum_{\beta\in\mathcal{Q}:\beta\geq\alpha}2^{d\left(  \beta\right)
}I_{\mathcal{Q}}^{\ast}\mu\left(  \beta\right)  ^{2}\leq CI_{\mathcal{Q}%
}^{\ast}\mu\left(  \alpha\right)  <\infty,\;\;\;\;\;\alpha\in\mathcal{Q}.
\tag{$T_\mathcal{Q}$}\label{NASCem}%
\end{equation}
We note a simpler necessary condition for (\ref{Carlesonppem})
\begin{equation}
2^{d\left(  \alpha\right)  }I_{\mathcal{Q}}^{\ast}\mu\left(  \alpha\right)
\leq C, \tag{$S_\mathcal{Q}$}\label{simple'em}%
\end{equation}
which is obtained using the sum in (\ref{NASCem}) to dominate its largest
term. However, condition (\ref{simple'em}) is not in general sufficient for
(\ref{Carlesonppem}) as evidenced by certain Cantor-like measures $\mu$.

These considerations apply when $\mathcal{Q}$ is either of the two trees,
$\mathcal{T}$ and $\mathcal{F}$ just described on $\mathbb{B}_{1}$. However
the associated geometries are different; we will refer to conditions
associated to $\mathcal{F}$ as "fattened".

\begin{theorem}
Let $\mu$ be a positive measure on the disk $\mathbb{B}_{1}$. Then the
fattened tree condition ($T_{\mathcal{F}}$) implies the standard tree
condition ($T_{\mathcal{T}}$), but not conversely.
\end{theorem}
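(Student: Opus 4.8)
I would prove the two directions separately, and for the (harder) forward implication pass through the Carleson embedding (\ref{Carlesonppem}), which by \cite{ArRoSa} is equivalent to the tree condition for \emph{each} of $\mathcal{T}$ and $\mathcal{F}$. So it is enough to deduce (\ref{Carlesonppem}) for $\mathcal{T}$ from (\ref{Carlesonppem}) for $\mathcal{F}$. First I would reduce to \emph{compatible} kube decompositions: after replacing the two decompositions by comparable ones if necessary (which alters the constants in all the conditions involved by at most universal factors), we may assume that at every level $\ell$ each $\mathcal{T}$-kube $K_\alpha$ with $d(\alpha)=\ell$ lies in a unique $\mathcal{F}$-kube of level $\ell$, written $K_{\tilde\alpha}$, and each $\mathcal{F}$-kube of level $\ell$ is the disjoint union of the level-$\ell$ $\mathcal{T}$-kubes it contains. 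In particular the assignment $\alpha\mapsto\tilde\alpha$ commutes with taking immediate predecessors, $\widetilde{A\alpha}=A\tilde\alpha$, so the $\mathcal{T}$-geodesic from the root to $\alpha$ is carried level by level onto the $\mathcal{F}$-geodesic from the root to $\tilde\alpha$.

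\textbf{The maximal push-forward.} Given $f\ge 0$ on $\mathcal{T}$, define $g\ge 0$ on $\mathcal{F}$ by
\[
g(\eta)=\max\{f(\alpha):\alpha\in\mathcal{T},\ \tilde\alpha=\eta\},\qquad\eta\in\mathcal{F}.
\]
Two facts drive the proof. First, writing the $\mathcal{T}$-geodesic to $\alpha$ as $o=\beta_0,\beta_1,\dots,\beta_m=\alpha$ and using $\widetilde{A\alpha}=A\tilde\alpha$, the kubes $\tilde\beta_0,\dots,\tilde\beta_m=\tilde\alpha$ are exactly the $\mathcal{F}$-geodesic to $\tilde\alpha$, and $g(\tilde\beta_j)\ge f(\beta_j)$; hence
\[
I_{\mathcal{T}}f(\alpha)=\sum_{j=0}^{m}f(\beta_j)\ \le\ \sum_{j=0}^{m}g(\tilde\beta_j)=I_{\mathcal{F}}g(\tilde\alpha),\qquad\alpha\in\mathcal{T}.
\]
Second, since $\tilde\alpha=\eta$ forces $d(\alpha)=d(\eta)$, the Besov weight $2^{-d(\cdot)}$ is constant on the kubes being compared, so replacing a sum of squares by a max of squares costs nothing:
\[
\sum_{\eta\in\mathcal{F}}2^{-d(\eta)}g(\eta)^{2}\ \le\ \sum_{\eta\in\mathcal{F}}\ \sum_{\alpha:\,\tilde\alpha=\eta}2^{-d(\alpha)}f(\alpha)^{2}\ =\ \sum_{\alpha\in\mathcal{T}}2^{-d(\alpha)}f(\alpha)^{2}.
\]
Combining these with $\sum_{\alpha:\tilde\alpha=\eta}\mu(K_\alpha)=\mu(K_\eta)$ and (\ref{Carlesonppem}) for $\mathcal{F}$ gives
\[
\sum_{\alpha\in\mathcal{T}}\big(I_{\mathcal{T}}f(\alpha)\big)^{2}\mu(K_\alpha)\ \le\ \sum_{\eta\in\mathcal{F}}\big(I_{\mathcal{F}}g(\eta)\big)^{2}\mu(K_\eta)\ \le\ C\sum_{\eta\in\mathcal{F}}2^{-d(\eta)}g(\eta)^{2}\ \le\ C\sum_{\alpha\in\mathcal{T}}2^{-d(\alpha)}f(\alpha)^{2},
\]
which is (\ref{Carlesonppem}) for $\mathcal{T}$, hence $(T_{\mathcal{T}})$. (Much the same scheme shows, more generally, that a tree condition passes to any refinement obtained by subdividing kubes compatibly with the tree order.)

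\textbf{The converse fails.} For $N\ge 1$ let $\mu_{N}=N^{-1}\sigma_{N}$, where $\sigma_{N}$ is arclength on the circle $\{|z|=1-2^{-N}\}$. For a kube $Q$ of level $\ell$ in either tree one finds $I^{*}\mu_{N}(Q)\approx N^{-1}\cdot(\text{angular width of }Q)$ when $\ell\le N$ and $I^{*}\mu_{N}(Q)=0$ when $\ell>N$; the angular width is $2^{-\ell}$ in $\mathcal{T}$ and $2^{-\ell/2}$ in $\mathcal{F}$, and a level-$\ell$ kube has $2^{j-\ell}$ descendants of level $j$ in $\mathcal{T}$ but only $\approx 2^{(j-\ell)/2}$ in $\mathcal{F}$. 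Feeding this into the left side of $(T_{\mathcal{T}})$ at a vertex of level $\ell$ gives $\approx N^{-2}2^{-\ell}(N-\ell+1)\lesssim N^{-1}2^{-\ell}\approx I^{*}\mu_{N}$, so $\mu_{N}$ satisfies $(T_{\mathcal{T}})$ with a constant independent of $N$; but the left side of $(T_{\mathcal{F}})$ at the root is $\approx N^{-2}\sum_{j=0}^{N}2^{j/2}\approx N^{-2}2^{N/2}$ against a right side $\approx CN^{-1}$, so $(T_{\mathcal{F}})$ can only hold for $\mu_{N}$ with $C\gtrsim 2^{N/2}/N\to\infty$. A single counterexample measure is obtained by superposing $\mu_{N_{k}}$ along a lacunary sequence $N_{k}\to\infty$ (e.g. $N_k=2^k$): the $\mathcal{T}$-estimates add up because $\sum_{k}N_{k}^{-1}<\infty$, while the unbounded $\mathcal{F}$-defect of each summand survives.

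\textbf{Main obstacle.} The conceptual core of the forward implication is short — it is the observation that the maximal push-forward is cost-free in the $B_{2}^{1/2}$-norm precisely because $2^{-d(\cdot)}$ is constant on kubes of a common level, combined with the geodesic domination. The delicate point is the passage to compatible decompositions, i.e. controlling the bounded overlaps between $\mathcal{T}$- and $\mathcal{F}$-kubes and the ``center drift'' that accumulates along $\mathcal{F}$-geodesics (the drift at level $j$ is comparable to the width of an $\mathcal{F}$-kube at level $j$, so fixed-size angular neighborhoods do not close the induction); this is routine in the present framework but must be stated carefully. An alternative that I would keep in reserve is to argue directly on the sums: split the left side of $(T_{\mathcal{T}})$ at a vertex $\alpha$ of level $m$ into a ``deep'' part ($d(\beta)\ge 2m$), estimated by $(T_{\mathcal{F}})$ applied at the level-$2m$ $\mathcal{F}$-kube under $\alpha$ — whose $\mathcal{F}$-tent sits inside the $\mathcal{T}$-tent of $\alpha$, so its total mass is at most $I^{*}\mu(\alpha)$ — and a ``shallow'' part ($m\le d(\beta)<2m$), controlled by combining the trivial bound $I^{*}\mu(\,\cdot\,)\le I^{*}\mu(\alpha)$ with the simple condition $(S_{\mathcal{F}})$ (which follows from $(T_{\mathcal{F}})$) after a dyadic splitting of the summation index; this route is elementary but somewhat longer.
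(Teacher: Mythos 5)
Your proposal is correct in both directions, but the primary route is genuinely different from the paper's. For ($T_{\mathcal{F}}$) $\Rightarrow$ ($T_{\mathcal{T}}$) the paper argues directly on the sums: at a vertex $\alpha$ of level $m$ it splits the left side of ($T_{\mathcal{T}}$) into the deep part $d(\beta)>2m$, grouped inside fattened kubes below an $\mathcal{F}$-vertex of level $2m$ and then dominated by ($T_{\mathcal{F}}$), and the shallow part $m\le d(\beta)\le 2m$, which is summed along the $\mathcal{F}$-geodesic by a level-set (stopping-time) argument using only the fattened simple condition ($S_{\mathcal{F}}$). This is precisely the ``reserve'' argument you sketch at the end, so your fallback is in fact the paper's proof. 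Your main route --- pass to the equivalent embeddings (\ref{Carlesonppem}) and transfer the $\mathcal{F}$-embedding to the $\mathcal{T}$-embedding by the maximal push-forward $g(\eta)=\max_{\tilde\alpha=\eta}f(\alpha)$, using that the weight $2^{-d(\cdot)}$ is constant on each level so the maximum is free, together with the geodesic domination $I_{\mathcal{T}}f(\alpha)\le I_{\mathcal{F}}g(\tilde\alpha)$ --- is cleaner and more general: it shows that a tree condition passes to any compatibly nested refinement, and all three displayed estimates are correct once the decompositions are nested and predecessor-compatible.

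The one step I would not call routine is exactly the reduction you defer. The trees actually defined in the paper ($2^{n}$ versus $\left[2^{n/2}\right]$ congruent arcs, with parentage decided by the radius through a child's center) are neither nested nor predecessor-compatible, so you must replace both by (say) dyadic-arc versions and then prove that ($T_{\mathcal{T}}$) and ($T_{\mathcal{F}}$) are stable under such a comparable change of grid. That stability is not a pointwise triviality: the tree condition at $\alpha$ tests a local sum against the mass of $\alpha$'s \emph{own} tent, and a neighbouring tent of the other grid may carry far more mass. The repair is available inside your framework: since (\ref{Carlesonppem}) trivially passes to any smaller measure, so does the tree condition; hence one may restrict $\mu$ to $S(\alpha)$, cover the restricted tents by boundedly many same-level tents of the other grid (the angular drift along geodesics totals only a bounded multiple of the top kube's width), and control the finitely many shallow levels by the simple condition. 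This is the same device the paper invokes when two adjacent fattened kubes are needed to cover the union in (\ref{union}), but it does need to be written out for your argument to be complete. For the converse, your lacunary superposition $\sum_{k}N_{k}^{-1}\sigma_{N_{k}}$, $N_{k}=2^{k}$, is a correct counterexample: one gets $I_{\mathcal{T}}^{\ast}\mu(\beta)\approx 2^{-d(\beta)}/d(\beta)$, so ($T_{\mathcal{T}}$) holds, while $2^{d(b)}I_{\mathcal{F}}^{\ast}\mu(b)\approx 2^{d(b)/2}/d(b)$ is unbounded, so even ($S_{\mathcal{F}}$) fails. The paper reaches the same conclusion slightly more quickly with the single radial weight $d\mu(z)=(1-\left\vert z\right\vert)^{\rho}dz$, $-1<\rho<-1/2$, which also violates ($S_{\mathcal{F}}$) while satisfying ($T_{\mathcal{T}}$); either example does the job.
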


%

\proof
First we show that the standard tree condition ($T_{\mathcal{T}}$) is not
sufficient for the fattened tree condition ($T_{\mathcal{F}}$), in fact not
even for the fattened simple condition ($S_{\mathcal{F}}$). For this, let
$\rho>-1$ and set
\[
d\mu\left(  z\right)  =\left(  1-\left\vert z\right\vert \right)  ^{\rho}dz.
\]
Then
\[
I_{\mathcal{T}}^{\ast}\mu\left(  \beta\right)  \approx2^{-d\left(
\beta\right)  }\int_{1-2^{-d\left(  \beta\right)  }}^{1}\left(  1-r\right)
^{\rho}dr\approx2^{-d\left(  \beta\right)  }\left(  2^{-d\left(  \beta\right)
}\right)  ^{\rho+1}=2^{-d\left(  \beta\right)  \left(  \rho+2\right)  },
\]
and the left side of ($T_{\mathcal{T}}$) satisfies
\begin{align*}
\sum_{\beta\in\mathcal{T}:\beta\geq\alpha}2^{d\left(  \beta\right)
}I_{\mathcal{T}}^{\ast}\mu\left(  \beta\right)  ^{2}  &  \approx\sum_{\beta
\in\mathcal{T}:\beta\geq\alpha}2^{-d\left(  \beta\right)  \left(
2\rho+3\right)  }\\
&  =\sum_{k=d\left(  \alpha\right)  }^{\infty}2^{k-d\left(  \alpha\right)
}2^{-k\left(  2\rho+3\right)  }\\
&  =2^{-d\left(  \alpha\right)  }\sum_{k=d\left(  \alpha\right)  }^{\infty
}2^{-k\left(  2\rho+2\right)  }\\
&  \approx2^{-d\left(  \alpha\right)  \left(  2\rho+3\right)  },
\end{align*}
which is dominated by
\[
2^{-d\left(  \alpha\right)  \left(  \rho+2\right)  }\approx I_{\mathcal{T}%
}^{\ast}\mu\left(  \alpha\right)
\]
if $\rho>-1$. Thus $\mu$ satisfies the standard tree condition
($T_{\mathcal{T}}$) for all $\rho>-1$. On the other hand,
\[
I_{\mathcal{F}}^{\ast}\mu\left(  a\right)  \approx2^{-\frac{d\left(  a\right)
}{2}}\int_{1-2^{-d\left(  a\right)  }}^{1}\left(  1-r\right)  ^{\rho}%
dr\approx2^{-\frac{d\left(  a\right)  }{2}}\left(  2^{-d\left(  a\right)
}\right)  ^{\rho+1}=2^{-d\left(  a\right)  \left(  \rho+\frac{3}{2}\right)
},
\]
and so the left side of the fattened simple condition ($S_{\mathcal{F}}$)
satisfies
\[
2^{d\left(  a\right)  }I_{\mathcal{F}}^{\ast}\mu\left(  a\right)
\approx2^{d\left(  a\right)  }2^{-d\left(  a\right)  \left(  \rho+\frac{3}%
{2}\right)  }=2^{-d\left(  a\right)  \left(  \rho+\frac{1}{2}\right)  },
\]
which is unbounded if $\rho<-1/2$. So with $-1<\rho<-1/2$, ($T_{\mathcal{T}}$)
holds but not ($S_{\mathcal{F}}$).

Now we turn to proving that the fattened tree condition ($T_{\mathcal{F}}$)
implies the standard tree condition ($T_{\mathcal{T}}$). Decompose the left
side of ($T_{\mathcal{T}}$) into the following two pieces:
\begin{align*}
\sum_{\beta\in\mathcal{T}:\beta\geq\alpha}2^{d\left(  \beta\right)
}I_{\mathcal{T}}^{\ast}\mu\left(  \beta\right)  ^{2}  &  =\sum_{\beta
\in\mathcal{T}:\beta\geq\alpha\text{ and }d\left(  \beta\right)  \leq2d\left(
\alpha\right)  }2^{d\left(  \beta\right)  }I_{\mathcal{T}}^{\ast}\mu\left(
\beta\right)  ^{2}\\
&  +\sum_{\beta\in\mathcal{T}:\beta\geq\alpha\text{ and }d\left(
\beta\right)  >2d\left(  \alpha\right)  }2^{d\left(  \beta\right)
}I_{\mathcal{T}}^{\ast}\mu\left(  \beta\right)  ^{2}\\
&  =A+B.
\end{align*}
Now let $a\in\mathcal{F}$ satisfy $d\left(  a\right)  =2d\left(
\alpha\right)  $ and
\begin{equation}%
{\displaystyle\bigcup_{\beta\in\mathcal{T}:\beta\geq\alpha\text{ and }d\left(
\beta\right)  =2d\left(  \alpha\right)  }}
K_{\beta}\subset K_{a}, \label{union}%
\end{equation}
where by $K_{a}$ for $a\in\mathcal{F}$ we mean the fattened kube in the disk
corresponding to $a$ (it is roughly a $2^{-d\left(  \beta\right)  }%
\times2^{-\frac{d\left(  \beta\right)  }{2}}$ rectangle - which is
$2^{-2d\left(  \alpha\right)  }\times2^{-d\left(  \alpha\right)  }$ - oriented
so that its long side is parallel to the nearby boundary of the disk, and so
that its distance from the boundary is about $2^{-d\left(  a\right)  }$). It
may be that two such adjacent kubes $K_{a}$ and $K_{a^{\prime}}$ are required
to cover the left side of (\ref{union}), but the argument below can be easily
modified to accommodate this upon replacing $\mu$ by $\mu\chi$ where $\chi$
denotes the characteristic function of the successor set $S_{\alpha}%
=\cup_{\beta\in\mathcal{T}:\beta\geq\alpha}K_{\beta}$ and noting from
(\ref{Carlesonppem}) that if $\mu$ satisfies ($T_{\mathcal{F}}$) then so does
$\mu\chi$. Then we have
\begin{align*}
B  &  =\sum_{\beta\in\mathcal{T}:\beta\geq\alpha\text{ and }d\left(
\beta\right)  >2d\left(  \alpha\right)  }2^{d\left(  \beta\right)
}I_{\mathcal{T}}^{\ast}\mu\left(  \beta\right)  ^{2}\\
&  \leq\sum_{b\in\mathcal{F}:b\geq a}2^{d\left(  b\right)  }\sum_{\beta
\in\mathcal{T}:K_{\beta}\subset K_{b}}I_{\mathcal{T}}^{\ast}\mu\left(
\beta\right)  ^{2}\\
&  \leq\sum_{b\in\mathcal{F}:b\geq a}2^{d\left(  b\right)  }\left(
\sum_{\beta\in\mathcal{T}:K_{\beta}\subset K_{b}}I_{\mathcal{T}}^{\ast}%
\mu\left(  \beta\right)  \right)  ^{2}\\
&  \leq\sum_{b\in\mathcal{F}:b\geq a}2^{d\left(  b\right)  }I_{\mathcal{F}%
}^{\ast}\mu\left(  b\right)  ^{2}.
\end{align*}
The fattened tree condition ($T_{\mathcal{F}}$) shows that the final term
above is dominated by $CI_{\mathcal{F}}^{\ast}\mu\left(  a\right)  $, which is
at most $CI_{\mathcal{T}}^{\ast}\mu\left(  \alpha\right)  $, and hence we
have
\[
B\leq CI_{\mathcal{T}}^{\ast}\mu\left(  \alpha\right)  .
\]
To handle term $A$ we write the geodesic in $\mathcal{F}$ consisting of $a$
together with the $d\left(  \alpha\right)  $ terms immediately preceding $a$
in $\mathcal{F}$ as
\[
\left\{  a_{d\left(  \alpha\right)  },a_{d\left(  \alpha\right)
+1},...,a_{2d\left(  \alpha\right)  }=a\right\}  ,
\]
where $d\left(  a_{k}\right)  =k$ and $a_{k}<a_{k+1}$. Then
\begin{align*}
A  &  \leq\sum_{k=d\left(  \alpha\right)  }^{2d\left(  \alpha\right)  }%
2^{k}\sum_{\beta\in\mathcal{T}:\beta\geq\alpha\text{ and }d\left(
\beta\right)  =k}I_{\mathcal{T}}^{\ast}\mu\left(  \beta\right)  ^{2}\\
&  \leq\sum_{k=d\left(  \alpha\right)  }^{2d\left(  \alpha\right)  }%
2^{k}\left(  \sum_{\beta\in\mathcal{T}:\beta\geq\alpha\text{ and }d\left(
\beta\right)  =k}I_{\mathcal{T}}^{\ast}\mu\left(  \beta\right)  \right)
^{2}\\
&  \leq\sum_{k=d\left(  \alpha\right)  }^{2d\left(  \alpha\right)  }%
2^{k}I_{\mathcal{F}}^{\ast}\left(  \chi\mu\right)  \left(  a_{k}\right)  ^{2}.
\end{align*}
Now for $j\geq0$, let $E_{j}$ consist of those integers $k$ in $\left[
d\left(  \alpha\right)  ,2d\left(  \alpha\right)  \right]  $ satisfying
\begin{equation}
2^{-j-1}I_{\mathcal{F}}^{\ast}\left(  \chi\mu\right)  \left(  a_{d\left(
\alpha\right)  }\right)  <I_{\mathcal{F}}^{\ast}\left(  \chi\mu\right)
\left(  a_{k}\right)  \leq2^{-j}I_{\mathcal{F}}^{\ast}\left(  \chi\mu\right)
\left(  a_{d\left(  \alpha\right)  }\right)  , \label{jsat}%
\end{equation}
and provided $E_{j}\neq\phi$, let $k_{j}=\max_{E_{j}}k$ be the largest integer
in $E_{j}$, so that
\begin{equation}
2^{-j-1}I_{\mathcal{F}}^{\ast}\left(  \chi\mu\right)  \left(  a_{d\left(
\alpha\right)  }\right)  <I_{\mathcal{F}}^{\ast}\left(  \chi\mu\right)
\left(  a_{k_{j}}\right)  \leq2^{-j}I_{\mathcal{F}}^{\ast}\left(  \chi
\mu\right)  \left(  a_{d\left(  \alpha\right)  }\right)  . \label{ksat}%
\end{equation}
Using (\ref{jsat}) and (\ref{ksat}), we then have
\begin{align*}
A  &  \leq2\sum_{j\geq0}2^{-j}I_{\mathcal{F}}^{\ast}\left(  \chi\mu\right)
\left(  a_{d\left(  \alpha\right)  }\right)  I_{\mathcal{F}}^{\ast}\left(
\chi\mu\right)  \left(  a_{k_{j}}\right)  \left\{  \sum_{k\in E_{j}}%
2^{k}\right\} \\
&  \leq4\sum_{j\geq0}2^{-j}I_{\mathcal{F}}^{\ast}\left(  \chi\mu\right)
\left(  a_{d\left(  \alpha\right)  }\right)  \left\{  I_{\mathcal{F}}^{\ast
}\left(  \chi\mu\right)  \left(  a_{k_{j}}\right)  2^{k_{j}}\right\} \\
&  \leq CI_{\mathcal{F}}^{\ast}\left(  \chi\mu\right)  \left(  a_{d\left(
\alpha\right)  }\right)  ,
\end{align*}
where the last line follows from the fattened simple condition
($S_{\mathcal{F}}$) applied to $a_{k_{j}}$ since $d\left(  a_{k_{j}}\right)
=k_{j}$. Since
\[
I_{\mathcal{F}}^{\ast}\left(  \chi\mu\right)  \left(  a_{d\left(
\alpha\right)  }\right)  \leq CI_{\mathcal{T}}^{\ast}\mu\left(  \alpha\right)
,
\]
we have altogether,
\[
A+B\leq CI_{\mathcal{T}}^{\ast}\mu\left(  \alpha\right)  ,
\]
which completes the proof that the standard tree condition ($T_{\mathcal{T}}$)
holds when the fattened tree condition ($T_{\mathcal{F}}$) holds.

\paragraph{The embedding is Lipschitz continuous to the boundary but not
$C^{1}$}

In the next section we will see that if $\mathbb{B}_{1}$ is embedded
holomorphically in $\mathbb{B}_{n}$ and the embedding has a transverse $C^{2}
$ extension that takes $\partial\mathbb{B}_{1}$ into $\partial\mathbb{B}_{n}$
then the induced space of functions on the embedded disk is the Hardy space of
the disk. The proof is given for finite $n$ but it only uses the fact that the
kernel functions on the disk have useful second order Taylor expansions; hence
an analog of the result holds if $n=\infty.$ We now give an example where the
embedding extends continuously to the boundary but the induced function space
on the disk is $B_{2}^{\sigma}\left(  \mathbb{B}_{1}\right)  $ with
$0<\sigma<1/2$ and not the Hardy space $B_{2}^{1/2}\left(  \mathbb{B}%
_{1}\right)  .$ In fact in Subsubsection \ref{complete} we saw that there must
be an embedding of the disk into a $\mathbb{B}_{n}$ so that the induced
function space on $\mathbb{B}_{1}$ is $B_{2}^{\sigma}\left(  \mathbb{B}%
_{1}\right)  .$ Here we write the map explicitly and do certain computations.

Pick and fix $\sigma,$ $0<\sigma<1/2.$ We want a map $f$ of $\mathbb{B}_{1}$
into $\mathbb{B}_{\infty}$ so that
\begin{equation}
\frac{1}{\left(  1-\bar{x}y\right)  ^{2\sigma}}=\frac{1}{\left(
1-\overline{f(x)}\cdot f(y)\right)  }. \label{id}%
\end{equation}
Define $c_{n}$ by
\[
1-\left(  1-z\right)  ^{2\sigma}=\sum_{1}^{\infty}c_{n}z^{n}%
\]
and define $f:\mathbb{B}_{1}\rightarrow$ $\mathbb{B}_{\infty}$ by
\[
f(z)=\left(  \sqrt{c_{n}}z^{n}\right)  _{1}^{\infty},
\]
hence (\ref{id}) holds.

We know $c_{n}$ are positive and
\begin{align*}
c_{n}  &  =\left\vert \left(
\begin{array}
[c]{l}%
2\sigma\\
n
\end{array}
\right)  \right\vert =\frac{2\sigma}{n}\left(  1-\frac{2\sigma}{1}\right)
...\left(  1-\frac{2\sigma}{n-1}\right) \\
&  \approx\frac{2\sigma}{n}e^{-\left(  \frac{2\sigma}{1}+...+\frac{2\sigma
}{n-1}\right)  }\approx\frac{2\sigma}{n}e^{-2\sigma\ln n}\approx
n^{-1-2\sigma}.
\end{align*}
Thus $f$ extends continuously to the boundary but, for $z\in\partial
\mathbb{B}_{1},$ $f^{\prime}(z)$ fails to be in $l^{\infty}$ much less
$l^{2}.$ To estimate the behavior of $f$ near the boundary we use the fact
that $1-r^{n}\approx n\left(  1-r\right)  $ for $n\leq\frac{1}{1-r}$ and
estimate
\begin{align*}
\left\vert f(1)-f(r)\right\vert ^{2}  &  =\sum_{n=1}^{\infty}\left\vert
\sqrt{c_{n}}\left(  1-r^{n}\right)  \right\vert ^{2}\\
&  \approx\sum_{n=1}^{\frac{1}{1-r}}n^{-1-2\sigma}\left(  1-r^{n}\right)
^{2}+\sum_{n=\frac{1}{1-r}}^{\infty}n^{-1-2\sigma}\left(  1-r^{n}\right)
^{2}\\
&  \approx\sum_{n=1}^{\frac{1}{1-r}}n^{-1-2\sigma}n^{2}\left(  1-r\right)
^{2}+\sum_{n=\frac{1}{1-r}}^{\infty}n^{-1-2\sigma}\approx\left(  1-r\right)
^{2\sigma},
\end{align*}
so that
\[
\left\vert f(1)-f(r)\right\vert \approx\left(  1-r\right)  ^{\sigma}.
\]
Thus $f$ is $Lip\;\sigma$.

Suppose we now take a point $x$ on the positive real axis near the boundary.
The image point is $f(x)=\left(  \sqrt{c_{n}}x^{n}\right)  $ and the distance
of $f(x)$ to the boundary is
\begin{align*}
1-\left(  \overline{f(x)}\cdot f(x)\right)  ^{1/2}  &  =1-\left(  \sum
c_{n}x^{2n}\right)  ^{1/2}\\
&  =1-\left(  1-\left(  1-x^{2}\right)  ^{2\sigma}\right)  ^{1/2}\\
&  \sim1-\left(  1-\frac{1}{2}(1-x^{2})^{2\sigma}\right) \\
&  \sim(1-x^{2})^{2\sigma}.
\end{align*}
Because $f$ is not differentiable at the boundary our earlier definition of
transverse does not apply.\ However $f$ does fail to be transverse at the
boundary in the sense that
\[
\frac{dist\left(  f(r),\partial\mathbb{B}_{\infty}\right)  }{dist\left(
f(r),f(1)\right)  }=\frac{1-\left(  \overline{f(r)}\cdot f(r)\right)  ^{1/2}%
}{\left\vert f(1)-f(r)\right\vert }\approx\frac{(1-r)^{2\sigma}}{\left(
1-r\right)  ^{\sigma}}=\left(  1-r\right)  ^{\sigma}%
\]
is not bounded below as $r\rightarrow1;$ as it would be if we had
(\ref{transembed})\textbf{.}

Now consider Carleson measures. We know that a measure $\mu$ on the disk is a
Carleson measure for $B_{2}^{\sigma}\left(  \mathbb{B}_{1}\right)  $ if and
only if $f_{\ast}\mu$ is a Carleson measure for $B_{2}^{1/2}\left(
\mathbb{B}_{\infty}\right)  .$ Here we just note that it is straightforward to
check that the simple condition \ref{SCsigma} for $\mu$ corresponds to the
SC($1/2$) condition for $f_{\ast}\mu.$ Fix $x$ in the disk, near the boundary.
The SC($1/2$) condition for $f_{\ast}\mu$ states that the $\mu$ mass of the
set of $y$ for which
\[
\left\vert 1-\overline{f(y)}\cdot\frac{f(x)}{\left\Vert f(x)\right\Vert
}\right\vert \leq1-\left\Vert f(x)\right\Vert .
\]
is dominated by $C\left(  1-\left\Vert f(x)\right\Vert \right)  .$ Using the
closed form for $%
{\textstyle\sum}
c_{n}z^{n}$ to evaluate the norms and the inner product and doing a bit of
algebra we find that set is the same as the set of $y$ for which
\[
\left\vert \left(  1-\frac{x(\overline{y-x})}{1-\left\vert x\right\vert ^{2}%
}\right)  ^{2\sigma}-1\right\vert \leq1-\left(  1-\left\vert x\right\vert
^{2}\right)  ^{2\sigma}.
\]
This is in turn equivalent to $\left\vert y-x\right\vert \leq C(1-\left\vert
x\right\vert )$ which describes a set of $y$'s comparable in size and shape
with the set of $y$ for which
\[
\left\vert 1-\overline{y}\cdot\frac{x}{\left\vert x\right\vert }\right\vert
\leq C\left(  1-\left\vert x\right\vert \right)  .
\]
The conclusion now follows from the comparison $1-\left\Vert f(x)\right\Vert
\sim(1-\left\vert x\right\vert )^{2\sigma}.$

We just studied $f$ using the Euclidean metric for both $\mathbb{B}_{1}$ and
$\mathbb{B}_{\infty}.$ There are other natural metrics in this context. For
fixed $n,$ $\sigma$ we can define the metric $\delta_{\sigma}$ on
$\mathbb{B}_{n}$ by
\begin{align*}
\delta_{\sigma}(x,y)  &  =\sqrt{1-\frac{\left\vert k(x,y)\right\vert ^{2}%
}{k(x,x)k(y,y)}}\\
&  =\sin(\operatorname*{angle}(k(x,\cdot)k(y,\cdot))),
\end{align*}
where $k_{\sigma}(x,y)=k_{\sigma}(x,y)=\left(  1-\overline{x}\cdot y\right)
^{-2\sigma}$ is the reproducing kernel for $B_{2}^{\sigma}\left(
\mathbb{B}_{n}\right)  .$ This is a general construction of a metric
associated with a reproducing kernel Hilbert space and is related to the
themes we have been considering, see Section 9.2 of \cite{AgMc2}. For the
particular map $f=f_{\sigma}$ we defined it is a consequence of (\ref{id}) and
the definitions that $f=f_{\sigma}$ will be an \emph{isometry} from
$(\mathbb{B}_{1},\delta_{\sigma})$ into $(\mathbb{B}_{\infty},\delta_{1/2}).$

\subsubsection{Hardy spaces on planar domains}

Suppose now that $\Omega=R,$ a domain in $\mathbb{C}$ with boundary
$\Gamma=\partial\bar{R}$ consisting of a finite collection of $C^{2}$ curves.
Suppose that $f$ is a holomorphic map of $R$ into some $\mathbb{B}_{n}$, that
$f$ extends to a $C^{1}$ map of $\bar{R}$ into $\mathbb{\bar{B}}_{n}$ which
takes $\Gamma$ into $\mathbb{\partial\bar{B}}_{n}$, which is one to one on
$\Gamma,$ and which satisfies the transversality condition
\begin{equation}
\left\langle f^{\prime}(z),f(z)\right\rangle \neq0\text{ for }z\in
\Gamma\label{trans}%
\end{equation}
which combines (\ref{transcomp}) and (\ref{transembed}); recall that these two
conditions are equivalent for holomorphic embeddings. We denote the space
generated by the kernel functions $k(x,y)=a_{n}(f(x),f(y))$ by $\mathcal{H}%
_{k}(R)$. (This is a minor variation on what was described earlier; here we do
not require that $f$ be injective on $R.)$

We want to study the relation between $\mathcal{H}_{k}(R)$ and the Hardy space
of $R,$ $H^{2}(R),$ which we now define. Let $d\sigma$ be arclength measure on
$\Gamma$ and define $H^{2}=H^{2}(R)$ to be the closure in $L^{2}%
(\Gamma,d\sigma)$ of the subspace consisting of restrictions to $\Gamma$ of
functions holomorphic on $\bar{R}.$ We refer to \cite{Ab} and \cite{Du} for
the basic theory of these spaces. In particular there is a natural isometric
identification of $H^{2}$ as a space of nontangential boundary values of a
certain space of holomorphic functions on $R$, we also denote that space by
$H^{2}.$ The choice of the measure $d\sigma$ is not canonical but all the
standard choices lead to the same space of holomorphic functions on $R$ with
equivalent norms. The Carleson measures for $H^{2}$ are those described by the
classical Carleson condition, measures $\mu$ for which there is a constant $C$
so that for all $r>0,$ $z\in\Gamma$%
\begin{equation}
\mu(B(z,r)\cap R)\leq Cr. \label{classiccar}%
\end{equation}
That is, $\mu$ satisfies the appropriate version of the simple condition
(\ref{simp}). For small positive $\varepsilon$ and $z\in\Gamma$ let
$\varepsilon(z)$ be the inward pointing normal at $z$ of length $\varepsilon.$
Because the norm in $H^{2}$ can be computed as
\[
\overline{\lim_{\varepsilon\rightarrow0^{+}}}\int_{\Gamma}\left\vert
f(z+\varepsilon(z))\right\vert ^{2}d\sigma
\]
and those integrals are, in fact, the integration of $\left\vert f\right\vert
^{2}$ against a measure on $R$ which satisfies (\ref{classiccar}); we have
that $H^{2}$ is saturated with respect to its Carleson measures; $H^{2}$
consists of exactly those holomorphic functions for which
\begin{equation}
\sup\left\{  \int_{R}\left\vert f\right\vert ^{2}d\mu:\mu\in CM(H^{2}),\text{
}\left\Vert \mu\right\Vert _{Carleson}=1\right\}  <\infty. \label{sat}%
\end{equation}
(We note in passing that if a Banach space of holomorphic functions $B$ is
saturated with respect to its Carleson measures then the multiplier algebra of
$B$ will be $H^{\infty}$: thus, by the comments following Theorem 2, the
Besov-Sobolev spaces $B_{2}^{\sigma}\left(  \mathbb{B}_{n}\right)  $ are not
saturated for $0\leq\sigma\leq1/2$ except for the classical Hardy space,
$n=1,\sigma=1/2.)$

We saw earlier (Proposition \ref{embedd}) that as a consequence of the
transversallity condition the Carleson measures for $\mathcal{H}_{k}(R)$ are
exactly those which satisfy (\ref{classiccar}). Hence every $f\in
\mathcal{H}_{k}(R)$ satisfies (\ref{sat}) and thus we have a continuous
inclusion $i$
\begin{equation}
i:\mathcal{H}_{k}(R)\rightarrow H^{2}. \label{inclusion}%
\end{equation}

To this point we have only used that $f$ is $C^{1}.$ We will be able to get
much more precise information about the relation between $\mathcal{H}%
_{k}(R)\ $and $H^{2}$ if we assume that $f$ is $C^{2}.$ We now make that
assumption$.$

The prototype for our analysis is the proof by D. Alpay, M. Putinar, and V.
Vinnakov \cite{APV} that if $R=\mathbb{B}_{1},\ f$ is $C^{2},$ and if the
differential $df$ is nonvanishing, then $\mathcal{H}_{k}(\mathbb{B}_{1}%
)=H^{2}(\mathbb{B}_{1});$ the spaces of functions coincide and the norms are
equivalent. This insures that the spaces have the same multipliers. We know
from the classical theory that the multiplier algebra of $H^{2}(\mathbb{B}%
_{1})$ is $H^{\infty}(\mathbb{B}_{1})$ and hence the multiplier algebra of
$\mathcal{H}_{k}(\mathbb{B}_{1})$ is also $H^{\infty}(\mathbb{B}_{1})$.
Application of the theory of complete Nevanlinna-Pick kernels then gives an
interesting consequence; any bounded holomorphic function on $f(\mathbb{B}%
_{1})$ has a holomorphic extension to all $\mathbb{B}_{n}$ which is bounded
and, in fact, is in the multiplier algebra $M_{B_{2}^{1/2}\left(
\mathbb{B}_{n}\right)  }$. Indeed, this uses the fact that the multiplier
algebra of $\mathcal{H}_{k}(\mathbb{B}_{1})$ is $H^{\infty}(\mathbb{B}_{1})$
as follows. If $h\in H^{\infty}\left(  f\left(  \mathbb{B}_{1}\right)
\right)  $ with norm $1$, then $h\circ f\in H^{\infty}(\mathbb{B}%
_{1})=M_{\mathcal{H}_{k}(\mathbb{B}_{1})}$ with norm $M<\infty$, and thus the
matrices
\[
\left[  \left(  M^{2}-h\left(  f\left(  z_{i}\right)  \right)  \overline
{h\left(  f\left(  z_{j}\right)  \right)  }k\left(  z_{i},z_{j}\right)
\right)  \right]  _{i,j=1}^{m}%
\]
are positive semi-definite for all infinite sequences $\left\{  z_{i}\right\}
_{i=1}^{\infty}$ in $\mathbb{B}_{1}$ and $m$ finite. By the definition of $k$,
this says that the matrices
\[
\left[  \left(  M^{2}-h\left(  w_{i}\right)  \overline{h\left(  w_{j}\right)
}a_{n}\left(  w_{i},w_{j}\right)  \right)  \right]  _{i,j=1}^{m}%
\]
are positive semi-definite for all infinite sequences $\left\{  w_{i}\right\}
_{i=1}^{\infty}$ in $f\left(  \mathbb{B}_{1}\right)  $ and $m$ finite. Taking
$\left\{  w_{i}\right\}  _{i=1}^{\infty}$ to be dense in $f\left(
\mathbb{B}_{1}\right)  $, the Pick property for $H_{n}^{2}$ shows that there
is $\varphi\in M_{H_{n}^{2}}$ with $\left\Vert \varphi\right\Vert
_{M_{H_{n}^{2}}}\leq M$ and that agrees with $h$ on $\left\{  w_{i}\right\}
_{i=1}^{\infty}$, hence on $f\left(  \mathbb{B}_{1}\right)  $ as required. See
\cite{APV} for details. (In fact there is a minor error in that paper; a
nonsingularity hypothesis is needed as shown by the map of $\mathbb{B}_{1}%
$into $\mathbb{B}_{2}$ given by $f(z)=2^{-1/2}(z^{2},z^{3})$.\ For this choice
of $f$ the space $\mathcal{H}_{k}(\mathbb{B}_{1})$ will not contain any $g$
with $g^{\prime}(0)\neq0.$ The hypothesis is needed to insure that the
function $\phi^{-1}$ constructed at the end of Section 3 of \cite{APV} has the
required properties. Also, the continuity properties of the function $L$ in
\cite{APV} follow if $f$ is assumed to be $C^{2}.)$

We know the inclusion $i$ is bounded, we now turn attention to its adjoint
$i^{\ast}$ mapping $H^{2}$ to $\mathcal{H}_{k}.$ We want to compute the norm
$\left\Vert i^{\ast}g\right\Vert _{\mathcal{H}_{k}}$ for $g$ a finite linear
combination of kernel functions. We denote the kernel functions for
$\mathcal{H}_{k}$ by $k_{x}=k(x,\cdot)$ and those for the Hardy space $H^{2}$
by $h_{x}.$ It is a direct computation that for any $x,$ $i^{\ast}h_{x}%
=k_{x}.$ Thus if $g=\sum a_{i}h_{x_{i}}$ then $i^{\ast}(g)=\sum a_{i}k_{x_{i}%
}$ and
\begin{align*}
\left\Vert i^{\ast}g\right\Vert _{\mathcal{H}_{k}}^{2}  &  =\left\langle
{\textstyle\sum_{i}}
a_{i}k_{x_{i}},%
{\textstyle\sum_{j}}
a_{j}k_{x_{j}}\right\rangle _{\mathcal{H}_{k}}\\
&  =%
{\textstyle\sum_{i,j}}
a_{i}\bar{a}_{j}\left\langle k_{x_{i}},k_{x_{j}}\right\rangle _{\mathcal{H}%
_{k}}=%
{\textstyle\sum_{i,j}}
a_{i}\bar{a}_{j}k_{x_{i}}(x_{j}).
\end{align*}
Alternatively, setting $\tilde{S}=ii^{\ast},$ we have
\begin{align*}
\left\Vert i^{\ast}g\right\Vert _{\mathcal{H}_{k}}^{2}  &  =\left\langle
i^{\ast}g,i^{\ast}g\right\rangle _{\mathcal{H}_{k}}=\left\langle \tilde
{S}g,g\right\rangle _{H^{2}}\\
&  =%
{\textstyle\sum_{i,j}}
a_{i}\bar{a}_{j}\left\langle \tilde{S}h_{x_{i}},h_{x_{j}}\right\rangle
_{H^{2}}.
\end{align*}
Thus $\tilde{S}$ is a positive operator on $H^{2}\ $and we know $\tilde{S}$ is
bounded because we know $i$ is bounded. We record the consequence
\begin{equation}
\left\langle \tilde{S}h_{x_{i}},h_{x_{j}}\right\rangle _{H^{2}}=k_{x_{i}%
}(x_{j}). \label{tilda}%
\end{equation}
We now give an integral representation of $\tilde{S}$ and using that show that
$\tilde{S}$ is a Fredholm operator. For $h\in H^{2}$ define $S$ by%

\[
Sh(x)=\int_{\Gamma}k_{\omega}(x)h(\omega)d\sigma(\omega),\text{ \ }x\in R.
\]
In particular, setting $h=h_{x_{i}}$ we have%
\begin{align*}
Sh_{_{x_{i}}}\left(  x\right)   &  =\int_{\Gamma}k_{\omega}\left(  x\right)
h_{_{x_{i}}}\left(  \omega\right)  d\sigma(\omega)\\
&  =\left\langle k_{(\mathbf{\cdot})}\left(  x\right)  ,\overline{h_{x_{i}%
}(\cdot)}\right\rangle _{H^{2}}\\
&  =k_{x_{i}}(x).
\end{align*}
In the last equality we used the fact that $k_{\omega}(x)$ is bounded and
conjugate holomorphic in $\omega$ and that taking the inner product with
$\bar{h}_{x_{i}}$ evaluates such a function at $x_{i}.$ Hence we have
\[
\left\langle Sh_{_{x_{i}}},h_{x_{j}}\right\rangle _{H^{2}}=k_{x_{i}}(x_{j}).
\]
Comparing with (\ref{tilda}) we conclude that $S=\tilde{S}.$ Following
\cite{APV} we now compare the integration kernel for $S$ with the Cauchy
kernel. For $\omega\in\Gamma,\zeta\in R$ we set
\begin{align*}
L(\omega,\zeta)  &  =(\omega-\zeta)k_{\omega}(\zeta)=\frac{(\omega-\zeta
)}{1-f(\zeta)\cdot\overline{f(\omega)}}\\
&  =\frac{(\omega-\zeta)}{\left(  f(\omega)-f(\zeta)\right)  \cdot
\overline{f(\omega)}}.
\end{align*}
The transversality hypothesis insures that $L$ extends continuously to
$\bar{R}\times\bar{R}$ and that for $\omega\in\Gamma$ we have $L(\omega
,\omega)=\left\langle f^{\prime}(\omega),f(\omega)\right\rangle ^{-1},$ a
continuous function that is bounded away from zero. We now write the
integration kernel for $S$ as
\begin{align*}
k_{\omega}(\zeta)  &  =\frac{L(\omega,\zeta)}{\omega-\zeta}\\
&  =\frac{L(\omega,\zeta)-L(\omega,\omega)}{\omega-\zeta}+\frac{L(\omega
,\omega)}{\omega-\zeta}\\
&  =k_{1,\omega}(\zeta)+k_{2,\omega}(\zeta).
\end{align*}
This lets us split $S=S_{1}+S_{2}.$ The hypothesis that $f$ be $C^{2}$ insures
that $k_{1}$ extends $\bar{R}\times\bar{R}$ with
\[
k_{1,\omega}(\omega)=\frac{f^{\prime\prime}(\omega)\cdot\overline{f(\omega)}%
}{2\left(  f^{\prime}(\omega)\cdot\overline{f(\omega)}\right)  ^{2}},
\]
a continuous function, and hence $S_{1}$ is compact. Along $\Gamma$ we can
write $dz(\omega)=v(\omega)d\sigma(\omega)$ for a continuous function $v$
which is bounded away from $0$. Thus%
\[
S_{2}h(x)=\frac{1}{2\pi i}\int_{\Gamma}\frac{1}{x-\omega}s(\omega
)h(\omega)dz(\omega).
\]
with $s$ continuous and bounded away from zero. The operator
\[
Ph=\frac{1}{2\pi i}\int_{\Gamma}\frac{1}{x-\omega}h(\omega)dz(\omega)
\]
gives a bounded projection of $L^{2}(\Gamma,d\sigma)$ onto $H^{2}$ \cite{Ab}.
Thus $S_{2}$ is a Toeplitz operator with a symbol that is continuous and
bounded away from zero. Hence, by the Fredholm theory for Toeplitz operators,
$S_{2}$ is a Fredholm operator \cite{Ab}. Hence $S,$ a compact perturbation of
$S_{2},$ is also Fredholm.

$S$ is a positive Fredholm operator on $H^{2}$. Hence $\operatorname*{Ker}(S)$
is finite dimensional and $\operatorname*{Ran}(S)$ is the closed subspace
$\operatorname*{Ker}(S)^{\perp}.$ The restriction of $S$ to that closed
subspace is an isomorphism of that space; one-to-one, onto, bounded, and
bounded below.

We can identify $\mathcal{H}_{k}$ with $H_{n}^{2}/V_{f(X)}$. In particular if
$P$ is any polynomial on $\mathbb{C}^{n}$ then there is a function in
$\mathcal{H}_{k}$ of the form $\tilde{P}=P\circ f.$ Furthermore we have the
norm estimate%
\[
\left\Vert \tilde{P}\right\Vert _{\mathcal{H}_{k}}=\left\Vert P\right\Vert
_{H_{n}^{2}/V_{f(X)}}\leq\left\Vert P\right\Vert _{H_{n}^{2}}.
\]
Using this and the fact that the polynomials are dense in $H_{n}^{2}$ we
conclude that the set $B_{0}(R)=\{\tilde{P}\}$ is dense in $\mathcal{H}_{k}.$
Let $A(R)$ be the algebra of functions holomorphic in $R$ which extend
continuously to $\bar{R},$ normed by the uniform norm. Because $f$ extends
continuously to $\bar{R}$ the set $B_{0}(R)$ is a subalgebra of $A(R)$. Let
$B(R)$ denote the closure of $B_{0}(R)$ in $A(R)$ and let $\overline{B(R)}$
denote the closure of $B(R)$ (or, equivalently, $B_{0}(R)$) in $H^{2}.$

Suppose now that we have $f\in H^{2}$ in $\operatorname*{Ker}(S).$ We know $i$
is injective hence we must have $i^{\ast}f=0.$ Thus, for any $P\in$ $B_{0}(R),
$ $\left\langle i^{\ast}f,P\right\rangle _{\mathcal{H}_{k}}=\left\langle
f,iP\right\rangle _{H^{2}}=0$. Hence $\operatorname*{Ker}(S)\subset
\overline{B(R)}^{\perp}.$ When we pass to orthogonal complements and recall
that $\operatorname*{Ran}(S)=\operatorname*{Ker}(S)^{\perp}$ we find that
$\operatorname*{Ran}(S)\supset\overline{B(R)}.$ On the other hand $S=ii^{\ast
}$ and hence $\operatorname*{Ran}(S)\subset\operatorname*{Ran}(i).$ We know
$B_{0}(R)$ is $\mathcal{H}_{k}$-dense in $\mathcal{H}_{k}$ and that $i$ is
continuous. Thus we continue the inclusions with $\operatorname*{Ran}%
(S)\subset\operatorname*{Ran}(i)\subset\overline{B(R)}.$ Combining these
ingredients we have $\operatorname*{Ran}(S)=\operatorname*{Ran}(i)=\overline
{B(R)}.$ In particular we have that $i$ is a continuous one-to-one map onto
its closed range and hence must be a norm isomorphism of $\mathcal{H}_{k}(R)$
and $\overline{B(R)}$. In sum we have

\begin{theorem}
Suppose $f,$ $R,$ and $\overline{B(R)}$ are as described. Then $\overline
{B(R)}$ has finite codimension in $H^{2}$ and $i$ is a norm isomorphism
between $\mathcal{H}_{k}(R)$ and $\overline{B(R)}.$

If
\begin{equation}
\dim(A(R)/B(R))=s<\infty. \label{codim}%
\end{equation}
then the codimension of $\overline{B(R)}\ $in $H^{2}$ is $s.$ In particular if
$B_{0}(R)$ is dense in $A(R)$ then $i$ is a norm isomorphism of $\mathcal{H}%
_{k}(R)$ onto $H^{2}.$
\end{theorem}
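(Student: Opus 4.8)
The first paragraph of the theorem is precisely the upshot of the analysis carried out above, and I would simply record the chain of implications already in hand: the positive operator $\tilde S=ii^{*}$ equals the singular integral operator $S$, and $S=S_{1}+S_{2}$ with $S_{1}$ compact (its kernel extends continuously to $\bar{R}\times\bar{R}$ because $f\in C^{2}$) and $S_{2}$ a Toeplitz operator with continuous, nowhere-vanishing symbol, hence Fredholm; so $S$ is a self-adjoint Fredholm operator, $\operatorname{Ran}S=(\operatorname{Ker}S)^{\perp}$ is closed of finite codimension $\dim\operatorname{Ker}S$, and $\operatorname{Ran}S=\operatorname{Ran}i=\overline{B(R)}$. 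Since $i$ is bounded, injective, and has closed range, the open mapping theorem makes $i\colon\mathcal H_{k}(R)\to\overline{B(R)}$ a norm isomorphism, and $\operatorname{codim}_{H^{2}}\overline{B(R)}=\dim\operatorname{Ker}S<\infty$.

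For the codimension formula, suppose $\dim(A(R)/B(R))=s<\infty$. The inclusion $A(R)\hookrightarrow H^{2}$ is continuous from the uniform norm (because $\|g\|_{H^{2}}^{2}\le|\Gamma|\,\|g\|_{\infty}^{2}$) and sends $B(R)$ into $\overline{B(R)}$, so it induces a linear map $\Phi\colon A(R)/B(R)\to H^{2}/\overline{B(R)}$; the plan is to prove $\Phi$ is an isomorphism. Surjectivity is the easy half: functions holomorphic in a neighbourhood of $\bar{R}$ lie in $A(R)$ and are dense in $H^{2}(R)$ by the standard approximation theory for Hardy spaces of planar domains, so the image of $A(R)$ is dense in — hence, being finite-dimensional, equal to — $H^{2}/\overline{B(R)}$; this gives $\operatorname{codim}_{H^{2}}\overline{B(R)}\le s$.

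The substance is the injectivity of $\Phi$, equivalently the identity $A(R)\cap\overline{B(R)}^{\,H^{2}}=B(R)$, and this is where I expect the \textbf{main obstacle} to lie. If $g\in A(R)$ is an $H^{2}$-limit of functions $P_{n}\circ f\in B_{0}(R)$, then applying $i^{-1}$ (a bounded map, by the first paragraph) shows $P_{n}\circ f\to i^{-1}g$ in $\mathcal H_{k}(R)$, hence locally uniformly on $R$; so $g$ and $i^{-1}g$ agree as holomorphic functions on $R$ and $g$ is a locally uniform limit of $B_{0}(R)$-functions. The remaining step is to upgrade this to uniform convergence on all of $\bar{R}$, i.e.\ to show $g$ lies in the uniform closure $B(R)$ of $B_{0}(R)$. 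My plan here is a boundary-regularity argument: show that $\operatorname{Ker}S\subset A(R)$ using the splitting $S=S_{1}+S_{2}$ (from $S\psi=0$ we get $S_{2}\psi=-S_{1}\psi$, which has the smoothness of the continuous kernel of $S_{1}$, and inverting the Fredholm Toeplitz operator $S_{2}$ modulo its finite-dimensional cokernel bootstraps $\psi\in L^{2}(\Gamma)$ up to $\psi\in C(\Gamma)$, so $\psi$ is the boundary value of an $A(R)$-function); then a functional on $A(R)$ that kills $B_{0}(R)$ and is represented by such a continuous $\psi$ extends to a functional on $H^{2}$ killing $\overline{B(R)}$, giving $\operatorname{codim}_{H^{2}}\overline{B(R)}\ge s$. (An alternative is to approximate $g\in A(R)$ uniformly by functions holomorphic past $\Gamma$ and invoke Oka--Weil/Mergelyan approximation by polynomials on the image set $f(\bar{R})$, but that needs a polynomial-convexity hypothesis, so I would prefer the regularity route.) Combining the two bounds yields $\operatorname{codim}_{H^{2}}\overline{B(R)}=s$.

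The final assertion needs none of this: if $B_{0}(R)$ is uniformly dense in $A(R)$ then $B(R)=A(R)$, so $\overline{B(R)}^{\,H^{2}}\supseteq\overline{A(R)}^{\,H^{2}}=H^{2}$ (using that $A(R)$ is $H^{2}$-dense), and since $\overline{B(R)}=\operatorname{Ran}i$ is closed we get $\operatorname{Ran}i=H^{2}$; together with the first paragraph, $i$ is a norm isomorphism of $\mathcal H_{k}(R)$ onto $H^{2}$, consistent with $s=0$.
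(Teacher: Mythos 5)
Your first paragraph and the final assertion are correct and essentially the paper's route: the Fredholm analysis of $S=ii^{\ast}$ gives that $\operatorname{Ran}(S)=\operatorname{Ran}(i)=\overline{B(R)}$ is closed of finite codimension $\dim\operatorname{Ker}(S)$, so $i$ is an isomorphism onto $\overline{B(R)}$; and your surjectivity half, $\operatorname{codim}_{H^{2}}\overline{B(R)}\leq s$, via density of $A(R)$ in $H^{2}$ and finite dimensionality of the quotient, is also fine (it is all that is needed when $s=0$).

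The gap is exactly in the half you flag as the main obstacle, $\operatorname{codim}_{H^{2}}\overline{B(R)}\geq s$. Even granting the regularity claim $\operatorname{Ker}(S)\subset A(R)$, restricting the functionals $\langle\cdot,\psi\rangle_{H^{2}}$, $\psi\in\operatorname{Ker}(S)$, to $A(R)$ only produces an injection of $\operatorname{Ker}(S)$ into the annihilator of $B(R)$ in $A(R)^{\ast}$, i.e.\ it re-proves $\dim\operatorname{Ker}(S)\leq s$, the inequality you already have. What is needed is the converse: that \emph{every} functional on $A(R)$ annihilating $B(R)$ is bounded for the $H^{2}$ norm, so that it extends to a functional killing $\overline{B(R)}$ and is represented by an element of $\operatorname{Ker}(S)$. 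A functional on a uniform algebra vanishing on a subalgebra is a priori just a measure on $\bar{R}$ and need not be $H^{2}$-continuous (evaluation at a boundary point is the standard counterexample), so your sketch is circular: it assumes the annihilating functionals are of the form $\langle\cdot,\psi\rangle$ with $\psi\in\operatorname{Ker}(S)$, which is precisely what must be proved. The paper supplies the missing ingredient with Gamelin's structure theorem for finite-codimension subalgebras of $A(R)$: $B(R)$ is obtained from $A(R)$ by a chain of $s$ codimension-one steps, each the condition $g(x)=g(y)$ for two points $x,y\in R$ or the vanishing of a point derivation at a point of $R$; because these points are \emph{interior}, each such functional is continuous on $H^{2}$, giving $B(R)=A(R)\cap\left(\operatorname{span}\{h_{1},\dots,h_{s}\}\right)^{\perp}$ and hence $\overline{B(R)}=\left(\operatorname{span}\{h_{1},\dots,h_{s}\}\right)^{\perp}$ of codimension exactly $s$. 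Without this (or some substitute controlling the annihilator of $B(R)$), your argument does not yield $\operatorname{codim}=s$; as a secondary point, the bootstrap for $\operatorname{Ker}(S)\subset A(R)$ also needs care, since the Riesz projection is not bounded on $C(\Gamma)$ and one must work in H\"older classes using $f\in C^{2}$.
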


\begin{proof}
We have established the first statements. Suppose now that (\ref{codim})
holds. By work of T. Gamelin \cite{Ga} we have a complete structural
description of $B(R).$ The algebra $B(R)$ can be obtained from $A(R)$ by a
chain of passages to subalgebras each of codimension one in the previous
subalgebra. Each of these steps is of one of two possible forms. One possible
step consists of selecting two points $x$ and $y$ of $R$ and passing to the
subalgebra of functions which take the same values at $x$ and $y.$ The other
possibility is picking a point $x$ in $R$ and passing to the kernel of a point
derivation (in the algebra considered) supported at $x.$ In particular, at
each step we pass to the kernel of a linear functional which can extended
continuously to $H^{2}$ and is thus given by an inner product with a $h\in
H^{2}.$ (This is because the points $x,y$ were in $R,$ not in $\Gamma.)$ Thus
there are $s$ elements in $H^{2}$ such that $B(R)=A(R)\cap\left(
\operatorname*{span}\{h_{1},...,h_{s}\}\right)  ^{\perp}.$ This insures that
when we pass to closures in $H^{2}$ we will have $\overline{B(R)}=\left(
\operatorname*{span}\{h_{1},...,h_{s}\}\right)  ^{\perp}$ which has
codimension $s.$
\end{proof}

\begin{corollary}
\label{NP}If $R$ is a domain in $\mathbb{C}$ with boundary consisting of a
finite collection of smooth curves then the Hardy space $H^{2}(R)$ admits an
equivalent norm with the property that with the new norm the space is a
reproducing kernel Hilbert space with complete N-P kernel.
\end{corollary}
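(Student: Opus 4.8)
The plan is to realize $H^{2}(R)$, after an equivalent renorming, as one of the spaces $\mathcal{H}_{k}(R)$ constructed in Subsubsection \ref{complete}, for which the complete Nevanlinna--Pick property is automatic, and to extract the norm equivalence from the theorem just proved. The one input needed is a holomorphic map $f\colon R\rightarrow\mathbb{B}_{n}$ with $n$ \emph{finite} satisfying the hypotheses of that theorem --- holomorphic on $R$, extending to a $C^{2}$ (indeed $C^{\infty}$) map of $\bar{R}$ into $\overline{\mathbb{B}_{n}}$ that carries $\Gamma$ into $\partial\mathbb{B}_{n}$ and is one-to-one on $\Gamma$, with the transversality condition \eqref{trans} --- and which is in addition injective on all of $\bar{R}$ with nowhere vanishing differential.

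Granting such an $f$, set $k(x,y)=a_{n}(f(x),f(y))=(1-\langle f(y),f(x)\rangle)^{-1}$ on $R$. By the discussion in Subsubsection \ref{complete}, $\mathcal{H}_{k}(R)$ is then a reproducing kernel Hilbert space of functions on $R$ --- here the injectivity of $f$ on $R$ is what makes it a faithful space of functions on $R$ rather than on $f(R)$ --- whose kernel $k$ has the complete Nevanlinna--Pick property. Since $f$ is moreover injective on $R$ with $df\neq0$ there, Gamelin's structural description of $B(R)$ (as used in the proof of the preceding theorem) forces $B_{0}(R)$ to be dense in $A(R)$, so the final clause of that theorem applies: the continuous inclusion $i\colon\mathcal{H}_{k}(R)\rightarrow H^{2}(R)$ of \eqref{inclusion} is a norm isomorphism of $\mathcal{H}_{k}(R)$ \emph{onto} $H^{2}(R)$. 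Because $i$ is the identity on underlying functions, $\mathcal{H}_{k}(R)$ and $H^{2}(R)$ then consist of exactly the same holomorphic functions on $R$ with comparable norms; that is, $\left\Vert \cdot\right\Vert _{\mathcal{H}_{k}(R)}$ is an equivalent Hilbert space norm on $H^{2}(R)$, and with it $H^{2}(R)$ \emph{is} the reproducing kernel Hilbert space $\mathcal{H}_{k}(R)$, whose kernel is complete Nevanlinna--Pick. That is exactly the statement of the corollary, so it remains to construct $f$.

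When $R$ is simply connected one may take $f$ to be a Riemann map of $R$ onto $\mathbb{B}_{1}$: it extends to a $C^{\infty}$ diffeomorphism of $\bar{R}$ onto $\overline{\mathbb{B}_{1}}$ when $\Gamma$ is smooth, is injective and immersive, carries $\Gamma$ onto $\partial\mathbb{B}_{1}$, and is transverse because $\left\vert f\right\vert ^{2}$ is subharmonic, equals $1$ on $\Gamma$, and by Hopf's lemma has strictly positive outward normal derivative there, which gives $\operatorname{Re}\langle f^{\prime}(z)\mathbf{n},f(z)\rangle>0$ and hence \eqref{trans}. For finitely connected $R$ I would instead take a finite family of Ahlfors maps $\Phi_{a_{1}},\dots,\Phi_{a_{m}}$ of $R$ onto $\mathbb{B}_{1}$, based at points $a_{j}\in R$ in general position, and set $f=m^{-1/2}(\Phi_{a_{1}},\dots,\Phi_{a_{m}})$. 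Each $\Phi_{a_{j}}$ is holomorphic on $R$, extends $C^{\infty}$ to $\bar{R}$, maps $R$ into $\mathbb{B}_{1}$, and has unimodular boundary values on $\Gamma$; hence $\left\vert f\right\vert ^{2}=m^{-1}\sum_{j}\left\vert \Phi_{a_{j}}\right\vert ^{2}<1$ on $R$ and $\equiv1$ on $\Gamma$, so $f$ maps $R$ into $\mathbb{B}_{m}$ with $f(\Gamma)\subset\partial\mathbb{B}_{m}$, and applying Hopf's lemma to each subharmonic $\left\vert \Phi_{a_{j}}\right\vert ^{2}$ again yields \eqref{trans}. The point that genuinely requires work is that, for $m$ large enough and the $a_{j}$ in general position, $f$ is injective on $\bar{R}$ with nowhere vanishing differential --- a single Ahlfors map has critical points once $R$ is multiply connected, so several maps are needed --- and I expect this verification, that some concrete finite family of Ahlfors maps already separates the points and tangent directions of $\bar{R}$, to be the main technical obstacle. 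Everything else is an assembly of the preceding theorem, Proposition \ref{embedd} (through the inclusion \eqref{inclusion}), and the observations of Subsubsection \ref{complete}.
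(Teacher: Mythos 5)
Your overall reduction is exactly the paper's: produce a holomorphic $f:R\rightarrow\mathbb{B}_{n}$, $n$ finite, smooth up to $\bar{R}$, carrying $\Gamma$ into $\partial\mathbb{B}_{n}$ and transverse there, with $B_{0}(R)$ dense in $A(R)$, and then quote the preceding theorem to conclude that $i$ is a norm isomorphism of $\mathcal{H}_{k}(R)$ onto $H^{2}(R)$, so the $\mathcal{H}_{k}$ norm is the desired equivalent complete N-P norm. Your transversality argument (Hopf's lemma applied to the subharmonic functions $\left\vert \Phi_{a_{j}}\right\vert ^{2}$, which are $<1$ inside and $\equiv 1$ on $\Gamma$, so each term $\operatorname{Re}\langle \Phi_{a_{j}}^{\prime}(z)\mathbf{n},\Phi_{a_{j}}(z)\rangle$ is strictly positive and no cancellation can occur) is a perfectly good variant of the phase-coherence lemma the paper proves for the same purpose.

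The genuine gap is the construction of $f$ in the multiply connected case, and you flag it yourself: you never prove that some finite family of Ahlfors maps separates the points and tangent directions of $\bar{R}$, and everything downstream hinges on it. This is not a routine verification — a single Ahlfors map is a proper $m$-to-$1$ map, so it identifies infinitely many pairs of points, and showing that finitely many of them jointly separate points requires a theorem of the type in \cite{Rud2} or \cite{Be} (which the paper cites only as "other constructions"). There is also a logical wrinkle in how you would use the separation even if you had it: Gamelin's structural description of $B(R)$, as invoked in the proof of the preceding theorem, is a description of subalgebras of \emph{finite} codimension, i.e. it presupposes (\ref{codim}); so before arguing that injectivity and $df\neq 0$ rule out the codimension-one steps, you must first get $s<\infty$, e.g. from Lund's half of the paper's last theorem (which itself needs the unproven statement that your maps separate all but finitely many points). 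The paper sidesteps all of this by citing Stout \cite{Sto}: three functions $f_{1},f_{2},f_{3}$, each of modulus one on every boundary component, separating points, with no common critical point, \emph{and} — this is the key point, delivered by Stout rather than rederived — with the polynomials in the $f_{i}$ dense in $A(R)$, so that $B_{0}(R)$ is dense in $A(R)$ immediately and the theorem's final clause applies. Note also that the theorem only requires $f$ to be one-to-one on $\Gamma$, not on $R$ (the paper states this explicitly), so insisting on injectivity of $f$ on all of $\bar{R}$ makes your task harder than necessary; what you really need is the density of $B_{0}(R)$ in $A(R)$, and as written your proposal does not establish it.
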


\begin{proof}
It suffices to find a mapping $f$ of $R$ into some $\mathbb{B}_{n}$ to which
the previous theorem applies and so that $B_{0}(R)$ is dense in $A(R).$ It is
a theorem of Stout \cite{Sto} that one can find a set of three holomorphic
functions $\left\{  f_{i}\right\}  _{i=1,2,3}$ which separate points, with
each $f_{i}$ having modulus identically one on each boundary component and
such that there is no point at which all three functions have vanishing
derivative. In the same paper he shows that under these assumptions the
polynomials in the $f_{i}$ are dense in $A(R).$ We now claim that the mapping
of $z$ to $f(z)=(f_{1}(z),f_{2}(z),f_{3}(z))/\sqrt{3}$ is a map to which the
theorem can be applied. To see that each $f_{i}$ has the required smoothness
note that if one precomposes with a conformal map $\phi$ which takes part of
the unit disc near $1$ to the part of $R$ near a boundary point $z$ then by
the reflection principle the composite is real analytic at $1.$ Hence near $z$
the $f_{i}$ are as smooth as $\phi$ and the local smoothness of $\phi$ is
determined by the smoothness of $\Gamma.$ Also note that, assuming $f_{i}$ is
not constant, $(f_{i}\circ\phi)^{\prime}(1)\neq0,$ for otherwise the image of
a neighborhood of $1$ under the holomorphic map $f_{i}\circ\phi$ would not
stay \emph{inside} the unit disk. In particular, for each nonconstant $f_{i}$
we have $f_{i}^{\prime}\neq0$ on\ $\Gamma.$ To finish we need to verify the
transversality condition (\ref{trans}). For $z\in\Gamma,$ for the nonconstant
$f_{i},$ $f_{i}^{\prime}(z)\overline{f_{i}(z)}\neq0.$ We need to insure that
if several such terms are added there is no cancellation. That follows from
applying the following lemma to each of the $f_{i}$ and noting that the number
$\alpha$ in the lemma is determined by the geometry at the point $z$ but is
independent of the function $g.$
\end{proof}

\begin{lemma}
Suppose $R$ is a domain in $\mathbb{C}$, $\gamma$ is a $C^{2}$ arc forming
part of $\partial\bar{R}$ and $z\in\gamma.$ There is a real number $\alpha$ so
that for any $g$ holomorphic on $R$ with $\left\vert g(z)\right\vert =1$ on
$\gamma,$ $g^{\prime}(z)\neq0$ and $\left\vert g(w)\right\vert \leq1$ on the
intersection of $R$ with a neighborhood of $z$ we have $g^{\prime}%
(z)\overline{g(z)}=e^{i\alpha}r(g)$ for some positive real number $r(g).$
\end{lemma}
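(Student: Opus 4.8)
The plan is to reduce the statement to a first-order Taylor expansion of $g$ at $z$ and to read the argument of $g'(z)\overline{g(z)}$ directly off the geometry of $\gamma$ at $z$. Let $\mathbf{T}$ be the unit tangent vector to $\gamma$ at $z$, oriented so that $R$ lies to its left, and let $\mathbf{n}=i\mathbf{T}$ be the associated inward unit normal. I claim the lemma holds with
\[
\alpha=\arg\!\left(i\,\overline{\mathbf{T}}\right)\pmod{2\pi},
\]
a quantity determined by $\gamma$ and $z$ alone. (Sanity check: for $R=\mathbb{B}_{1}$, $z=1$ one has $\mathbf{T}=i$, hence $\alpha=0$, consistent with $g'(1)\overline{g(1)}=\frac{1-|a|^{2}}{|1-a|^{2}}>0$ for a Blaschke factor $g(w)=\frac{w-a}{1-\bar a w}$.)

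For the argument itself, since $g'(z)$ exists I would write $g(w)=g(z)+g'(z)(w-z)+o(|w-z|)$ as $w\to z$ in $\bar R$, so that
\[
|g(w)|^{2}=1+2\operatorname{Re}\!\left(\overline{g(z)}\,g'(z)\,(w-z)\right)+o(|w-z|).
\]
Along $\gamma$ near $z$ one has $w-z=\pm s\mathbf{T}+o(s)$ as $s\to0^{+}$, and $|g(w)|=1$ there; letting $s\to0^{+}$ with either sign forces $\operatorname{Re}\!\left(\overline{g(z)}g'(z)\mathbf{T}\right)=0$, i.e.\ $\overline{g(z)}g'(z)\mathbf{T}=i\mu$ with $\mu\in\mathbb{R}$. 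Moving into $R$ along the normal, $w=z+t\mathbf{n}+o(t)$ with $t\to0^{+}$, the hypothesis $|g(w)|\le1$ gives $\operatorname{Re}\!\left(\overline{g(z)}g'(z)\mathbf{n}\right)\le0$, that is $\operatorname{Re}(i\cdot i\mu)=-\mu\le0$, so $\mu\ge0$; and $\mu\neq0$ since $g(z)$, $g'(z)$, $\mathbf{T}$ are all nonzero. Hence $\mu>0$, and because $|\mathbf{T}|=1$,
\[
g'(z)\overline{g(z)}=\overline{g(z)}g'(z)=i\mu\,\overline{\mathbf{T}}=\mu\,e^{i\alpha},
\]
which is the conclusion with $r(g)=\mu=|g'(z)|>0$.

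The one point that needs care — and the main obstacle — is justifying the two directional limits using only that $\gamma$ is $C^{2}$ and that $g$ is holomorphic on $R$ and differentiable at $z$, since the rays $z\pm s\mathbf{T}$ and $z+t\mathbf{n}$ need not literally lie in $\bar R$. I would remove this difficulty by precomposing with a conformal map $\phi$ of a half-disk onto a neighborhood of $z$ in $R$ with $\phi(0)=z$; classical boundary regularity for conformal maps onto $C^{2}$ domains gives that $\phi$ extends $C^{1}$ up to the flat boundary segment with $\phi'$ nonvanishing there, and $\phi'(0)$ is a positive multiple of $\mathbf{T}$. Applying the Taylor argument to $g\circ\phi$ on the half-disk, where the boundary segment and the inward normal direction are genuinely present, yields $\overline{g(z)}g'(z)\phi'(0)=i\mu_{0}$ with $\mu_{0}>0$; dividing by $\phi'(0)=c\,\mathbf{T}$, $c>0$, recovers the displayed identity with $\mu=\mu_{0}/c>0$. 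The remaining steps are elementary.
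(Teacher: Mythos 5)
Your proof is correct, and at the level of overall strategy it mirrors the paper's: both arguments transfer the problem to a model boundary via a conformal map $\phi$ whose $C^{1}$ extension and nonvanishing derivative at the boundary point come from the $C^{2}$ smoothness of $\gamma$ (Kellogg--Warschawski), and then observe that $\arg\phi'$ at the base point is a purely geometric quantity, so the argument of $g'(z)\overline{g(z)}$ is independent of $g$. Where you genuinely differ is in the model case: the paper maps a piece of the unit disc near $1$ to $R$, extends $g$ by Schwarz reflection, and argues that conformality forces the linearization of $g$ to carry the outward normal at $1$ to the outward normal at $g(1)$, giving $g'(1)\overline{g(1)}>0$; you instead work on a half-disk and run a first-order Taylor expansion of $\left\vert g\circ\phi\right\vert ^{2}$, getting the vanishing of the real part in the tangential direction from $\left\vert g\right\vert =1$ on the boundary segment and the sign in the normal direction from $\left\vert g\right\vert \leq1$ inside, with nondegeneracy from $g'(z)\neq0$. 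Your route is more computational and buys an explicit value $\alpha=\arg\left(  i\overline{\mathbf{T}}\right)  $ (with $\mathbf{T}$ the unit tangent oriented with $R$ on the left), which the paper leaves implicit as $-\arg\phi'$; the paper's route is shorter because the reflection principle simultaneously legitimizes the boundary derivative and the normal-preservation statement. The only point you should make explicit is why $g\circ\phi$ is differentiable at the boundary point of the half-disk: either invoke reflection for $g\circ\phi$ across the flat segment (as the paper does for $g$ in its model case), or note that the lemma's hypothesis already presupposes a derivative $g'(z)$ at $z$, so the chain rule with the $C^{1}$ map $\phi$ suffices; as written you lean on the latter, which is consistent with the statement but worth saying in one sentence.
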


\begin{proof}
First consider the case when $\gamma$ is part of the unit circle near $z=1$
and locally $R$ is inside the circle. By the reflection principle $g$ extends
to a holomorphic function on a neighbourhood of $z$ which insures that the
hypotheses about the boundary behavior of $g$ are well formulated. We have
$g(1)=\eta$ with $\left\vert \eta\right\vert =1.$ By conformality and the fact
that $g$ takes part of the circle to part of the circle, the linearization of
$g$ must map the outward pointing normal at $1$ to the outward pointing normal
at $\eta.$ Thus $g^{\prime}(1)=\eta r$ for some positive $r$ and hence
$g^{\prime}(1)\overline{g(1)}=\eta r\bar{\eta}=r$ as required. For the general
case let $\phi$ be a conformal mapping of the part of the unit disc near $1$
to the interior of $R$ near $z$ which takes $1 $ to $z$. If $\gamma$ is
$C^{2}$ near $z$ then $\phi$ will be at least $C^{1}$ at $1$ and thus we can
apply the result from the special case to $g\circ\phi.$ That gives%
\[
0<(g\circ\phi)^{\prime}(1)\overline{(g\circ\phi)(1)}=g^{\prime}(\phi
(1))\phi^{\prime}(1)\overline{g(\phi(1))}=g^{\prime}(z)\phi^{\prime
}(1)\overline{g(z)}%
\]
and $\arg(g^{\prime}(z)\overline{g(z)})$ is independent of $g$ as
required.\medskip
\end{proof}

\begin{remark}
A reason for taking note of this corollary is that, while it is known that the
classical Hardy space of the disc does have a complete N-P kernel, the various
classically defined norms on the Hardy spaces of multiply connected domains do
not have this property. (Actually it is not known if the property always
fails; it is known to fail sometimes and there are no known cases until now
using the classically defined norms where it holds.) Hence it is interesting
that the spaces do carry relatively natural equivalent norms with the
property. See \cite{AgMc2} for further discussion.
\end{remark}

Also, as in \cite{APV}, we obtain extension theorems as follows. Suppose now
that we are in the situation of the previous theorem and (\ref{codim}) holds.
As noted in that proof, we will have $\mathcal{H}_{k}(R)=V^{\perp}$ where $V$
is a \emph{finite} dimensional subspace of $H^{2}(R)$ and the orthogonality is
in $H^{2}(R).$ In that case the multiplier algebra $\mathcal{H}_{k}(R),$
$M_{\mathcal{H}_{k}(R)},$ will be
\[
H^{\infty}(R)\cap\mathcal{H}_{k}(R)=H^{\infty}(R)\cap V^{\perp}=w^{\ast
}\text{-closure of }B(R)\text{ in }H^{\infty}(R).
\]
The facts that multipliers must be bounded and that $1\in\mathcal{H}_{k}(R)$
insure $M_{\mathcal{H}_{k}(R)}$ is contained in that space. On the other hand
if $b\in H^{\infty}(R)$ then $b$ multiplies $\mathcal{H}_{k}(R)$ into
$H^{2}(R).$ We then need to know that if $b$ is also in $V^{\perp}$ and that
if $g\in V^{\perp}$ then $bg,$ which we know to be in $H^{2}(R),$ is also in
$V^{\perp}.$ That is insured by the fact that membership in $V^{\perp}$ is
determined by local conditions which have the form that if two functions
satisfy them then so does the product.

The fact that membership in $V^{\perp}$ is determined locally allows us to
have a more intrinsic description of the multipliers. First note that for any
function $h$ in $H^{\infty}(R)\cap\mathcal{H}_{k}(R),$ the function $Th $
defined on $f(R)$ by $Th(f(z))=h(z)$ is a function on $f(R)$ which can be
obtained by restricting a function in $H_{n}^{2}$ to $f(R).$ This insures that
given $z\in f(R)$ there is a neighbourhood $V_{z}\subset\mathbb{B}_{n}$ and a
holomorphic function $h_{z}^{\ast}$ defined on $V_{z}$ such that $h_{z}^{\ast
}=Th$ on $V_{z}\cap f(R).$ We will say that a function $j$ on $f(R)$ that has
this property, i.e. for each $z$ in $f(R)$ one can find a holomorphic
extension of $j$ to a full neighbourhood of $z$ in $\mathbb{C}^{n},$ has the
\emph{local extension property}. Suppose conversely that $h\in H^{\infty}(R)$
is such that $Th$ has that local extension property. The function $Th$ will
then be the uniform limit on compact subsets of $V_{z}\cap f(R)$ of
polynomials. However any polynomial on $\mathbb{C}^{n}$ when restricted to
$f(R)$ gives a function of the form $Tb$ for some $b\in B(R).$ Thus at each
point of $R$ there is neighbourhood in which $h$ can be locally uniformly
approximated by elements of a $B(R).$That insures that the bounded function
$h$ is in the $w^{\ast}$-closure of $B(R)$ in $H^{\infty}(R)$ which, we just
noted, equals $M_{\mathcal{H}_{k}(R)}.$

We have established the following corollary.

\begin{corollary}
Suppose we are in the situation of the previous theorem and (\ref{codim})
holds. If $h$ is a bounded holomorphic function on $f(R)$ and which has the
local extension property$\ $then there is a bounded function $H$ in $H_{n}%
^{2}$ such that $H$ restricted to $f(R)$ agrees with $h;$ in fact $H$ can be
chosen in $M_{H_{n}^{2}}.$ If the codimension $s=0$ then the local extension
property is automatically satisfied.
\end{corollary}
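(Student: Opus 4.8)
The plan is to assemble the three structural facts established in the discussion preceding the statement. Write $V=\operatorname{span}\{h_{1},\dots,h_{s}\}\subset H^{2}(R)$ for the finite dimensional subspace produced, via Gamelin's description of $B(R)$ (\cite{Ga}), by the previous theorem, so that $\mathcal{H}_{k}(R)=V^{\perp}$ and, as noted there, $M_{\mathcal{H}_{k}(R)}=H^{\infty}(R)\cap V^{\perp}$ coincides with the $w^{*}$-closure of $B(R)$ in $H^{\infty}(R)$. The first link is already in hand: each defining condition of $V^{\perp}$ is a local condition at an interior point of $R$ (either a two-point identification $x\leftrightarrow y$ or a point derivation at $x$) and is preserved under products, so membership in $V^{\perp}$ is a local, multiplicative property.

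Next I would translate the local extension property into membership in that closure. Given $h$ bounded holomorphic on $f(R)$ with the local extension property, set $g=h\circ f\in H^{\infty}(R)$, so that $Tg=h$ on $f(R)$. By hypothesis, near each point of $f(R)$ the function $h$ extends holomorphically to a full neighbourhood in $\mathbb{C}^{n}$, hence is there a locally uniform limit of polynomials; restricting these polynomials to $f(R)$ gives functions of the form $Tb$ with $b\in B_{0}(R)\subset B(R)$. Therefore at every point of $R$ the function $g$ is a locally uniform limit of elements of $B(R)$; since membership in $V^{\perp}$ is local, $g\in V^{\perp}$, and being bounded, $g\in H^{\infty}(R)\cap V^{\perp}=M_{\mathcal{H}_{k}(R)}$, with some multiplier norm $M$.

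Finally I would lift $M_{g}$ from $\mathcal{H}_{k}(R)$, realized via $(\ref{tee})$ as the subspace $(V_{f(R)})^{\perp}=H_{n}^{2}/V_{f(R)}$ of $H_{n}^{2}$, to a multiplier of $H_{n}^{2}$ itself. Running the Alpay--Putinar--Vinnakov argument used earlier (\cite{APV}) verbatim: boundedness of $M_{g}$ by $M$ yields positive semidefiniteness of the Pick matrices $\bigl[(M^{2}-h(w_{i})\overline{h(w_{j})})\,a_{n}(w_{i},w_{j})\bigr]_{i,j=1}^{m}$ for finite sequences $\{w_{i}\}\subset f(R)$, and, choosing a sequence dense in $f(R)$, the complete Nevanlinna--Pick property of $H_{n}^{2}$ produces $H=\varphi\in M_{H_{n}^{2}}$ with $\|\varphi\|_{M_{H_{n}^{2}}}\le M$ agreeing with $h$ on $f(R)$. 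For the last assertion, if $s=0$ then $V=\{0\}$ and $\mathcal{H}_{k}(R)=H^{2}(R)$, so $H^{\infty}(R)\cap\mathcal{H}_{k}(R)=H^{\infty}(R)$; by the first observation recorded just before the corollary, $T(\cdot)$ has the local extension property for every element of $H^{\infty}(R)\cap\mathcal{H}_{k}(R)$, hence for $h\circ f$ whenever $h$ is bounded holomorphic on $f(R)$, i.e. the hypothesis is automatic. I expect the only real work to be in this last lifting step --- ensuring the lifted multiplier keeps the bound $M$, which is exactly what the complete N-P property of $H_{n}^{2}$ buys, provided one is careful that agreement on a dense subset of $f(R)$ propagates to all of $f(R)$; the first two steps are bookkeeping with the structure of $B(R)$.
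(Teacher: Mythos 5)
Your proposal is correct and follows essentially the same route as the paper: you identify $h\circ f$ as an element of $M_{\mathcal{H}_k(R)}=H^{\infty}(R)\cap V^{\perp}$ (the $w^{*}$-closure of $B(R)$) by combining the local extension hypothesis with local polynomial approximation and the locality of the conditions defining $V^{\perp}$, and then lift to $M_{H_n^2}$ via the Pick-matrix argument of \cite{APV} with a dense sequence in $f(R)$, exactly as in the discussion preceding the corollary. The treatment of the $s=0$ case also matches the paper's observation that any function in $H^{\infty}(R)\cap\mathcal{H}_k(R)$ automatically yields the local extension property.
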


This result applies, for instance, to the maps $f$ used in the proof of
Corollary \ref{NP}. A different type of example is the following. Pick and fix
$L>1$ and let $R$ be the ring domain $R=R_{L}=\left\{  z:L^{-1}<\left\vert
z\right\vert <L\right\}  .$\ Let $f$ be the mapping of $R_{L}$ into
$\mathbb{B}_{2}$ given by
\[
f(z)=c(z,z^{-1})\text{ with }c=\frac{L^{2}}{1+L^{4}}.
\]
In this case $s=0.$ By the theorem $\mathcal{H}_{k}$ is isomorphic to
$H^{2}(R_{L})$ and by the corollary $f(R_{L})$ has the extension property.

In fact, for this particular map there is no need for a general theorem. We
can define $H^{2}(R_{L})$ using a computationally convenient boundary
measure;\ let $d\sigma_{L^{-1}}$ and $d\sigma_{L}$ be arc length measure on
the two circles which form $\partial\bar{R}$ and set $d\tau=$ $(2\pi
L^{-1})^{-1}d\sigma_{L^{-1}}+(2\pi L)^{-1}d\sigma_{L}\ $giving mass $1$ to
each boundary component. Let $H^{2}(R_{L})$ be the closure in $L^{2}%
(\partial\bar{R},d\tau)$ of the rational functions with poles off $R$ or,
equivalently, the closure of the space of polynomials in $z$ and $1/z.$ The
monomials $\left\{  z^{n}\right\}  _{n=-\infty}^{\infty}$ are an orthogonal
basis for $H^{2}(A)$ and we have
\begin{equation}
\left\Vert z^{n}\right\Vert _{H^{2}(R_{L})}^{2}=L^{2\left\vert n\right\vert
}+L^{-2\left\vert n\right\vert }. \label{norm}%
\end{equation}
\ On the other hand $\mathcal{H}_{k}(R)$ has reproducing kernels
\[
k(z,w)=a_{2}(f(z),f(w))=\frac{1}{1-c\bar{z}w-c\frac{1}{\bar{z}w}}.
\]
The norm on $\mathcal{H}_{k}(R)$ is rotationally invariant and hence the
monomials are again an orthogonal basis. Thus to compare $\mathcal{H}_{k}(R)$
to $H^{2}(R_{L})$ it is enough to compute the norm of the monomials in
$\mathcal{H}_{k}(R).$ Doing a partial fraction decomposition of the
reproducing kernel and then a power series expansion gives \
\[
k(z,w)=\frac{L^{4}+1}{L^{2}-1}\sum_{n=-\infty}^{\infty}\frac{\left(  \bar
{z}w\right)  ^{n}}{L^{2\left\vert n\right\vert }}%
\]
and hence
\[
\left\Vert z^{n}\right\Vert _{\mathcal{H}_{k}}^{2}=\frac{L^{2}-1}{L^{4}%
+1}L^{2\left\vert n\right\vert }.
\]
Comparison with (\ref{norm}) shows that the identity map between the two
spaces is an isomorphism.

It is not clear what the natural hypotheses are to insure that (\ref{codim})
holds, however results of B. Lund \cite{Lu} and E. Stout \cite{Sto} cover a
large category of cases. See also Theorem 3 of E. Bishop in \cite{Bish}.

\begin{theorem}
Suppose $B(R)$ contains a nonconstant function $h_{1}$ which has modulus
identically one on $\partial\bar{R}$. Suppose further that there are
$h_{2},...,h_{n}$ in $B(R)$ so that the mapping $H=(h_{1},h_{2},...,h_{n})$
separates all but finitely many points of $R.$ Then
\[
\dim(A(R)/B(R))=s<\infty.
\]
If in fact $H$ can be chosen so that $H$ separates every pair of points and
the differential $dH$ is nonvanishing then $s=0.$
\end{theorem}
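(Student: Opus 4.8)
The plan is to sandwich $B(R)$ between $A(R)$ and the smaller algebra $\tilde B$ generated by $h_{1},\dots,h_{n}$, and to show that $\tilde B$ already has finite codimension in $A(R)$. Precisely, let $\tilde B$ be the uniform closure in $A(R)$ of the polynomials in $h_{1},\dots,h_{n}$; since each $h_{j}$ lies in the closed algebra $B(R)$ we have $\tilde B\subseteq B(R)\subseteq A(R)$, so $\dim(A(R)/B(R))\le\dim(A(R)/\tilde B)$ and it suffices to bound the latter. Write $H=(h_{1},\dots,h_{n}):\bar R\to\mathbb C^{n}$. First I would record the structural consequences of the hypothesis on $h_{1}$: being nonconstant with $|h_{1}|\equiv1$ on $\partial\bar R$, the maximum principle gives $|h_{1}|<1$ on $R$, and a routine maximum-modulus argument applied to the Möbius transforms $(h_{1}-w_{0})/(1-\bar w_{0}h_{1})$ shows that $h_{1}-w_{0}$ vanishes somewhere in $R$ for every $w_{0}\in\mathbb D$ and that $h_{1}^{-1}(K)$ is compact for compact $K\subset\mathbb D$. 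Hence $h_{1}:R\to\mathbb D$ is a proper holomorphic map, i.e. a finite branched covering of some degree $d$, with finite branch locus, $h_{1}'\ne0$ in a one-sided neighborhood of $\partial\bar R$. The locus where $dH=0$ is contained in the branch locus of $h_{1}$ (since $dH(p)=0$ forces $h_{1}'(p)=0$), hence finite; and by hypothesis the set of pairs $p\ne q$ in $R$ with $H(p)=H(q)$ is finite. Thus $H$ realizes $\bar R$, away from a finite set, as an embedded bordered analytic curve $\Omega=H(R)$ whose boundary $H(\partial\bar R)$ lies in $\{|z_{1}|=1\}$.

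The heart of the argument is the uniform polynomial approximation theorem of Stout \cite{Sto} (and its extension by Lund \cite{Lu}) for exactly this configuration: when one generator is proper onto the disc in the above sense and the joint map is generically injective with generically nonvanishing differential, the closed algebra generated by the coordinate functions coincides with the algebra of all $g\in A(R)$ satisfying the finitely many linear constraints forced by the map --- namely $g(p)=g(q)$ for each of the finitely many coincidence pairs, together with the vanishing of the appropriate finitely many point-derivations (counted with multiplicity) at the critical points. Equivalently, the polynomial hull of $H(\bar R)$ in $\mathbb C^{n}$ is $H(\bar R)$ itself, carries no extra analytic structure, and polynomial approximation holds up to this finite obstruction. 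Granting this, $\tilde B$ is the intersection of finitely many kernels of bounded linear functionals on $A(R)$, so $\dim(A(R)/\tilde B)<\infty$; with the sandwich above this proves $\dim(A(R)/B(R))=s<\infty$.

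For the final assertion, if $H$ can be chosen to separate every pair of points of $R$ and to have nowhere-vanishing differential $dH$, then the list of coincidence pairs and of critical points is empty, so Stout's theorem gives $\tilde B=A(R)$; hence $B(R)=A(R)$ and $s=\dim(A(R)/B(R))=0$. This is consistent with, and contains, the $s=0$ instances already used in the proof of Corollary~\ref{NP}.

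The genuinely hard step is the hull/approximation statement quoted in the second paragraph: showing that adjoining the coordinate functions of $H$ to the polynomials produces no spurious maximal ideals, i.e. that the polynomial hull of the bordered curve $H(\bar R)$ collapses to the curve and that approximation degrades only by the obvious finite-dimensional amount. This is where $|h_{1}|\equiv1$ on $\partial\bar R$ does the work: it confines $H(\bar R)$ to a region ($\bar{\mathbb D}\times\mathbb C^{n-1}$, with boundary pushed into the distinguished boundary $\{|z_{1}|=1\}$) whose polynomial-convexity properties preclude hidden analytic discs --- and it is precisely the content of the cited theorems of Stout and Lund, which we invoke rather than reprove. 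Everything else is bookkeeping: verifying that the coincidence set and the critical set are finite (done above), that each resulting constraint extends to a bounded functional on $A(R)$ (immediate, since the points involved lie in $R$, not on $\partial\bar R$), and that finitely many such constraints cut out a subalgebra of finite codimension.
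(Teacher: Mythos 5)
Your argument is essentially the paper's: the paper proves this theorem by direct citation, the first statement being Lund's theorem (with the extension from exact separation to separation off a finite set noted as straightforward) and the second being Stout's density theorem, and your proof likewise delegates the key finite-codimension/approximation statement to those same references, adding only the routine reduction to the closed subalgebra generated by $h_1,\dots,h_n$ and the verification (properness of $h_1$, Riemann--Hurwitz finiteness of the critical and coincidence sets) that the exceptional data are finite. So the proposal is correct and takes the same route as the paper, merely spelling out bookkeeping the paper leaves implicit.
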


\begin{proof}
The first statement is in \cite{Lu}, the second in \cite{Sto}. (The result in
\cite{Lu} is for the case in which $H$ separates all pairs of points. The
extension to the more general situation is straightforward.)
\end{proof}

Other constructions which can be used to form maps $f$ of interest in this
context are in \cite{Rud2} and \cite{Be}.

\begin{remark}
It was pointed out to us by John M$^{c}$Carthy that by using Corollary
\ref{NP} together with techniques from Chapter 14 of \cite{AgMc2} it is
possible to prove dilation and extension theorems for operators $T$ which have
spectrum in $\bar{R}$ and which satisfy the operator inequality%
\[
I-%
{\textstyle\sum_{i=1}^{3}}
f_{i}(T)f_{i}(T)^{\ast}\geq0,
\]
where the $f_{i}$ are the functions from the proof of Corollary \ref{NP}. We
plan to return to this issue in a later paper.
\end{remark}

\section{Inequalities on trees}

We now recall some of our earlier results in \cite{ArRoSa} and \cite{ArRoSa2}
on Carleson measures for the Dirichlet space $B_{2}\left(  \mathbb{B}%
_{n}\right)  $ on the unit ball $\mathbb{B}_{n}$, as well as for certain
$B_{2}\left(  \mathcal{T}\right)  $ spaces on trees $\mathcal{T}$, including
the Bergman trees $\mathcal{T}_{n}$. By a tree we mean a connected loopless
graph $\mathcal{T}$ with a root $o$ and a partial order $\leq$ defined by
$\alpha\leq\beta$ if $\alpha$ belongs to the geodesic $\left[  o,\beta\right]
$. See for example \cite{ArRoSa} for more details. We define $B_{2}\left(
\mathbb{B}_{1}\right)  $ on the unit disc and $B_{2}\left(  \mathcal{T}%
\right)  $ on a tree respectively by the norms
\[
\left\Vert f\right\Vert _{B_{2}\left(  \mathbb{B}_{1}\right)  }=\left(
\int_{\mathbb{B}_{1}}\left\vert \left(  1-\left\vert z\right\vert ^{2}\right)
f^{\prime}\left(  z\right)  \right\vert ^{2}\frac{dz}{\left(  1-\left\vert
z\right\vert ^{2}\right)  ^{2}}+\left\vert f\left(  0\right)  \right\vert
^{2}\right)  ^{\frac{1}{2}},
\]
for $f$ holomorphic on $\mathbb{B}_{1}$, and
\[
\left\Vert f\right\Vert _{B_{2}\left(  \mathcal{T}\right)  }=\left(
\sum_{\alpha\in\mathcal{T}:\alpha\neq o}\left\vert f\left(  \alpha\right)
-f\left(  A\alpha\right)  \right\vert ^{2}+\left\vert f\left(  o\right)
\right\vert ^{2}\right)  ^{\frac{1}{2}},
\]
for $f$ on the tree $\mathcal{T}$. Here $A\alpha$ denotes the immediate
predecessor of $\alpha$ in the tree $\mathcal{T}$. We define the weighted
Lebesgue space $L_{\mu}^{2}\left(  \mathcal{T}\right)  $ on the tree by the
norm
\[
\left\Vert f\right\Vert _{L_{\mu}^{2}\left(  \mathcal{T}\right)  }=\left(
\sum_{\alpha\in\mathcal{T}}\left\vert f\left(  \alpha\right)  \right\vert
^{2}\mu\left(  \alpha\right)  \right)  ^{\frac{1}{2}},
\]
for $f$ and $\mu$ on the tree $\mathcal{T}$. We say that $\mu$ is a
$B_{2}\left(  \mathcal{T}\right)  $-Carleson measure on the tree $\mathcal{T}$
if $B_{2}\left(  \mathcal{T}\right)  $ embeds continuously into $L_{\mu}%
^{2}\left(  \mathcal{T}\right)  $, i.e.
\begin{equation}
\left(  \sum_{\alpha\in\mathcal{T}}If\left(  \alpha\right)  ^{2}\mu\left(
\alpha\right)  \right)  ^{1/2}\leq C\left(  \sum_{\alpha\in\mathcal{T}%
}f\left(  \alpha\right)  ^{2}\right)  ^{1/2},\;\;\;\;\;f\geq0,
\label{Carleson}%
\end{equation}
or equivalently, by duality,
\begin{equation}
\left(  \sum_{\alpha\in\mathcal{T}}I^{\ast}\left(  g\mu\right)  \left(
\alpha\right)  ^{2}\right)  ^{1/2}\leq C\left(  \sum_{\alpha\in\mathcal{T}%
}g\left(  \alpha\right)  ^{2}\mu\left(  \alpha\right)  \right)  ^{1/2}%
,\;\;\;\;\;g\geq0, \label{Carleson'}%
\end{equation}
where
\begin{align*}
If\left(  \alpha\right)   &  =\sum_{\beta\in\mathcal{T}:\beta\leq\alpha
}f\left(  \beta\right)  ,\\
I^{\ast}\left(  g\mu\right)  \left(  \alpha\right)   &  =\sum_{\beta
\in\mathcal{T}:\beta\geq\alpha}g\left(  \beta\right)  \mu\left(  \beta\right)
.
\end{align*}
If (\ref{Carleson}) is satisfied, we say that $\mu$ is a $B_{2}\left(
\mathcal{T}\right)  $-Carleson measure on the tree $\mathcal{T}$. A necessary
and sufficient condition for (\ref{Carleson}) given in \cite{ArRoSa} is the
discrete tree condition
\begin{equation}
\sum_{\beta\in\mathcal{T}:\beta\geq\alpha}I^{\ast}\mu\left(  \beta\right)
^{2}\leq C^{2}I^{\ast}\mu\left(  \alpha\right)  <\infty,\;\;\;\;\;\alpha
\in\mathcal{T}, \label{NASC}%
\end{equation}
which is obtained by testing (\ref{Carleson'}) over $g=\chi_{S_{\alpha}}$,
$\alpha\in\mathcal{T}$. We note that a simpler necessary condition for
(\ref{Carleson}) is
\begin{equation}
d\left(  \alpha\right)  I^{\ast}\mu\left(  \alpha\right)  \leq C^{2},
\label{simple}%
\end{equation}
which one obtains by testing (\ref{Carleson}) over $f=I^{\ast}\delta_{\alpha
}=\chi_{\left[  0,\alpha\right]  }$. However, condition (\ref{simple}) is not
in general sufficient for (\ref{Carleson}) as evidenced by certain Cantor-like
measures $\mu$.

We also have the more general two-weight tree theorem from \cite{ArRoSa}.

\begin{theorem}
\label{Tars}Let $w$ and $v$ be nonnegative weights on a tree $\mathcal{T}$.
Then,
\begin{equation}
\left(  \sum_{\alpha\in\mathcal{T}}If\left(  \alpha\right)  ^{2}w\left(
\alpha\right)  \right)  ^{1/2}\leq C\left(  \sum_{\alpha\in\mathcal{T}%
}f\left(  \alpha\right)  ^{2}v\left(  \alpha\right)  \right)  ^{1/2}%
,\;\;\;\;\;f\geq0, \label{star}%
\end{equation}
if and only if
\[
\sum_{\beta\geq\alpha}I^{*}w\left(  \beta\right)  ^{2}v\left(  \beta\right)
^{-1}\leq CI^{*}w\left(  \alpha\right)  <\infty,\;\;\;\;\;\alpha\in
\mathcal{T}.
\]

\end{theorem}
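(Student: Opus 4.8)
The plan is to treat the two implications separately: necessity by a duality argument with indicator test functions, and sufficiency by a self-improving Cauchy--Schwarz estimate resting on the elementary summation-by-parts inequality $\left(\sum_j a_j\right)^{2}\le 2\sum_j a_j\left(\sum_{i\le j}a_i\right)$, valid for $a_j\ge 0$. As a preliminary reduction, by exhausting $\mathcal{T}$ by finite subtrees and checking that the constants are stable I may assume $\mathcal{T}$ is finite, so that every sum below is finite and the manipulations (Tonelli for interchanging sums of nonnegative terms, and division by finite quantities) are legitimate; the degenerate cases $v(\beta)=0$ are handled by the conventions $x/0=+\infty$ for $x>0$ and $0/0=0$, after noting that the tree condition forces $I^{*}w(\beta)=0$ whenever $v(\beta)=0$. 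I also record the adjoint identity: with $I^{*}(g)(\beta)=\sum_{\gamma\ge\beta}g(\gamma)$ one has $\langle If,h\rangle_{\ell^{2}(w)}=\langle f,\,v^{-1}I^{*}(hw)\rangle_{\ell^{2}(v)}$, so that (\ref{star}) is equivalent to the dual inequality $\sum_\alpha I^{*}(hw)(\alpha)^{2}v(\alpha)^{-1}\le C^{2}\sum_\alpha h(\alpha)^{2}w(\alpha)$ for all $h\ge 0$.

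For necessity I would test this dual inequality on $h=\chi_{S_\alpha}$, where $S_\alpha=\{\beta\in\mathcal{T}:\beta\ge\alpha\}$. For $\gamma\ge\alpha$ one has $I^{*}(\chi_{S_\alpha}w)(\gamma)=I^{*}w(\gamma)$, so the left side is at least $\sum_{\gamma\ge\alpha}I^{*}w(\gamma)^{2}v(\gamma)^{-1}$ while the right side equals $C^{2}I^{*}w(\alpha)$; this is exactly the asserted condition. The finiteness $I^{*}w(\alpha)<\infty$ is forced as well, since otherwise (\ref{star}) already fails on $f=\chi_{\{\alpha\}}$.

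For sufficiency I would argue directly on (\ref{star}). Since $If$ is nondecreasing along geodesics from the root, summation by parts along $[o,\alpha]$ gives $If(\alpha)^{2}\le 2\sum_{\beta\le\alpha}f(\beta)If(\beta)$; multiplying by $w(\alpha)$ and interchanging sums yields
\[
A:=\sum_\alpha If(\alpha)^{2}w(\alpha)\;\le\;2\sum_\beta f(\beta)\,If(\beta)\,I^{*}w(\beta).
\]
Put $B=\sum_\alpha f(\alpha)^{2}v(\alpha)$ and $D=\sum_\beta If(\beta)^{2}I^{*}w(\beta)^{2}v(\beta)^{-1}$. Splitting the weight as $v(\beta)\cdot v(\beta)^{-1}$ and applying Cauchy--Schwarz gives $\sum_\beta f(\beta)If(\beta)I^{*}w(\beta)\le B^{1/2}D^{1/2}$, hence $A\le 2B^{1/2}D^{1/2}$. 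The crucial point is that $D$ is controlled by $B$ alone: applying the same summation-by-parts bound to $If(\beta)^{2}$ inside the definition of $D$, interchanging sums, and invoking the hypothesis in the form $\sum_{\beta\ge\gamma}I^{*}w(\beta)^{2}v(\beta)^{-1}\le C\,I^{*}w(\gamma)$ gives
\[
D\;\le\;2\sum_\gamma f(\gamma)If(\gamma)\sum_{\beta\ge\gamma}\frac{I^{*}w(\beta)^{2}}{v(\beta)}\;\le\;2C\sum_\gamma f(\gamma)If(\gamma)I^{*}w(\gamma)\;\le\;2CB^{1/2}D^{1/2},
\]
the last step being the same Cauchy--Schwarz. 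Since $D<\infty$ this forces $D^{1/2}\le 2CB^{1/2}$, whence $A\le 2B^{1/2}D^{1/2}\le 4CB$, which is (\ref{star}).

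The one genuinely delicate step is this double use of the summation-by-parts inequality to produce the self-improving estimate $D\lesssim B$: the tree condition must be inserted precisely at the moment when the interchange of summation has converted $\sum_{\beta\ge\gamma}I^{*}w(\beta)^{2}v(\beta)^{-1}$ into a multiple of $I^{*}w(\gamma)$, and the loop then closes only because $D$ is known to be finite --- which is why the reduction to finite trees at the outset is more than a formality. The remaining ingredients (the adjoint computation, the indicator test in the necessity direction, and the limiting argument back to infinite trees) are routine.
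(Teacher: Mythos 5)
Your proof is correct, but it follows a genuinely different route from the one this paper relies on. Here Theorem \ref{Tars} is quoted from \cite{ArRoSa}, and the sufficiency direction is reproved in the subsection on unified proofs for trees by a duality-plus-maximal-function argument: the dual embedding (\ref{Carleson'2}) is rewritten in the form (\ref{SstarBesov}), observed to be a Hardy-type embedding (\ref{SstarHardy}) for the measures $\left\vert \mathcal{S}^{\ast}\left(\cdot\right)\right\vert _{\mu}^{2}d\omega$ and $\mu$, for which the tree condition (\ref{NASC2}) is exactly the simple condition, and then Theorem \ref{unity} (an $L^{\infty}$ bound plus a weak type $(1,1)$ estimate and Marcinkiewicz interpolation for the tree maximal operator) finishes the proof. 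You instead work on the primal inequality: the elementary bound $If(\alpha)^{2}\leq 2\sum_{\beta\leq\alpha}f(\beta)If(\beta)$, an interchange of sums, Cauchy--Schwarz against the weight $v$, and a second application of the same two steps inside $D=\sum_{\beta}If(\beta)^{2}I^{\ast}w(\beta)^{2}v(\beta)^{-1}$ to close the bootstrap $D\leq 2CB^{1/2}D^{1/2}$. Your necessity argument (testing the dual inequality on $\chi_{S_{\alpha}}$) is the same as the paper's remark following (\ref{NASC}). What your route buys is a short, self-contained proof with explicit constants (the constant in (\ref{star}) is at most $2\sqrt{C}$, and conversely the tree constant is at most $C^{2}$), avoiding the maximal operator and interpolation; what the paper's route buys is a single mechanism (Theorem \ref{unity}) that simultaneously handles the boundary Hardy embedding and is reused later in the analysis of $H_{n}^{2}$.

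One caveat on your preliminary reduction: replacing $\mathcal{T}$ by a finite subtree with the restricted weight $w\chi_{\mathcal{T}_{N}}$ is not innocuous, because the right-hand side $I^{\ast}w(\alpha)$ of the tree condition shrinks under restriction of $w$, so the hypothesis is not obviously inherited with a uniform constant; as phrased, "checking that the constants are stable" hides exactly the kind of assertion the theorem itself is needed to justify. The repair is immediate within your own scheme: keep the full tree and the full hypothesis, and prove (\ref{star}) for finitely supported $f$; then $If$ is bounded and supported in $\cup_{\gamma\in\operatorname{supp}f}S(\gamma)$, so $D\leq\left(\sum_{\gamma}f(\gamma)\right)^{2}\sum_{\gamma\in\operatorname{supp}f}CI^{\ast}w(\gamma)<\infty$ directly from the hypothesis, the division is legitimate, and monotone convergence in $f$ gives the general case. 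With that adjustment the argument is complete.
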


We now specialize the tree $\mathcal{T}$ to the Bergman tree $\mathcal{T}_{n}
$ associated with the usual decomposition of the unit ball $\mathbb{B}_{n}$
into top halves of Carleson \textquotedblleft boxes\textquotedblright\ or
Bergman \textquotedblleft kubes\textquotedblright\ $K_{\alpha}$. See
Subsection 2.2 in \cite{ArRoSa2} and Subsection 2.4 in \cite{Zhu} for details.
The following characterization of $B_{2}\left(  \mathbb{B}_{n}\right)
$-Carleson measures on the unit ball $\mathbb{B}_{n}$ is from \cite{ArRoSa2}.
Given a positive measure $\mu$ on the ball, we denote by $\widehat{\mu}$ the
associated measure on the Bergman tree $\mathcal{T}_{n}$ given by
$\widehat{\mu}\left(  \alpha\right)  =\int_{K_{\alpha}}d\mu$ for $\alpha
\in\mathcal{T}_{n}$. We say that $\mu$ is a $B_{2}\left(  \mathbb{B}%
_{n}\right)  $-Carleson measure on the unit ball $\mathbb{B}_{n}$ if
\begin{equation}
\left(  \int_{\mathbb{B}_{n}}\left\vert f\left(  z\right)  \right\vert
^{2}d\mu\left(  z\right)  \right)  ^{\frac{1}{2}}\leq C\left\Vert f\right\Vert
_{B_{2}},\;\;\;\;\;f\in B_{2}, \label{C'''}%
\end{equation}
and that $\widehat{\mu}$ is a $B_{2}\left(  \mathcal{T}_{n}\right)  $-Carleson
measure on the Bergman tree $\mathcal{T}_{n}$ if
\begin{equation}
\left(  \sum_{\alpha\in\mathcal{T}_{n}}If\left(  \alpha\right)  ^{2}%
\widehat{\mu}\left(  \alpha\right)  \right)  ^{1/2}\leq C\left(  \sum
_{\alpha\in\mathcal{T}_{n}}f\left(  \alpha\right)  ^{2}\right)  ^{1/2}%
,\;\;\;\;\;f\geq0. \label{T'''}%
\end{equation}

\begin{theorem}
\label{Besov}Suppose $\mu$ is a positive measure on the unit ball
$\mathbb{B}_{n}$. Then with constants depending only on dimension $n$, the
following conditions are equivalent:

\begin{enumerate}
\item $\mu$ is a $B_{2}\left(  \mathbb{B}_{n}\right)  $-Carleson measure on
$\mathbb{B}_{n}$, i.e. (\ref{C'''}) holds.

\item $\widehat{\mu}=\left\{  \widehat{\mu}\left(  \alpha\right)  \right\}
_{\alpha\in\mathcal{T}_{n}}$ is a $B_{2}\left(  \mathcal{T}_{n}\right)
$-Carleson measure on the Bergman tree $\mathcal{T}_{n}$, i.e. (\ref{T'''}) holds.

\item There is $C<\infty$ such that
\[
\sum_{\beta\geq\alpha}I\widehat{\mu}\left(  \beta\right)  ^{2}\leq CI^{\ast
}\widehat{\mu}\left(  \alpha\right)  <\infty,\;\;\;\;\;\alpha\in
\mathcal{T}_{n}.
\]

\end{enumerate}
\end{theorem}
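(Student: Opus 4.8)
\noindent The plan is to prove the chain $(1)\Leftrightarrow(2)\Leftrightarrow(3)$, treating $(2)\Leftrightarrow(3)$ as a direct application of the discrete tree theory already recalled above and concentrating the work in the transference $(1)\Leftrightarrow(2)$ between the ball $\mathbb{B}_n$ and its Bergman tree $\mathcal{T}_n$.

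\emph{The equivalence $(2)\Leftrightarrow(3)$.} Passing from the tree embedding \eqref{T'''} to its dual form \eqref{Carleson'} and testing that dual inequality on the indicators $\chi_{S_\alpha}$ of the successor sets $S_\alpha=\{\beta\in\mathcal{T}_n:\beta\geq\alpha\}$ gives the necessity of the tree condition, while its sufficiency is Theorem \ref{Tars} applied with $w=\widehat\mu$ and $v\equiv 1$. The quantity occurring there is $I^{\ast}\widehat\mu(\beta)=\widehat\mu(S_\beta)=\mu\bigl(\bigcup_{\gamma\geq\beta}K_\gamma\bigr)$, the $\mu$-mass of the nonisotropic Carleson tent over $K_\beta$, so condition $(3)$ is exactly \eqref{NASC} written for $\widehat\mu$.

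\emph{The implication $(2)\Rightarrow(1)$.} Given $f\in B_2(\mathbb{B}_n)$, I would attach to it the nonnegative tree function $\varphi(\alpha)=|f(c_\alpha)-f(c_{A\alpha})|$ for $\alpha\neq o$, with $\varphi(o)=|f(o)|$. Two routine estimates are needed. First, a discrete Littlewood--Paley bound $\sum_{\alpha\in\mathcal{T}_n}\varphi(\alpha)^2\lesssim\|f\|_{B_2}^2$: since consecutive Bergman centers lie at bounded Bergman distance, each difference $f(c_\alpha)-f(c_{A\alpha})$ is dominated by a weighted average over a fixed-size Bergman ball of the integrand $(1-|z|^2)^m f^{(m)}(z)$ from \eqref{Hilbertnorm}, by Cauchy estimates against the invariant measure $d\lambda_n$. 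Second, a telescoping and sub-mean-value bound: because each kube $K_\alpha$ has bounded Bergman diameter, $\sup_{z\in K_\alpha}|f(z)|\lesssim I\varphi(\alpha)$, and hence $\int_{\mathbb{B}_n}|f|^2\,d\mu=\sum_\alpha\int_{K_\alpha}|f|^2\,d\mu\lesssim\sum_{\alpha\in\mathcal{T}_n}\bigl(I\varphi(\alpha)\bigr)^2\widehat\mu(\alpha)$. Applying the tree Carleson inequality \eqref{T'''} for $\widehat\mu$ to $\varphi$ bounds the right side by $C\sum_\alpha\varphi(\alpha)^2\lesssim C\|f\|_{B_2}^2$, which is \eqref{C'''}.

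\emph{The implication $(1)\Rightarrow(2)$, which I expect to be the main obstacle.} Here one must go the other way: from an arbitrary nonnegative sequence $a=\{a_\alpha\}_{\alpha\in\mathcal{T}_n}\in\ell^2$ one must build a holomorphic $F\in B_2(\mathbb{B}_n)$ with $\|F\|_{B_2}\lesssim\|a\|_{\ell^2}$ and the pointwise lower bound $|F(c_\alpha)|\gtrsim Ia(\alpha)=\sum_{o\leq\beta\leq\alpha}a_\beta$; feeding $F$ into the ball Carleson inequality \eqref{C'''} then yields $\sum_{\alpha}\bigl(Ia(\alpha)\bigr)^2\widehat\mu(\alpha)\lesssim\|F\|_{B_2}^2\lesssim\|a\|_{\ell^2}^2$, which is \eqref{T'''}. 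The delicate point is that the reproducing kernel of $B_2(\mathbb{B}_n)$ grows only logarithmically, so one cannot simply superpose reproducing kernels; instead one superposes normalized bump functions $\phi_\alpha$ subordinated to the kubes $K_\alpha$ (of the schematic form $2^{-d(\alpha)}(1-\overline{c_\alpha}\cdot z)^{-s}$ for a suitable exponent $s$, normalized so that $\|\phi_\alpha\|_{B_2}\approx 1$ and $\phi_\alpha(c_\alpha)\approx 1$), tuning the superposition so that the off-diagonal inner products $\langle\phi_\alpha,\phi_\beta\rangle$, controlled through the standard comparison $2^{-d(\alpha\wedge\beta)}\approx|1-\overline{c_\alpha}\cdot c_\beta|$ on the Bergman tree, do not destroy the norm bound, while along each geodesic $[o,\alpha]$ the blocks accumulate without cancellation on a Stolz-type approach region so that the lower bound is preserved. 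This reconstruction is precisely the one carried out in \cite{ArRoSa2}, which I would transcribe to complete the proof.
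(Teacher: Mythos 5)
You should first note that this paper does not actually prove Theorem \ref{Besov}: it is imported wholesale from \cite{ArRoSa2} (``The following characterization of $B_{2}\left(\mathbb{B}_{n}\right)$-Carleson measures \dots is from \cite{ArRoSa2}'', and again in the proof of Theorem \ref{Besov'}: ``The case $\sigma=0$ \dots was proved in \cite{ArRoSa2}''). So there is no internal proof to compare against; your outline is the standard ball--tree transference scheme, and your treatment of $(2)\Leftrightarrow(3)$ via Theorem \ref{Tars} and testing the dual inequality on $\chi_{S_\alpha}$ is exactly condition (\ref{NASC}) and is fine.

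Two substantive points. First, in $(2)\Rightarrow(1)$ the inequality $\sup_{z\in K_{\alpha}}|f(z)|\lesssim I\varphi(\alpha)$ does not follow from telescoping when $\varphi(\beta)=|f(c_{\beta})-f(c_{A\beta})|$: that $\varphi$ only sees the values of $f$ at the kube centers, and the oscillation of $f$ inside $K_{\alpha}$ is invisible to it, so the pointwise bound as stated is unjustified. The standard repair is to take instead $\varphi(\alpha)$ to be a local quantity such as $\sup_{z\in K_{\alpha}^{\ast}}(1-|z|^{2})|\nabla f(z)|$ (or an $L^{2}$ mean of the integrand in (\ref{Hilbertnorm}) over an enlarged kube $K_{\alpha}^{\ast}$); then $\sum_{\alpha}\varphi(\alpha)^{2}\lesssim\|f\|_{B_{2}}^{2}$ by subharmonicity and bounded overlap, and $|f(z)|\lesssim I\varphi(\langle z\rangle)$ by integrating along the chain of kubes from the root to $z$. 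Second, your $(1)\Rightarrow(2)$ is not a proof but a pointer to the construction in \cite{ArRoSa2}; since the theorem itself is cited from there, this is circular as a self-contained argument. For $p=2$ there is a cleaner route available inside this paper: by Lemma \ref{carllem}, $\mu$ being $B_{2}$-Carleson is equivalent to a bilinear inequality for $\operatorname{Re}$ of the reproducing kernel, which for $\sigma=0$ is $\log\frac{1}{1-\overline{w}\cdot z}$ and has real part comparable to its modulus; discretizing via (\ref{disckernel}) and summing $\log$ along geodesics (in place of the geometric sums used for $\sigma>0$) lands directly on the dual tree inequality, exactly as in the proof of Theorem \ref{Besov'} for $0<\sigma<1/2$. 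That would make your sketch self-contained without invoking the holomorphic reconstruction of tree functions from \cite{ArRoSa2}, which is really needed only for $p\neq2$.
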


\subsection{Unified proofs for trees}

We begin with some notation. Let $\mathcal{G}_{\mathcal{T}}$ be the set of
maximal geodesics of $\mathcal{T}$ starting at the root. For $\alpha
\in\mathcal{T}$ let $\mathcal{S}\left(  \alpha\right)  \subset\mathcal{G}%
_{\mathcal{T}}$ denote the collection of all geodesics passing through
$\alpha$ (i.e. that are eventually in the successor set $S\left(
\alpha\right)  ).$ To unify considerations involving both the tree
$\mathcal{T}$ and its ideal boundary $\mathcal{G}_{\mathcal{T}}$ we set
$\mathcal{T}^{\ast}=\mathcal{T}\cup\mathcal{G}_{\mathcal{T}}$ and let
$\mathcal{S}^{\ast}\left(  \alpha\right)  =S\left(  \alpha\right)
\cup\mathcal{S}\left(  \alpha\right)  $ be the union of the successor set
$S\left(  \alpha\right)  $ with its boundary geodesics. We suppose $\mu,$
$\sigma,$ $\omega,$ and $\nu$ are finite positive measures on $\mathcal{T}%
^{\ast}$ with, for the moment, $\mu$, $\omega$ and $\sigma$ supported in the
tree $\mathcal{T}$, and $\nu$ supported in the boundary $\mathcal{G}%
_{\mathcal{T}}.$

We now give a short proof that the two weight tree condition,
\begin{equation}
\sum_{\beta\in\mathcal{T}:\beta\geq\alpha}I^{\ast}\mu\left(  \beta\right)
^{2}\omega\left(  \beta\right)  \leq C_{0}^{2}I^{\ast}\mu\left(
\alpha\right)  <\infty,\;\;\;\;\;\alpha\in\mathcal{T}, \label{NASC2}%
\end{equation}
implies the dual Besov-Carleson embedding (which is equivalent to (\ref{star})
with $\mu=w$ and $\omega=1/v).$
\begin{equation}
\sum_{\alpha\in\mathcal{T}}I^{\ast}\left(  g\mu\right)  \left(  \alpha\right)
^{2}\omega\left(  \alpha\right)  \leq C^{2}\sum_{\alpha\in\mathcal{T}}g\left(
\alpha\right)  ^{2}\mu\left(  \alpha\right)  ,\;\;\;\;\;g\geq0,
\label{Carleson'2}%
\end{equation}
Moreover, we will unify this result and the well-known equivalence of the
Hardy-Carleson embedding on the tree,
\begin{equation}
\sum_{\alpha\in\mathcal{T}}\left(  \frac{1}{\left\vert \mathcal{S}\left(
\alpha\right)  \right\vert _{\nu}}\int_{\mathcal{S}\left(  \alpha\right)
}fd\nu\right)  ^{2}\sigma\left(  \alpha\right)  \leq C^{2}\int_{\mathcal{G}%
_{\mathcal{T}}}f^{2}d\nu,\;\;\;\;\;f\geq0\text{ on }\mathcal{G}_{\mathcal{T}},
\label{Cargeo}%
\end{equation}
with the simple condition on geodesics,
\begin{equation}
\sum_{\beta\geq\alpha}\sigma\left(  \beta\right)  \leq C_{0}^{2}\left\vert
\mathcal{S}\left(  \alpha\right)  \right\vert _{\nu},\;\;\;\;\;\alpha
\in\mathcal{T}. \label{Simgeo}%
\end{equation}
We rewrite (\ref{Carleson'2}) as
\begin{equation}
\int_{\mathcal{T}}\left(  \frac{1}{\left\vert \mathcal{S}^{\ast}\left(
\alpha\right)  \right\vert _{\mu}}\int_{\mathcal{S}^{\ast}\left(
\alpha\right)  }gd\mu\right)  ^{2}\left\vert \mathcal{S}^{\ast}\left(
\cdot\right)  \right\vert _{\mu}^{2}d\omega\left(  \cdot\right)  \leq
C^{2}\int_{\mathcal{T}^{\ast}}g^{2}d\mu,\;\;\;\;\;g\geq0\text{ on }%
\mathcal{T}^{\ast}, \label{SstarBesov}%
\end{equation}
and rewrite (\ref{Cargeo}) as
\begin{equation}
\int_{\mathcal{T}}\left(  \frac{1}{\left\vert \mathcal{S}^{\ast}\left(
\alpha\right)  \right\vert _{\nu}}\int_{\mathcal{S}^{\ast}\left(
\alpha\right)  }fd\nu\right)  ^{2}d\sigma\left(  \alpha\right)  \leq C^{2}%
\int_{\mathcal{T}^{\ast}}f^{2}d\nu,\;\;\;\;\;f\geq0\text{ on }\mathcal{T}%
^{\ast}. \label{SstarHardy}%
\end{equation}
Thus we see that the inequality (\ref{SstarBesov}) has exactly the same form
as inequality (\ref{SstarHardy}), but with $\left\vert \mathcal{S}^{\ast
}\left(  \cdot\right)  \right\vert _{\mu}^{2}d\omega\left(  \cdot\right)  $ in
place of $d\sigma$ and $\mu$ in place of $\nu$. Note that the integrations on
the left are over $\mathcal{T}$, where the averages on $\mathcal{S}^{\ast
}\left(  \alpha\right)  $ are defined. Moreover, the tree condition
(\ref{NASC2}) is just the simple condition (\ref{Simgeo}) for the measures
$\left\vert \mathcal{S}^{\ast}\left(  \cdot\right)  \right\vert _{\mu}%
^{2}d\omega\left(  \cdot\right)  $ and $\mu$:
\[
\sum_{\beta\geq\alpha}\left\vert \mathcal{S}^{\ast}\left(  \beta\right)
\right\vert _{\mu}^{2}\omega\left(  \beta\right)  \leq C_{0}^{2}\left\vert
\mathcal{S}^{\ast}\left(  \alpha\right)  \right\vert _{\mu},\;\;\;\;\;\alpha
\in\mathcal{T}.
\]
In fact, if one permits $\nu$ in (\ref{SstarHardy}) to live in all of the
closure $\mathcal{T}^{\ast}$, then we can characterize (\ref{SstarHardy}) by a
simple condition, and if one permits $\sigma$ to live in all of $\mathcal{T}%
^{\ast}$ as well, then the corresponding maximal inequality is characterized
by a simple condition. The following theorem will be used later to
characterize Carleson measures for the Drury-Arveson space $B_{2}^{1/2}\left(
\mathbb{B}_{n}\right)  $. The proof can be used to simplify some of the
arguments in \cite{ArRoSa} and \cite{ArRoSa2}.

\begin{theorem}
\label{unity}Inequality (\ref{SstarHardy}) holds if and only if
\begin{equation}
\left\vert S\left(  \alpha\right)  \right\vert _{\sigma}\leq C_{0}%
^{2}\left\vert \mathcal{S}^{\ast}\left(  \alpha\right)  \right\vert _{\nu
},\;\;\;\;\;\alpha\in\mathcal{T}. \label{gensimp}%
\end{equation}
More generally, if both $\sigma$ and $\nu$ live in $\mathcal{T}^{\ast}$, then
the maximal inequality
\begin{equation}
\int_{\mathcal{T}^{\ast}}\mathcal{M}f\left(  \zeta\right)  ^{2}d\sigma\left(
\zeta\right)  \leq C^{2}\int_{\mathcal{T}^{\ast}}\left\vert f\right\vert
^{2}d\nu,\;\;\;\;\;\text{for all }f\text{ on }\mathcal{T}^{\ast},
\label{maxineq}%
\end{equation}
where
\[
\mathcal{M}f\left(  \zeta\right)  =\mathcal{M}\left(  fd\nu\right)  \left(
\zeta\right)  =\sup_{\alpha\in\mathcal{T}:\alpha\leq\zeta}\frac{1}{\left\vert
\mathcal{S}^{\ast}\left(  \alpha\right)  \right\vert _{\nu}}\int
_{\mathcal{S}^{\ast}\left(  \alpha\right)  }\left\vert f\right\vert d\nu,
\]
holds if and only if
\begin{equation}
\left\vert \mathcal{S}^{\ast}\left(  \alpha\right)  \right\vert _{\sigma}\leq
C_{0}^{2}\left\vert \mathcal{S}^{\ast}\left(  \alpha\right)  \right\vert
_{\nu},\;\;\;\;\;\alpha\in\mathcal{T}. \label{gensimp'}%
\end{equation}

\end{theorem}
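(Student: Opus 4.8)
The plan is to prove the more general maximal inequality first, namely that (\ref{maxineq}) holds if and only if (\ref{gensimp'}) holds, and then deduce the equivalence of (\ref{SstarHardy}) with (\ref{gensimp}) as a corollary by a standard reduction of the averaging (Hardy-type) operator to the maximal operator. The necessity direction in both cases is routine: testing (\ref{maxineq}) against $f = \chi_{\mathcal{S}^{\ast}(\alpha)}$ gives $\mathcal{M}f(\zeta) \geq 1$ for every $\zeta \in \mathcal{S}^{\ast}(\alpha)$, since the vertex $\alpha$ itself is admissible in the supremum defining $\mathcal{M}f$, and so the left side of (\ref{maxineq}) is at least $\left\vert \mathcal{S}^{\ast}(\alpha) \right\vert_{\sigma}$ while the right side equals $C^2 \left\vert \mathcal{S}^{\ast}(\alpha) \right\vert_{\nu}$; this yields (\ref{gensimp'}). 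Testing (\ref{SstarHardy}) against the same indicator and keeping only the terms $\sigma(\beta)$ with $\beta \geq \alpha$ on the left (for which the average over $\mathcal{S}^{\ast}(\beta)$ of $\chi_{\mathcal{S}^{\ast}(\alpha)}$ equals $1$, because $\mathcal{S}^{\ast}(\beta) \subset \mathcal{S}^{\ast}(\alpha)$) gives (\ref{gensimp}).

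For sufficiency of (\ref{gensimp'}) for (\ref{maxineq}), I would run the usual stopping-time / Calder\'on--Zygmund argument adapted to the tree. Fix $f \geq 0$ on $\mathcal{T}^{\ast}$ and for $\lambda > 0$ let $\Omega_\lambda = \{\zeta : \mathcal{M}f(\zeta) > \lambda\}$. For each $\zeta \in \Omega_\lambda$ there is a vertex $\alpha \leq \zeta$ with $\frac{1}{\left\vert \mathcal{S}^{\ast}(\alpha)\right\vert_\nu}\int_{\mathcal{S}^{\ast}(\alpha)} f\, d\nu > \lambda$; choosing among such $\alpha$ the ones closest to the root, i.e. maximal under $\leq$, we obtain a pairwise-disjoint collection $\{\mathcal{S}^{\ast}(\alpha_j)\}_j$ of "Calder\'on--Zygmund" successor sets whose union contains $\Omega_\lambda$ and which satisfies $\left\vert \mathcal{S}^{\ast}(\alpha_j)\right\vert_\nu < \frac{1}{\lambda}\int_{\mathcal{S}^{\ast}(\alpha_j)} f\, d\nu$. (On a tree this selection is clean: the maximal such vertices are automatically $\leq$-incomparable, hence their successor sets are disjoint; there is no Vitali covering loss.) Then
\[
\left\vert \Omega_\lambda \right\vert_\sigma \leq \sum_j \left\vert \mathcal{S}^{\ast}(\alpha_j) \right\vert_\sigma \leq C_0^2 \sum_j \left\vert \mathcal{S}^{\ast}(\alpha_j) \right\vert_\nu \leq \frac{C_0^2}{\lambda} \sum_j \int_{\mathcal{S}^{\ast}(\alpha_j)} f\, d\nu \leq \frac{C_0^2}{\lambda} \int_{\mathcal{T}^{\ast}} f\, d\nu,
\]
using (\ref{gensimp'}) in the second step and disjointness in the last. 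This is the weak-type $(1,1)$ bound $\left\Vert \mathcal{M} \right\Vert_{L^1(\nu) \to L^{1,\infty}(\sigma)} \leq C_0^2$. Combined with the trivial $L^\infty(\nu) \to L^\infty(\sigma)$ bound $\left\Vert \mathcal{M}f \right\Vert_{L^\infty(\sigma)} \leq \left\Vert f \right\Vert_{L^\infty(\nu)}$, the Marcinkiewicz interpolation theorem gives the strong-type $L^2(\nu) \to L^2(\sigma)$ bound, which is exactly (\ref{maxineq}) with $C$ comparable to $C_0$.

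Finally, for the equivalence of (\ref{SstarHardy}) with (\ref{gensimp}), the direction $(\ref{gensimp'}) \Rightarrow (\ref{SstarHardy})$ follows because the left side of (\ref{SstarHardy}) is pointwise dominated by $\int_{\mathcal{T}^{\ast}} (\mathcal{M}f)^2\, d\sigma$ — indeed each average $\frac{1}{\left\vert \mathcal{S}^{\ast}(\alpha)\right\vert_\nu}\int_{\mathcal{S}^{\ast}(\alpha)} f\, d\nu$ is at most $\mathcal{M}f(\zeta)$ for any $\zeta \in \mathcal{S}^{\ast}(\alpha)$, so integrating $d\sigma$ over $\alpha \in \mathcal{T}$ and using that each $\alpha \in \mathcal{S}^{\ast}(\alpha)$ bounds it by $\int (\mathcal{M}f)^2 d\sigma$ — and I just showed $(\ref{gensimp'}) \Rightarrow (\ref{maxineq})$; one must check that (\ref{gensimp}), which only controls $\left\vert S(\alpha) \right\vert_\sigma$ with $\sigma$ living in the tree, is equivalent here to (\ref{gensimp'}), but when $\sigma$ is supported in $\mathcal{T}$ we have $\left\vert \mathcal{S}^{\ast}(\alpha)\right\vert_\sigma = \left\vert S(\alpha)\right\vert_\sigma$, so the two coincide. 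The converse $(\ref{SstarHardy}) \Rightarrow (\ref{gensimp})$ is the necessity computation noted above. I expect the main obstacle to be purely bookkeeping: keeping straight the roles of $\mathcal{S}^{\ast}$ versus $S$, of $\mathcal{T}$ versus $\mathcal{T}^{\ast}$, and ensuring that in the stopping-time argument the supremum in $\mathcal{M}$ (taken over $\alpha \leq \zeta$ with $\alpha \in \mathcal{T}$, including when $\zeta$ is a boundary point) is correctly matched with the selected maximal vertices; the analytic content — weak-$(1,1)$ plus $L^\infty$ plus interpolation — is entirely standard, and the tree structure only makes the covering step easier than in the Euclidean setting.
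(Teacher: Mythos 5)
Your proposal is correct and follows essentially the same route as the paper: necessity by testing on $\chi_{\mathcal{S}^{\ast}(\alpha)}$, sufficiency by selecting the stopping vertices closest to the root (the paper calls these the \emph{minimal} elements under $\leq$, since the root is the minimum --- your phrase ``maximal under $\leq$'' is a harmless slip), using their incomparability to get the weak type $(1,1)$ bound from (\ref{gensimp'}), and concluding by the trivial $L^{\infty}$ bound and Marcinkiewicz interpolation, with the first equivalence obtained from the maximal one by pointwise domination of the averages by $\mathcal{M}f$.
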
%

\proof
The necessity of (\ref{gensimp}) for (\ref{SstarHardy}), and also
(\ref{gensimp'}) for (\ref{maxineq}), follows upon setting $f=\chi
_{\mathcal{S}^{*}\left(  \alpha\right)  }$ in the respective inequality. To
see that (\ref{gensimp'}) is sufficient for (\ref{maxineq}), which includes
the assertion that (\ref{gensimp}) is sufficient for (\ref{SstarHardy}), note
that the sublinear map $\mathcal{M}$ is bounded with norm $1$ from $L^{\infty
}\left(  \mathcal{T}^{*};\nu\right)  $ to $L^{\infty}\left(  \mathcal{T}%
^{*};\sigma\right)  $, and is weak type $1-1$ with constant $C_{0}$ by
(\ref{gensimp'}). Indeed,
\[
\left\{  \zeta\in\mathcal{T}^{*}:\mathcal{M}f\left(  \zeta\right)
>\lambda\right\}  \subset\cup\left\{  \mathcal{S}^{*}\left(  \alpha\right)
:\alpha\in\mathcal{T}\text{ and }\mathcal{M}f\left(  \alpha\right)
>\lambda\right\}  ,
\]
and if we let $\lambda>0$ and denote by $\Gamma$ the minimal elements in
$\left\{  \alpha\in\mathcal{T}:\mathcal{M}f\left(  \alpha\right)
>\lambda\right\}  $, then
\begin{align*}
\left|  \left\{  \zeta\in\mathcal{T}^{*}:\mathcal{M}f\left(  \zeta\right)
>\lambda\right\}  \right|  _{\sigma}  &  \leq\sum_{\alpha\in\Gamma}\left|
\mathcal{S}^{*}\left(  \alpha\right)  \right|  _{\sigma}\leq C_{0}^{2}%
\sum_{\alpha\in\Gamma}\left|  \mathcal{S}^{*}\left(  \alpha\right)  \right|
_{\nu}\\
&  \leq C_{0}^{2}\sum_{\alpha\in\Gamma}\lambda^{-1}\int_{\mathcal{S}%
^{*}\left(  \alpha\right)  }\left|  f\right|  d\nu\leq C_{0}^{p}\lambda
^{-1}\int_{\mathcal{T}^{*}}\left|  f\right|  d\nu.
\end{align*}
Marcinkiewicz interpolation now completes the proof.

The proof actually yields the following more general inequality.

\begin{theorem}%
\[
\int_{\mathcal{T}^{*}}\mathcal{M}f\left(  \zeta\right)  ^{2}d\sigma\left(
\zeta\right)  \leq C^{2}\int_{\mathcal{T}^{*}}\left|  f\right|  ^{2}%
\mathcal{M}\left(  d\sigma\right)  d\nu,\;\;\;\;\;\text{for all }f\text{ on
}\mathcal{T}^{*}.
\]

\end{theorem}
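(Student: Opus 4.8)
The plan is to run the proof of Theorem \ref{unity} essentially verbatim, this time carrying along the extra weight $\mathcal{M}\left(  d\sigma\right)  $ inside the stopping-time estimate. Concretely, I would regard $\mathcal{M}$ as a sublinear operator from $L^{p}$ of the measure $\mathcal{M}\left(  d\sigma\right)  d\nu$ into $L^{p}\left(  \sigma\right)  $ and obtain the case $p=2$ by Marcinkiewicz interpolation between a weak type $\left(  1,1\right)  $ bound and the trivial $L^{\infty}$ bound.

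First I would establish the weak type inequality
\[
\left\vert \left\{  \zeta\in\mathcal{T}^{\ast}:\mathcal{M}f\left(  \zeta\right)  >\lambda\right\}  \right\vert _{\sigma}\leq\frac{1}{\lambda}\int_{\mathcal{T}^{\ast}}\left\vert f\right\vert \mathcal{M}\left(  d\sigma\right)  d\nu,\qquad\lambda>0.
\]
As in the proof of Theorem \ref{unity}, let $\Gamma$ be the set of minimal elements $\alpha\in\mathcal{T}$ for which the average $\left\vert \mathcal{S}^{\ast}\left(  \alpha\right)  \right\vert _{\nu}^{-1}\int_{\mathcal{S}^{\ast}\left(  \alpha\right)  }\left\vert f\right\vert d\nu$ exceeds $\lambda$; then $\Gamma$ is an antichain, the sets $\mathcal{S}^{\ast}\left(  \alpha\right)  $ with $\alpha\in\Gamma$ are pairwise disjoint, and $\left\{  \mathcal{M}f>\lambda\right\}  \subset\bigcup_{\alpha\in\Gamma}\mathcal{S}^{\ast}\left(  \alpha\right)  $. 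The one new observation is that whenever $\zeta\in\mathcal{S}^{\ast}\left(  \alpha\right)  $ we have $\alpha\leq\zeta$, hence $\mathcal{M}\left(  d\sigma\right)  \left(  \zeta\right)  \geq\left\vert \mathcal{S}^{\ast}\left(  \alpha\right)  \right\vert _{\sigma}/\left\vert \mathcal{S}^{\ast}\left(  \alpha\right)  \right\vert _{\nu}$; combining this with the stopping inequality $\lambda\left\vert \mathcal{S}^{\ast}\left(  \alpha\right)  \right\vert _{\nu}<\int_{\mathcal{S}^{\ast}\left(  \alpha\right)  }\left\vert f\right\vert d\nu$ and summing over the disjoint family $\Gamma$ yields the displayed bound (with constant $1$). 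For the $L^{\infty}$ endpoint, $\left\Vert \mathcal{M}f\right\Vert _{L^{\infty}\left(  \sigma\right)  }\leq\left\Vert f\right\Vert _{L^{\infty}\left(  \nu\right)  }$ because $\mathcal{M}f$ is a supremum of $\nu$-averages of $\left\vert f\right\vert $, and since the root forces $\mathcal{M}\left(  d\sigma\right)  \geq\left\vert \sigma\right\vert /\left\vert \nu\right\vert >0$ on the support of $\nu$, this coincides with $\left\Vert f\right\Vert _{L^{\infty}\left(  \mathcal{M}\left(  d\sigma\right)  d\nu\right)  }$.

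Marcinkiewicz interpolation at $p=2$ between these two endpoint estimates then gives $\int_{\mathcal{T}^{\ast}}\left(  \mathcal{M}f\right)  ^{2}d\sigma\leq C^{2}\int_{\mathcal{T}^{\ast}}\left\vert f\right\vert ^{2}\mathcal{M}\left(  d\sigma\right)  d\nu$; imposing $\mathcal{M}\left(  d\sigma\right)  \leq C_{0}^{2}$, that is the simple condition (\ref{gensimp'}), recovers (\ref{maxineq}), so this is genuinely more general. I do not anticipate a serious obstacle: the whole content is the elementary remark that the factor $\mathcal{M}\left(  d\sigma\right)  $ can be inserted for free inside the stopping-time estimate, once one notes $\mathcal{M}\left(  d\sigma\right)  \left(  \zeta\right)  \geq\left\vert \mathcal{S}^{\ast}\left(  \alpha\right)  \right\vert _{\sigma}/\left\vert \mathcal{S}^{\ast}\left(  \alpha\right)  \right\vert _{\nu}$ on each stopping region $\mathcal{S}^{\ast}\left(  \alpha\right)  $. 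The only steps requiring minor care are the bookkeeping at the $L^{\infty}$ endpoint and the degenerate cases $\sigma=0$ or $\nu=0$, which are trivial.
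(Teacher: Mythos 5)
Your argument is correct and is essentially the paper's own proof: the same stopping-time covering by minimal elements $\Gamma$, the same pointwise observation that $\mathcal{M}\left(d\sigma\right)\left(\zeta\right)\geq\left\vert \mathcal{S}^{\ast}\left(\alpha\right)\right\vert _{\sigma}/\left\vert \mathcal{S}^{\ast}\left(\alpha\right)\right\vert _{\nu}$ on $\mathcal{S}^{\ast}\left(\alpha\right)$ to get the weak type $(1,1)$ bound with respect to $\sigma$ and $\mathcal{M}\left(d\sigma\right)d\nu$, followed by Marcinkiewicz interpolation with the trivial $L^{\infty}$ endpoint. Your explicit remarks about the $L^{\infty}$ bookkeeping and degenerate cases are fine but only make explicit what the paper leaves implicit.
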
%

\proof
Following the above proof we use instead the estimate
\begin{align*}
\left\vert \left\{  \zeta\in\mathcal{T}^{\ast}:\mathcal{M}f\left(
\zeta\right)  >\lambda\right\}  \right\vert _{\sigma}  &  \leq\sum_{\alpha
\in\Gamma}\left\vert \mathcal{S}^{\ast}\left(  \alpha\right)  \right\vert
_{\sigma}=\sum_{\alpha\in\Gamma}\frac{\left\vert \mathcal{S}^{\ast}\left(
\alpha\right)  \right\vert _{\sigma}}{\left\vert \mathcal{S}^{\ast}\left(
\alpha\right)  \right\vert _{\nu}}\left\vert \mathcal{S}^{\ast}\left(
\alpha\right)  \right\vert _{\nu}\\
&  \leq\sum_{\alpha\in\Gamma}\frac{\left\vert \mathcal{S}^{\ast}\left(
\alpha\right)  \right\vert _{\sigma}}{\left\vert \mathcal{S}^{\ast}\left(
\alpha\right)  \right\vert _{\nu}}\lambda^{-1}\int_{\mathcal{S}^{\ast}\left(
\alpha\right)  }\left\vert f\right\vert d\nu\\
&  \leq\sum_{\alpha\in\Gamma}\lambda^{-1}\int_{\mathcal{S}^{\ast}\left(
\alpha\right)  }\left\vert f\left(  \zeta\right)  \right\vert \mathcal{M}%
\left(  d\sigma\right)  \left(  \zeta\right)  d\nu\left(  \zeta\right)  ,
\end{align*}
which shows that $\mathcal{M}$ is weak type $1-1$ with respect to the measures
$\sigma$ and $\mathcal{M}\left(  d\sigma\right)  d\nu.$

\section{Carleson measures for the Hardy-Sobolev spaces}

\subsection{The case $\sigma\geq0$}

Given a positive measure $\mu$ on the ball, we denote by $\widehat{\mu}$ the
associated measure on the Bergman tree $\mathcal{T}_{n}$ given by
$\widehat{\mu}\left(  \alpha\right)  =\int_{K_{\alpha}}d\mu$ for $\alpha
\in\mathcal{T}_{n}$. We will often write $\mu\left(  \alpha\right)  $ for
$\widehat{\mu}\left(  \alpha\right)  $ when no confusion should arise. Let
$\sigma\geq0$. Recall that $\mu$ is a $B_{2}^{\sigma}$-Carleson measure on
$\mathbb{B}_{n}$ if there is a positive constant $C$ such that
\begin{equation}
\left(  \int_{\mathbb{B}_{n}}\left\vert f\left(  z\right)  \right\vert
^{2}d\mu\left(  z\right)  \right)  ^{\frac{1}{2}}\leq C\left\Vert f\right\Vert
_{B_{2}^{\sigma}}, \label{Carleson''}%
\end{equation}
for all $f\in B_{2}^{\sigma}$. In this section we show (Theorem \ref{Besov'})
that $\mu$ is a $B_{2}^{\sigma}$-Carleson measure on $\mathbb{B}_{n}$ if
$\widehat{\mu}$ is a $B_{2}^{\sigma}(\mathcal{T}_{n})$-Carleson measure, i.e.
if it satisfies
\begin{equation}
\left(  \sum_{\alpha\in\mathcal{T}_{n}}If\left(  \alpha\right)  ^{2}\mu\left(
\alpha\right)  \right)  ^{1/2}\leq C\left(  \sum_{\alpha\in\mathcal{T}_{n}%
}\left[  2^{-\sigma d\left(  \alpha\right)  }f\left(  \alpha\right)  \right]
^{2}\right)  ^{1/2},\;\;\;\;\;f\geq0, \label{treecond'}%
\end{equation}
which is (\ref{star}) with $w\left(  \alpha\right)  =\mu\left(  \alpha\right)
$ and $v\left(  \alpha\right)  =2^{-2\sigma d\left(  \alpha\right)  }$. The
dual of (\ref{treecond'}) is
\begin{equation}
\left(  \sum_{\alpha\in\mathcal{T}_{n}}\left[  2^{\sigma d\left(
\alpha\right)  }I^{\ast}g\mu\left(  \alpha\right)  \right]  ^{2}\right)
^{1/2}\leq C\left(  \sum_{\alpha\in\mathcal{T}_{n}}g\left(  \alpha\right)
^{2}\mu\left(  \alpha\right)  \right)  ^{1/2},\;\;\;\;\;g\geq0.
\label{treecond'dual}%
\end{equation}
Theorem \ref{Tars} shows that (\ref{treecond'}) is equivalent to the tree
condition
\begin{equation}
\sum_{\beta\geq\alpha}\left[  2^{\sigma d\left(  \beta\right)  }I^{\ast}%
\mu\left(  \beta\right)  \right]  ^{2}\leq CI^{\ast}\mu\left(  \alpha\right)
<\infty,\;\;\;\;\;\alpha\in\mathcal{T}_{n}. \label{treecond}%
\end{equation}
Conversely, in the range $0\leq\sigma<1/2$, we show that $\mu$ is
$B_{2}^{\sigma}(\mathcal{T}_{n})$-Carleson if $\mu$ is a $B_{2}^{\sigma
}\left(  \mathbb{B}_{n}\right)  $-Carleson measure on $\mathbb{B}_{n}$.

\begin{theorem}
\label{Besov'}Suppose $\sigma\geq0$ and that the structural constants
$\lambda,\theta$ in the construction of $\mathcal{T}_{n}$ (subsection 2.2. of
\cite{ArRoSa2}) satisfy $\lambda=1$ and $\theta=\frac{\ln2}{2}$. Let $\mu$ be
a positive measure on the unit ball $\mathbb{B}_{n}$. Then with constants
depending only on $\sigma$ and $n$, conditions \emph{2} and \emph{3} below are
equivalent, condition \emph{3} is sufficient for condition \emph{1}, and
provided that $0\leq\sigma<1/2$, condition \emph{3} is necessary for condition
\emph{1}:

\begin{enumerate}
\item $\mu$ is a $B_{2}^{\sigma}\left(  \mathbb{B}_{n}\right)  $-Carleson
measure on $\mathbb{B}_{n}$, i.e. (\ref{Carleson''}) holds.

\item $\widehat{\mu}=\left\{  \mu\left(  \alpha\right)  \right\}  _{\alpha
\in\mathcal{T}_{n}}$ is a $B_{2}^{\sigma}\left(  \mathcal{T}_{n}\right)
$-Carleson measure, i.e. (\ref{treecond'}) holds with $\mu\left(
\alpha\right)  =\int_{K_{\alpha}}d\mu,$ where $\mathcal{T}_{n}$\ ranges over
all unitary rotations of a fixed Bergman tree.

\item There is $C<\infty$ such that
\[
\sum_{\beta\geq\alpha}\left[  2^{\sigma d\left(  \beta\right)  }I^{\ast}%
\mu\left(  \beta\right)  \right]  ^{2}\leq CI^{\ast}\mu\left(  \alpha\right)
<\infty,\;\;\;\;\;\alpha\in\mathcal{T}_{n},
\]
where $\mathcal{T}_{n}$\ ranges over all unitary rotations of a fixed Bergman tree.
\end{enumerate}
\end{theorem}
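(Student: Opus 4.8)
The plan is to establish the chain $(2)\Leftrightarrow(3)$ unconditionally, $(2)\Rightarrow(1)$ unconditionally (which, combined with the equivalence, gives sufficiency of $(3)$ for $(1)$), and $(1)\Rightarrow(3)$ only under the restriction $0\le\sigma<1/2$ (necessity). The equivalence $(2)\Leftrightarrow(3)$ is immediate from the two-weight tree theorem, Theorem \ref{Tars}: condition $(2)$ is precisely inequality (\ref{treecond'}), which is (\ref{star}) with $w(\alpha)=\mu(\alpha)$ and $v(\alpha)=2^{-2\sigma d(\alpha)}$, and the characterizing condition $\sum_{\beta\ge\alpha}I^{\ast}w(\beta)^{2}v(\beta)^{-1}\le CI^{\ast}w(\alpha)<\infty$ of that theorem, after substituting $v(\beta)^{-1}=2^{2\sigma d(\beta)}$ and $I^{\ast}w=I^{\ast}\mu$, reads exactly as condition $(3)$. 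Since $(2)$ and $(3)$ are quantified over the same family of unitary rotations of a fixed Bergman tree, nothing more is needed here; this part works for all $\sigma\ge0$.

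For $(2)\Rightarrow(1)$ I would follow the route used for $\sigma=0$ in \cite{ArRoSa2}. Fix a large integer $N$ and use the standard reproducing formula to write $f\in B_{2}^{\sigma}(\mathbb{B}_{n})$ as $f(z)=c_{N}\int_{\mathbb{B}_{n}}(1-\bar{w}\cdot z)^{-(n+N)}h(w)(1-|w|^{2})^{N-\sigma}\,d\lambda_{n}(w)$ with $\|h\|_{L^{2}(\lambda_{n})}\approx\|f\|_{B_{2}^{\sigma}}$ (the exponents chosen so that the dual pairing reproduces the $B_{2}^{\sigma}$ norm). One then estimates $|f(z)|$ for $z\in K_{\alpha}$ by splitting the integral over the kubes $K_{\beta}$ and using the basic geometric fact that $|1-\bar{w}\cdot z|\gtrsim 2^{-d(\alpha\wedge\beta)}$ for $z\in K_{\alpha}$, $w\in K_{\beta}$, together with a routine estimate of $\int_{K_{\beta}}|1-\bar{w}\cdot z|^{-(n+N)}(1-|w|^{2})^{N-\sigma}\,d\lambda_{n}(w)$ in terms of $2^{\sigma d(\beta)}$ and the $L^{2}(K_{\beta})$-mass of $h$; summing, one finds $|f(z)|\lesssim If'(\alpha)$ for a nonnegative sequence $f'$ with $\sum_{\alpha}2^{-2\sigma d(\alpha)}f'(\alpha)^{2}\lesssim\|h\|_{L^{2}}^{2}\approx\|f\|_{B_{2}^{\sigma}}^{2}$. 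Squaring, integrating against $\mu$, and invoking $(2)$ yields $(1)$. The one delicate point is that the lower bound $|1-\bar{w}\cdot z|\gtrsim2^{-d(\alpha\wedge\beta)}$ degenerates when $z$ and $w$ lie in nearly the same complex-tangential direction; this is exactly why $(2)$ and $(3)$ are imposed for \emph{all} unitary rotations of a fixed Bergman tree, so that averaging the estimate over rotations restores the needed control, precisely as in \cite{ArRoSa2}.

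For $(1)\Rightarrow(3)$ when $0\le\sigma<1/2$ I would argue as follows. A soft duality observation — the inclusion $\iota:B_{2}^{\sigma}\hookrightarrow L^{2}(\mu)$ is bounded iff $\iota\iota^{\ast}$ is, and $\iota\iota^{\ast}$ has the reproducing kernel $k(z,w)=(1-\bar{w}\cdot z)^{-2\sigma}$ as its integral kernel — shows that $\|\mu\|_{Carleson}^{2}$ is comparable to the norm on $L^{2}(\mu)$ of the positive operator with kernel $k$, and, restricting to nonnegative test functions and using that $k$ has positive real part for $\sigma<1/2$, comparable to the best $C$ in $\int\!\int\operatorname{Re}k(z,w)f(w)g(z)\,d\mu(w)\,d\mu(z)\le C\|f\|_{L^{2}(\mu)}\|g\|_{L^{2}(\mu)}$. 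The key analytic input, valid precisely because $0\le\sigma<1/2$, is the pointwise comparison $\operatorname{Re}(1-\bar{z}\cdot z')^{-2\sigma}\approx|1-\bar{z}\cdot z'|^{-2\sigma}$ (the argument of $1-\bar{z}\cdot z'$ lies in $(-\pi/2,\pi/2)$, so raising to the power $2\sigma<1$ keeps the argument in $(-\sigma\pi,\sigma\pi)\subset(-\pi/2,\pi/2)$, bounded away from $\pm\pi/2$). This lets us replace $\operatorname{Re}k$ by $|k|$, discretize $|1-\bar{w}\cdot z|^{-2\sigma}$ on the Bergman tree (over all rotations) into the tree operator $g\mapsto\sum_{\beta}2^{2\sigma d(\alpha\wedge\beta)}g(\beta)\mu(\beta)$, and conclude, via the necessity half of Theorem \ref{Tars}/\ref{Besov} (testing on $g=\chi_{S_{\alpha}}$), that $\widehat{\mu}$ satisfies condition $(3)$ for every rotation. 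I expect the \textbf{main obstacle} to be organizing this discretization so that the angular degeneracy is once more absorbed by the unitary rotations; this, together with the comparison $\operatorname{Re}k\approx|k|$ — which is exactly what fails at $\sigma=1/2$ and forces the split-tree analysis of Theorem \ref{Arvcom} in that endpoint case — is the crux of the proof.
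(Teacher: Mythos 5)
Your proposal is correct in substance, and for the hard direction it coincides with the paper's argument: the necessity of (3) for (1) when $0\leq\sigma<1/2$ is proved in the paper exactly as you outline, via the $TT^{\ast}$ reduction of Lemma \ref{carllem}, the comparison (\ref{realbelow}) $\operatorname{Re}\left(  1-\overline{w}\cdot w^{\prime}\right)  ^{-2\sigma}\approx\left\vert 1-\overline{w}\cdot w^{\prime}\right\vert ^{-2\sigma}$ (your $\cos(\sigma\pi)$ argument is the right justification, and its failure at $\sigma=1/2$ is indeed what forces the split-tree analysis), the two-sided discretization (\ref{disckernel}) with averaging over unitary rotations, the identity $2^{2\sigma d\left(  \alpha\wedge\alpha^{\prime}\right)  }\approx\sum_{\gamma\leq\alpha\wedge\alpha^{\prime}}2^{2\sigma d\left(  \gamma\right)  }$ leading to (\ref{treecond'dual}), and finally Theorem \ref{Tars}. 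Where you genuinely diverge is the sufficiency direction: you propose (2)$\Rightarrow$(1) by a reproducing-formula decomposition of $f$ and a pointwise domination $\left\vert f\right\vert \lesssim If^{\prime}$ on the tree, in the style of the $\sigma=0$ argument of \cite{ArRoSa2}, whereas the paper never leaves the bilinear framework: it observes that the same kernel-domination chain, now using only the one-sided bound $\left\vert \operatorname{Re}\left(  1-\overline{w}\cdot w^{\prime}\right)  ^{-2\sigma}\right\vert \leq\left\vert 1-\overline{w}\cdot w^{\prime}\right\vert ^{-2\sigma}$, shows the tree condition (3) is sufficient for (1) for \emph{all} $\sigma\geq0$. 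Your route works (it is the established sufficiency mechanism for $\sigma=0$ and extends), but the paper's is more economical, reusing one discretization for both directions and yielding the $\sigma\geq1/2$ sufficiency statement with no extra work; note also that the lower bound $\left\vert 1-\overline{w}\cdot z\right\vert \gtrsim2^{-d\left(  \alpha\wedge\beta\right)  }$ you invoke holds for a fixed tree only after passing to a positive proportion of rotations (this is (\ref{lowbound})), which you correctly flag as the place where averaging over rotations must enter.

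One small gap to repair: your necessity argument, as written, does not cover $\sigma=0$. There the kernel $\left(  1-\overline{w}\cdot z\right)  ^{-2\sigma}$ degenerates to a constant and the step $2^{2\sigma d\left(  \alpha\wedge\alpha^{\prime}\right)  }\approx\sum_{\gamma\leq\alpha\wedge\alpha^{\prime}}2^{2\sigma d\left(  \gamma\right)  }$ requires $\sigma>0$; the Dirichlet-space kernel is logarithmic, and the paper simply disposes of this case by quoting Theorem \ref{Besov} from \cite{ArRoSa2}. Either do the same, or redo your discretization with $\log\frac{1}{\left\vert 1-\overline{w}\cdot z\right\vert }\approx d\left(  \alpha\wedge\beta\right)  $ in place of the power kernel.
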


%

\proof
The case $\sigma=0$ is Theorem \ref{Besov} above, and was proved in
\cite{ArRoSa2}. Theorem \ref{Tars} yields the equivalence of conditions $2$
and $3 $ in Theorem \ref{Besov'}.

We use the presentation of $B_{2}^{\sigma}\left(  \mathbb{B}_{n}\right)  $
given by $\mathcal{H}_{k}$ with kernel function $k\left(  w,z\right)  =\left(
\frac{1}{1-\overline{w}\cdot z}\right)  ^{2\sigma}$ on $\mathbb{B}_{n}$ as
given in Subsubsection \ref{IS}. To begin we must verify that this kernel is
positive definite, i.e.,
\[
\sum_{i,j=1}^{m}a_{i}\overline{a_{j}}k\left(  z_{i},z_{j}\right)  \geq0\text{
with equality }\Leftrightarrow\text{ all }a_{i}=0.
\]
Now for $0<\sigma\leq1/2$, this follows by expanding $\left(  1-\overline
{w}\cdot z\right)  ^{-2\sigma}$ in a power series, using that the coefficients
in the expansion are positive, and that the matrices $\left[  \left(
\overline{z_{i}}\cdot z_{j}\right)  ^{\ell}\right]  _{i,j=1}^{N}$ are
nonnegative semidefinite by Schur's theorem for $\ell,N\geq1$. There is
however another approach that not only works for all $\sigma>0$, but also
yields the equivalence of the norms in $\mathcal{H}_{k}$ and $B_{2}^{\sigma
}\left(  \mathbb{B}_{n}\right)  $. For this we recall the invertible
\textquotedblleft radial\textquotedblright\ differentiation operators
$R^{\gamma,t}:H\left(  \mathbb{B}_{n}\right)  \rightarrow H\left(
\mathbb{B}_{n}\right)  $ given in \cite{Zhu} by
\[
R^{\gamma,t}f\left(  z\right)  =\sum_{k=0}^{\infty}\frac{\Gamma\left(
n+1+\gamma\right)  \Gamma\left(  n+1+k+\gamma+t\right)  }{\Gamma\left(
n+1+\gamma+t\right)  \Gamma\left(  n+1+k+\gamma\right)  }f_{k}\left(
z\right)  ,
\]
provided neither $n+\gamma$ nor $n+\gamma+t$ is a negative integer, and where
$f\left(  z\right)  =\sum_{k=0}^{\infty}f_{k}\left(  z\right)  $ is the
homogeneous expansion of $f$. If the inverse of $R^{\gamma,t}$ is denoted
$R_{\gamma,t}$, then Proposition 1.14 of \cite{Zhu} yields
\begin{align}
R^{\gamma,t}\left(  \frac{1}{\left(  1-\overline{w}\cdot z\right)
^{n+1+\gamma}}\right)   &  =\frac{1}{\left(  1-\overline{w}\cdot z\right)
^{n+1+\gamma+t}},\label{Zhuidentity}\\
R_{\gamma,t}\left(  \frac{1}{\left(  1-\overline{w}\cdot z\right)
^{n+1+\gamma+t}}\right)   &  =\frac{1}{\left(  1-\overline{w}\cdot z\right)
^{n+1+\gamma}},\nonumber
\end{align}
for all $w\in\mathbb{B}_{n}$. Thus for any $\gamma$, $R^{\gamma,t}$ is
approximately differentiation of order $t$. From Theorem 6.1 and Theorem 6.4
of \cite{Zhu} we have that the derivatives $R^{\gamma,m}f\left(  z\right)  $
are \textquotedblleft$L^{2}$ norm equivalent\textquotedblright\ to $\sum
_{k=0}^{m-1}\left\vert f^{\left(  k\right)  }\left(  0\right)  \right\vert
+f^{\left(  m\right)  }\left(  z\right)  $ for $m$ large enough and $f\in
H\left(  \mathbb{B}_{n}\right)  $. We will also use that the proof of
Corollary 6.5 of \cite{Zhu} shows that $R^{\gamma,\frac{n+1+\alpha}{2}-\sigma
}$ is a bounded invertible operator from $B_{2}^{\sigma}$ onto the weighted
Bergman space $A_{\alpha}^{2}$, provided that neither $n+\gamma$ nor
$n+\gamma+\frac{n+1+\alpha}{2}-\sigma$ is a negative integer.

Let $\ell_{z}^{\eta}\left(  \zeta\right)  =\left(  \frac{1}{1-\overline
{z}\cdot\zeta}\right)  ^{\eta}$ and set $d\nu_{\alpha}\left(  \zeta\right)
=(1-\left\vert z\right\vert ^{2})^{\alpha}d\lambda(\zeta).$ Note from
(\ref{Zhuidentity}) that
\[
R^{\gamma,t}\ell_{z}^{\eta}\left(  \zeta\right)  =\frac{1}{\left(
1-\overline{z}\cdot\zeta\right)  ^{n+1+\alpha}}%
\]
provided $t=n+1+\alpha-\eta$ and $\gamma=\eta-n-1.$The reproducing formula in
Theorem 2.7 of \cite{Zhu} yields
\begin{align*}
\ell_{w}^{\eta}\left(  z\right)   &  =\int_{\mathbb{B}_{n}}\ell_{w}^{\eta
}\left(  \zeta\right)  \overline{\left(  \frac{1}{1-\overline{z}\cdot\zeta
}\right)  }^{n+1+\alpha}d\nu_{\alpha}\left(  \zeta\right) \\
&  =\int_{\mathbb{B}_{n}}\ell_{w}^{\eta}\left(  \zeta\right)  \overline
{R^{\gamma,t}\ell_{z}^{\eta}\left(  \zeta\right)  }d\nu_{\alpha}\left(
\zeta\right)  .
\end{align*}
Now let $S^{\gamma,t}$ be the square root of $R^{\gamma,t}$ defined by
\[
S^{\gamma,t}f\left(  z\right)  =\sum_{k=0}^{\infty}\sqrt{\frac{\Gamma\left(
n+1+\gamma\right)  \Gamma\left(  n+1+k+\gamma+t\right)  }{\Gamma\left(
n+1+\gamma+t\right)  \Gamma\left(  n+1+k+\gamma\right)  }}f_{k}\left(
z\right)  .
\]
Since $R^{\gamma,t}=\left(  S^{\gamma,t}\right)  ^{\ast}S^{\gamma,t}$ we have
with $\eta=2\sigma$ that
\begin{align*}
\sum_{i,j=1}^{m}a_{i}\overline{a_{j}}k\left(  x_{i},x_{j}\right)   &
=\sum_{i,j=1}^{m}a_{i}\overline{a_{j}}\int_{\mathbb{B}_{n}}S^{\gamma,t}%
\ell_{z_{i}}^{\eta}\left(  \zeta\right)  \overline{S^{\gamma,t}\ell_{z_{j}%
}^{\eta}\left(  \zeta\right)  }d\nu_{\alpha}\left(  \zeta\right) \\
&  =\int_{\mathbb{B}_{n}}\left\vert \sum_{i=1}^{m}a_{i}S^{\gamma,t}\ell
_{z_{i}}^{\eta}\left(  \zeta\right)  \right\vert ^{2}d\nu_{\alpha}\left(
\zeta\right)
\end{align*}
is positive definite. Note that $S^{\gamma,t}$ is a radial differentiation
operator of order $\frac{t}{2}$ so that $S^{\gamma,t}R_{\gamma,\frac{t}{2}} $
is bounded and invertible on the weighted Bergman space $A_{\alpha}^{2}$ (e.g.
by inspecting coefficients in homogeneous expansions). Thus with $\eta
=2\sigma$, we also have the equivalence of norms:
\[
\left\Vert f\right\Vert _{\mathcal{H}_{k}}^{2}=\int_{\mathbb{B}_{n}}\left\vert
S^{\gamma,t}f\left(  \zeta\right)  \right\vert ^{2}d\nu_{\alpha}\left(
\zeta\right)  \approx\int_{\mathbb{B}_{n}}\left\vert R^{\gamma,\frac
{n+1+\alpha-2\sigma}{2}}f\left(  \zeta\right)  \right\vert ^{2}d\nu_{\alpha
}\left(  \zeta\right)  \approx\left\Vert f\right\Vert _{B_{2}^{\sigma}\left(
\mathbb{B}_{n}\right)  }^{2}.
\]
For the remainder of this proof we will use the $\mathcal{H}_{k}$ norm on the
space $B_{2}^{\sigma}\left(  \mathbb{B}_{n}\right)  $.

The next part of the argument holds for general Hilbert spaces with
reproducing kernel hence we isolate it as a separate lemma. Let $\mathcal{J}$
be a Hilbert space of functions on $X$ with reproducing kernel functions
\{$j_{x}(\cdot)\}_{x\in X}.$ A measure $\mu$ on $X$ is a $\mathcal{J}%
$-Carleson measure exactly if the inclusion map $T$ is bounded from
$\mathcal{J}$ to $L^{2}\left(  X,\mu\right)  $.

\begin{lemma}
\label{carllem}A measure $\mu$ is a $\mathcal{J}$-Carleson measure if and only
if the linear map
\[
f\left(  \cdot\right)  \rightarrow Sf\left(  \cdot\right)  =\int
_{X}\operatorname{Re}j_{x}\left(  \cdot\right)  \,f(x)d\mu(x)
\]
is bounded on $L^{2}\left(  X,\mu\right)  .$
\end{lemma}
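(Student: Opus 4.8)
The plan is to exploit the standard duality pairing for Carleson embeddings together with the elementary observation that the adjoint of the inclusion map $T:\mathcal{J}\to L^2(X,\mu)$ composed with $T$ itself is precisely an integral operator against the reproducing kernel. Recall that $\mu$ is a $\mathcal{J}$-Carleson measure exactly when $T$ is bounded, which happens if and only if $T^*$ is bounded, which happens if and only if $T^*T$ (or equivalently $TT^*$) is bounded on the relevant space. So the first step is to compute $T^*$. For $f\in L^2(X,\mu)$ and $h\in\mathcal{J}$ one has $\langle Tf\text{-pairing}\rangle$; more precisely, for $h\in\mathcal{J}$, $\langle h, T^*f\rangle_{\mathcal{J}} = \langle Th, f\rangle_{L^2(\mu)} = \int_X h(x)\overline{f(x)}\,d\mu(x) = \int_X \langle h, j_x\rangle_{\mathcal{J}}\,\overline{f(x)}\,d\mu(x)$, whence $T^*f = \int_X \overline{f(x)}\,j_x\,d\mu(x)$ (interpreted weakly). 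Then $TT^*$ acts on $L^2(\mu)$ by $TT^*f(\,\cdot\,) = \int_X \overline{f(x)}\,j_x(\cdot)\,d\mu(x) = \int_X j_x(\cdot)\,\overline{f(x)}\,d\mu(x)$, which is a positive operator whose boundedness on $L^2(\mu)$ is equivalent to that of $T$.

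The one wrinkle is that the lemma asserts boundedness of the operator built from $\operatorname{Re} j_x$ applied to a \emph{real} function (or the real-bilinear form), rather than the complex operator $f\mapsto \int j_x \overline{f}\,d\mu$. So the second step is to reconcile the two. The operator $TT^*$ above is self-adjoint and positive on the complex Hilbert space $L^2(X,\mu)$; its boundedness is equivalent to the boundedness of the associated quadratic form $f\mapsto \langle TT^*f, f\rangle_{L^2(\mu)} = \int_X\int_X j_x(y)\,\overline{f(x)}\,f(y)\,d\mu(x)d\mu(y) = \|T^*f\|_{\mathcal{J}}^2 = \|Tf\|$-type quantity — this is manifestly real and nonnegative, so only $\operatorname{Re}\int\int j_x(y)\overline{f(x)}f(y)\,d\mu d\mu$ matters, i.e.\ only $\operatorname{Re} j_x$ enters the symmetrized form. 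Writing $f = g$ real-valued and using that real functions are dense enough to control the norm of a self-adjoint operator (the operator norm of a bounded self-adjoint operator on a complex Hilbert space equals the supremum of its quadratic form over the unit sphere, and one may further reduce to real test functions by splitting $f$ into real and imaginary parts and using positivity), one identifies the operator norm of $TT^*$ with the best constant $C$ in $\big|\int_X\int_X \operatorname{Re} j_x(y)\, g(x)\,g(y)\,d\mu(x)d\mu(y)\big|\le C\|g\|_{L^2(\mu)}^2$ for real $g\ge 0$ — or, equivalently, with the boundedness on $L^2(X,\mu)$ of the \emph{real-linear} operator $Sf(\cdot) = \int_X \operatorname{Re} j_x(\cdot)\,f(x)\,d\mu(x)$. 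This last equivalence is the content of the lemma.

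The main obstacle, and the only place requiring care, is precisely the passage between the complex integral operator $f\mapsto\int j_x\overline f\,d\mu$ (whose boundedness is transparently equivalent to the Carleson property via $T^*$) and the real operator $S$ against $\operatorname{Re}j_x$ stated in the lemma; one must verify that taking real parts of the kernel does not change the operator norm, which hinges on the self-adjointness/positivity of $TT^*$ and on the fact that a bound over real test functions upgrades to a bound over complex ones at the cost of at most a universal constant (here, in fact, no loss, by the parallelogram-type identity $\langle TT^* f, f\rangle$ being controlled by its values on $\operatorname{Re} f$ and $\operatorname{Im} f$). Everything else — the computation of $T^*$, the identification $TT^*f = Sf + i(\text{something that contributes nothing to the form})$, and the standard $T$ bounded $\iff$ $TT^*$ bounded — is routine. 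I would present the argument in the order: (i) $T$ Carleson $\iff$ $TT^*$ bounded on $L^2(\mu)$; (ii) explicit formula for $TT^*$ as an integral operator against $j_x$; (iii) reduction of the operator norm to the symmetric bilinear form, hence to $\operatorname{Re}j_x$; (iv) conclude that boundedness of $S$ is the same condition.
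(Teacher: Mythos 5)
Your argument is correct and takes essentially the same route as the paper's proof: identify the Carleson property with boundedness of $T^{\ast}$ (equivalently of $TT^{\ast}$), compute the adjoint as integration against the kernels $j_x$, reduce to the quadratic form $\left\Vert T^{\ast}f\right\Vert _{\mathcal{J}}^{2}$ and to real test functions so that only $\operatorname{Re}j_{x}$ survives, and use symmetry and positivity to pass between the form bound and boundedness of $S$. The only blemish is a harmless conjugation slip in your weak formula for the adjoint (it should read $T^{\ast}f=\int_{X}f\left(  x\right)  j_{x}\,d\mu\left(  x\right)  $ rather than $\int_{X}\overline{f\left(  x\right)  }j_{x}\,d\mu\left(  x\right)  $), which is immaterial once you restrict to real $f$ as both you and the paper do.
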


%

\proof
$T$ is bounded if and only if the adjoint $T^{\ast}$ is bounded from
$L^{2}\left(  X,\mu\right)  $ to $\mathcal{J}$, i.e.
\begin{equation}
\left\Vert T^{\ast}f\right\Vert _{\mathcal{J}}^{2}=\left\langle T^{\ast
}f,T^{\ast}f\right\rangle _{\mathcal{J}}\leq C\left\Vert f\right\Vert
_{L^{2}\left(  \mu\right)  }^{2},\;\;\;\;\;f\in L^{2}\left(  \mu\right)  .
\label{TT*}%
\end{equation}
For $x\in X$ we have
\begin{align*}
T^{\ast}f\left(  x\right)   &  =\left\langle T^{\ast}f,j_{x}\right\rangle
_{\mathcal{J}}=\left\langle f,Tj_{x}\right\rangle _{L^{2}\left(  \mu\right)
}\\
&  =\int f\left(  w\right)  \overline{j_{x}\left(  w\right)  }d\mu\left(
w\right) \\
&  =\int j_{w}\left(  x\right)  f\left(  w\right)  d\mu\left(  w\right)  ,
\end{align*}
and thus we obtain
\begin{align*}
\left\Vert T^{\ast}f\right\Vert _{\mathcal{J}}^{2}  &  =\left\langle T^{\ast
}f,T^{\ast}f\right\rangle _{\mathcal{J}}\\
&  =\left\langle \int j_{w}f\left(  w\right)  d\mu\left(  w\right)  ,\int
j_{w^{\prime}}f\left(  w^{\prime}\right)  d\mu\left(  w^{\prime}\right)
\right\rangle _{\mathcal{J}}\\
&  =\int\int\left\langle j_{w},j_{w^{\prime}}\right\rangle _{\mathcal{J}%
}f\left(  w\right)  d\mu\left(  w\right)  \overline{f\left(  w^{\prime
}\right)  }d\mu\left(  w^{\prime}\right) \\
&  =\int\int j_{w}\left(  w^{\prime}\right)  f\left(  w\right)  d\mu\left(
w\right)  \overline{f\left(  w^{\prime}\right)  }d\mu\left(  w^{\prime
}\right)  .
\end{align*}
Having (\ref{TT*}) for general $f$ is equivalent to having it for real $f$ and
we now suppose $f$ is real. In that case we continue with
\begin{align*}
\left\Vert T^{\ast}f\right\Vert _{\mathcal{J}}^{2}  &  =\int\int
\operatorname{Re}j_{w}\left(  w^{\prime}\right)  f\left(  w\right)  f\left(
w^{\prime}\right)  d\mu\left(  w\right)  d\mu\left(  w^{\prime}\right) \\
&  =\left\langle Sf,f\right\rangle _{L^{2}\left(  \mu\right)  }.
\end{align*}
The last quantity satisfies the required estimates exactly if $S$ is bounded;
the proof is complete.

The first of the two following corollaries is immediate from the lemma.

\begin{corollary}
\label{same}Suppose $\mathcal{J}$ and $\mathcal{J}^{\prime}$ are two
reproducing kernel Hilbert spaces on $X$ with kernel functions $\left\{
j\right\}  $ and $\left\{  j^{\prime}\right\}  $ respectively. If
$\operatorname{Re}j_{x}(y)\leq c\operatorname{Re}j_{x}^{\prime}(y)$ then every
$\mathcal{J}^{\prime}$-Carleson measure is a $\mathcal{J}$-Carleson measure.
If $\operatorname{Re}j_{x}(y)\sim c\operatorname{Re}j_{x}^{\prime}(y)$ then
the two sets of Carleson measures coincide.
\end{corollary}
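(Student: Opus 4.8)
The plan is to read off Corollary \ref{same} directly from Lemma \ref{carllem}. By that lemma, $\mu$ is $\mathcal{J}$-Carleson exactly when the operator $S_{\mathcal{J}}f(\cdot)=\int_{X}\operatorname{Re}j_{x}(\cdot)\,f(x)\,d\mu(x)$ is bounded on $L^{2}(X,\mu)$, and $\mu$ is $\mathcal{J}^{\prime}$-Carleson exactly when $S_{\mathcal{J}^{\prime}}f(\cdot)=\int_{X}\operatorname{Re}j_{x}^{\prime}(\cdot)\,f(x)\,d\mu(x)$ is bounded on $L^{2}(X,\mu)$; indeed, tracking constants through the proof of the lemma, $\|\mu\|_{Carleson,\mathcal{J}}^{2}$ is comparable to the operator norm $\|S_{\mathcal{J}}\|$, and likewise for $\mathcal{J}^{\prime}$. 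So it suffices to prove the operator-norm estimate $\|S_{\mathcal{J}}\|\le c\,\|S_{\mathcal{J}^{\prime}}\|$ under the hypothesis $\operatorname{Re}j_{x}(y)\le c\operatorname{Re}j_{x}^{\prime}(y)$, and then the two-sided assertion follows by applying this in both directions.

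Here one should keep in mind that in the situations where the corollary is applied the real parts of the kernels are \emph{nonnegative} — this is the very property emphasized for the range $0\le\sigma\le 1/2$: since $|\overline{x}\cdot y|<1$ the number $1-\overline{x}\cdot y$ lies in the right half-plane, and raising it to a power in $(0,1]$ keeps its argument in $(-\pi/2,\pi/2]$, so $\operatorname{Re}j_{x}(y)\ge 0$ and $\operatorname{Re}j_{x}^{\prime}(y)\ge 0$ for all $x,y$. Granting this, for every $f\in L^{2}(X,\mu)$ and every $y\in X$ we have
\[
|S_{\mathcal{J}}f(y)|\le\int_{X}\operatorname{Re}j_{x}(y)\,|f(x)|\,d\mu(x)\le c\int_{X}\operatorname{Re}j_{x}^{\prime}(y)\,|f(x)|\,d\mu(x)=c\,S_{\mathcal{J}^{\prime}}|f|(y),
\]
where the first step uses $\operatorname{Re}j_{x}(y)\ge 0$ and the second uses the pointwise hypothesis together with $|f|\ge 0$. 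Taking $L^{2}(X,\mu)$ norms and using that $\||f|\|_{L^{2}(\mu)}=\|f\|_{L^{2}(\mu)}$ gives $\|S_{\mathcal{J}}f\|_{L^{2}(\mu)}\le c\,\|S_{\mathcal{J}^{\prime}}\|\,\|f\|_{L^{2}(\mu)}$, hence $\|S_{\mathcal{J}}\|\le c\,\|S_{\mathcal{J}^{\prime}}\|$.

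Consequently, if $\mu$ is $\mathcal{J}^{\prime}$-Carleson then $S_{\mathcal{J}^{\prime}}$ is bounded, so $S_{\mathcal{J}}$ is bounded, so $\mu$ is $\mathcal{J}$-Carleson, with $\|\mu\|_{Carleson,\mathcal{J}}^{2}\le c\,\|\mu\|_{Carleson,\mathcal{J}^{\prime}}^{2}$; when $\operatorname{Re}j_{x}(y)\sim c\operatorname{Re}j_{x}^{\prime}(y)$ the same computation run in the opposite direction yields the reverse inclusion with comparable norms, so the two classes coincide. The only place that calls for any care is the passage of the absolute value inside the integral and the pointwise comparison of the two kernels; both rest on the nonnegativity of $\operatorname{Re}j$ and $\operatorname{Re}j^{\prime}$ — exactly the feature confining the discussion to $\sigma\le 1/2$, since without it a pointwise bound on real parts would not control the operators — and beyond that there is genuinely nothing to do, which is why the corollary is immediate from the lemma.
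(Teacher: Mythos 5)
Your reduction to Lemma \ref{carllem} is indeed the paper's intended route (the paper simply declares Corollary \ref{same} immediate from that lemma), but as written your argument has a genuine gap: the corollary is stated for arbitrary reproducing kernel Hilbert spaces on $X$, with no hypothesis that $\operatorname{Re}j_{x}(y)$ or $\operatorname{Re}j_{x}^{\prime}(y)$ is nonnegative, and the very first inequality in your display, $|S_{\mathcal{J}}f(y)|\leq\int_{X}\operatorname{Re}j_{x}(y)\,|f(x)|\,d\mu(x)$, is exactly the step that fails when $\operatorname{Re}j_{x}(y)$ changes sign. Importing positivity from ``the situations where the corollary is applied'' changes the statement being proved; worse, your closing claim that without positivity ``a pointwise bound on real parts would not control the operators'' is false --- the corollary holds in the stated generality.

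The missing idea is to compare quadratic forms rather than operators via pointwise domination. The proof of Lemma \ref{carllem} shows that $\mu$ is $\mathcal{J}$-Carleson exactly when $Q(f):=\langle S_{\mathcal{J}}f,f\rangle_{L^{2}(\mu)}=\int\!\!\int\operatorname{Re}j_{w}(w^{\prime})f(w)f(w^{\prime})\,d\mu(w)\,d\mu(w^{\prime})\leq C\|f\|_{L^{2}(\mu)}^{2}$ for real $f$, and moreover $Q(f)=\|T^{\ast}f\|_{\mathcal{J}}^{2}\geq0$, so $Q$ is a symmetric (since $\operatorname{Re}j_{w}(w^{\prime})=\operatorname{Re}j_{w^{\prime}}(w)$) positive semidefinite form, and likewise $Q^{\prime}$. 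For $f\geq0$ the pointwise hypothesis gives $Q(f)\leq cQ^{\prime}(f)$ with no absolute values needed; for general real $f=f_{+}-f_{-}$, Cauchy--Schwarz for the form $Q$ yields $Q(f)\leq2\bigl(Q(f_{+})+Q(f_{-})\bigr)\leq2c\bigl(Q^{\prime}(f_{+})+Q^{\prime}(f_{-})\bigr)\leq2cC^{\prime}\|f\|_{L^{2}(\mu)}^{2}$, since $\|f_{+}\|^{2}+\|f_{-}\|^{2}=\|f\|^{2}$. This proves the one-sided implication in full generality, and two-sided comparability then gives coincidence of the two Carleson classes. With this repair your argument is complete (and in the paper's applications, where $\operatorname{Re}j\geq0$ for the kernels with $0\leq\sigma\leq1/2$, your shortcut is valid as written).
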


\begin{corollary}
\label{same2}Suppose $X$ is a bounded open set in some $\mathbb{R}^{k}$ and
$\partial\bar{X}$ is smooth. Suppose $\mathcal{J}$ and $\mathcal{J}^{\prime} $
are two reproducing kernel Hilbert spaces on $X$ with kernel functions
$\left\{  j\right\}  $ and $\left\{  j^{\prime}\right\}  $ and that there is a
smooth function $h(x,y)$ on $\bar{X}\times\bar{X}$ which is bounded and
bounded away from zero so that
\[
j_{x}^{\prime}(y)=j_{x}(y)h(x,y).
\]
Then the set of $\mathcal{J}^{\prime}$-Carleson measures and $\mathcal{J}%
$-Carleson measures coincide.
\end{corollary}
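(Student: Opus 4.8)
The plan is to use Lemma \ref{carllem} to reduce the assertion to a comparison of two integral operators on $L^{2}(\mu)$, and then to exploit the regularity of $h$ through a Taylor expansion about the diagonal together with an elementary commutator identity. Note first that if $h$ were real-valued it would, by connectedness of $\bar{X}\times\bar{X}$ and positivity of $h$ on the diagonal, be bounded below by a positive constant, so that $\operatorname{Re}j_{x}^{\prime}(y)=h(x,y)\operatorname{Re}j_{x}(y)$ would be comparable to $\operatorname{Re}j_{x}(y)$ and Corollary \ref{same} would finish the proof at once; the real content is therefore the case of a complex-valued $h$, where this pointwise comparison of real parts fails. By the proof of Lemma \ref{carllem}, $\mu$ is a $\mathcal{J}$-Carleson measure precisely when the operator $Kf(\cdot)=\int_{X}j_{w}(\cdot)\,f(w)\,d\mu(w)$ (that is, $TT^{\ast}$) is bounded on $L^{2}(\mu)$, and likewise $\mu$ is $\mathcal{J}^{\prime}$-Carleson precisely when $K^{\prime}f(\cdot)=\int_{X}h(w,\cdot)\,j_{w}(\cdot)\,f(w)\,d\mu(w)$ is bounded on $L^{2}(\mu)$. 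Since $1/h$ is again smooth on $\bar{X}\times\bar{X}$ and bounded above and below, it suffices to prove one implication — that boundedness of $K$ on $L^{2}(\mu)$ forces boundedness of $K^{\prime}$ — because the reverse is the same argument applied with $(j,j^{\prime},h)$ replaced by $(j^{\prime},j,1/h)$.

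Writing $x=(x_{1},\dots,x_{k})$ for a point of $X\subset\mathbb{R}^{k}$, let $M_{i}$ denote multiplication by the bounded coordinate function $x_{i}$ on $L^{2}(\mu)$. Comparing kernels, the commutator $[M_{i},K]=M_{i}K-KM_{i}$ is the integral operator with kernel $(x_{i}-w_{i})\,j_{w}(x)$, hence bounded on $L^{2}(\mu)$ with $\lVert[M_{i},K]\rVert\le 2\operatorname{diam}(X)\lVert K\rVert$ whenever $K$ is bounded; iterating, for every multi-index $\gamma$ the operator with kernel $(x-w)^{\gamma}j_{w}(x)$ is a finite signed composition of such commutators with $K$ and so is bounded on $L^{2}(\mu)$ whenever $K$ is. Now extend $h$ to a smooth function past $\bar{X}\times\bar{X}$ and Taylor expand in the first variable about the diagonal,
\[
h(w,x)=\sum_{|\beta|\le N}b_{\beta}(x)\,(w-x)^{\beta}+R_{N}(w,x),\qquad b_{\beta}(x)=\tfrac{1}{\beta!}\,\partial_{w}^{\beta}h(x,x),
\]
with the $b_{\beta}$ smooth (so bounded) on $\bar{X}$ and $|R_{N}(w,x)|\le C_{N}|w-x|^{N+1}$. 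Multiplying through by $j_{w}(x)$ decomposes
\[
K^{\prime}=\sum_{|\beta|\le N}M_{b_{\beta}}\bigl(\text{operator with kernel }(w-x)^{\beta}j_{w}(x)\bigr)+E_{N},
\]
where $E_{N}$ is the integral operator with kernel $R_{N}(w,x)j_{w}(x)$. By the commutator discussion and the boundedness of the $b_{\beta}$, the finite sum on the right is bounded on $L^{2}(\mu)$ as soon as $K$ is.

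The crux, and the step I expect to be the main obstacle, is the remainder $E_{N}$, whose kernel only obeys the pointwise bound $|R_{N}(w,x)j_{w}(x)|\le C_{N}|w-x|^{N+1}|j_{w}(x)|$. This is where the hypothesis that $\partial\bar{X}$ be smooth does real work: for the spaces to which the corollary is applied the reproducing kernel satisfies a polynomial estimate $|j_{w}(x)|\lesssim|w-x|^{-m}$ for some $m$ — for $B_{2}^{\sigma}(\mathbb{B}_{n})$, say, the inequality $\operatorname{Re}(1-\overline{w}\cdot x)\ge\tfrac12|w-x|^{2}$ yields $m=4\sigma$ — so that for $N\ge m$ the kernel of $E_{N}$ is bounded on $\bar{X}\times\bar{X}$; since in these settings $1\in\mathcal{J}$, so that every $\mathcal{J}$-Carleson measure has finite total mass, a bounded kernel produces a bounded operator and $E_{N}$ is bounded on $L^{2}(\mu)$. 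Hence $K^{\prime}$ is bounded on $L^{2}(\mu)$ whenever $K$ is; by the symmetry noted above the converse holds as well, and the two classes of Carleson measures coincide.

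As a sanity check on the mechanism, in the model case $h(x,y)=\overline{a(x)}a(y)$ with $a$ smooth and bounded away from $0$, multiplication by $a$ is a unitary from $\mathcal{J}$ onto $\mathcal{J}^{\prime}$, from which one reads off directly that $\mu$ is $\mathcal{J}^{\prime}$-Carleson if and only if $|a|^{2}\mu$ — equivalently $\mu$ — is $\mathcal{J}$-Carleson, with no growth estimate needed; the commutator expansion above is precisely the device for passing from such an $h$ to a general one.
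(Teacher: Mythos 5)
Your reduction via Lemma \ref{carllem} and the symmetry remark (replacing $(j,j',h)$ by $(j',j,1/h)$) match the paper, and your commutator treatment of the polynomial terms $(w-x)^{\beta}j_{w}(x)$ is sound. The genuine gap is in the remainder $E_{N}$. The corollary is stated for \emph{arbitrary} reproducing kernel Hilbert spaces on $X$: nothing in the hypotheses gives you either the polynomial off-diagonal bound $|j_{w}(x)|\lesssim|w-x|^{-m}$ or the finiteness of every $\mathcal{J}$-Carleson measure (which you get from $1\in\mathcal{J}$ in the examples, but $1$ need not lie in a general $\mathcal{J}$, and boundedness of $K$ on $L^{2}(\mu)$ gives no pointwise control of the kernel whatsoever). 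So the estimate $|R_{N}(w,x)j_{w}(x)|\leq C_{N}|w-x|^{N+1}|j_{w}(x)|$ cannot be closed under the stated hypotheses, and your argument as written proves only a weaker statement in which extra growth and mass assumptions are imported; you acknowledge this yourself, but those assumptions are not available in the corollary, and the corollary is invoked in the paper at the level of generality in which it is stated.

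The paper's own proof sidesteps exactly this difficulty by expanding $h$ globally rather than about the diagonal: extend $h$ smoothly to a box containing $\bar{X}\times\bar{X}$ and expand it in a double Fourier series $h(x,y)=\sum_{\alpha}c_{\alpha}e^{-i\alpha_{1}\cdot x}e^{-i\alpha_{2}\cdot y}$, so that
\[
R^{\prime}=\sum_{\alpha}c_{\alpha}M_{e(\alpha_{2})}\,R\,M_{e(\alpha_{1})},
\]
where the $M_{e}$ are multiplications by unimodular characters, hence unitary on $L^{2}(\mu)$. Smoothness of $h$ makes $\sum_{\alpha}|c_{\alpha}|<\infty$, so $\Vert R^{\prime}\Vert\leq\Vert R\Vert\sum_{\alpha}|c_{\alpha}|$ with no remainder term at all, no hypothesis on the size of $j_{w}(x)$, and no finiteness of $\mu$. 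If you want to salvage your Taylor-plus-commutator scheme, you would either have to add the kernel-growth and finite-mass hypotheses to the statement (weakening it), or replace the one-sided Taylor remainder by a decomposition into uniformly bounded pieces in the spirit of the Fourier expansion; as it stands, the step ``$E_{N}$ is bounded on $L^{2}(\mu)$'' does not follow from the hypotheses of the corollary.
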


%

\proof
In the proof of the lemma we saw that $\mu$ was a $\mathcal{J}$-Carleson
measure if and only if%

\[
f\left(  \cdot\right)  \rightarrow Rf\left(  \cdot\right)  =\int_{X}%
j_{x}\left(  \cdot\right)  \,f(x)d\mu(x)
\]
was a bounded operator on $L^{2}\left(  X,\mu\right)  ;$ and similarly for
$j^{\prime}.$ Thus we need to show that $R$ is bounded if and only if
$R^{\prime}$ is where $R^{\prime}$ is given by
\begin{align}
f\left(  \cdot\right)  \rightarrow R^{\prime}f\left(  \cdot\right)   &
=\int_{X}j_{x}^{\prime}\left(  \cdot\right)  \,f(x)d\mu(x)\nonumber\\
&  =\int_{X}j_{x}(\cdot)h(x,\cdot)f(x)d\mu(x). \label{R'}%
\end{align}
However this follows from standard facts about bounded operators given by
integral kernels. For instance we could extend $h$ to be a smooth compactly
supported function in a box in $\mathbb{R}^{2k}$ which contains $X\times X.$
Then expand $h\left(  x,y\right)  $ in a multiple Fourier series $\sum
_{\alpha=(\alpha_{1},\alpha_{2})}c_{\alpha}e^{-i\alpha_{1}\cdot x}%
e^{-i\alpha_{2}\cdot y}$ and substitute into (\ref{R'}). This yields the
operator equation
\[
R^{\prime}=\sum_{\alpha=(\alpha_{1},\alpha_{2})}c_{\alpha}M_{e(\alpha_{2}%
)}RM_{e(\alpha_{1})}%
\]
where the $c_{a}$ are Fourier coefficients and the $e$'s are unimodular
characters and the $M_{e}$'s are the corresponding multiplication operators
$M_{e(\alpha)}g\left(  z\right)  =e^{-i\alpha\cdot z}g\left(  z\right)  $. The
$M_{e}$'s are unitary and the smoothness of $h$ insures that $\left\{
c_{a}\right\}  $ is an absolutely convergent sequence.\ Hence if $R$ is
bounded so is $R^{\prime}$. Because $h$ is bounded away from zero we can work
with $1/h(x,y)$ to reverse the argument and complete the proof.\medskip

In the case of current interest the lemma gives that $\mu$ is a $B_{2}%
^{\sigma}\left(  \mathbb{B}_{n}\right)  $-Carleson measure exactly if we have
estimates for
\[
\left\langle T^{\ast}f,T^{\ast}f\right\rangle _{B_{2}^{\sigma}\left(
\mathbb{B}_{n}\right)  }=\int\int\operatorname{Re}\left(  \frac{1}%
{1-\overline{w}\cdot w^{\prime}}\right)  ^{2\sigma}f\left(  w\right)
d\mu\left(  w\right)  f\left(  w^{\prime}\right)  d\mu\left(  w^{\prime
}\right)
\]
for $f\geq0.$

\bigskip Now we use that%

\begin{equation}
\operatorname{Re}\left(  \frac{1}{1-\overline{w}\cdot w^{\prime}}\right)
^{2\sigma}\approx\left\vert \frac{1}{1-\overline{w}\cdot w^{\prime}%
}\right\vert ^{2\sigma} \label{realbelow}%
\end{equation}
for $0\leq\sigma<1/2$, to obtain that $\mu$ is $B_{2}^{\sigma}\left(
\mathbb{B}_{n}\right)  $-Carleson if and only if
\[
\int\int\left\vert \frac{1}{1-\overline{w}\cdot w^{\prime}}\right\vert
^{2\sigma}f\left(  w\right)  d\mu\left(  w\right)  f\left(  w^{\prime}\right)
d\mu\left(  w^{\prime}\right)  \leq C\left\Vert f\right\Vert _{L^{2}\left(
\mu\right)  }^{2},\;\;\;\;\;f\geq0.
\]

This inequality is easily discretized using that
\begin{equation}
c2^{d\left(  \alpha\wedge\alpha^{\prime}\right)  }\leq\left\vert \frac
{1}{1-\overline{w}\cdot w^{\prime}}\right\vert \leq C\int_{\mathcal{U}_{n}%
}2^{d\left(  \alpha\left(  Uw\right)  \wedge\alpha\left(  Uw^{\prime}\right)
\right)  }dU, \label{disckernel}%
\end{equation}
for $w\in K_{\alpha}$ and $w^{\prime}\in K_{\alpha^{\prime}}$ where
$\alpha\left(  Uw\right)  $ denotes the unique kube $K_{\alpha\left(
Uw\right)  } $ containing $Uw.$ The second inequality above is analogous to
similar inequalities in Euclidean space used to control an operator by
translations of its dyadic version, and the proof is similar (e.g. use
(\ref{lowbound}) below and integrate over $\mathcal{U}_{n}\mathcal{)}$. Using
this and decomposing the ball $\mathbb{B}_{n}$ as $\cup_{\alpha\in
\mathcal{T}_{n}}K_{\alpha}$, we obtain that $\mu$ is $B_{2}^{\sigma}\left(
\mathbb{B}_{n}\right)  $-Carleson if and only if
\[
\sum_{\alpha,\alpha^{\prime}\in\mathcal{T}_{n}}2^{2\sigma d\left(
\alpha\wedge\alpha^{\prime}\right)  }f\left(  \alpha\right)  \mu\left(
\alpha\right)  f\left(  \alpha^{\prime}\right)  \mu\left(  \alpha^{\prime
}\right)  \leq C\sum_{\alpha\in\mathcal{T}_{n}}f\left(  \alpha\right)  ^{2}%
\mu\left(  \alpha\right)  ,\;\;\;\;\;f\geq0,
\]
where $\mathcal{T}_{n}$ ranges over all unitary rotations of a fixed Bergman
tree. Now for $\sigma>0$,
\[
2^{2\sigma d\left(  \alpha\wedge\alpha^{\prime}\right)  }\approx\sum
_{\gamma\leq\alpha\wedge\alpha^{\prime}}2^{2\sigma d\left(  \gamma\right)  },
\]
and so the left side above is approximately
\[
\sum_{\alpha,\alpha^{\prime}\in\mathcal{T}_{n}}\sum_{\gamma\leq\alpha
\wedge\alpha^{\prime}}2^{2\sigma d\left(  \gamma\right)  }f\left(
\alpha\right)  \mu\left(  \alpha\right)  f\left(  \alpha^{\prime}\right)
\mu\left(  \alpha^{\prime}\right)  =\sum_{\gamma\in\mathcal{T}_{n}}2^{2\sigma
d\left(  \gamma\right)  }I^{\ast}f\left(  \gamma\right)  ^{2}.
\]
Thus for $0<\sigma<1/2$, $\mu$ is $B_{2}^{\sigma}\left(  \mathbb{B}%
_{n}\right)  $-Carleson if and only if (\ref{treecond'dual}) holds where
$\mathcal{T}_{n}$ ranges over all unitary rotations of a fixed Bergman tree.
By Theorem \ref{Tars}, this is equivalent to the tree condition
(\ref{treecond}) where $\mathcal{T}_{n}$ ranges over all unitary rotations of
a fixed Bergman tree. However, we need only consider a fixed Bergman tree
$\mathcal{T}_{n}$ since if $\mu$ is a positive measure on the ball whose
discretization $\mu_{\mathcal{T}_{n}}$ on $\mathcal{T}_{n}$ satisfies the tree
condition, then its discretization $\mu_{U\mathcal{T}_{n}}$ to any unitary
rotation $U\mathcal{T}_{n}$ also satisfies the tree condition (with a possibly
larger, but controlled constant). Indeed, Theorem \ref{Tars} shows that
$\mu_{\mathcal{T}_{n}}$ is $B_{2}^{\sigma}\left(  \mathcal{T}_{n}\right)
$-Carleson, and hence so is the fattened measure defined by
\[
\mu_{\mathcal{T}_{n}}^{\natural}\left(  \alpha\right)  =\sum_{d\left(
\alpha,\beta\right)  \leq N}\mu_{\mathcal{T}_{n}}\left(  \beta\right)
,\;\;\;\;\;\alpha\in\mathcal{T}_{n}.
\]
Since $\mu_{U\mathcal{T}_{n}}$ is pointwise dominated by $\mu_{\mathcal{T}%
_{n}}^{\natural}$ for $N$ sufficiently large, $\mu_{U\mathcal{T}_{n}}$ is
$B_{2}^{\sigma}\left(  \mathcal{T}_{n}\right)  $-Carleson as well, hence
satisfies the tree condition (\ref{treecond}) with $U\mathcal{T}_{n}$ in place
of $\mathcal{T}_{n}$.

Finally, we note that in the case $\sigma\geq1/2$, the above argument,
together with the inequality
\[
\left\vert \operatorname{Re}\left(  \frac{1}{1-\overline{w}\cdot w^{\prime}%
}\right)  ^{2\sigma}\right\vert \leq\left\vert \frac{1}{1-\overline{w}\cdot
w^{\prime}}\right\vert ^{2\sigma},
\]
shows that the tree condition (\ref{treecond}) is sufficient for $\mu$ to be a
$B_{2}^{\sigma}\left(  \mathbb{B}_{n}\right)  $-Carleson measure. This
completes the proof of Theorem \ref{Besov'}.

\subsection{The case $\sigma=1/2$: The Drury-Arveson Hardy space $H_{n}^{2}$}

The above theorem just misses capturing the Drury-Arveson Hardy space
$H_{n}^{2}=B_{2}^{1/2}\left(  \mathbb{B}_{n}\right)  $. If we take
$\sigma=1/2$ in the above proof, then (\ref{realbelow}) combined with the
first inequality in (\ref{disckernel}) is weakened to the inequality
\begin{equation}
\operatorname{Re}\frac{1}{1-\overline{z}\cdot z^{\prime}}=\frac
{\operatorname{Re}\left(  1-\overline{z}\cdot z^{\prime}\right)  }{\left\vert
1-\overline{z}\cdot z^{\prime}\right\vert ^{2}}\geq c+c2^{2d\left(
\alpha\wedge\alpha^{\prime}\right)  -d^{\ast}\left(  \left[  \alpha\right]
\wedge\left[  \alpha^{\prime}\right]  \right)  },\;\;\;\;\;z\in K_{\alpha
},z^{\prime}\in K_{\alpha^{\prime}}, \label{below'}%
\end{equation}
(see below for the definition of $d^{\ast}\left(  \left[  \alpha\right]
\wedge\left[  \alpha^{\prime}\right]  \right)  $ related to a quotient tree
$\mathcal{R}_{n}$ of the Bergman tree $\mathcal{T}_{n}$) which does not lead
to the tree condition (\ref{treecond}). We will however modify the proof so as
to give a characterization in Theorem \ref{Arvcom} below of the Carleson
measures for $H_{n}^{2}=B_{2}^{1/2}\left(  \mathbb{B}_{n}\right)  $ in terms
of the simple condition (\ref{Arvsimple}) and the \textquotedblleft
split\textquotedblright\ tree condition (\ref{fullsplittreecondition}) given
below. We will proceed by three propositions, the first reducing the Carleson
measure embedding for $H_{n}^{2}$ to a positive bilinear inequality on the ball.

\begin{proposition}
\label{realreduction}Let $\mu$ be a positive measure on the ball
$\mathbb{B}_{n}$. Then $\mu$ is $H_{n}^{2}$-Carleson if and only if the
bilinear inequality
\begin{equation}
\int_{\mathbb{B}_{n}}\int_{\mathbb{B}_{n}}\left(  \operatorname{Re}\frac
{1}{1-\overline{z}\cdot z^{\prime}}\right)  f\left(  z^{\prime}\right)
d\mu\left(  z^{\prime}\right)  g\left(  z\right)  d\mu\left(  z\right)  \leq
C\left\Vert f\right\Vert _{L^{2}\left(  \mu\right)  }\left\Vert g\right\Vert
_{L^{2}\left(  \mu\right)  }, \label{Rebil}%
\end{equation}
holds for all $f,g\geq0$. Moreover, provided we use the $H_{n}^{2}$ norm for
Carleson measures (but not the $B_{2}^{1/2}\left(  \mathbb{B}_{n}\right)  $
norm) the constants implicit in the above statement are independent of
dimension $n$.
\end{proposition}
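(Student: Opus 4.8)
The plan is to apply Lemma \ref{carllem} to the space $\mathcal{J} = H_n^2 = B_2^{1/2}(\mathbb{B}_n)$. The reproducing kernel for $H_n^2$ is $j_z(w) = \frac{1}{1-\overline{z}\cdot w}$ (this is the defining feature of the Drury--Arveson space). Lemma \ref{carllem} says precisely that $\mu$ is $\mathcal{J}$-Carleson if and only if the operator
\[
Sf(\cdot) = \int_{\mathbb{B}_n} \operatorname{Re} j_z(\cdot)\, f(z)\, d\mu(z)
\]
is bounded on $L^2(\mathbb{B}_n,\mu)$. Boundedness of $S$ on $L^2(\mu)$ is, by the standard duality pairing, equivalent to the bilinear estimate $|\langle Sf, g\rangle_{L^2(\mu)}| \le C \|f\|_{L^2(\mu)} \|g\|_{L^2(\mu)}$ for all $f,g$; since $\operatorname{Re} j_z(w)$ is real and symmetric in $z,w$, it suffices to test on real $f,g$, and since the kernel $\operatorname{Re}\frac{1}{1-\overline{z}\cdot z'}$ has positive real part (indeed is positive: $\operatorname{Re}\frac{1}{1-\overline{z}\cdot z'} = \frac{\operatorname{Re}(1-\overline{z}\cdot z')}{|1-\overline{z}\cdot z'|^2} > 0$ on $\mathbb{B}_n\times\mathbb{B}_n$), it suffices to test on $f,g \ge 0$. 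Writing out $\langle Sf,g\rangle_{L^2(\mu)}$ gives exactly the left-hand side of (\ref{Rebil}), so this yields the stated equivalence.

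The one point requiring care is the claim that testing on nonnegative $f,g$ suffices. For a general operator with a sign-changing kernel this would fail, but here the kernel $K(z,z') = \operatorname{Re}\frac{1}{1-\overline{z}\cdot z'}$ is strictly positive on $\mathbb{B}_n \times \mathbb{B}_n$, so $|\langle Sf,g\rangle| \le \langle S|f|, |g|\rangle$ for all real $f,g$, and then for complex $f,g$ one reduces to real and imaginary parts at the cost of a fixed constant. Thus the finiteness of the best constant in (\ref{Rebil}) for $f,g\ge 0$ is comparable to the norm of $S$ on $L^2(\mu)$, which by Lemma \ref{carllem} is comparable to the Carleson norm $\|\mu\|_{Carleson}$ for $H_n^2$. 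I expect this to be the only genuinely non-bookkeeping step, and it is short.

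Finally, for the dimensional independence of the constants: the entire chain of equivalences above --- Lemma \ref{carllem}, the duality between $L^2(\mu)$-boundedness of $S$ and the bilinear form, and the positive-kernel reduction to $f,g \ge 0$ --- involves only absolute constants (the constant from $|f| \le |\operatorname{Re} f| + |\operatorname{Im} f|$ and the like); none of the steps invokes the Bergman tree $\mathcal{T}_n$, the kube decomposition, or any comparison of the form (\ref{disckernel}), all of which carry $n$-dependent constants. The only subtlety is that one must use the intrinsic $H_n^2$-norm (equivalently, the $\mathcal{H}_k$-norm with kernel $\frac{1}{1-\overline{z}\cdot z'}$ and inner product normalized so that the kernel reproduces), not the $B_2^{1/2}(\mathbb{B}_n)$-norm defined via (\ref{Hilbertnorm}), since passing between those two norms uses the radial-derivative machinery of \cite{Zhu} with $n$-dependent comparability constants. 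Recording this restriction completes the proof.
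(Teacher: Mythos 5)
Your proposal is correct and follows the paper's own route: the paper proves this proposition simply by citing Lemma \ref{carllem}, and your argument is exactly that application, with the routine details (reduction to nonnegative $f,g$ via positivity of $\operatorname{Re}\frac{1}{1-\overline{z}\cdot z'}$, and the observation that only absolute constants enter so long as one uses the intrinsic $H_{n}^{2}$ norm) filled in correctly.
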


%

\proof
This is immediate from Lemma \ref{carllem}.

\medskip

We will proceed from the continuous bilinear inequality (\ref{Rebil}) in two
steps. First we obtain Proposition \ref{discreduction} which states that
(\ref{Rebil}) is equivalent to a family of discrete inequalities involving
positive quantities. In the section following that we give necessary and
sufficient conditions for the discrete inequalities to hold.

However before doing those things we introduce two additional objects
associated to the tree $\mathcal{T}_{n}.$ The first is a decomposition of
$\mathcal{T}_{n}$ into a set of equivalence classes called rings. The rings
will help provide a language for a precise description of the local size of
the integration kernel in (\ref{Rebil}). Second, we introduce a notion of a
unitary rotation of $\mathcal{T}_{n}.$ As is often the case, when we pass from
a discrete inequality to a continuous one technical problems arise associated
with edge effects. We will deal with those by averaging over unitary rotations
of $\mathcal{T}_{n}.$

\subsubsection{A modified Bergman tree $\mathcal{T}_{n}$ and its quotient tree
$\mathcal{R}_{n}$\label{ring}}

We begin by recalling the main features of the construction of $\mathcal{T}%
_{n}$ given in \cite{ArRoSa2}, and describe the modification we need. Recall
that $\beta$ is the Bergman metric on the unit ball $\mathbb{B}_{n}$ in
$\mathbb{C}^{n}$. Note that for each $r>0$%
\[
\mathcal{S}_{r}=\partial B_{\beta}\left(  0,r\right)  =\left\{  z\in
\mathbb{B}_{n}:\beta\left(  0,z\right)  =r\right\}
\]
is a Euclidean sphere centered at the origin. In fact, by (1.40) in \cite{Zhu}
we have $\beta\left(  0,z\right)  =\tanh^{-1}\left\vert z\right\vert $, and
so
\begin{align}
1-\left\vert z\right\vert ^{2}  &  =1-\tanh^{2}\beta\left(  0,z\right)
\label{connection}\\
&  =\frac{4}{e^{2\beta\left(  0,z\right)  }+2+e^{-2\beta\left(  0,z\right)  }%
}\nonumber\\
&  \approx4e^{-2\beta\left(  0,z\right)  }\nonumber
\end{align}
for $\beta\left(  0,z\right)  $ large. We recall the following elementary
abstract construction from \cite{ArRoSa2} (Lemma 7 on page 18).

\begin{lemma}
\label{spherelemma}Let $\left(  X,d\right)  $ be a separable metric space and
$\lambda>0$. There is a denumerable set of points $E=\left\{  x_{j}\right\}
_{j=1}^{\infty\text{ or }J}$ and a corresponding set of Borel subsets $Q_{j} $
of $X$ satisfying
\begin{align}
X  &  =\cup_{j=1}^{\infty\text{ or }J}Q_{j},\label{tile}\\
Q_{i}\cap Q_{j}  &  =\phi,\;\;\;\;\;i\neq j,\nonumber\\
B\left(  x_{j},\lambda\right)   &  \subset Q_{j}\subset B\left(
x_{j},2\lambda\right)  ,\;\;\;\;\;j\geq1.\nonumber
\end{align}

\end{lemma}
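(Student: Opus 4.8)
The plan is to take $E$ to be a maximal $2\lambda$-separated subset of $X$ and then to manufacture the sets $Q_j$ from the balls $B(x_j,2\lambda)$ by a greedy disjointification, arranged so that the concentric balls $B(x_j,\lambda)$ survive intact.

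First I would apply Zorn's lemma to the family of $2\lambda$-separated subsets of $X$, i.e. those $F\subset X$ with $d(x,x')\geq 2\lambda$ for all distinct $x,x'\in F$, ordered by inclusion; the union of a chain of such sets is again $2\lambda$-separated, so a maximal element $E=\{x_j\}_{j\geq 1}$ exists (possibly finite). Next I would check that $E$ is at most countable: fixing a countable dense set $D\subset X$ and choosing for each $x\in E$ some point of $D$ within distance $\lambda$ of $x$ produces an injection $E\hookrightarrow D$, since two distinct points of $E$ are more than $2\lambda$ apart and so cannot share such a neighbour. I would then record two consequences of $2\lambda$-separation and maximality: (i) the open balls $B(x_j,\lambda)$ are pairwise disjoint, because $d(x_i,x_j)\geq 2\lambda$; and (ii) $\bigcup_j B(x_j,2\lambda)=X$, because a point lying at distance $\geq 2\lambda$ from every $x_j$ could be adjoined to $E$, violating maximality.

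To produce the $Q_j$ I would enumerate $E=\{x_1,x_2,\dots\}$, put $U=\bigcup_j B(x_j,\lambda)$, and disjointify only the ``annular'' remainders: set $Q_j^{0}=B(x_j,2\lambda)\setminus U$, then $\widetilde Q_j=Q_j^{0}\setminus\bigcup_{i<j}Q_i^{0}$, and finally $Q_j=B(x_j,\lambda)\cup\widetilde Q_j$. Each $Q_j$ is Borel, being a finite or countable Boolean combination of open balls. The inclusions $B(x_j,\lambda)\subset Q_j\subset B(x_j,2\lambda)$ are immediate from the definition, and $\bigcup_j Q_j=U\cup\bigcup_j Q_j^{0}=X$ by (ii). For pairwise disjointness one expands $Q_i\cap Q_j$ into four intersections: $B(x_i,\lambda)\cap B(x_j,\lambda)=\emptyset$ by (i); the two mixed terms vanish because $\widetilde Q_k\subset X\setminus U$ is disjoint from every core ball; and $\widetilde Q_i\cap\widetilde Q_j=\emptyset$ (say for $i<j$) directly from the construction of $\widetilde Q_j$.

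The only genuine subtlety, and the reason for this slightly roundabout construction, is the lower inclusion $B(x_j,\lambda)\subset Q_j$. A straight greedy disjointification of the balls $B(x_j,2\lambda)$ themselves would \emph{not} respect it: $B(x_j,\lambda)$ can meet $B(x_i,2\lambda)$ for $i<j$ since one only knows $d(x_i,x_j)\geq 2\lambda$, not $\geq 3\lambda$, so part of the core ball around $x_j$ could be absorbed into $Q_i$. Stripping off the union $U$ of all core balls before disjointifying, and then re-attaching $B(x_j,\lambda)$ to $Q_j$ at the end, is exactly what repairs this; I expect the step needing the most care to be the verification that the re-attached piece stays disjoint from all the other $Q_i$, which however reduces to the single observation $\widetilde Q_j\cap U=\emptyset$.
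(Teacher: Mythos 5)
Your proof is correct and complete. Note that the paper itself gives no proof of Lemma \ref{spherelemma}: it simply recalls it as Lemma 7 of \cite{ArRoSa2}, and your argument --- a maximal $2\lambda$-separated set via Zorn's lemma, countability from separability, then a disjointification of the pieces $B(x_j,2\lambda)\setminus\bigcup_i B(x_i,\lambda)$ with the core balls $B(x_j,\lambda)$ re-attached --- is exactly the standard construction behind that lemma. You also correctly isolate the only real pitfall (a naive greedy disjointification of the $2\lambda$-balls can absorb part of a later core ball, since the separation is only $2\lambda$), and your modification disposes of it.
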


We refer to the sets $Q_{j}$ as \emph{qubes} centered at $x_{j}$. In
\cite{ArRoSa2}, we applied Lemma \ref{spherelemma} to the spheres
$\mathcal{S}_{r} $ for $r>0$ as follows. Fix \emph{structural constants}
$\theta,\lambda>0$. For $N\in\mathbb{N}$, apply the lemma to the metric space
$\left(  \mathcal{S}_{N\theta},\beta\right)  $ to obtain points $\left\{
z_{j}^{N}\right\}  _{j=1}^{J}$ and qubes $\left\{  Q_{j}^{N}\right\}
_{j=1}^{J} $ in $\mathcal{S}_{N\theta}$ satisfying (\ref{tile}). For the
remainder of this subsection we assume $\theta=\frac{\ln2}{2}$ and $\lambda=1$.

However, we now wish to facilitate the definition of an equivalence relation
that identifies qubes \textquotedblleft lying in the same complex line
intersected with the sphere\textquotedblright. To achieve this we recall the
projective space $\mathbb{C}P(n-1)$ can be realized as the set of all complex
circles $\left[  \zeta\right]  =\left\{  e^{is}\zeta:e^{is}\in\mathbb{T}%
\right\}  $, $\zeta\in\partial\mathbb{B}_{n}$, in the unit sphere (for $n=2$
these circles give the Hopf fibration of the real $3-$sphere). In \cite{AhCo}
an induced Koranyi metric was defined on $\mathbb{C}P(n-1)$ by
\[
d\left(  \left[  \eta\right]  ,\left[  \zeta\right]  \right)  =\inf\left\{
d\left(  e^{is}\eta,e^{it}\zeta\right)  :e^{is},e^{it}\in\mathbb{T}\right\}
\]
where $d\left(  \eta,\zeta\right)  =\left\vert 1-\overline{\eta}\cdot
\zeta\right\vert ^{\frac{1}{2}}$. We scale this construction to the sphere
$\mathcal{S}_{r}$ by defining $\mathbb{P}_{r}$ to be the projective space of
complex circles $\left[  \zeta\right]  =\left\{  e^{is}\zeta:e^{is}%
\in\mathbb{T}\right\}  $, $\zeta\in\mathcal{S}_{r}$, in the sphere
$\mathcal{S}_{r}$ with induced Bergman metric
\[
\beta\left(  \left[  \eta\right]  ,\left[  \zeta\right]  \right)
=\inf\left\{  \beta\left(  e^{is}\eta,e^{it}\zeta\right)  :e^{is},e^{it}%
\in\mathbb{T}\right\}  .
\]
For $N\in\mathbb{N}$, we now apply Lemma \ref{spherelemma} to the projective
metric space $\left(  \mathbb{P}_{N\theta},\beta\right)  $ to obtain
projective points (complex circles) $\left\{  \mathsf{w}_{j}^{N}\right\}
_{j=1}^{J},$ $J$ depending on $N,$ in $\mathbb{P}_{N\theta}$ and unit
projective qubes $\left\{  \mathsf{Q}_{j}^{N}\right\}  _{j=1}^{J}$ contained
in $\mathbb{P}_{N\theta}$ satisfying (\ref{tile}). For each $N$ and $j$ we
define points $\left\{  z_{j,i}^{N}\right\}  _{i=1}^{M}$ on the complex circle
$\mathsf{w}_{j}^{N}$ that are approximately distance $1$ from their neighbours
in the Bergman metric: $\beta\left(  z_{j,i}^{N},z_{j,i+1}^{N}\right)
\approx1$ for $1\leq i\leq M$ ($z_{j,M+1}^{N}=z_{j,1}^{N}$). We then define
corresponding \emph{qubes} $\left\{  Q_{j,i}^{N}\right\}  _{i}$ so that
$\mathsf{Q}_{j}^{N}=\cup_{i}Q_{j,i}^{N}$, and so that (\ref{tile}) holds in
the metric space $\left(  \mathcal{S}_{N\theta},\beta\right)  $ for the
collection $\left\{  Q_{j,i}^{N}\right\}  _{j,i}$.

For $z\in\mathbb{B}_{n}$, let $P_{r}z$ denote the radial projection of $z$
onto the sphere $\mathcal{S}_{r}$. We now define subsets $K_{j,i}^{N}$ of
$\mathbb{B}_{n}$ by $K_{1}^{0}=\left\{  z\in\mathbb{B}_{n}:\beta\left(
0,z\right)  <\theta\right\}  $ and
\[
K_{j,i}^{N}=\left\{  z\in\mathbb{B}_{n}:N\theta\leq d\left(  0,z\right)
<\left(  N+1\right)  \theta,\;P_{N\theta}z\in Q_{j,i}^{N}\right\}
,\;\;\;\;\;N\geq1\text{ and }j,i\geq1.
\]
We define corresponding points $c_{j,i}^{N}\in K_{j,i}^{N}$ by
\[
c_{j,i}^{N}=P_{\left(  N+\frac{1}{2}\right)  \theta}\left(  z_{j,i}%
^{N}\right)  .
\]
We will refer to the subset $K_{j,i}^{N}$ of $\mathbb{B}_{n}$ as a \emph{kube}
centered at $c_{j,i}^{N}$ (while $K_{1}^{0}$ is centered at $0$). Similarly we
define \emph{projective kubes} \textsf{K}$_{j}^{N}=\cup_{i}K_{j,i}^{N}$ with
centre \textsf{c}$_{j}^{N}=P_{\left(  N+\frac{1}{2}\right)  \theta}\left(
\text{\textsf{w}}_{j}^{N}\right)  $.

Define a tree structure on the collection of all projective kubes
\[
\mathcal{R}_{n}=\left\{  \mathsf{K}_{j}^{N}\right\}  _{N\geq0,j\geq1}%
\]
by declaring that \textsf{K}$_{i}^{N+1}$ is a child of \textsf{K}$_{j}^{N}$,
written \textsf{K}$_{i}^{N+1}\in\mathcal{C}\left(  \mathsf{K}_{j}^{N}\right)
$, if the projection $P_{N\theta}\left(  \mathsf{w}_{i}^{N+1}\right)  $ of the
circle \textsf{w}$_{i}^{N+1}$ onto the sphere $\mathcal{S}_{N\theta}$ lies in
the projective qube \textsf{Q}$_{j}^{N}$. In the case $N=0$, we declare every
kube \textsf{K}$_{j}^{1}$ to be a child of the root kube \textsf{K}$_{1}^{0}$.
An element $\mathsf{K}_{j}^{N}$ is, roughly, the orbit of a single kube under
a circle action; thus we often refer to them as rings and to $\mathcal{R}_{n}$
as the ring tree. One can think of the ring tree $\mathcal{R}_{n}$ as a
\textquotedblleft quotient tree\textquotedblright\ of the Bergman tree
$\mathcal{T}_{n}$ by the one-parameter family of slice rotations $z\rightarrow
e^{is}z$, $e^{is}\in\mathbb{T}$.

We will now define a tree structure on the collection of kubes
\[
\mathcal{T}_{n}=\left\{  K_{j,i}^{N}\right\}  _{N\geq0\text{ and }j,i\geq1}%
\]
that is compatible with the above tree structure on the collection of
projective kubes $\mathcal{R}_{n}$. To this end, we reindex the kubes
$\left\{  K_{j,i}^{N}\right\}  _{N\geq0\text{ and }j,i\geq1}$ as $\left\{
K_{j}^{N}\right\}  _{N\geq0,j\geq1}$ and define an equivalence relation
$\thicksim$ on the reindexed collection $\left\{  K_{j}^{N}\right\}  _{j}$ by
declaring kubes equivalent that lie in the same projective kube: $K_{i}%
^{N}\thicksim K_{k}^{N}$ if and only if there is a projective kube
$\mathsf{K}_{j}^{N}$ such that $K_{i}^{N},K_{k}^{N}\in\mathsf{K}_{j}^{N}$.
Given $K_{i}^{N}\in\mathcal{T}_{n}$, we denote by $\left[  K_{i}^{N}\right]  $
the equivalence class of $K_{i}^{N}$, which can of course be identified with a
projective kube in $\mathcal{R}_{n}$. Define the tree structure on
$\mathcal{T}_{n}$ by declaring that $K_{i}^{N+1}$ is a child of $K_{j}^{N}$,
written $K_{i}^{N+1}\in\mathcal{C}\left(  K_{j}^{N}\right)  $, if the
projection $P_{N\theta}\left(  z_{i}^{N+1}\right)  $ of $z_{i}^{N+1}$ onto the
sphere $\mathcal{S}_{N\theta}$ lies in the qube $Q_{j}^{N}$. Note that by
construction, it follows that $\left[  K_{i}^{N+1}\right]  $ is then also a
child of $\left[  K_{j}^{N}\right]  $ in $\mathcal{R}_{n}$. In the case $N=0$,
we declare every kube $K_{j}^{1}$ to be a child of the root kube $K_{1}^{0}$.

We will typically write $\alpha,\beta,\gamma$ etc. to denote elements
$K_{j}^{N}$ of the tree $\mathcal{T}_{n}$ when the correspondence with the
unit ball $\mathbb{B}_{n}$ is immaterial. We will write $K_{\alpha}$ for the
kube $K_{j}^{N}$ and $c_{\alpha}$ for its center $c_{j}^{N}$ when the
correspondence matters. Sometimes we will further abuse notation by using
$\alpha$ to denote the center $c_{\alpha}=c_{j}^{N}$ of the kube $K_{\alpha
}=K_{j}^{N}$. Similarly, we will typically write $A,B,C$ etc. to denote
elements $\mathsf{K}_{j}^{N}$ of the ring tree $\mathcal{R}_{n}$ when the
correspondence with the unit ball $\mathbb{B}_{n}$ is immaterial, and we will
write $\mathsf{K}_{A}$ for the projective kube $\mathsf{K}_{j}^{N}$
corresponding to $A$ when the correspondence matters. Finally, for $\alpha
\in\mathcal{T}_{n}$, we denote by $\left[  \alpha\right]  $ the ring in
$\mathcal{R}_{n}$ that corresponds to the equivalence class of $\alpha$. The
following compatibility relations hold for $\alpha,\beta\in\mathcal{T}_{n} $
and $A,B\in\mathcal{R}_{n}$:
\begin{align}
\beta &  \leq\alpha\Longrightarrow\left[  \beta\right]  \leq\left[
\alpha\right]  ,\label{cr}\\
B  &  \leq A\Longleftrightarrow\text{ for every }\alpha\in A\text{ there is
}\beta\in B\text{ with }\beta\leq\alpha,\nonumber
\end{align}

We will also need the notion of a unitary rotation of $\mathcal{T}_{n}$. For
each $w\in\mathbb{B}_{n}$ define $\left\langle w\right\rangle \in
\mathcal{T}_{n}$ to be the unique tree element such that $w\in K_{\left\langle
w\right\rangle }$, and define $\left[  w\right]  \in\mathcal{R}_{n}$ to be the
unique ring tree element such that $w\in\mathsf{K}_{\left[  w\right]  }$ (here
we are viewing the projective kube $\mathsf{K}_{\left[  w\right]  }$ as a
subset of the ball $\mathbb{B}_{n}$). The notation is coherent; the ring
containing $w$ is the equivalence class in $\mathcal{T}$ containing the kube
$K_{\left\langle w\right\rangle };$ $\left[  w\right]  =\left[  \left\langle
w\right\rangle \right]  $. Let $\mathcal{U}_{n}$ be the unitary group with
Haar measure $dU$. Recall that we may identify $\alpha$ with the center
$c_{\alpha}$ of the Bergman kube $K_{\alpha}$ (subsubsection 5.2.1 of
\cite{ArRoSa2}). If we define $K_{U^{-1}\alpha}=U^{-1}K_{\alpha}$, then
$\left\{  K_{U^{-1}\alpha}\right\}  _{\alpha\in\mathcal{T}_{n}}\equiv\left\{
U^{-1}K_{\alpha}\right\}  _{\alpha\in\mathcal{T}_{n}}$ is the Bergman grid
rotated by $U^{-1}$, and
\begin{equation}
\alpha=\left\langle Uz\right\rangle \Leftrightarrow Uz\in K_{\alpha
}\Leftrightarrow z\in U^{-1}K_{\alpha}\Leftrightarrow z\in K_{U^{-1}\alpha}.
\label{rotate}%
\end{equation}
We denote by $U^{-1}\mathcal{T}_{n}$ the tree corresponding to the rotated
grid $\left\{  K_{U^{-1}\alpha}\right\}  _{\alpha\in\mathcal{T}_{n}}$. The
same construction applies to obtain the rotated ring tree $U^{-1}%
\mathcal{R}_{n}$, and the compatibility relation (\ref{cr}) persists between
$U^{-1}\mathcal{T}_{n}$ and $U^{-1}\mathcal{R}_{n}$ since $\left[
U^{-1}\alpha\right]  =U^{-1}\left[  \alpha\right]  $. We also define
$\left\langle w\right\rangle _{U}\in U^{-1}\mathcal{T}_{n}$ and $\left[
w\right]  _{U}\in U^{-1}\mathcal{R}_{n}$ by $w\in K_{\left\langle
w\right\rangle _{U}}$ and $w\in\mathsf{K}_{\left[  w\right]  _{U}}$
respectively. Then from (\ref{rotate}) we have $\alpha=\left\langle
Uz\right\rangle \Leftrightarrow U^{-1}\alpha=\left\langle z\right\rangle _{U}$.

We will also want distance functions with controlled behavior under unitary
rotations. We now extend the definition of the tree distance $d_{U^{-1}%
\mathcal{T}_{n}}$ and the ring distance $d_{U^{-1}\mathcal{R}_{n}}$ on the
rotations $U^{-1}\mathcal{T}_{n}$ and $U^{-1}\mathcal{R}_{n}$ to
$\mathbb{B}_{n}\times\mathbb{B}_{n}$ by
\begin{align*}
d_{U^{-1}\mathcal{T}_{n}}\left(  z,w\right)   &  =d_{U^{-1}\mathcal{T}_{n}%
}\left(  \left\langle z\right\rangle _{U},\left\langle w\right\rangle
_{U}\right)  ,\;\;\;\;\;z,w\in\mathbb{B}_{n},\\
d_{U^{-1}\mathcal{R}_{n}}\left(  z,w\right)   &  =d_{U^{-1}\mathcal{R}_{n}%
}\left(  \left[  z\right]  _{U},\left[  w\right]  _{U}\right)
\;\;\;\;\;z,w\in\mathbb{B}_{n}.
\end{align*}
We have the following identities:
\begin{align*}
d_{U^{-1}\mathcal{T}_{n}}\left(  z,w\right)   &  =d_{\mathcal{T}_{n}}\left(
Uz,Uw\right)  ,\\
d_{U^{-1}\mathcal{R}_{n}}\left(  z,w\right)   &  =d_{\mathcal{R}_{n}}\left(
Uz,Uw\right)  .
\end{align*}
We often write simply $d$ when the underlying tree is evident, especially when
it is $\mathcal{T}_{n}$ or $\mathcal{R}_{n}$, and provided this will cause no
confusion; e.g. $d\left(  z,w\right)  =d_{\mathcal{T}_{n}}\left(  \left\langle
z\right\rangle ,\left\langle w\right\rangle \right)  $.

Finally, we introduce yet another structure on the trees $\mathcal{T}_{n}$ and
$\mathcal{R}_{n}$, namely the $\emph{unitary}$ tree distance $d^{\ast}$ given
by
\begin{align*}
d^{\ast}\left(  \alpha,\beta\right)   &  =\inf_{U\in\mathcal{U}_{n}%
}d_{\mathcal{T}_{n}}\left(  Uc_{\alpha},Uc_{\beta}\right)  =\inf
_{U\in\mathcal{U}_{n}}d_{U^{-1}\mathcal{T}_{n}}\left(  c_{\alpha},c_{\beta
}\right)  ,\\
d^{\ast}\left(  \left[  \alpha\right]  ,\left[  \beta\right]  \right)   &
=\inf_{U\in\mathcal{U}_{n}}d_{\mathcal{R}_{n}}\left(  Uc_{\alpha},Uc_{\beta
}\right)  =\inf_{U\in\mathcal{U}_{n}}d_{U^{-1}\mathcal{R}_{n}}\left(
c_{\alpha},c_{\beta}\right)  .
\end{align*}
Note that the analogous definitions of $d^{\ast}$ on the rotated trees
$U^{-1}\mathcal{T}_{n}$ and $U^{-1}\mathcal{R}_{n}$ coincide with the above
definitions, so that we can write simply $d^{\ast}$ for $d_{U^{-1}%
\mathcal{T}_{n}}^{\ast}$ or $d_{U^{-1}\mathcal{R}_{n}}^{\ast}$ without
ambiguity. We now \textbf{define} $d^{\ast}\left(  \alpha\wedge\beta\right)  $
and $d^{\ast}\left(  A\wedge B\right)  $ in analogy with the corresponding
formulas for $d$; namely
\begin{align*}
2d^{\ast}\left(  \alpha\wedge\beta\right)   &  =d^{\ast}\left(  \alpha\right)
+d^{\ast}\left(  \beta\right)  -d^{\ast}\left(  \alpha,\beta\right)
,\;\;\;\;\;\alpha,\beta\in U^{-1}\mathcal{T}_{n},\\
2d^{\ast}\left(  A\wedge B\right)   &  =d^{\ast}\left(  A\right)  +d^{\ast
}\left(  B\right)  -d^{\ast}\left(  A,B\right)  ,\;\;\;\;\;A,B\in
U^{-1}\mathcal{R}_{n},
\end{align*}
so that
\begin{align*}
d^{\ast}\left(  \alpha\wedge\beta\right)   &  =\sup_{U\in\mathcal{U}_{n}%
}d_{U^{-1}\mathcal{T}_{n}}\left(  \left\langle c_{\alpha}\right\rangle
_{U}\wedge\left\langle c_{\beta}\right\rangle _{U}\right)  ,\;\;\;\;\;\alpha
,\beta\in\mathcal{T}_{n},\\
d^{\ast}\left(  \left[  \alpha\right]  \wedge\left[  \beta\right]  \right)
&  =\sup_{U\in\mathcal{U}_{n}}d_{U^{-1}\mathcal{R}_{n}}\left(  \left[
c_{\alpha}\right]  _{U}\wedge\left[  c_{\beta}\right]  _{U}\right)
\;\;\;\;\alpha,\beta\in\mathcal{T}_{n},
\end{align*}
The unitary distance $d^{\ast}$ on the ring tree $\mathcal{R}_{n}$ will play a
crucial role in discretizing the bilinear inequality (\ref{Rebil}) in the next
section. (Actually $d^{\ast}\left(  \left[  \alpha\right]  \wedge\left[
\beta\right]  \right)  $ is a function of the pair $\left(  \left[
\alpha\right]  ,\left[  \beta\right]  \right)  $ not of the ring tree element
$\left[  \alpha\right]  \wedge\left[  \beta\right]  $. We indulge in this
slight abuse of notation because below $d^{\ast}\left(  \left[  \alpha\right]
\wedge\left[  \beta\right]  \right)  $ will have the role of a substitute for
$d\left(  \left[  \alpha\right]  \wedge\left[  \beta\right]  \right)  .)$

\subsubsection{The discrete inequality}

We can now state the discretization inequality.

\begin{proposition}
\label{discreduction}Let $\mu$ be a positive measure on $\mathbb{B}_{n}$. Then
the bilinear inequality (\ref{Rebil}) is equivalent to having, for all unitary
rotations of a fixed Bergman tree $\mathcal{T}_{n}$ together with the
corresponding rotations of the associated ring tree $\mathcal{R}_{n}$, and
with constants independent of the rotation, the discrete inequality,
\begin{equation}
\sum_{\alpha\in\mathcal{T}_{n}}\left\vert T_{\mu}g\left(  \alpha\right)
\right\vert ^{2}\mu\left(  \alpha\right)  \leq C\sum_{\alpha\in\mathcal{T}%
_{n}}\left\vert g\left(  \alpha\right)  \right\vert ^{2}\mu\left(
\alpha\right)  ,\;\;\;\;\;g\geq0, \label{discnew}%
\end{equation}
where $T_{\mu}$ is the positive linear operator on the tree $\mathcal{T}_{n} $
given by,
\begin{equation}
T_{\mu}g\left(  \alpha\right)  =\sum_{\beta\in\mathcal{T}_{n}}2^{2d\left(
\alpha\wedge\beta\right)  -d^{\ast}\left(  \left[  \alpha\right]
\wedge\left[  \beta\right]  \right)  }g\left(  \beta\right)  \mu\left(
\beta\right)  ,\;\;\;\;\;\alpha\in\mathcal{T}_{n}. \label{deffracnew}%
\end{equation}
Equivalently, (\ref{discnew}) can be replaced by the bilinear estimate
\begin{align}
&  \sum_{\alpha,\alpha^{\prime}\in\mathcal{T}_{n}}2^{2d\left(  \alpha
\wedge\alpha^{\prime}\right)  -d^{\ast}\left(  \left[  \alpha\right]
\wedge\left[  \alpha^{\prime}\right]  \right)  }f\left(  \alpha\right)
\mu\left(  \alpha\right)  g\left(  \alpha^{\prime}\right)  \mu\left(
\alpha^{\prime}\right) \label{donedeal}\\
\;\;\;\;\;  &  \leq C\left\{  \sum_{\alpha\in\mathcal{T}_{n}}f\left(
\alpha\right)  ^{2}\mu\left(  \alpha\right)  \right\}  ^{\frac{1}{2}}\left\{
\sum_{\alpha^{\prime}\in\mathcal{T}_{n}}g\left(  \alpha^{\prime}\right)
^{2}\mu\left(  \alpha^{\prime}\right)  \right\}  ^{\frac{1}{2}},\nonumber
\end{align}
where $\mathcal{T}_{n}$ ranges over all unitary rotations of a fixed Bergman tree.
\end{proposition}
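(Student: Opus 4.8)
The plan is to build on Proposition~\ref{realreduction}, which already identifies the $H_{n}^{2}$-Carleson condition with the bilinear inequality~(\ref{Rebil}), and then to pass between~(\ref{Rebil}) and the discrete inequality~(\ref{discnew}) through a two-sided pointwise comparison of the continuous kernel $\operatorname{Re}\frac{1}{1-\overline{z}\cdot z^{\prime}}$ with the family of discretized kernels $2^{2d_{U^{-1}\mathcal{T}_{n}}\left(z\wedge z^{\prime}\right)-d^{\ast}\left(\left[z\right]\wedge\left[z^{\prime}\right]\right)}$ from the definition~(\ref{deffracnew}) of $T_{\mu}$, one for each unitary rotation $U\in\mathcal{U}_{n}$. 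First I would observe that~(\ref{discnew}) says precisely that the positive symmetric operator $T_{\mu}$ is bounded on $\ell^{2}\left(\mu\right)$, and that this is equivalent to boundedness of the associated bilinear form~(\ref{donedeal}) (Cauchy--Schwarz in one direction; take the supremum over $f$ first, after truncating to finite subtrees, in the other). Thus it suffices to show that~(\ref{Rebil}) is equivalent to~(\ref{donedeal}) uniformly over all rotations of a fixed Bergman tree, keeping in mind that both the continuous kernel and $d^{\ast}$ are invariant under $\mathcal{U}_{n}$, so that only edge effects separate the two.

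The heart of the argument is the kernel comparison. The lower estimate is exactly~(\ref{below'}): for $z\in K_{\alpha}$, $z^{\prime}\in K_{\alpha^{\prime}}$ one has $\operatorname{Re}\frac{1}{1-\overline{z}\cdot z^{\prime}}\geq c\,2^{2d\left(\alpha\wedge\alpha^{\prime}\right)-d^{\ast}\left(\left[\alpha\right]\wedge\left[\alpha^{\prime}\right]\right)}$, the additive constant there being harmless because $d^{\ast}\left(\left[\alpha\right]\wedge\left[\alpha^{\prime}\right]\right)$ is bounded above by $2d\left(\alpha\wedge\alpha^{\prime}\right)$ up to an additive constant, which makes the discrete kernel bounded below. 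The missing ingredient is the matching upper estimate,
\begin{equation*}
\operatorname{Re}\frac{1}{1-\overline{z}\cdot z^{\prime}}\leq C\int_{\mathcal{U}_{n}}2^{2d_{U^{-1}\mathcal{T}_{n}}\left(z\wedge z^{\prime}\right)-d^{\ast}\left(\left[z\right]\wedge\left[z^{\prime}\right]\right)}\,dU,
\end{equation*}
the analogue of the second inequality in~(\ref{disckernel}). To obtain it I would split $\operatorname{Re}\frac{1}{1-\overline{z}\cdot z^{\prime}}=\frac{\operatorname{Re}\left(1-\overline{z}\cdot z^{\prime}\right)}{\left\vert 1-\overline{z}\cdot z^{\prime}\right\vert^{2}}$, use~(\ref{disckernel}) to bound $\left\vert 1-\overline{z}\cdot z^{\prime}\right\vert^{-1}\leq C\int_{\mathcal{U}_{n}}2^{d_{U^{-1}\mathcal{T}_{n}}\left(z\wedge z^{\prime}\right)}\,dU$ and square, Cauchy--Schwarz on the probability space $\left(\mathcal{U}_{n},dU\right)$ giving $\left\vert 1-\overline{z}\cdot z^{\prime}\right\vert^{-2}\leq C\int_{\mathcal{U}_{n}}2^{2d_{U^{-1}\mathcal{T}_{n}}\left(z\wedge z^{\prime}\right)}\,dU$; and I would control the numerator by a rotation-independent quantity, writing $\overline{z}\cdot z^{\prime}=\rho e^{i\phi}$ so that $\operatorname{Re}\left(1-\overline{z}\cdot z^{\prime}\right)=\left(1-\rho\right)+\rho\left(1-\cos\phi\right)$, which is comparable to $\left(1-\left\vert z\right\vert\right)+\left(1-\left\vert z^{\prime}\right\vert\right)$ plus the square of the slice separation of $z$ and $z^{\prime}$, the latter being at most $C2^{-d^{\ast}\left(\left[z\right]\wedge\left[z^{\prime}\right]\right)}$ by the very definition of the unitary ring distance (the infimum over rotations in $d^{\ast}$ realizes the closest slice alignment). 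Multiplying the two bounds and pulling the rotation-independent factor $2^{-d^{\ast}}$ inside the $dU$ integral yields the displayed inequality, the radial contributions $\left(1-\left\vert z\right\vert\right)\left\vert 1-\overline{z}\cdot z^{\prime}\right\vert^{-2}$ being absorbed using $\left\vert 1-\overline{z}\cdot z^{\prime}\right\vert\geq c\bigl(\left(1-\left\vert z\right\vert\right)+\left(1-\left\vert z^{\prime}\right\vert\right)\bigr)$.

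Granted the two-sided kernel comparison, the two implications are routine. For~(\ref{discnew})$\Rightarrow$(\ref{Rebil}): given $f,g\geq0$ on $\mathbb{B}_{n}$, insert the upper estimate into the left side of~(\ref{Rebil}), perform the $dU$ integration last, and recognize the inner sum as the bilinear form~(\ref{donedeal}) for the rotated tree $U^{-1}\mathcal{T}_{n}$ evaluated at the discretizations $f_{U}\left(\alpha\right)=\bigl(\mu\left(\alpha\right)^{-1}\int_{K_{\alpha}}f^{2}\,d\mu\bigr)^{1/2}$ and $g_{U}$, whose $L^{2}\left(\mu\right)$ norms equal those of $f$ and $g$; the hypothesis applies with a constant independent of $U$. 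For~(\ref{Rebil})$\Rightarrow$(\ref{discnew}): given $g\geq0$ on a rotation $U^{-1}\mathcal{T}_{n}$, set $G\left(z\right)=g\left(\left\langle z\right\rangle_{U}\right)$, so that $\left\Vert G\right\Vert_{L^{2}\left(\mu\right)}^{2}=\sum_{\alpha}g\left(\alpha\right)^{2}\mu\left(\alpha\right)$, and use the lower estimate~(\ref{below'}) for $U^{-1}\mathcal{T}_{n}$ inside~(\ref{Rebil}) to dominate~(\ref{donedeal}). I expect the main obstacle to be the upper kernel estimate: correctly matching $\operatorname{Re}\left(1-\overline{z}\cdot z^{\prime}\right)$ with $2^{-d^{\ast}}$ across all regimes --- comparable versus disparate radii, nearly aligned versus nearly antipodal slices --- and making the passage through~(\ref{disckernel}) and its square compatible with the averaging over $\mathcal{U}_{n}$.
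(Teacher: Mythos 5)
Your overall architecture is the same as the paper's (two-sided comparison of $\operatorname{Re}\frac{1}{1-\overline{z}\cdot z^{\prime}}$ with the discretized kernel, averaging over unitary rotations, Cauchy--Schwarz on kubes to pass between $f$ and its discretization, and duality to go between (\ref{discnew}) and (\ref{donedeal})), but there is a genuine error at the step you yourself flag as the heart of the matter. Your claimed upper estimate $\operatorname{Re}\frac{1}{1-\overline{z}\cdot z^{\prime}}\leq C\int_{\mathcal{U}_{n}}2^{2d_{U^{-1}\mathcal{T}_{n}}\left(  z\wedge z^{\prime}\right)  -d^{\ast}\left(  \left[  z\right]  \wedge\left[  z^{\prime}\right]  \right)  }dU$ is false without an additive constant, and your reason for discarding the constants --- that $d^{\ast}\left(  \left[  \alpha\right]  \wedge\left[  \alpha^{\prime}\right]  \right)  \leq2d\left(  \alpha\wedge\alpha^{\prime}\right)  +C$, so the discrete kernel is bounded below --- is also false. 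Take $z^{\prime}=e^{i\theta}z$ with $\theta$ of order one (e.g. $z^{\prime}=-z$) and $\left\vert z\right\vert \rightarrow1$: the two points lie in a common slice, so by (\ref{inpart}) $2^{-d^{\ast}\left(  \left[  z\right]  \wedge\left[  z^{\prime}\right]  \right)  }\approx1-\left\vert \overline{z}\cdot z^{\prime}\right\vert ^{2}\rightarrow0$, i.e. $d^{\ast}\approx d\left(  z\right)  $, while the points are Bergman-far apart so $d_{U^{-1}\mathcal{T}_{n}}\left(  z\wedge z^{\prime}\right)  =O\left(  1\right)  $ for every $U$; hence the right side of your inequality is $\approx2^{-d\left(  z\right)  }\rightarrow0$, whereas $\operatorname{Re}\frac{1}{1-\overline{z}\cdot z^{\prime}}\geq\frac{1}{2}$ always. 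The same example defeats your numerator comparison: $\operatorname{Re}\left(  1-\overline{z}\cdot z^{\prime}\right)  =\left(  1-\rho\right)  +\rho\left(  1-\cos\phi\right)  $ contains the angular term $\rho\left(  1-\cos\phi\right)  $, which is \emph{not} controlled by radial gaps plus the square of the slice separation (here the slice separation is zero but the angular term is of order one). This is exactly why the paper's kernel bounds (\ref{below''}) and (\ref{unittree}) carry the additive terms $c$ and $C$: in the computation (\ref{realcal}) the quantity $\left\vert \operatorname{Im}\left(  1-\overline{z}\cdot z^{\prime}\right)  \right\vert ^{2}/\left\vert 1-\overline{z}\cdot z^{\prime}\right\vert ^{2}$ is absorbed only up to the constant $1$, which cannot be dominated by $2^{2d-d^{\ast}}$.

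The gap propagates into your converse direction. Once the correct upper bound $C+C\int_{\mathcal{U}_{n}}2^{2d_{U}-d^{\ast}}dU$ is used, inserting it into (\ref{Rebil}) produces, besides the term you treat, a constant-kernel term $C\int f\,d\mu\int g\,d\mu$, which your proposal never addresses; the paper handles it as term $I$ in (\ref{termI}) via Cauchy's inequality and the finiteness of $\mu$ (a point that itself deserves a sentence, since it is not immediate from (\ref{discnew}) alone). The forward direction (\ref{Rebil})$\Rightarrow$(\ref{donedeal}) is unaffected, since there the additive constant in the lower bound (\ref{below'}) really can be dropped, and your discretization by quadratic means over kubes is a correct variant of the paper's averaging step. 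So the fix is local but necessary: restore the additive constants in both kernel estimates, prove the upper one as in (\ref{numest}), (\ref{lowbound}) and (\ref{realcal}) (the denominator lower bound holds only for a fixed proportion of rotations, which is what forces the $dU$-average), and add the separate treatment of the constant term in the passage from (\ref{donedeal}) back to (\ref{Rebil}).
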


%

\proof
We first establish (\ref{donedeal}), i.e. we discretize the bilinear
inequality (\ref{Rebil}) to the following discrete bilinear inequality valid
for all unitary rotations $U^{-1}\mathcal{T}_{n}$ of the Bergman tree
$\mathcal{T}_{n}$:
\begin{align}
&  \sum_{\alpha,\alpha^{\prime}\in U^{-1}\mathcal{T}_{n}}2^{2d\left(
\alpha\wedge\alpha^{\prime}\right)  -d^{\ast}\left(  \left[  \alpha\right]
\wedge\left[  \alpha^{\prime}\right]  \right)  }f\left(  \alpha\right)
\mu\left(  \alpha\right)  g\left(  \alpha^{\prime}\right)  \mu\left(
\alpha^{\prime}\right) \label{done}\\
&  \;\;\;\;\;\leq C\left\{  \sum_{\alpha\in U^{-1}\mathcal{T}_{n}}f\left(
\alpha\right)  ^{2}\mu\left(  \alpha\right)  \right\}  ^{\frac{1}{2}}\left\{
\sum_{\alpha^{\prime}\in U^{-1}\mathcal{T}_{n}}g\left(  \alpha^{\prime
}\right)  ^{2}\mu\left(  \alpha^{\prime}\right)  \right\}  ^{\frac{1}{2}%
},\nonumber
\end{align}
for all $U\in\mathcal{U}_{n}$, $f,g\geq0$ on $U^{-1}\mathcal{T}_{n}$ and where
the constant $C$ is independent of $U,f,g$. At a crucial point in the argument
below, we need to estimate the distance $1-\left\vert \overline{z}\cdot
z^{\prime}\right\vert ^{2}$ in terms of the tree structure, and this is what
leads to the associated ring tree $\mathcal{R}_{n}$ and the quantities
$d\left(  \left[  \alpha\right]  \wedge\left[  \alpha^{\prime}\right]
\right)  $ and $d^{\ast}\left(  \left[  \alpha\right]  \wedge\left[
\alpha^{\prime}\right]  \right)  $. Recall that a \emph{slice} of the ball
$\mathbb{B}_{n}$ is the intersection of the ball with a complex line through
the origin. In particular, every point $z\in\mathbb{B}_{n}\setminus\left\{
0\right\}  $ lies in a unique slice
\[
S_{z}=\left\{  \left(  e^{i\theta}z_{1},...,e^{i\theta}z_{n}\right)
:\theta\in\left[  0,2\pi\right)  \right\}  .
\]
We define two elements $\alpha$ and $\alpha^{\prime}$ of the Bergman tree
$\mathcal{T}_{n}$ to be \emph{slice-related} if $\alpha\thicksim\alpha
^{\prime}$ where, recall, $\thicksim$ denotes that the two elements lie in the
same projective kube. Now given $\alpha,\alpha^{\prime}\in\mathcal{T}_{n},$
let
\[
\left[  o,\alpha\right]  =\left\{  o,\alpha_{1},...,\alpha_{m}=\alpha\right\}
\text{ and }\left[  o,\alpha^{\prime}\right]  =\left\{  o,\alpha_{1}^{\prime
},...,\alpha_{m^{\prime}}^{\prime}=\alpha^{\prime}\right\}
\]
be the geodesics from the root $o$ to $\alpha,\alpha^{\prime}$ respectively.
We then have from (\ref{cr}) that $\alpha_{k}$ and $\alpha_{k}^{\prime}$ are
slice-related if and only if $k\leq d\left(  \left[  \alpha\right]
\wedge\left[  \alpha^{\prime}\right]  \right)  $.

It may help the reader to visualize $d\left(  \left[  \alpha\right]
\wedge\left[  \alpha^{\prime}\right]  \right)  $ in the following way. Imagine
that each slice $S$ is thickened to a \emph{slab} $\mathcal{S}$ of width one
in the Bergman metric. Thus in the Euclidean metric, a slab $\mathcal{S}$ is a
lens whose \textquotedblleft thickness\textquotedblright\ at any point is
roughly the square root of the distance to the boundary of the ball
$\partial\mathbb{B}_{n}$. Moreover, given $z\in\mathbb{B}_{n}$, we denote by
$\mathcal{S}_{z}$ the slab corresponding to the slice $S_{z}$, but truncated
by intersecting it with $B\left(  0,\left\vert z\right\vert \right)  $. The
slabs $\mathcal{S}_{c_{\alpha}}$ and $\mathcal{S}_{c_{\alpha^{\prime}}}$
associated with the unique slices $S_{c_{\alpha}}$ and $S_{c_{\alpha^{\prime}%
}}$ through $c_{\alpha}$ and $c_{\alpha^{\prime}}$ will intersect in a
\textquotedblleft disc\textquotedblright\ of radius roughly $d\left(  \left[
\alpha\right]  \wedge\left[  \alpha^{\prime}\right]  \right)  $ in the Bergman
metric - at least this will be the case for a \textquotedblleft fixed
proportion\textquotedblright\ of pairs $\left(  \alpha,\alpha^{\prime}\right)
$, and will be literally true for all pairs with the unitary quantity
$d^{\ast}\left(  \left[  \alpha\right]  \wedge\left[  \alpha^{\prime}\right]
\right)  $ in place of $d\left(  \left[  \alpha\right]  \wedge\left[
\alpha^{\prime}\right]  \right)  $. Note from this picture that $\alpha
_{d\left(  \left[  \alpha\right]  \wedge\left[  \alpha^{\prime}\right]
\right)  }$ is the \emph{exit point} $E_{\alpha^{\prime}}\alpha$ of the
geodesic $\left[  o,\alpha\right]  $ from the slab $\mathcal{S}_{\alpha
^{\prime}}$ associated to the slice $S_{\alpha^{\prime}}$ through
$c_{\alpha^{\prime}}$, and similarly, $\alpha_{d\left(  \left[  \alpha\right]
\wedge\left[  \alpha^{\prime}\right]  \right)  }^{\prime}$ is the exit point
$E_{\alpha}\alpha^{\prime}$ of the geodesic $\left[  o,\alpha^{\prime}\right]
$ from the slab $\mathcal{S}_{\alpha}$. Both points have the same distance
from the root. Note that we can also define $E_{\alpha^{\prime}}\alpha$ as the
intersection of the geodesic $\left[  o,\alpha\right]  $ with the ring
$\left[  \alpha\right]  \wedge\left[  \alpha^{\prime}\right]  $, which we will
denote by $E_{\left[  \alpha\right]  \wedge\left[  \alpha^{\prime}\right]
}\alpha$. Finally, note that since $d\left(  \left[  \alpha\right]
\wedge\left[  \alpha^{\prime}\right]  \right)  =d\left(  E_{\alpha^{\prime}%
}\alpha\right)  =d\left(  E_{\alpha}\alpha^{\prime}\right)  $ and
$\alpha\wedge\alpha^{\prime}=\alpha_{\ell}$ where $\ell=\max\left\{
k:\alpha_{k}=\alpha_{k}^{\prime}\right\}  $, we have that $d\left(  \left[
\alpha\right]  \wedge\left[  \alpha^{\prime}\right]  \right)  $ satisfies
\begin{equation}
d\left(  \alpha\wedge\alpha^{\prime}\right)  \leq d\left(  \left[
\alpha\right]  \wedge\left[  \alpha^{\prime}\right]  \right)  \leq\min\left\{
d\left(  \alpha\right)  ,d\left(  \alpha^{\prime}\right)  \right\}  .
\label{extremes}%
\end{equation}

The key feature of the quantity $d\left(  \left[  \alpha\right]  \wedge\left[
\alpha^{\prime}\right]  \right)  $ is that $2^{-d\left(  \left[
\alpha\right]  \wedge\left[  \alpha^{\prime}\right]  \right)  }$ is
essentially $1-\left|  \overline{z}\cdot z^{\prime}\right|  ^{2}$ for $z\in
K_{\alpha}$, $z^{\prime}\in K_{\alpha^{\prime}}$. More precisely, for each
$z,z^{\prime}\in\mathbb{B}_{n}$, there is a subset $\Sigma$ of the unitary
group $\mathcal{U}_{n}$ with Haar measure $\left|  \Sigma\right|  \geq c>0$
and satisfying
\begin{align}
c2^{-d\left(  \left[  Uz\right]  \wedge\left[  Uz^{\prime}\right]  \right)  }
&  \leq1-\left|  \overline{z}\cdot z^{\prime}\right|  ^{2},\;\;\;\;\;U\in
\Sigma,\label{numest}\\
1-\left|  \overline{z}\cdot z^{\prime}\right|  ^{2}  &  \leq C2^{-d\left(
\left[  Uz\right]  \wedge\left[  Uz^{\prime}\right]  \right)  },\;\;\;\;\;U\in
\mathcal{U}_{n}.\nonumber
\end{align}
In particular, in terms of the unitary ring distance $d^{*}$, we have the
equivalence
\begin{equation}
1-\left|  \overline{z}\cdot z^{\prime}\right|  ^{2}\approx2^{-d^{*}\left(
\left[  z\right]  \wedge\left[  z^{\prime}\right]  \right)  }. \label{inpart}%
\end{equation}
The full force of the first inequality in (\ref{numest}) will not be used
until the next subsubsection when we prove the sufficiency of the simple
condition and split tree condition for (\ref{donedeal}). To prove
(\ref{numest}), let $S=S_{z}$ be the slice through $z$, $\mathcal{S}%
=\mathcal{S}_{z}$ the corresponding slab, denote by $P$ projection from the
ball onto $S$, and by $Q$ its orthogonal projection, so that
\[
Pw=\frac{\overline{z}\cdot w}{\left|  z\right|  ^{2}}z,\;\;\;\;\;Qw=w-Pw.
\]
If $d=d\left(  \left[  z\right]  \wedge\left[  z^{\prime}\right]  \right)  $,
then $\left\langle z^{\prime}\right\rangle _{d}$ is the exit point
$E_{\left\langle z\right\rangle }\left\langle z^{\prime}\right\rangle $ of
$\left[  o,\left\langle z^{\prime}\right\rangle \right]  $ from the slab
$\mathcal{S}$. Since $\mathcal{S}$ is a lens whose Euclidean ``thickness'' at
any point is roughly the square root of the distance from the boundary, we
have
\[
\left|  Q\left(  c_{\left\langle z^{\prime}\right\rangle _{d}}\right)
\right|  \leq C2^{-\frac{1}{2}d\left(  \left\langle z^{\prime}\right\rangle
_{d}\right)  }=C2^{-\frac{1}{2}d}.
\]
Since $z^{\prime}\in K_{\left\langle z^{\prime}\right\rangle }$ where
$\left\langle z^{\prime}\right\rangle \geq\left\langle z^{\prime}\right\rangle
_{d+1}$, we also have
\[
\left|  Q\left(  z^{\prime}\right)  \right|  \leq C2^{-\frac{1}{2}d}.
\]
It now follows that
\begin{align*}
1-\left|  \overline{z}\cdot z^{\prime}\right|  ^{2}  &  =1-\left|  z\right|
^{2}\left|  Pz^{\prime}\right|  ^{2}=1-\left|  z\right|  ^{2}\left(  1-\left|
Qz^{\prime}\right|  ^{2}\right) \\
&  \leq1-\left|  z\right|  ^{2}+C^{2}2^{-d}\\
&  \leq C^{2}2^{-d\left(  \left[  z\right]  \wedge\left[  z^{\prime}\right]
\right)  },
\end{align*}
and since this argument works for any Bergman tree $U^{-1}\mathcal{T}_{n}$,
this yields the second inequality in (\ref{numest}).

To obtain the first inequality in (\ref{numest}), we use a standard averaging
argument as follows. Given $U\in\mathcal{U}_{n}$, if $d=d\left(  \left[
Uz\right]  \wedge\left[  Uz^{\prime}\right]  \right)  $, then $\left(
\left\langle z^{\prime}\right\rangle _{U}\right)  _{d}$ is the exit point
$E_{\left\langle z\right\rangle _{U}}\left\langle z^{\prime}\right\rangle _{U}
$ of $\left[  o,\left\langle z^{\prime}\right\rangle _{U}\right]  $ from the
slab $\mathcal{S}$. Since $c_{\left(  \left\langle z^{\prime}\right\rangle
_{U}\right)  _{d+1}}$ lies outside $\mathcal{S}$, and since $\mathcal{S}$ is a
lens whose Euclidean \textquotedblleft thickness\textquotedblright\ at any
point is roughly the square root of the distance from the boundary, we have
\[
\left\vert Q\left(  c_{\left(  \left\langle z^{\prime}\right\rangle
_{U}\right)  _{d+1}}\right)  \right\vert \geq c2^{-\frac{1}{2}d\left(  \left(
\left\langle z^{\prime}\right\rangle _{U}\right)  _{d}\right)  }=c2^{-\frac
{1}{2}d}.
\]
Since $z^{\prime}\in K_{\left\langle z^{\prime}\right\rangle _{U}}$ where
$\left\langle z^{\prime}\right\rangle _{U}\geq\left(  \left\langle z^{\prime
}\right\rangle _{U}\right)  _{d+1}$, we thus also have
\[
\left\vert Q\left(  z^{\prime}\right)  \right\vert \geq c2^{-\frac{1}{2}d},
\]
for $U$ in a subset $\Sigma$ of the unitary group $\mathcal{U}_{n}$ such that
$\left\vert \Sigma\right\vert \geq c>0$ (the third line of (\ref{tile}) is
used here). It now follows that
\begin{align*}
1-\left\vert \overline{z}\cdot z^{\prime}\right\vert ^{2}  &  =1-\left\vert
z\right\vert ^{2}\left\vert Pz^{\prime}\right\vert ^{2}=1-\left\vert
z\right\vert ^{2}\left(  1-\left\vert Qz^{\prime}\right\vert ^{2}\right) \\
&  \geq1-\left\vert z\right\vert ^{2}\left(  1-c^{2}2^{-d}\right) \\
&  \geq c^{2}2^{-d}=c^{2}2^{-d\left(  \left[  Uz\right]  \wedge\left[
Uz^{\prime}\right]  \right)  },
\end{align*}
for all $U\in\Sigma$, which yields the first inequality in (\ref{numest}).

The main inequalities used in establishing the equivalence of (\ref{Rebil})
and (\ref{done}) are (\ref{below'}), i.e.
\begin{equation}
\operatorname{Re}\frac{1}{1-\overline{z}\cdot z^{\prime}}\geq c+c2^{2d\left(
\alpha\wedge\alpha^{\prime}\right)  -d^{*}\left(  \left[  \alpha\right]
\wedge\left[  \alpha^{\prime}\right]  \right)  },\;\;\;\;\;z\in K_{\alpha
},z^{\prime}\in K_{\alpha^{\prime}}, \label{below''}%
\end{equation}
for all $\alpha,\alpha^{\prime}\in U^{-1}\mathcal{T}_{n}$, $U\in
\mathcal{U}_{n}$, together with a converse obtained by averaging over all
unitary rotations $U^{-1}\mathcal{T}_{n}$ of the Bergman tree $\mathcal{T}%
_{n}$,
\begin{equation}
\operatorname{Re}\frac{1}{1-\overline{z}\cdot z^{\prime}}\leq C+C\int
_{\mathcal{U}_{n}}2^{2d\left(  \left\langle Uz\right\rangle \wedge\left\langle
Uz^{\prime}\right\rangle \right)  -d^{*}\left(  \left[  z\right]
\wedge\left[  z^{\prime}\right]  \right)  }dU. \label{unittree}%
\end{equation}
This latter inequality is analogous to similar inequalities in Euclidean space
used to control an operator by translations of its dyadic version, and the
proof given below is similar.

To prove (\ref{below''}) and (\ref{unittree}), we will use the identity (Lemma
1.3 of \cite{Zhu})
\begin{equation}
1-\overline{\varphi_{a}\left(  w\right)  }\cdot\varphi_{a}\left(  z\right)
=\frac{\left(  1-\overline{a}\cdot a\right)  \left(  1-\overline{w}\cdot
z\right)  }{\left(  1-\overline{w}\cdot a\right)  \left(  1-\overline{a}\cdot
z\right)  },\;\;\;\;\;z,w\in\overline{\mathbb{B}_{n}},a\in\mathbb{B}_{n},
\label{polar}%
\end{equation}
the fact that the Bergman balls $B_{\beta}\left(  a,r\right)  $ are the
ellipsoids (\cite{Rud}, page 29)
\begin{equation}
B_{\beta}\left(  a,r\right)  =\left\{  z\in\mathbb{B}_{n}:\frac{\left|
P_{a}z-c_{a}\right|  ^{2}}{t^{2}\rho_{a}^{2}}+\frac{\left|  Q_{a}z\right|
^{2}}{t^{2}\rho_{a}}<1\right\}  , \label{ellip}%
\end{equation}
where
\[
c_{a}=\frac{\left(  1-t^{2}\right)  a}{1-t^{2}\left|  a\right|  ^{2}}%
,\;\rho_{a}=\frac{1-\left|  a\right|  ^{2}}{1-t^{2}\left|  a\right|  ^{2}},
\]
and $t>0$ satisfies $B_{\beta}\left(  0,r\right)  =B\left(  0,t\right)  $, and
the fact that the projection of $B_{\beta}\left(  a,1\right)  $ onto the
sphere $\partial\mathbb{B}_{n}$ is essentially the nonisotropic Koranyi ball
$Q\left(  \frac{a}{\left|  a\right|  },\sqrt{1-\left|  a\right|  ^{2}}\right)
$ given in (4.11) of \cite{Zhu} by
\begin{equation}
Q\left(  \zeta,\delta\right)  =\left\{  \eta\in\partial\mathbb{B}_{n}:\left|
1-\overline{\eta}\cdot\zeta\right|  ^{\frac{1}{2}}\leq\delta\right\}
,\;\;\;\;\;\zeta\in\partial\mathbb{B}_{n}. \label{ellip'}%
\end{equation}
Indeed, if $c_{\alpha}$ is the center of the Bergman kube $K_{\alpha}$, then
the successor set $S\left(  \alpha\right)  =\cup_{\beta\geq\alpha}K_{\beta}$
consists essentially of all points $z$ lying between $K_{\alpha}$ and its
projection onto the sphere, and from (\ref{ellip}) and (\ref{ellip'}) we then
have
\[
S\left(  \alpha\right)  \approx\left\{  z\in\mathbb{B}_{n}:\left|
1-\overline{c_{\alpha}}\cdot z\right|  \leq1-\left|  c_{\alpha}\right|
^{2}\approx2^{-d\left(  \alpha\right)  }\right\}
\]
in the sense that if
\[
\mathcal{S}_{C}\left(  w\right)  =\left\{  z\in\mathbb{B}_{n}:\left|
1-\overline{w}\cdot z\right|  \leq C\left(  1-\left|  w\right|  ^{2}\right)
\right\}  ,
\]
then there are positive constants $c$ and $C$ such that
\[
\mathcal{S}_{c}\left(  c_{\alpha}\right)  \subset S\left(  \alpha\right)
\subset\mathcal{S}_{C}\left(  c_{\alpha}\right)  ,
\]
where
\[
\mathcal{S}_{C}\left(  c_{\alpha}\right)  \approx\left\{  z\in\mathbb{B}%
_{n}:\left|  1-\overline{c_{\alpha}}\cdot z\right|  \leq C2^{-d\left(
\alpha\right)  }\right\}  .
\]

Using (\ref{polar}) with $a=c_{\alpha}$, $\omega=\varphi_{a}\left(  w\right)
$ and $\zeta=\varphi_{a}\left(  z\right)  $, we see that
\[
\left\vert 1-\overline{\omega}\cdot\zeta\right\vert \leq C,\;\;\;\;\;\omega
,\zeta\in K_{\alpha},
\]
since $\left\vert w\right\vert ,\left\vert z\right\vert \leq\rho<1$ for
$\omega,\zeta\in K_{\alpha}$, and it now follows easily that
\begin{align}
\left\vert 1-\overline{\omega}\cdot\zeta\right\vert  &  \leq C2^{-d\left(
\alpha\right)  },\;\;\;\;\;\omega,\zeta\in S\left(  \alpha\right)
,\label{succset}\\
\left\vert 1-\overline{\omega}\cdot\zeta\right\vert  &  \geq c2^{-d\left(
\alpha\right)  },\;\;\;\;\;\omega\in S\left(  \alpha\right)  ,\zeta
\notin\mathcal{S}_{2C}\left(  \alpha\right)  .\nonumber
\end{align}
Now fix $U\in\mathcal{U}_{n}$, $\alpha\in U^{-1}\mathcal{T}_{n}$,
$\alpha^{\prime}\in U^{-1}\mathcal{T}_{n}$, $z\in K_{\alpha}$, $z^{\prime}\in
K_{\alpha^{\prime}}$ and let $\beta=\alpha\wedge\alpha^{\prime}$ be the
minimum of $\alpha$ and $\alpha^{\prime}$ in the Bergman tree. From the first
inequality in (\ref{succset}), we obtain
\begin{equation}
\left\vert 1-\overline{z^{\prime}}\cdot z\right\vert \leq C2^{-d\left(
\beta\right)  }=C2^{-d\left(  \alpha\wedge\alpha^{\prime}\right)  }.
\label{denest}%
\end{equation}
We now write $\frac{\overline{z}\cdot z^{\prime}}{\left\vert \overline{z}\cdot
z^{\prime}\right\vert }=e^{i\theta}$, where by localizing $z$ and $z^{\prime}$
to lie close together near the boundary of the ball, we may assume that both
$\left\vert \theta\right\vert $ and $1-\left\vert \overline{z^{\prime}}\cdot
z\right\vert ^{2}$ are small, say less than $\varepsilon>0$. We then have
\begin{align*}
\operatorname{Re}\left(  1-\overline{z}\cdot z^{\prime}\right)   &  =\left(
1-\left\vert \overline{z}\cdot z^{\prime}\right\vert \right)  +\left\vert
\overline{z}\cdot z^{\prime}\right\vert \left(  1-\cos\theta\right) \\
&  \approx\left(  1-\left\vert \overline{z}\cdot z^{\prime}\right\vert
^{2}\right)  +\left(  1-\cos^{2}\theta\right) \\
&  =\left(  1-\left\vert \overline{z}\cdot z^{\prime}\right\vert ^{2}\right)
+\sin^{2}\theta\\
&  \approx\left(  1-\left\vert \overline{z}\cdot z^{\prime}\right\vert
^{2}\right)  +\left\vert \operatorname{Im}\left(  1-\overline{z}\cdot
z^{\prime}\right)  \right\vert ^{2}\\
&  =1-\left\vert \overline{z}\cdot z^{\prime}\right\vert ^{2}+\left\vert
1-\overline{z}\cdot z^{\prime}\right\vert ^{2}-\left\vert \operatorname{Re}%
\left(  1-\overline{z}\cdot z^{\prime}\right)  \right\vert ^{2}.
\end{align*}
However, for $\varepsilon>0$ sufficiently small, we may absorb the last term
$\left\vert \operatorname{Re}\left(  1-\overline{z}\cdot z^{\prime}\right)
\right\vert ^{2}$ on the right side into the left side, to obtain
\begin{equation}
\operatorname{Re}\frac{1}{1-\overline{z}\cdot z^{\prime}}=\frac
{\operatorname{Re}\left(  1-\overline{z}\cdot z^{\prime}\right)  }{\left\vert
1-\overline{z}\cdot z^{\prime}\right\vert ^{2}}\approx\frac{1-\left\vert
\overline{z}\cdot z^{\prime}\right\vert ^{2}}{\left\vert 1-\overline{z}\cdot
z^{\prime}\right\vert ^{2}}+1. \label{realcal}%
\end{equation}
Note that (\ref{realcal}) persists for all $z,z^{\prime}\in\mathbb{B}_{n}$
since if $z$ and $z^{\prime}$ do not lie close together near the boundary of
the ball, then $\left\vert 1-\overline{z}\cdot z^{\prime}\right\vert \geq c>0$.

Using (\ref{denest}), (\ref{realcal}) and (\ref{inpart}), we immediately have
the lower bound
\[
\operatorname{Re}\frac{1}{1-\overline{z}\cdot z^{\prime}}\geq c+c2^{2d\left(
\alpha\wedge\alpha^{\prime}\right)  -d^{\ast}\left(  \left[  \alpha\right]
\wedge\left[  \alpha^{\prime}\right]  \right)  },\;\;\;\;\;z\in K_{\alpha
},z^{\prime}\in K_{\alpha^{\prime}},
\]
which is (\ref{below''}). To obtain the converse (\ref{unittree}), we use the
third line in (\ref{tile}) to note that for fixed $z,z^{\prime}\in
\mathbb{B}_{n}$, there is a subset $\Sigma$ of the unitary group
$\mathcal{U}_{n}$ having Haar measure bounded below by a positive constant
$c$, and such that for each $U\in\Sigma$, if $\alpha\in U^{-1}\mathcal{T}_{n}%
$, $\alpha^{\prime}\in U^{-1}\mathcal{T}_{n}$, $z\in K_{\alpha}$, $z^{\prime
}\in K_{\alpha^{\prime}}$, and $\beta=\alpha\wedge\alpha^{\prime}\in
U^{-1}\mathcal{T}_{n}$, then $z$ and $z^{\prime}$ do not lie in a common child
$\gamma\in U^{-1}\mathcal{T}_{n}$ of $\beta$ (we may of course replace
\textquotedblleft child\textquotedblright\ by an \textquotedblleft$\ell$-fold
grandchild\textquotedblright\ with $\ell$ sufficiently large and fixed). From
the second inequality in (\ref{succset}), we then obtain
\begin{equation}
\left\vert 1-\overline{z^{\prime}}\cdot z\right\vert \geq c2^{-d\left(
\alpha\wedge\alpha^{\prime}\right)  },\;\;\;\;\;U\in\Sigma, \label{lowbound}%
\end{equation}
and combined with the second inequality in (\ref{numest}), (\ref{realcal}) now
yields (\ref{unittree}) upon integrating over Haar measure and using
$\left\vert \Sigma\right\vert \geq c>0$.

Now (\ref{Rebil}) is invariant under unitary transformations, and so
(\ref{below''}) for the tree $U^{-1}\mathcal{T}_{n}$ immediately shows that
(\ref{Rebil}) implies (\ref{done}) (note that we are throwing away the
constant lower bound of $c$ in (\ref{below''})).

Conversely, for $U\in\mathcal{U}_{n}$ let $f\left(  U^{-1}\alpha\right)
=\int_{U^{-1}K_{\alpha}}fd\lambda_{n}$ and $\nu\left(  U^{-1}\alpha\right)
=\int_{U^{-1}K_{\alpha}}d\nu$ be the function and measure discretizations of
$f$ and $\nu$ respectively on the rotated Bergman grid $\left\{
K_{U^{-1}\alpha}\right\}  _{\alpha\in\mathcal{T}_{n}}$. From (\ref{unittree})
and (\ref{rotate}) the left side of (\ref{Rebil}) with $f=g$ satisfies
\begin{align*}
&  \int_{\mathbb{B}_{n}}\int_{\mathbb{B}_{n}}\left(  \operatorname{Re}\frac
{1}{1-\overline{z}\cdot z^{\prime}}\right)  f\left(  z^{\prime}\right)
d\mu\left(  z^{\prime}\right)  f\left(  z\right)  d\mu\left(  z\right) \\
&  \;\;\;\;\;\leq C\int_{\mathcal{U}_{n}}\int_{\mathbb{B}_{n}}\int
_{\mathbb{B}_{n}}f\left(  z^{\prime}\right)  d\mu\left(  z^{\prime}\right)
f\left(  z\right)  d\mu\left(  z\right)  dU\\
&  \;\;\;\;\;\;\;\;\;\;+C\int_{\mathcal{U}_{n}}\int_{\mathbb{B}_{n}}%
\int_{\mathbb{B}_{n}}\frac{2^{2d\left(  \left\langle Uz\right\rangle
\wedge\left\langle Uz^{\prime}\right\rangle \right)  }}{2^{d^{*}\left(
\left[  z\right]  \wedge\left[  z^{\prime}\right]  \right)  }}f\left(
z^{\prime}\right)  d\mu\left(  z^{\prime}\right)  f\left(  z\right)
d\mu\left(  z\right)  dU\\
&  \;\;\;\;\;=I+II.
\end{align*}

Now $\mu$ is a finite measure and from Cauchy's inequality, we obtain that
\begin{equation}
I\leq C\left\|  f\right\|  _{L^{2}\left(  \mu\right)  }^{2}. \label{termI}%
\end{equation}
For each $U\in\mathcal{U}_{n}$, we decompose the ball $\mathbb{B}_{n}$ by the
rotated Bergman tree $U^{-1}\mathcal{T}_{n}$ to obtain
\[
II=C\int_{\mathcal{U}_{n}}\sum_{\alpha,\alpha^{\prime}\in\mathcal{T}_{n}}%
\int_{z\in K_{U^{-1}\alpha}}\int_{z^{\prime}\in K_{U^{-1}\alpha^{\prime}}%
}\frac{2^{2d\left(  \alpha\wedge\alpha^{\prime}\right)  }}{2^{d^{*}\left(
\left[  \alpha\right]  \wedge\left[  \alpha^{\prime}\right]  \right)  }%
}f\left(  z^{\prime}\right)  d\mu\left(  z^{\prime}\right)  f\left(  z\right)
d\mu\left(  z\right)  dU.
\]
Now let $\left(  fd\mu\right)  _{U}=\left(  fd\mu\right)  \circ U^{-1}$ for
each $U\in\mathcal{U}_{n}$, so that $f\left(  z^{\prime}\right)  d\mu\left(
z^{\prime}\right)  =\left(  fd\mu\right)  _{U}\left(  Uz^{\prime}\right)  $.
Then if we make the change of variable $w^{\prime}=Uz^{\prime}$ and $w=Uz$ in
the inner integrals above, $II$ becomes
\begin{align*}
&  C\int_{\mathcal{U}_{n}}\left\{  \sum_{\alpha,\alpha^{\prime}\in
\mathcal{T}_{n}}\int_{w\in K_{\alpha}}\int_{w^{\prime}\in K_{\alpha^{\prime}}%
}\frac{2^{2d\left(  \alpha\wedge\alpha^{\prime}\right)  }}{2^{d^{*}\left(
\left[  \alpha\right]  \wedge\left[  \alpha^{\prime}\right]  \right)  }%
}\left(  fd\mu\right)  _{U}\left(  w^{\prime}\right)  \left(  fd\mu\right)
_{U}\left(  w\right)  \right\}  dU\\
&  \;\;\;\;\;=C\int_{\mathcal{U}_{n}}\left\{  \sum_{\alpha,\alpha^{\prime}%
\in\mathcal{T}_{n}}\frac{2^{2d\left(  \alpha\wedge\alpha^{\prime}\right)  }%
}{2^{d^{*}\left(  \left[  \alpha\right]  \wedge\left[  \alpha^{\prime}\right]
\right)  }}\left(  fd\mu\right)  _{U}\left(  \alpha^{\prime}\right)  \left(
fd\mu\right)  _{U}\left(  \alpha\right)  \right\}  dU.
\end{align*}
Now we write
\[
\left(  fd\mu\right)  _{U}\left(  \alpha\right)  =\int_{U^{-1}K_{\alpha}}%
fd\mu=\left(  \frac{1}{\left|  U^{-1}K_{\alpha}\right|  _{\mu}}\int
_{U^{-1}K_{\alpha}}fd\mu\right)  \mu\left(  U^{-1}\alpha\right)  ,
\]
so that we obtain an estimate for $II$ from (\ref{done}) as follows:
\begin{align}
II  &  \leq C\int_{\mathcal{U}_{n}}\left\{  \sum_{\alpha,\alpha^{\prime}%
\in\mathcal{T}_{n}}\frac{2^{2d\left(  \alpha\wedge\alpha^{\prime}\right)  }%
}{2^{d^{*}\left(  \left[  \alpha\right]  \wedge\left[  \alpha^{\prime}\right]
\right)  }}\left(  fd\mu\right)  _{U}\left(  \alpha^{\prime}\right)  \left(
fd\mu\right)  _{U}\left(  \alpha\right)  \right\}  dU\label{termII}\\
&  \leq C\int_{\mathcal{U}_{n}}\left\{  \sum_{\alpha,\alpha^{\prime}%
\in\mathcal{T}_{n}}\frac{2^{2d\left(  \alpha\wedge\alpha^{\prime}\right)  }%
}{2^{d^{*}\left(  \left[  \alpha\right]  \wedge\left[  \alpha^{\prime}\right]
\right)  }}\left(  \frac{1}{\left|  U^{-1}K_{\alpha^{\prime}}\right|  _{\mu}%
}\int_{U^{-1}K_{\alpha^{\prime}}}fd\mu\right)  \mu\left(  U^{-1}\alpha
^{\prime}\right)  \right. \nonumber\\
&  \times\left.  \left(  \frac{1}{\left|  U^{-1}K_{\alpha}\right|  _{\mu}}%
\int_{U^{-1}K_{\alpha}}fd\mu\right)  \mu\left(  U^{-1}\alpha\right)  \right\}
dU\nonumber\\
&  \leq C\int_{\mathcal{U}_{n}}\left\{  \sum_{\alpha\in\mathcal{T}_{n}}\left(
\frac{1}{\left|  U^{-1}K_{\alpha}\right|  _{\mu}}\int_{U^{-1}K_{\alpha}}%
fd\mu\right)  ^{2}\mu\left(  U^{-1}\alpha\right)  \right\}  dU\nonumber\\
&  \leq C\int_{\mathcal{U}_{n}}\left\{  \sum_{\alpha\in\mathcal{T}_{n}}%
\int_{U^{-1}K_{\alpha}}f^{2}d\mu\right\}  dU=C\left\|  f\right\|
_{L^{2}\left(  \widetilde{\mu}\right)  }^{2}.\nonumber
\end{align}

Combining the estimates (\ref{termI}) and (\ref{termII}) for terms $I$ and
$II$, we thus obtain the bilinear inequality (\ref{Rebil}) when $f=g$, and
this suffices for the general inequality. This completes the proof of the
equivalence of (\ref{Rebil}) and (\ref{done}).

Now (\ref{done}) can be rewritten as
\begin{equation}
\sum_{\alpha\in\mathcal{T}_{n}}f\left(  \alpha\right)  \left\{  T_{\mu
}g\left(  \alpha\right)  \right\}  \mu\left(  \alpha\right)  \leq C\left\|
f\right\|  _{\ell^{2}\left(  \mu\right)  }\left\|  g\right\|  _{\ell
^{2}\left(  \mu\right)  }, \label{done''}%
\end{equation}
for all $f,g\geq0$ on $\mathcal{T}_{n}$, and where $T_{\mu}$ is given in
(\ref{deffracnew}):
\[
T_{\mu}g\left(  \alpha\right)  =\sum_{\alpha^{\prime}\in\mathcal{T}_{n}%
}2^{2d\left(  \alpha\wedge\alpha^{\prime}\right)  -d^{*}\left(  \left[
\alpha\right]  \wedge\left[  \alpha^{\prime}\right]  \right)  }g\left(
\alpha^{\prime}\right)  \mu\left(  \alpha^{\prime}\right)  .
\]
Upon using the Cauchy-Schwartz inequality and taking the supremum over all $f
$ with $\left\|  f\right\|  _{\ell^{2}\left(  \mu\right)  }=1$ in
(\ref{done''}), we obtain the equivalence of (\ref{done''}) and the discrete
inequality (\ref{discnew}), where $\mathcal{T}_{n}$ ranges over all unitary
rotations of a fixed Bergman tree.

\subsubsection{Carleson measures for $H_{n}^{2}$ and inequalities for positive
quantities}

Using Propositions \ref{realreduction} and \ref{discreduction}, we can
characterize Carleson measures for the Drury-Arveson Hardy space $H_{n}^{2}$
by either (\ref{Rebil}) or (\ref{discnew}). Recall that $\widehat{\mu}\left(
\alpha\right)  =\mu\left(  \alpha\right)  =\int_{K_{\alpha}}d\mu$ for
$\alpha\in\mathcal{T}_{n}$.

\begin{theorem}
\label{Achar}Let $\mu$ be a positive measure on the ball $\mathbb{B}_{n}$ with
$n$ finite. Then the following conditions are equivalent:

\begin{enumerate}
\item $\mu$ is a Carleson measure on the Drury-Arveson space $H_{n}^{2}$,

\item $\mu$ satisfies (\ref{Rebil}),

\item $\widehat{\mu}$ satisfies (\ref{discnew}) for all unitary rotations of a
fixed Bergman tree.
\end{enumerate}
\end{theorem}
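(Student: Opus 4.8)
The plan is essentially to observe that the equivalences $1 \Leftrightarrow 2$ and $2 \Leftrightarrow 3$ have already been established in the preceding propositions, so the proof of Theorem \ref{Achar} is a short bookkeeping argument assembling them. Concretely, Proposition \ref{realreduction} states that $\mu$ is $H_n^2$-Carleson if and only if the bilinear inequality (\ref{Rebil}) holds, which is precisely the equivalence $1 \Leftrightarrow 2$. Proposition \ref{discreduction} states that (\ref{Rebil}) is equivalent to the discrete inequality (\ref{discnew}) (equivalently, to (\ref{donedeal})) holding for all unitary rotations of a fixed Bergman tree $\mathcal{T}_n$ with constants uniform in the rotation; this is the equivalence $2 \Leftrightarrow 3$. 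Chaining these two gives the full circle of equivalences, and that is all the theorem asserts.

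First I would recall the statements of Propositions \ref{realreduction} and \ref{discreduction} and note that the hypothesis $n < \infty$ in Theorem \ref{Achar} is exactly the setting in which both propositions were proved (the discretization argument in Proposition \ref{discreduction} uses the averaging over the compact unitary group $\mathcal{U}_n$, which requires finite dimension; the constants there depend on $n$). Then I would write: by Proposition \ref{realreduction}, condition 1 is equivalent to condition 2; by Proposition \ref{discreduction}, condition 2 is equivalent to condition 3; hence all three are equivalent. One small point worth making explicit is that in Proposition \ref{discreduction} the discrete inequality (\ref{discnew}) is stated together with (\ref{donedeal}) as equivalent forms (one passes between them by Cauchy--Schwarz and taking the supremum over $f$ with $\|f\|_{\ell^2(\mu)} = 1$), so condition 3 as phrased in the theorem — (\ref{discnew}) for all unitary rotations of a fixed Bergman tree — is literally the conclusion of that proposition.

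There is no real obstacle here: the theorem is a summary statement, and the substantive work lives entirely in Propositions \ref{realreduction} and \ref{discreduction}, both of which are proved before the theorem is stated. If anything, the only thing to be careful about is the quantifier structure — "for all unitary rotations of a fixed Bergman tree $\mathcal{T}_n$, with constants independent of the rotation" — which must be carried verbatim from Proposition \ref{discreduction} into condition 3, since it is this uniformity that makes the discretization reversible (the converse direction of Proposition \ref{discreduction} integrates the rotated discrete inequalities against Haar measure on $\mathcal{U}_n$, using (\ref{unittree})). So the proof I would write is simply:

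\begin{proof}
By Proposition \ref{realreduction}, $\mu$ is a Carleson measure on $H_{n}^{2}$ if and only if the bilinear inequality (\ref{Rebil}) holds; this is the equivalence of conditions $1$ and $2$. By Proposition \ref{discreduction}, the bilinear inequality (\ref{Rebil}) holds if and only if the discrete inequality (\ref{discnew}) holds for all unitary rotations of a fixed Bergman tree $\mathcal{T}_{n}$, with constants independent of the rotation; this is the equivalence of conditions $2$ and $3$. Chaining the two equivalences completes the proof.
\end{proof}
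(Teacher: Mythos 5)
Your proposal is correct and matches the paper exactly: the paper introduces Theorem \ref{Achar} with the sentence ``Using Propositions \ref{realreduction} and \ref{discreduction}, we can characterize Carleson measures for the Drury-Arveson Hardy space $H_{n}^{2}$ by either (\ref{Rebil}) or (\ref{discnew}),'' so the theorem is precisely the chaining of those two propositions with no further argument. Your remarks on the finite-dimensionality hypothesis and the uniformity of constants over unitary rotations are consistent with how the propositions are stated and used.
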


In Proposition \ref{treereduction} of the next subsubsection, we will complete
the characterization of Carleson measures for the Drury-Arveson space by
giving necessary and sufficient conditions for (\ref{discnew}) taken over all
unitary rotations of a fixed Bergman tree, namely the split tree condition
(\ref{fullsplittreecondition}) and the simple condition (\ref{Arvsimple}),
both given below, taken over all unitary rotations of a fixed Bergman tree..
We record here the necessity of the simple condition.

\begin{lemma}
\label{simpnecc}If $\mu$ is a Carleson measure on the Drury-Arveson space
$H_{n}^{2}$, then $\mu$ satisfies the simple condition,
\begin{equation}
2^{d\left(  \alpha\right)  }I^{*}\mu\left(  \alpha\right)  \leq
C,\;\;\;\;\;\alpha\in\mathcal{T}_{n}. \label{Arvsimple}%
\end{equation}

\end{lemma}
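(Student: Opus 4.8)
The plan is to run the classical argument for a ``simple'' necessary Carleson condition: test the embedding on a single reproducing kernel. Recall that $H_n^2=B_2^{1/2}(\mathbb{B}_n)$ has reproducing kernel $k_w(z)=(1-\overline{w}\cdot z)^{-1}$, so that by the reproducing property
\[
\|k_w\|_{H_n^2}^2=\langle k_w,k_w\rangle_{H_n^2}=k_w(w)=\frac{1}{1-|w|^2}.
\]
I would use the embedding in the form of condition $1$ of Theorem \ref{Achar} (the hypothesis of the lemma), namely $\int_{\mathbb{B}_n}|f|^2\,d\mu\leq C_\mu\|f\|_{H_n^2}^2$; note this uses the honest modulus $|k_w|^2$, not $\operatorname{Re}k_w$, so there is no need to pass through the reduced bilinear inequality.

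Concretely, fix $\alpha\in\mathcal{T}_n$ and apply the embedding to $f=k_{c_\alpha}$, where $c_\alpha$ is the center of the Bergman kube $K_\alpha$. On the right, $1-|c_\alpha|^2\approx 2^{-d(\alpha)}$ by (\ref{connection}) together with the normalization $\theta=\tfrac{\ln 2}{2}$, so $\|k_{c_\alpha}\|_{H_n^2}^2\approx 2^{d(\alpha)}$. On the left I would discard all of $\mathbb{B}_n$ except the Carleson tent $S(\alpha)=\bigcup_{\beta\geq\alpha}K_\beta$: by the tent inclusion recorded just after (\ref{ellip'}), equivalently the first inequality of (\ref{succset}) with $\omega=c_\alpha\in S(\alpha)$, every $z\in S(\alpha)$ satisfies $|1-\overline{c_\alpha}\cdot z|\leq C\,2^{-d(\alpha)}$, whence $|k_{c_\alpha}(z)|^2\geq c\,2^{2d(\alpha)}$ on $S(\alpha)$. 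Since the kubes $K_\beta$ are pairwise disjoint, $\int_{S(\alpha)}d\mu=\sum_{\beta\geq\alpha}\mu(\beta)=I^{*}\mu(\alpha)$, and therefore
\[
c\,2^{2d(\alpha)}\,I^{*}\mu(\alpha)\;\leq\;\int_{\mathbb{B}_n}|k_{c_\alpha}|^2\,d\mu\;\leq\;C_\mu\,\|k_{c_\alpha}\|_{H_n^2}^2\;\leq\;C_\mu'\,2^{d(\alpha)} .
\]
Cancelling one factor $2^{d(\alpha)}$ gives $2^{d(\alpha)}I^{*}\mu(\alpha)\leq C$, which is (\ref{Arvsimple}). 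Because the Carleson norm is invariant under the unitary group and $c_{U^{-1}\alpha}=U^{-1}c_\alpha$, the identical computation on any rotated Bergman tree $U^{-1}\mathcal{T}_n$ yields (\ref{Arvsimple}) with a constant independent of $U$, as needed later in Proposition \ref{treereduction}.

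I do not expect a real obstacle here: the argument uses only the norm identity $\|k_w\|_{H_n^2}^2=(1-|w|^2)^{-1}$ and the tent estimate $S(\alpha)\subset\{z:|1-\overline{c_\alpha}\cdot z|\leq C2^{-d(\alpha)}\}$, both already available in the excerpt, so the only things to be careful about are these two invocations. (Staying on the discrete side, testing (\ref{discnew}) on indicator functions does not cleanly reproduce the $I^{*}\mu$ version of the simple condition — it naturally produces only the weaker estimate on $\mu(\alpha)$ or the tree condition — which is exactly why the reproducing-kernel route, or equivalently an appeal to Proposition \ref{realreduction}, is the convenient one.)
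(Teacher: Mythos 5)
Your proposal is correct and is essentially the paper's own argument: the paper proves the lemma (for all $\sigma>0$) by testing the Carleson embedding on the reproducing kernels $k_w(z)=(1-\overline{w}\cdot z)^{-2\sigma}$, using $\left\Vert k_w\right\Vert^2=(1-|w|^2)^{-2\sigma}$, exactly the route you take with $\sigma=1/2$. Your write-up merely supplies the details (the tent lower bound from (\ref{succset}) and $1-|c_\alpha|^2\approx 2^{-d(\alpha)}$) that the paper compresses into "testing the Carleson embedding on these functions quickly leads to the desired estimates."
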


Recall that $p\sigma=1$ and that $\theta=\frac{\ln2}{2}$ so that $1-\left|
w\right|  ^{2}\approx2^{-d\left(  \alpha\right)  }=2^{-p\sigma d\left(
\alpha\right)  }$ for $w\in K_{\alpha}$.%

\proof
(of the lemma) \ In fact the analogous statement holds for all $\sigma>0.$
Recall from Subsubsection \ref{IS} that $B_{2}^{\sigma}\left(  \mathbb{B}%
_{n}\right)  $ can be realized as $\mathcal{H}_{k}$ with kernel function
$k\left(  w,z\right)  =\left(  \frac{1}{1-\overline{w}\cdot z}\right)
^{2\sigma}$ on $\mathbb{B}_{n}.$ This function satisfies
\[
\left\Vert \left(  \frac{1}{1-\overline{w}\cdot z}\right)  ^{2\sigma
}\right\Vert _{B_{2}^{\sigma}\left(  \mathbb{B}_{n}\right)  }^{2}=\left(
\frac{1}{1-\left\vert w\right\vert ^{2}}\right)  ^{2\sigma}.
\]
Testing the Carleson embedding on these functions quickly leads to the desired estimates.

\medskip

Later we will use the fact that the condition SC(1/2) is sufficient to insure
the tree condition with $\sigma<1/2$. Rather than prove that in isolation we
take the opportunity to record two strengthenings of the condition
SC($\sigma)$ each of which is sufficient to imply the corresponding tree
condition (\ref{treecondo}). Either of the two suffices to establish that,
given any $\varepsilon>0,$ the condition SC($\sigma+\varepsilon)$ implies
(\ref{treecondo}).

For $\sigma>0.$ We will say that a measure $\mu$ satisfies the strengthened
simple condition if there is a \emph{summable} function $h(\cdot)$ such that
\begin{equation}
2^{2\sigma d\left(  \alpha\right)  }I^{\ast}\mu\left(  \alpha\right)  \leq
Ch\left(  d\left(  \alpha\right)  \right)  ,\;\;\;\;\;\alpha\in\mathcal{T}%
_{n}, \label{ssimp}%
\end{equation}
For $0<p<1$ we say that $\mu$ satisfies the $\ell^{p}$-simple condition if
\begin{equation}
2^{2\sigma d\left(  \alpha\right)  }\left(  \sum_{\beta\geq\alpha}\mu\left(
\beta\right)  ^{p}\right)  ^{\frac{1}{p}}\leq C,\;\;\;\;\;\alpha\in
\mathcal{T}_{n}. \label{psigma}%
\end{equation}
Note that the choices $h\equiv1$ and $p=1$ recapture the simple condition
SC$(\sigma)$: $2^{2\sigma d\left(  \alpha\right)  }I^{\ast}\mu\left(
\alpha\right)  \leq C$.

\begin{lemma}
\label{simptree}Let $\sigma>0$. If $\mu$ satisfies $\emph{either}$ the
strengthened simple condition (\ref{ssimp}), \emph{or} the $\ell^{p}$-simple
condition (\ref{psigma}) for some $0<p<1$, then $\mu$ satisfies the tree
condition (\ref{treecondo}).
\end{lemma}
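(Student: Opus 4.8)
The plan is to estimate the left side of the tree condition (\ref{treecondo}) at a fixed $\alpha\in\mathcal{T}_n$ by splitting the sum over $\beta\geq\alpha$ according to the value of $d(\beta)$. Write $S_\alpha=\{\beta\in\mathcal{T}_n:\beta\geq\alpha\}$, and for $j\geq0$ let $S_\alpha^{(j)}=\{\beta\geq\alpha:d(\beta)=d(\alpha)+j\}$. The key structural observation is that for $\beta\in S_\alpha^{(j)}$ the successor sets $S(\beta)$ are pairwise disjoint and all contained in $S(\alpha)$, so $\sum_{\beta\in S_\alpha^{(j)}}I^\ast\mu(\beta)\leq I^\ast\mu(\alpha)$; and when $p<1$ we even get $\sum_{\beta\in S_\alpha^{(j)}}\mu(\beta)^p\geq$ (suitable comparison) — more precisely the natural move is $I^\ast\mu(\beta)=\sum_{\gamma\geq\beta}\mu(\gamma)\leq\left(\sum_{\gamma\geq\beta}\mu(\gamma)^p\right)^{1/p}$ which brings the $\ell^p$-hypothesis directly into play on each individual term.

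First I would treat the $\ell^p$-simple condition (\ref{psigma}). Fix $\alpha$; for each $\beta\geq\alpha$ apply (\ref{psigma}) to get $2^{\sigma d(\beta)}I^\ast\mu(\beta)\leq 2^{\sigma d(\beta)}\left(\sum_{\gamma\geq\beta}\mu(\gamma)^p\right)^{1/p}\leq C$, so in particular each term $\left[2^{\sigma d(\beta)}I^\ast\mu(\beta)\right]^2$ is bounded by $C\cdot 2^{\sigma d(\beta)}I^\ast\mu(\beta)$. Summing over $\beta\in S_\alpha^{(j)}$ and using $2^{\sigma d(\beta)}=2^{\sigma(d(\alpha)+j)}$ together with $\sum_{\beta\in S_\alpha^{(j)}}I^\ast\mu(\beta)\leq I^\ast\mu(\alpha)$ gives a bound $C\,2^{\sigma(d(\alpha)+j)}I^\ast\mu(\alpha)$ for the $j$-th layer — but this grows in $j$, which is the wrong direction, so I must instead extract the decay from the $\ell^p$ gain. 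The correct route: bound $\left[2^{\sigma d(\beta)}I^\ast\mu(\beta)\right]^2 = 2^{2\sigma d(\beta)}I^\ast\mu(\beta)^{2-p}\cdot I^\ast\mu(\beta)^{p}\leq \left(2^{2\sigma d(\beta)}I^\ast\mu(\beta)^2\right)^{(2-p)/2}\cdots$ — cleaner still, use $I^\ast\mu(\beta)^2 \le \left(\sum_{\gamma\ge\beta}\mu(\gamma)^p\right)^{2/p}$ is not quite it either; the slick version is $I^\ast\mu(\beta)\le I^\ast\mu(\alpha)^{1-p}\big(\sum_{\gamma\ge\beta}\mu(\gamma)^p\big)^{\cdots}$ — here is where I expect to have to be careful. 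The honest clean argument is: $2^{2\sigma d(\beta)}I^\ast\mu(\beta)^2\le\big(2^{2\sigma d(\beta)}(\sum_{\gamma\ge\beta}\mu(\gamma)^p)^{2/p}\big)$, and one factor of $2^{\sigma d(\beta)}(\sum_{\gamma\ge\beta}\mu(\gamma)^p)^{1/p}\le C$ by hypothesis, leaving $\le C\,2^{\sigma d(\beta)}(\sum_{\gamma\ge\beta}\mu(\gamma)^p)^{1/p}$. Now sum over all $\beta\in S_\alpha$: the inner $\gamma$-sums overlap with multiplicity $d(\gamma)-d(\alpha)+1\le d(\gamma)$ along the geodesic, so $\sum_{\beta\ge\alpha}2^{\sigma d(\beta)}\sum_{\gamma\ge\beta}\mu(\gamma)^p = \sum_{\gamma\ge\alpha}\mu(\gamma)^p\sum_{\alpha\le\beta\le\gamma}2^{\sigma d(\beta)}\le C\sum_{\gamma\ge\alpha}2^{\sigma d(\gamma)}\mu(\gamma)^p$ (geometric sum in $d(\beta)$). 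Finally apply $2^{\sigma d(\gamma)}\mu(\gamma)^p\le 2^{\sigma d(\gamma)}(\sum_{\delta\ge\gamma}\mu(\delta)^p)\cdot\mu(\gamma)^{\cdots}$ — rather, note $\mu(\gamma)^p\le (\sum_{\delta\ge\gamma}\mu(\delta)^p)$ so $2^{\sigma d(\gamma)}\mu(\gamma)^p\le 2^{\sigma d(\gamma)}(\sum_{\delta\ge\gamma}\mu(\delta)^p)^{1/p}\cdot(\sum_{\delta\ge\gamma}\mu(\delta)^p)^{1-1/p}\le C(\sum_{\delta\ge\gamma}\mu(\delta)^p)^{1-1/p}\le C\mu(\gamma)^{p(1-1/p)}=C\mu(\gamma)^{p-1}$; then $\mu(\gamma)^p\cdot\mu(\gamma)^{p-1}$ is not $\mu(\gamma)$. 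I will instead directly telescope: since $\sum_{\delta\ge\gamma}\mu(\delta)^p\ge\mu(\gamma)^p$ and the hypothesis controls $2^{\sigma d(\gamma)}$ by the $p$-th root, one gets $\sum_{\gamma\ge\alpha}2^{\sigma d(\gamma)}\mu(\gamma)^p\le C\sum_{\gamma\ge\alpha}\mu(\gamma)^p\cdot(\sum_{\delta\ge\gamma}\mu(\delta)^p)^{-(1-p)/p}$ — and by the elementary inequality $\sum_k a_k\big(\sum_{j\ge k}a_j\big)^{-(1-p)/p}\le C_p\big(\sum_k a_k\big)^{1/p}$ for a decreasing-tail sequence (a standard summation-by-parts/Hardy-type estimate), this is $\le C(\sum_{\gamma\ge\alpha}\mu(\gamma)^p)^{1/p}\le C\,I^\ast\mu(\alpha)^{1}\cdot 2^{-\sigma d(\alpha)}\cdots$. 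Collecting, the whole left side is $\le C\,2^{-\sigma d(\alpha)}(\sum_{\gamma\ge\alpha}\mu(\gamma)^p)^{1/p}\cdot(\text{something})\le C\,I^\ast\mu(\alpha)$ after one more application of (\ref{psigma}) to pull out a factor. I would organize this carefully in the writeup; the load-bearing fact is the tail-summation inequality $\sum_k a_k(\sum_{j\ge k}a_j)^{q-1}\le q^{-1}(\sum a_j)^q$ for $q=1/p>1$, $a_k\ge0$.

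For the strengthened simple condition (\ref{ssimp}) the argument is shorter and cleaner. Fix $\alpha$ and split $\sum_{\beta\ge\alpha}\big[2^{\sigma d(\beta)}I^\ast\mu(\beta)\big]^2$ into layers $S_\alpha^{(j)}$. On $S_\alpha^{(j)}$, hypothesis (\ref{ssimp}) gives $2^{2\sigma d(\beta)}I^\ast\mu(\beta)\le C\,h(d(\alpha)+j)$, hence $\big[2^{\sigma d(\beta)}I^\ast\mu(\beta)\big]^2=\big(2^{2\sigma d(\beta)}I^\ast\mu(\beta)\big)I^\ast\mu(\beta)\le C\,h(d(\alpha)+j)\,I^\ast\mu(\beta)$. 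Summing over $\beta\in S_\alpha^{(j)}$ and using disjointness of successor sets, $\sum_{\beta\in S_\alpha^{(j)}}I^\ast\mu(\beta)\le I^\ast\mu(\alpha)$, so the $j$-th layer contributes at most $C\,h(d(\alpha)+j)\,I^\ast\mu(\alpha)$. Summing over $j\ge0$ and using summability of $h$, $\sum_{j\ge0}h(d(\alpha)+j)\le\|h\|_{\ell^1}<\infty$, yields $\sum_{\beta\ge\alpha}\big[2^{\sigma d(\beta)}I^\ast\mu(\beta)\big]^2\le C\|h\|_{\ell^1}\,I^\ast\mu(\alpha)$, which is exactly (\ref{treecondo}). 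Finiteness of $I^\ast\mu(\alpha)$ is immediate from (\ref{ssimp}) with $\sigma>0$. The main obstacle, as indicated, is the bookkeeping in the $\ell^p$ case — getting the powers of $\mu(\gamma)$ and the geometric/tail summations to close up to the single factor $I^\ast\mu(\alpha)$ — whereas the $h$-case is essentially a one-line layer-cake estimate. I would present the $h$-case first as a warm-up and then do the $\ell^p$-case with the tail inequality made explicit as a sub-lemma.
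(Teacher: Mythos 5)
Your treatment of the strengthened simple condition (\ref{ssimp}) is correct and complete: writing each term as $\left(2^{2\sigma d(\beta)}I^{\ast}\mu(\beta)\right)I^{\ast}\mu(\beta)\leq Ch(d(\beta))I^{\ast}\mu(\beta)$ and using $\sum_{\beta\geq\alpha,\,d(\beta)=d(\alpha)+j}I^{\ast}\mu(\beta)\leq I^{\ast}\mu(\alpha)$ layer by layer is a legitimate, slightly more direct variant of the paper's argument, which instead expands $I^{\ast}\mu(\gamma)^{2}$ as a double sum over $\delta,\delta'\geq\alpha$ and exchanges the order of summation.

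The $\ell^{p}$ half, however, has a genuine gap, and it is not merely a matter of tightening the bookkeeping. First, after spending (\ref{psigma}) once you are left with $C\sum_{\beta\geq\alpha}2^{\sigma d(\beta)}\bigl(\sum_{\gamma\geq\beta}\mu(\gamma)^{p}\bigr)^{1/p}$, and your next step silently replaces the $1/p$-th power of the tail by the tail itself, which is not a valid inequality in general. Second, the tail-summation inequality you designate as load-bearing is not available: in the form $\sum_{k}a_{k}\bigl(\sum_{j\geq k}a_{j}\bigr)^{-(1-p)/p}\leq C_{p}\bigl(\sum_{k}a_{k}\bigr)^{1/p}$ it fails by homogeneity (the left side scales like $\lambda^{2-1/p}$ under $a\mapsto\lambda a$, the right like $\lambda^{1/p}$), while in the form with exponent $q-1=1/p-1>0$ it holds only trivially (each tail is at most the full sum) and yields no decay; the genuinely useful telescoping estimate $\sum_{k}a_{k}A_{k}^{q-1}\leq q^{-1}A_{1}^{q}$ is a statement about $0<q\leq1$, not $q=1/p>1$. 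Third, and decisively, no repair along your lines can work because the intermediate quantities you aim at are simply not bounded by $I^{\ast}\mu(\alpha)$: take $\mu=m\,\delta_{c_{\delta_{0}}}$, a point mass at the center of a deep descendant $\delta_{0}$ of $\alpha$ with $m=2^{-2\sigma d(\delta_{0})}$. This measure satisfies (\ref{psigma}) with constant $1$ (and the tree condition holds), yet $\sum_{\beta\geq\alpha}2^{\sigma d(\beta)}\bigl(\sum_{\gamma\geq\beta}\mu(\gamma)^{p}\bigr)^{1/p}\approx2^{\sigma d(\delta_{0})}m$ and $\sum_{\gamma\geq\alpha}2^{\sigma d(\gamma)}\mu(\gamma)^{p}=2^{\sigma(1-2p)d(\delta_{0})}$, both of which exceed $I^{\ast}\mu(\alpha)=2^{-2\sigma d(\delta_{0})}$ by unbounded factors as $d(\delta_{0})\rightarrow\infty$. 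The loss occurs at the moment you apply the hypothesis against the weight $2^{\sigma d(\beta)}$ rather than $2^{2\sigma d(\beta)}$ and keep a full $\ell^{p}$-tail as the leftover.

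The mechanism that does close the argument is an asymmetric splitting, as in the paper: expand the left side of (\ref{treecondo}) (up to constants) as $\sum_{\delta,\delta'\geq\alpha}2^{2\sigma d(\delta\wedge\delta')}\mu(\delta)\mu(\delta')$, use $\mu(\delta)\mu(\delta')\leq\mu(\delta)^{2-p}\mu(\delta')^{p}+\mu(\delta')^{2-p}\mu(\delta)^{p}$ and symmetry, apply (\ref{psigma}) only to the tail $\sum_{\delta'\geq\beta}\mu(\delta')^{p}\leq C2^{-2\sigma p\,d(\beta)}$, sum the geometric series $\sum_{\alpha\leq\beta\leq\delta}2^{2\sigma(1-p)d(\beta)}\leq C_{p}2^{2\sigma(1-p)d(\delta)}$, and absorb $\bigl(2^{2\sigma d(\delta)}\mu(\delta)\bigr)^{1-p}\leq C$ by the ordinary simple condition, leaving exactly $\sum_{\delta\geq\alpha}\mu(\delta)=I^{\ast}\mu(\alpha)$. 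You would need this (or an equally careful substitute) for the $\ell^{p}$ case; your $h$-case warm-up does not extend as written.
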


For the particular case when $\mu$ is the interpolation measure associated
with a separated sequence of points in the unit disk the result about the
$\ell^{p}$-simple condition is Theorem 4 on page 38 of \cite{Seip}.%

\proof
The left side of (\ref{treecondo}) satisfies
\begin{align*}
\sum_{\gamma\geq\alpha}2^{2\sigma d\left(  \gamma\right)  }I^{\ast}\mu\left(
\gamma\right)  ^{2}  &  =\sum_{\delta,\delta^{\prime}\geq\alpha}\sum
_{\alpha\leq\gamma\leq\delta\wedge\delta^{\prime}}2^{2\sigma d\left(
\gamma\right)  }\mu\left(  \delta\right)  \mu\left(  \delta^{\prime}\right) \\
&  \leq C\sum_{\delta,\delta^{\prime}\geq\alpha}2^{2\sigma d\left(
\delta\wedge\delta^{\prime}\right)  }\mu\left(  \delta\right)  \mu\left(
\delta^{\prime}\right)  .
\end{align*}
If (\ref{ssimp}) holds, then we continue with
\begin{align*}
\sum_{\delta,\delta^{\prime}\geq\alpha}2^{2\sigma d\left(  \delta\wedge
\delta^{\prime}\right)  }\mu\left(  \delta\right)  \mu\left(  \delta^{\prime
}\right)   &  \leq\sum_{\delta\geq\alpha}\mu\left(  \delta\right)
\sum_{\alpha\leq\beta\leq\delta}2^{2\sigma d\left(  \beta\right)  }I^{\ast}%
\mu\left(  \beta\right) \\
&  \leq C\sum_{\delta\geq\alpha}\mu\left(  \delta\right)  \sum_{\alpha
\leq\beta\leq\delta}h\left(  d\left(  \beta\right)  \right) \\
&  \leq C\sum_{\delta\geq\alpha}\mu\left(  \delta\right)  =CI^{\ast}\mu\left(
\alpha\right)  ,
\end{align*}
which yields (\ref{treecondo}). On the other hand, if (\ref{psigma}) holds
with $0<p<1$, then we use
\[
\mu\left(  \delta\right)  \mu\left(  \delta^{\prime}\right)  \leq\mu\left(
\delta\right)  ^{2-p}\mu\left(  \delta^{\prime}\right)  ^{p}+\mu\left(
\delta^{\prime}\right)  ^{2-p}\mu\left(  \delta\right)  ^{p},
\]
together with the symmetry in $\delta$ and $\delta^{\prime}$, to continue
with
\begin{align*}
\sum_{\delta,\delta^{\prime}\geq\alpha}2^{2\sigma d\left(  \delta\wedge
\delta^{\prime}\right)  }\mu\left(  \delta\right)  ^{2-p}\mu\left(
\delta^{\prime}\right)  ^{p}  &  =\sum_{\delta\geq\alpha}\mu\left(
\delta\right)  ^{2-p}\sum_{\alpha\leq\beta\leq\delta}2^{2\sigma d\left(
\beta\right)  }\left(  \sum_{\delta^{\prime}\geq\beta}\mu\left(
\delta^{\prime}\right)  ^{p}\right) \\
&  \leq C\sum_{\delta\geq\alpha}\mu\left(  \delta\right)  ^{2-p}\sum
_{\alpha\leq\beta\leq\delta}2^{2\sigma\left(  1-p\right)  d\left(
\beta\right)  }\\
&  \leq C_{p}\sum_{\delta\geq\alpha}\mu\left(  \delta\right)  ^{2-p}%
2^{2\sigma\left(  1-p\right)  d\left(  \delta\right)  }\\
&  \leq C_{p}\sum_{\delta\geq\alpha}\mu\left(  \delta\right)  =C_{p}I^{\ast
}\mu\left(  \alpha\right)  ,
\end{align*}
which again yields (\ref{treecondo}). The final inequality here follows since
\[
\mu\left(  \delta\right)  ^{1-p}2^{2\sigma\left(  1-p\right)  d\left(
\delta\right)  }=\left(  \mu\left(  \delta\right)  2^{2\sigma d\left(
\delta\right)  }\right)  ^{1-p}\leq\left(  I^{\ast}\mu\left(  \delta\right)
2^{2\sigma d\left(  \delta\right)  }\right)  ^{1-p}\leq C
\]
by the usual simple condition, an obvious consequence of (\ref{psigma}) when
$0<p<1$.

The two conditions in the lemma are independent of each other. We offer
ingredients for the examples that show this but omit the details of the
verification. Suppose $\sigma=1/2$ and let $\mathcal{T}_{0}$ be the linear
tree. The measure $\mu\left(  \alpha\right)  =2^{-d\left(  \alpha\right)  }$
satisfies (\ref{psigma}) for any $p>0$ but fails (\ref{ssimp}) for any
summable $h.$ Now consider the binary tree $\mathcal{T}_{1}$. Set $\mu
_{N}\left(  \alpha\right)  =2^{-N}N^{-1}\left(  \log N\right)  ^{-2}.$ With
the choice $h\left(  n\right)  =n^{-1}(\log n)^{-2},n\geq2,\ $the measures
$\mu_{N}$ satisfy (\ref{ssimp}) uniformly in $N.$ However with the choice of
$\alpha=o$ the left side of (\ref{psigma}) is $2^{-N+N/p}N^{-1}(\log N)^{-2}$
which is unbounded in $N$ for any fixed $p<1$.

\subsubsection{The split tree condition\label{split}}

The bilinear inequality associated with (\ref{discnew}) is
\begin{align}
\sum_{\alpha\in\mathcal{T}_{n}}f\left(  \alpha\right)  T_{\mu}g\left(
\alpha\right)  \mu\left(  \alpha\right)   &  =\sum_{\alpha,\beta\in
\mathcal{T}_{n}}2^{2d\left(  \alpha\wedge\beta\right)  -d^{\ast}\left(
\left[  \alpha\right]  \wedge\left[  \beta\right]  \right)  }f\mu\left(
\alpha\right)  g\mu\left(  \beta\right) \label{biin}\\
&  \leq C\left(  \sum_{\alpha\in\mathcal{T}_{n}}f\left(  \alpha\right)
^{2}\mu\left(  \alpha\right)  \right)  ^{\frac{1}{2}}\left(  \sum_{\alpha
\in\mathcal{T}_{n}}g\left(  \alpha\right)  ^{2}\mu\left(  \alpha\right)
\right)  ^{\frac{1}{2}}.\nonumber
\end{align}
By Theorem \ref{Achar} and Lemma \ref{simpnecc}, the simple condition
(\ref{simp}) is necessary for (\ref{biin}). We now derive another necessary
condition for (\ref{biin}) to hold, namely the split tree condition
(\ref{splittree}). First we set $f=g=\chi_{S\left(  \eta\right)  }$ in
(\ref{biin}) to obtain
\[
\sum_{\alpha,\beta\geq\eta}2^{2d\left(  \alpha\wedge\beta\right)  -d^{\ast
}\left(  \left[  \alpha\right]  \wedge\left[  \beta\right]  \right)  }%
\mu\left(  \alpha\right)  \mu\left(  \beta\right)  \leq CI^{\ast}\mu\left(
\eta\right)  ,\;\;\;\;\;\eta\in\mathcal{T}_{n}.
\]
If we organize the sum on the left hand side by summing first over rings, we
obtain
\[
\sum_{A,B\in\mathcal{R}_{n}}\sum_{\substack{\alpha,\beta\geq\eta\\\alpha\in
A,\beta\in B}}\frac{2^{2d\left(  \alpha\wedge\beta\right)  }}{2^{d^{\ast
}\left(  A\wedge B\right)  }}\mu\left(  \alpha\right)  \mu\left(
\beta\right)  =\sum_{C\in\mathcal{R}_{n}}\sum_{\substack{A,B\in\mathcal{R}%
_{n}\\A\wedge B=C}}\sum_{\substack{\alpha,\beta\geq\eta\\\alpha\in A,\beta\in
B}}\frac{2^{2d\left(  \alpha\wedge\beta\right)  }}{2^{d^{\ast}\left(  A\wedge
B\right)  }}\mu\left(  \alpha\right)  \mu\left(  \beta\right)  .
\]
Now define $A\curlywedge B=C$ to mean the more restrictive condition that both
$A\wedge B=C$ and $d^{\ast}\left(  A\wedge B\right)  -d\left(  C\right)  $ is
bounded (thus requiring that $A\wedge B$ is not \textquotedblleft
artificially\textquotedblright\ too much closer to the root than it ought to
be due to the vagaries of the particular tree structure). We can then restrict
the sum over $A$ and $B$ above to $A\curlywedge B=C$ which permits
$2^{d^{\ast}\left(  A\wedge B\right)  }$ to be replaced by $2^{d\left(
C\right)  }$. The result is
\[
\sum_{C\in\mathcal{R}_{n}}\sum_{\substack{A,B\in\mathcal{R}_{n}\\A\curlywedge
B=C}}\sum_{\substack{\alpha,\beta\geq\eta\\\alpha\in A,\beta\in B}%
}\frac{2^{2d\left(  \alpha\wedge\beta\right)  }}{2^{d\left(  C\right)  }}%
\mu\left(  \alpha\right)  \mu\left(  \beta\right)  \leq CI^{\ast}\mu\left(
\eta\right)  ,\;\;\;\;\;\eta\in\mathcal{T}_{n}.
\]
This is the split tree condition on the Bergman tree $\mathcal{T}_{n}$, which
dominates the more transparent form
\[
\sum_{k\geq0}\sum_{\gamma\geq\eta}2^{d\left(  \gamma\right)  -k}%
\sum_{\substack{\left(  \delta,\delta^{\prime}\right)  \in\mathcal{G}^{\left(
k\right)  }\left(  \gamma\right)  }}I^{\ast}\mu\left(  \delta\right)  I^{\ast
}\mu\left(  \delta^{\prime}\right)  \leq CI^{\ast}\mu\left(  \eta\right)
,\;\;\;\;\;\eta\in\mathcal{T}_{n},
\]
with $\gamma=\alpha\wedge\beta$ and $k=d(c)-d(\gamma),$ where as in Definition
\ref{grand}, the set $\mathcal{G}^{\left(  k\right)  }\left(  \gamma\right)  $
consists of pairs $\left(  \delta,\delta^{\prime}\right)  $ of grand$^{k}%
$-children of $\gamma$ in $\mathcal{G}^{\left(  k\right)  }\left(
\gamma\right)  \times\mathcal{G}^{\left(  k\right)  }\left(  \gamma\right)  $
which satisfy $\delta\wedge\delta^{\prime}=\gamma$, $\left[  A^{2}%
\delta\right]  =\left[  A^{2}\delta^{\prime}\right]  $ (which implies
$d\left(  \left[  \delta\right]  ,\left[  \delta^{\prime}\right]  \right)
\leq4$) and $d^{\ast}\left(  \left[  \delta\right]  ,\left[  \delta^{\prime
}\right]  \right)  =4$. Note that $\mathcal{G}^{\left(  0\right)  }\left(
\gamma\right)  =\mathcal{G}\left(  \gamma\right)  $ is the set of
grandchildren of $\gamma$.

To show the sufficiency of the simple condition (\ref{simp}) and the split
tree condition (\ref{splittree}) taken over all unitary rotations of a fixed
Bergman tree, we begin by claiming that the left hand side of (\ref{biin})
satisfies
\begin{align}
&  \sum_{A,B\in\mathcal{R}_{n}}\sum_{\substack{\alpha\in A \\\beta\in B
}}\frac{2^{2d\left(  \alpha\wedge\beta\right)  }}{2^{d^{\ast}\left(  A\wedge
B\right)  }}f\mu\left(  \alpha\right)  g\mu\left(  \beta\right) \nonumber\\
&  \qquad\qquad\qquad\leq C\int\limits_{\mathcal{U}_{n}}\sum_{C\in
U^{-1}\mathcal{R}_{n}}\sum_{\substack{A,B\in U^{-1}\mathcal{R}_{n}
\\A\curlywedge B=C}}\sum_{\substack{\alpha\in A \\\beta\in B}}\frac
{2^{2d\left(  \alpha\wedge\beta\right)  }}{2^{d\left(  C\right)  }}f\mu\left(
\alpha\right)  g\mu\left(  \beta\right)  \text{ }dU,
\end{align}
To see this we note that from (\ref{numest}) and (\ref{inpart}), we have
\[
d^{\ast}\left(  \left[  z\right]  \wedge\left[  z^{\prime}\right]  \right)
\leq d\left(  \left[  Uz\right]  \wedge\left[  Uz^{\prime}\right]  \right)
+C,
\]
for $U\in\Sigma$ where $\left\vert \Sigma\right\vert \geq c>0$. Moreover, this
inequality persists in the following somewhat stronger form: for any fixed
rings $A,B$ associated to the tree $\mathcal{R}_{n}$, there is $\Sigma$ with
$\left\vert \Sigma\right\vert \geq c>0$ such that for any $U\in\Sigma$, if
$A^{\prime},B^{\prime}\in U^{-1}\mathcal{R}_{n}$ satisfy $A\cap A^{\prime}%
\neq\phi,B\cap B^{\prime}\neq\phi$, then $d\left(  A^{\prime}\wedge B^{\prime
}\right)  -d^{\ast}\left(  A\wedge B\right)  $ is bounded and hence
$A^{\prime}\curlywedge B^{\prime}=A^{\prime}\wedge B^{\prime}$. Thus
\begin{align*}
&  \sum_{\substack{\alpha\in A \\\beta\in B}}\frac{2^{2d\left(  \alpha
\wedge\beta\right)  }}{2^{d^{\ast}\left(  A\wedge B\right)  }}f\mu\left(
\alpha\right)  g\mu\left(  \beta\right) \\
&  \qquad\qquad\leq C\int_{\mathcal{U}_{n}}\sum_{\substack{C^{\prime}\in
U^{-1}\mathcal{R}_{n} \\A^{\prime}\curlywedge B^{\prime}=C^{\prime} \\A\cap
A^{\prime},B\cap B^{\prime}\neq\phi}}\sum_{\alpha\in A^{\prime},\beta\in
B^{\prime}}\frac{2^{2d\left(  \alpha\wedge\beta\right)  }}{2^{d\left(
C^{\prime}\right)  }}f\mu\left(  \alpha\right)  g\mu\left(  \beta\right)
\text{ }dU
\end{align*}
as required. Thus it will suffice to prove that (\ref{simp}) and the split
tree condition (\ref{splittree}) for the tree $U^{-1}\mathcal{T}_{n}$ imply
\begin{align}
&  \sum_{C\in U^{-1}\mathcal{R}_{n}}\sum_{\substack{A,B\in U^{-1}%
\mathcal{R}_{n} \\A\curlywedge B=C}}\sum_{\substack{\alpha\in A \\\beta\in
B}}\frac{2^{2d\left(  \alpha\wedge\beta\right)  }}{2^{d\left(  C\right)  }%
}f\mu\left(  \alpha\right)  g\mu\left(  \beta\right) \label{suffprove}\\
&  \qquad\qquad\qquad\qquad\leq C\left(  \sum_{\alpha\in\mathcal{T}_{n}%
}f\left(  \alpha\right)  ^{2}\mu\left(  \alpha\right)  \right)  ^{\frac{1}{2}%
}\left(  \sum_{\alpha\in\mathcal{T}_{n}}g\left(  \alpha\right)  ^{2}\mu\left(
\alpha\right)  \right)  ^{\frac{1}{2}},\nonumber
\end{align}
with a constant $C$ independent of $U\in\mathcal{U}_{n}$. Without loss of
generality we prove (\ref{suffprove}) when $U$ is the identity.

Define the projection $P_{C}$ from functions $h=\left\{  h\left(
\alpha\right)  \right\}  _{\alpha\in A}$ on the ring $A$ to functions $P_{C}h$
on the ring $C$ (provided $C\leq A$) by
\[
P_{C}h=\left\{  \sum_{\substack{\alpha\in A \\\alpha\geq\gamma}}h\left(
\alpha\right)  \right\}  _{\gamma\in C}.
\]
We also define the \textquotedblleft Poisson kernel\textquotedblright%
\ $\mathbb{P}_{C}$ at scale $C$ to be the mapping taking functions $h=\left\{
h\left(  \gamma^{\prime}\right)  \right\}  _{\gamma^{\prime}\in C}$ on $C$ to
functions $\mathbb{P}_{C}h=\left\{  \mathbb{P}_{C}h\left(  \gamma\right)
\right\}  _{\gamma\in C}$ on $C$ given by
\[
\mathbb{P}_{C}h=\left\{  \sum_{\gamma^{\prime}\in A}\frac{2^{2d\left(
\gamma\wedge\gamma^{\prime}\right)  }}{2^{d\left(  C\right)  }}h\left(
\gamma^{\prime}\right)  \right\}  _{\gamma\in C}.
\]
Now if $f_{A}$ denotes the restriction $\chi_{A}f$ of $f$ to the ring $A$, we
can write the left side of (\ref{suffprove}) as approximately
\begin{align*}
&  \sum_{C\in\mathcal{R}_{n}}\sum_{\substack{A,B\in\mathcal{R}_{n}
\\A\curlywedge B=C}}\sum_{\gamma\in C}\mathbb{P}_{C}\left(  P_{C}\left(
f_{A}\mu\right)  \right)  \left(  \gamma\right)  P_{C}\left(  g_{B}\mu\right)
\left(  \gamma\right) \\
&  \;\;\;\;\;=\sum_{C\in\mathcal{R}_{n}}\sum_{\substack{A,B\in\mathcal{R}_{n}
\\A\curlywedge B=C}}\left\langle \mathbb{P}_{C}\left(  P_{C}\left(  f_{A}%
\mu\right)  \right)  ,P_{C}\left(  g_{B}\mu\right)  \right\rangle _{C},
\end{align*}
where the inner product $\left\langle F,G\right\rangle _{C}$ is given by
$\sum_{\gamma\in C}F\left(  \gamma\right)  G\left(  \gamma\right)  $. At this
point we notice that the Poisson kernel
\[
\mathbb{P}_{C}\left(  \gamma,\gamma^{\prime}\right)  =\frac{2^{2d\left(
\gamma\wedge\gamma^{\prime}\right)  }}{2^{d\left(  C\right)  }}%
\]
is a geometric sum of averaging operators $\mathbb{A}_{C}^{k}$ with kernel
\[
\mathbb{A}_{C}^{k}\left(  \gamma,\gamma^{\prime}\right)  =2^{d\left(
C\right)  -k}\chi_{\left\{  d\left(  \gamma\wedge\gamma^{\prime}\right)
=d\left(  C\right)  -k\right\}  },
\]
namely
\begin{equation}
\mathbb{P}_{C}\left(  \gamma,\gamma^{\prime}\right)  =\sum_{k=0}^{d\left(
C\right)  }2^{-k}\mathbb{A}_{C}^{k}\left(  \gamma,\gamma^{\prime}\right)  .
\label{geosum}%
\end{equation}

We now consider the bilinear inequality with $\mathbb{P}_{C}$ replaced by
$\mathbb{A}_{C}^{0}$:
\begin{equation}
\sum_{C\in\mathcal{R}_{n}}\sum_{\substack{A,B\in\mathcal{R}_{n} \\A\curlywedge
B=C}}\left\langle \mathbb{A}_{C}^{0}\left(  P_{C}\left(  f_{A}\mu\right)
\right)  ,P_{C}\left(  g_{B}\mu\right)  \right\rangle _{C}\leq C\left\Vert
f\right\Vert _{\ell^{2}\left(  \mu\right)  }\left\Vert g\right\Vert _{\ell
^{2}\left(  \mu\right)  }. \label{kineq}%
\end{equation}
The left side of (\ref{kineq}) is
\begin{align*}
&  \sum_{C\in\mathcal{R}_{n}}2^{d\left(  C\right)  }\sum_{\substack{A,B\in
\mathcal{R}_{n} \\A\curlywedge B=C}}\left\langle P_{C}\left(  f_{A}\mu\right)
,P_{C}\left(  g_{B}\mu\right)  \right\rangle _{C}\\
&  \;\;\;\;\;=\sum_{C\in\mathcal{R}_{n}}2^{d\left(  C\right)  }\sum_{\gamma\in
C}\left\{  \sum_{\substack{A,B\in\mathcal{R}_{n} \\A\curlywedge B=C}}I^{\ast
}\left(  f_{A}\mu\right)  \left(  \gamma\right)  I^{\ast}\left(  g_{B}%
\mu\right)  \left(  \gamma\right)  \right\}  .
\end{align*}
For fixed $\gamma\in C$, we dominate the sum $\sum_{A,B\in\mathcal{R}%
_{n}:A\curlywedge B=C}$ in braces above by
\begin{align*}
&  \sum_{\substack{A,B\in\mathcal{R}_{n} \\A\curlywedge B=C}}I^{\ast}\left(
f_{A}\mu\right)  \left(  \gamma\right)  I^{\ast}\left(  g_{B}\mu\right)
\left(  \gamma\right)  \leq I^{\ast}\left(  f\mu\right)  \left(
\gamma\right)  \left(  g\mu\right)  \left(  \gamma\right) \\
&  \quad\quad\quad+\left(  f\mu\right)  \left(  \gamma\right)  I^{\ast}\left(
g\mu\right)  \left(  \gamma\right)  +\sum_{\substack{\delta,\delta^{\prime}%
\in\mathcal{G}\left(  \gamma\right)  \\d^{\ast}\left(  \left[  \delta\right]
,\left[  \delta^{\prime}\right]  \right)  =4}}I^{\ast}\left(  f\mu\right)
\left(  \delta\right)  I^{\ast}\left(  g\mu\right)  \left(  \delta^{\prime
}\right)  .
\end{align*}
The first two terms easily satisfy the bilinear inequality using only the
simple condition (\ref{Arvsimple}). Indeed,
\begin{align*}
\sum_{C\in\mathcal{R}_{n}}2^{d\left(  C\right)  }\sum_{\gamma\in C}I^{\ast
}\left(  f\mu\right)  \left(  \gamma\right)  \left(  g\mu\right)  \left(
\gamma\right)   &  =\sum_{\gamma\in\mathcal{T}_{n}}2^{d\left(  \gamma\right)
}I^{\ast}\left(  f\mu\right)  \left(  \gamma\right)  \left(  g\mu\right)
\left(  \gamma\right) \\
&  =\sum_{\gamma\in\mathcal{T}_{n}}I\left(  2^{d}f\mu\right)  \left(
\gamma\right)  g\left(  \gamma\right)  \mu\left(  \gamma\right) \\
&  \leq\left\Vert I\left(  2^{d}f\mu\right)  \right\Vert _{\ell^{2}\left(
\mu\right)  }\left\Vert g\right\Vert _{\ell^{2}\left(  \mu\right)  }.
\end{align*}
Now the inequality $\left\Vert I\left(  2^{d}f\mu\right)  \right\Vert
_{\ell^{2}\left(  \mu\right)  }\leq C\left\Vert f\right\Vert _{\ell^{2}\left(
\mu\right)  }$ can be rewritten
\[
\sum_{\gamma\in\mathcal{T}_{n}}Ih\left(  \gamma\right)  ^{2}\mu\left(
\gamma\right)  \leq C\sum_{\gamma\in\mathcal{T}_{n}}h\left(  \gamma\right)
^{2}2^{-2d\left(  \gamma\right)  }\mu\left(  \gamma\right)  ,
\]
which by Theorem \ref{Tars}, is equivalent to the condition
\[
\sum_{\gamma\geq\alpha}I^{\ast}\mu\left(  \gamma\right)  ^{2}2^{2d\left(
\gamma\right)  }\mu\left(  \gamma\right)  \leq CI^{\ast}\mu\left(
\alpha\right)  ,\;\;\;\;\;\alpha\in\mathcal{T}_{n},
\]
which in turn is trivially implied by the simple condition $I^{\ast}\mu\left(
\gamma\right)  ^{2}2^{2d\left(  \gamma\right)  }\leq C^{2}$.

It remains then to consider the \textquotedblleft split\textquotedblright%
\ bilinear inequality
\begin{equation}
\sum_{\gamma\in\mathcal{T}_{n}}2^{d\left(  \gamma\right)  }\sum
_{\substack{\delta,\delta^{\prime}\in\mathcal{G}\left(  \gamma\right)
\\d^{\ast}\left(  \left[  \delta\right]  ,\left[  \delta^{\prime}\right]
\right)  =4}}I^{\ast}\left(  f\mu\right)  \left(  \delta\right)  I^{\ast
}\left(  g\mu\right)  \left(  \delta^{\prime}\right)  \leq C\left\Vert
f\right\Vert _{\ell^{2}\left(  \mu\right)  }\left\Vert g\right\Vert _{\ell
^{2}\left(  \mu\right)  }, \label{finalineq}%
\end{equation}
or equivalently the corresponding quadratic inequality obtained by setting
$f=g$:
\begin{equation}
\sum_{\gamma\in\mathcal{T}_{n}}2^{d\left(  \gamma\right)  }\sum
_{\substack{\delta,\delta^{\prime}\in\mathcal{G}\left(  \gamma\right)
\\d^{\ast}\left(  \left[  \delta\right]  ,\left[  \delta^{\prime}\right]
\right)  =4}}I^{\ast}\left(  f\mu\right)  \left(  \delta\right)  I^{\ast
}\left(  f\mu\right)  \left(  \delta^{\prime}\right)  \leq C\sum_{\alpha
\in\mathcal{T}_{n}}f\left(  \alpha\right)  ^{2}\mu\left(  \alpha\right)  .
\label{finalineq'}%
\end{equation}
Note that the restriction to $k=0$ in the split tree condition
(\ref{splittree}) yields the following necessary condition for
(\ref{finalineq'}):
\begin{equation}
\sum_{\gamma\geq\alpha}2^{d\left(  \gamma\right)  }\sum_{\substack{\delta
,\delta^{\prime}\in\mathcal{G}\left(  \gamma\right)  \\d^{\ast}\left(  \left[
\delta\right]  ,\left[  \delta^{\prime}\right]  \right)  =4}}I^{\ast}%
\mu\left(  \delta\right)  I^{\ast}\mu\left(  \delta^{\prime}\right)  \leq
CI^{\ast}\mu\left(  \alpha\right)  ,\;\;\;\;\;\alpha\in\mathcal{T}_{n}.
\label{ArvCar}%
\end{equation}
We now show that (\ref{ArvCar}) and (\ref{Arvsimple}) together imply
(\ref{finalineq'}). To see this write the left side of (\ref{finalineq'}) as
\[
\sum_{\gamma\in\mathcal{T}_{n}}2^{d\left(  \gamma\right)  }\sum
_{\substack{\delta,\delta^{\prime}\in\mathcal{G}\left(  \gamma\right)
\\d^{\ast}\left(  \left[  \delta\right]  ,\left[  \delta^{\prime}\right]
\right)  =4}}I^{\ast}\mu\left(  \delta\right)  I^{\ast}\mu\left(
\delta^{\prime}\right)  \frac{I^{\ast}\left(  f\mu\right)  \left(
\delta\right)  I^{\ast}\left(  f\mu\right)  \left(  \delta^{\prime}\right)
}{I^{\ast}\mu\left(  \delta\right)  I^{\ast}\mu\left(  \delta^{\prime}\right)
},
\]
and using the symmetry in $\delta,\delta^{\prime}$ we bound it by
\begin{align*}
&  \sum_{\gamma\in\mathcal{T}_{n}}2^{d\left(  \gamma\right)  }\sum
_{\substack{\delta,\delta^{\prime}\in\mathcal{G}\left(  \gamma\right)
\\d^{\ast}\left(  \left[  \delta\right]  ,\left[  \delta^{\prime}\right]
\right)  =4}}I^{\ast}\mu\left(  \delta\right)  I^{\ast}\mu\left(
\delta^{\prime}\right)  \left(  \frac{I^{\ast}\left(  f\mu\right)  \left(
\delta\right)  }{I^{\ast}\mu\left(  \delta\right)  }\right)  ^{2}\\
&  \;\;\;\;\;=\sum_{\delta\in\mathcal{T}_{n}}\left(  \frac{I^{\ast}\left(
f\mu\right)  \left(  \delta\right)  }{I^{\ast}\mu\left(  \delta\right)
}\right)  ^{2}\sum_{\substack{\delta^{\prime}\in\mathcal{G}\left(  A^{2}%
\delta\right)  \\d^{\ast}\left(  \left[  \delta\right]  ,\left[
\delta^{\prime}\right]  \right)  =4}}2^{d\left(  A^{2}\delta\right)  }I^{\ast
}\mu\left(  \delta\right)  I^{\ast}\mu\left(  \delta^{\prime}\right)  .
\end{align*}
By Theorem \ref{unity}, this last term is dominated by the right side of
(\ref{finalineq'}) provided $I^{\ast}\sigma\left(  \alpha\right)  \leq
CI^{\ast}\mu\left(  \alpha\right)  $ for all $\alpha\in\mathcal{T}_{n}$ where
$\sigma\left(  \delta\right)  $ is given by
\[
\sigma\left(  \delta\right)  =\sum_{\substack{\delta^{\prime}\in
\mathcal{G}\left(  A^{2}\delta\right)  \\d^{\ast}\left(  \left[
\delta\right]  ,\left[  \delta^{\prime}\right]  \right)  =4}}2^{d\left(
A^{2}\delta\right)  }I^{\ast}\mu\left(  \delta\right)  I^{\ast}\mu\left(
\delta^{\prime}\right)  .
\]
This latter condition can be expressed as
\begin{align}
&  \sum_{\gamma\geq\alpha}2^{d\left(  \gamma\right)  }\sum_{\substack{\delta
,\delta^{\prime}\in\mathcal{G}\left(  \gamma\right)  \\d^{\ast}\left(  \left[
\delta\right]  ,\left[  \delta^{\prime}\right]  \right)  =4}}I^{\ast}%
\mu\left(  \delta\right)  I^{\ast}\mu\left(  \delta^{\prime}\right)
+2^{d\left(  A\alpha\right)  }\sum_{\substack{\delta^{\prime}\in
\mathcal{G}\left(  A\delta\right)  \\d^{\ast}\left(  \left[  \delta\right]
,\left[  \delta^{\prime}\right]  \right)  =4}}I^{\ast}\mu\left(
A\alpha\right)  I^{\ast}\mu\left(  \delta^{\prime}\right) \label{needy}\\
&  \qquad+2^{d\left(  A^{2}\alpha\right)  }\sum_{\substack{\delta^{\prime}%
\in\mathcal{G}\left(  A^{2}\delta\right)  \\d^{\ast}\left(  \left[
\delta\right]  ,\left[  \delta^{\prime}\right]  \right)  =4}}I^{\ast}%
\mu\left(  A^{2}\alpha\right)  I^{\ast}\mu\left(  \delta^{\prime}\right)  \leq
CI^{\ast}\mu\left(  \alpha\right)  ,\;\;\;\;\;\alpha\in\mathcal{T}%
_{n}.\nonumber
\end{align}
Now the necessary condition (\ref{ArvCar}) shows that the first sum in
(\ref{needy}) is at most $CI^{\ast}\mu\left(  \alpha\right)  $, while the
simple condition (\ref{Arvsimple}) yields $2^{d\left(  A\alpha\right)
}I^{\ast}\mu\left(  \delta^{\prime}\right)  \leq C$, which shows that the
second sum in (\ref{needy}) is at most $CI^{\ast}\mu\left(  \alpha\right)  $.
The third sum is handled similarly and this completes the proof that
(\ref{kineq}) holds when both (\ref{ArvCar}) and (\ref{Arvsimple}) hold.

To handle the averaging operators $\mathbb{A}_{C}^{k}$ for $k>0$, we compute
that for $D\in\mathcal{R}_{n}$,
\begin{align*}
&  \sum_{C\in\mathcal{C}^{\left(  k-1\right)  }\left(  D\right)  }%
\sum_{\substack{A,B\in\mathcal{R}_{n}\\A\curlywedge B=C}}\left\langle
\mathbb{A}_{C}^{k}\left(  P_{C}\left(  f_{A}\mu\right)  \right)  ,P_{C}\left(
g_{B}\mu\right)  \right\rangle _{C}\\
&  \quad\quad\quad=\sum_{C\in\mathcal{C}^{\left(  k-1\right)  }\left(
D\right)  }\sum_{\substack{A,B\in\mathcal{R}_{n}\\A\curlywedge B=C}%
}2^{d\left(  C\right)  -k}\sum_{\substack{\gamma,\gamma^{\prime}\in
C\\d\left(  \gamma\wedge\gamma^{\prime}\right)  =d\left(  D\right)  }%
}P_{C}\left(  f_{A}\mu\right)  \left(  \gamma\right)  P_{C}\left(  g_{B}%
\mu\right)  \left(  \gamma^{\prime}\right) \\
&  \quad\quad\quad=\sum_{C\in\mathcal{C}^{\left(  k-1\right)  }\left(
D\right)  }2^{d\left(  C\right)  -k}\left(  \sum_{\substack{\delta
,\delta^{\prime}\in D\\d\left(  \delta,\delta^{\prime}\right)  =2}%
}\sum_{\substack{\gamma,\gamma^{\prime}\in C\\\gamma\geq\delta\\\gamma
^{\prime}\geq\delta^{\prime}}}\sum_{\substack{A,B\in\mathcal{R}_{n}%
\\A\curlywedge B=C}}P_{C}\left(  f_{A}\mu\right)  \left(  \gamma\right)
P_{C}\left(  g_{B}\mu\right)  \left(  \gamma^{\prime}\right)  \right)  .
\end{align*}
Summing this over all rings $D\in\mathcal{R}_{n}$, and then summing in $k>0$,
we obtain
\begin{align*}
&  \sum_{C\in\mathcal{R}_{n}}\sum_{A,B\in\mathcal{R}_{n}:A\curlywedge
B=C}\left\langle \sum_{k>0}2^{-k}\mathbb{A}_{C}^{k}\left(  P_{C}\left(
f_{A}\mu\right)  \right)  ,P_{C}\left(  g_{B}\mu\right)  \right\rangle _{C}\\
&  \qquad\qquad=\sum_{k>0}2^{-k}\sum_{D\in\mathcal{R}_{n}}\sum_{C\in
\mathcal{C}^{\left(  k-1\right)  }\left(  D\right)  }\sum_{\substack{A,B\in
\mathcal{R}_{n}\\A\curlywedge B=C}}\left\langle \mathbb{A}_{C}^{k}\left(
P_{C}\left(  f_{A}\mu\right)  \right)  ,P_{C}\left(  g_{B}\mu\right)
\right\rangle _{C}\\
&  \qquad\qquad=\sum_{k>0}\sum_{D\in\mathcal{R}_{n}}\sum_{C\in\mathcal{C}%
^{\left(  k-1\right)  }\left(  D\right)  }2^{d\left(  C\right)  -2k}\left(
\ast\right)  .
\end{align*}
The term $(\ast)$ is
\[
(\ast)=\sum_{\substack{\delta,\delta^{\prime}\in D\\d\left(  \delta
,\delta^{\prime}\right)  =2}}\sum_{\substack{\gamma,\gamma^{\prime}\in
C\\\gamma\geq\delta,\gamma^{\prime}\geq\delta^{\prime}}}\sum_{\substack{A,B\in
\mathcal{R}_{n}\\A\curlywedge B=C}}P_{C}\left(  f_{A}\mu\right)  \left(
\gamma\right)  P_{C}\left(  g_{B}\mu\right)  \left(  \gamma^{\prime}\right)
\]
and it satisfies $\left(  \ast\right)  \leq I+II+III$ with
\[
I=\sum_{\substack{\delta,\delta^{\prime}\in D\\d\left(  \delta,\delta^{\prime
}\right)  =2}}\sum_{\substack{\gamma,\gamma^{\prime}\in C\\\gamma\geq
\delta,\gamma^{\prime}\geq\delta^{\prime}}}\left(  f\mu\right)  \left(
\gamma\right)  \left(  \sum_{B\geq C}P_{C}\left(  g_{B}\mu\right)  \left(
\gamma^{\prime}\right)  \right)
\]%
\[
II=\sum_{\substack{\delta,\delta^{\prime}\in D\\d\left(  \delta,\delta
^{\prime}\right)  =2}}\sum_{\substack{\gamma,\gamma^{\prime}\in C\\\gamma
\geq\delta,\gamma^{\prime}\geq\delta^{\prime}}}\left(  \sum_{A\geq C}%
P_{C}\left(  f_{A}\mu\right)  \left(  \gamma\right)  \right)  \left(
g\mu\right)  \left(  \gamma^{\prime}\right)
\]%
\[
III=\sum_{\substack{\delta,\delta^{\prime}\in D\\d\left(  \delta
,\delta^{\prime}\right)  =2}}\sum_{\substack{\gamma,\gamma^{\prime}\in
C\\\gamma\geq\delta,\gamma^{\prime}\geq\delta^{\prime}}}\sum_{\substack{A,B\in
\mathcal{R}_{n}\\A\curlywedge B=C}}P_{C}\left(  f_{A}\mu\right)  \left(
\gamma\right)  P_{C}\left(  g_{B}\mu\right)  \left(  \gamma^{\prime}\right)
.
\]
We now analyze these sums in terms of the operator $I^{\ast}$. The first two,
$I$ and $II$, are are similar to each other and can be controlled by the
simple condition (\ref{Arvsimple}) alone. Indeed, $\sum_{B\geq C}P_{C}\left(
g_{B}\mu\right)  \left(  \gamma^{\prime}\right)  =I^{\ast}\mu\left(
\gamma^{\prime}\right)  $ and
\[
\sum_{k>0}\sum_{D\in\mathcal{R}_{n}}\left\{  \sum_{C\in\mathcal{C}^{\left(
k-1\right)  }\left(  D\right)  }2^{d\left(  C\right)  -2k}\left(
\sum_{\substack{\delta,\delta^{\prime}\in D\\d\left(  \delta,\delta^{\prime
}\right)  =2}}\sum_{\substack{\gamma,\gamma^{\prime}\in C\\\gamma\geq
\delta,\gamma^{\prime}\geq\delta^{\prime}}}\left(  f\mu\right)  \left(
\gamma\right)  I^{\ast}\mu\left(  \gamma^{\prime}\right)  \right)  \right\}
\]
has bilinear kernel function $K\left(  \gamma,\beta\right)  =2^{d\left(
\gamma\right)  -2k}$ where $k=d\left(  \gamma\right)  -d\left(  \gamma
\wedge\gamma^{\prime}\right)  $ and $\gamma^{\prime}$ is the unique element of
the ring $C=\left[  \gamma\right]  $ with $\beta\geq\gamma^{\prime}$. Since
$d\left(  \gamma\wedge\beta\right)  =d\left(  \gamma\wedge\gamma^{\prime
}\right)  $, we thus have
\[
K\left(  \gamma,\beta\right)  =2^{d\left(  \gamma\right)  -2\left(  d\left(
\gamma\right)  -d\left(  \gamma\wedge\gamma^{\prime}\right)  \right)
}=2^{2d\left(  \gamma\wedge\beta\right)  -\min\left\{  d\left(  \gamma\right)
,d\left(  \beta\right)  \right\}  },
\]
and the case $r=1$ of Theorem \ref{simplesuffices} below shows that this
kernel is controlled by the simple condition. We now turn to $III$. Using
$\left(  +\right)  $ to denote a set of summation indices:%
\[
\left(  +\right)  =\left\{  \eta,\eta^{\prime}:A^{2}\eta,A^{2}\eta^{\prime}\in
C,d\left(  \eta\wedge\eta^{\prime}\right)  =d\left(  D\right)  +1,d^{\ast
}\left(  \left[  \eta\right]  \wedge\left[  \eta^{\prime}\right]  \right)
=4\right\}
\]
we can rewrite $III$ as
\[
III=\sum_{\left(  +\right)  }I^{\ast}\left(  f\mu\right)  \left(  \eta\right)
I^{\ast}\left(  g\mu\right)  \left(  \eta^{\prime}\right)  .
\]
Setting $f=g$ we see we must show that
\begin{equation}
\sum_{k>0}\sum_{D\in\mathcal{R}_{n}}\sum_{C\in\mathcal{C}^{\left(  k-1\right)
}\left(  D\right)  }2^{d\left(  C\right)  -2k}\sum_{\left(  +\right)  }%
I^{\ast}\left(  f\mu\right)  \left(  \eta\right)  I^{\ast}\left(  f\mu\right)
\left(  \eta^{\prime}\right)  \leq C\left\Vert f\right\Vert _{\ell^{2}\left(
\mu\right)  }^{2}. \label{mustshow'}%
\end{equation}
Just as in handling the bilinear inequality for $\mathbb{A}_{C}^{0}$ above, we
exploit the symmetry in $\eta,\eta^{\prime}$ to obtain
\begin{align*}
&  \sum_{k>0}\sum_{D\in\mathcal{R}_{n}}\sum_{C\in\mathcal{C}^{\left(
k-1\right)  }\left(  D\right)  }2^{d\left(  C\right)  -2k}\sum_{\left(
+\right)  }I^{\ast}\left(  f\mu\right)  \left(  \eta\right)  I^{\ast}\left(
f\mu\right)  \left(  \eta^{\prime}\right) \\
&  \;\;\;\;\;=\sum_{k>0}\sum_{D\in\mathcal{R}_{n}}\sum_{C\in\mathcal{C}%
^{\left(  k-1\right)  }\left(  D\right)  }2^{d\left(  C\right)  -2k}%
\sum_{\left(  +\right)  }\left[  I^{\ast}\mu\left(  \eta\right)  I^{\ast}%
\mu\left(  \eta^{\prime}\right)  \right]  \frac{I^{\ast}\left(  f\mu\right)
\left(  \eta\right)  I^{\ast}\left(  f\mu\right)  \left(  \eta^{\prime
}\right)  }{I^{\ast}\mu\left(  \eta\right)  I^{\ast}\mu\left(  \eta^{\prime
}\right)  }\\
&  \;\;\;\;\;\leq\sum_{k>0}\sum_{D\in\mathcal{R}_{n}}\sum_{C\in\mathcal{C}%
^{\left(  k-1\right)  }\left(  D\right)  }2^{d\left(  C\right)  -2k}%
\sum_{\left(  +\right)  }I^{\ast}\mu\left(  \eta\right)  I^{\ast}\mu\left(
\eta^{\prime}\right)  \left(  \frac{I^{\ast}\left(  f\mu\right)  \left(
\eta\right)  }{I^{\ast}\mu\left(  \eta\right)  }\right)  ^{2}.
\end{align*}
Now we apply Theorem \ref{unity} to obtain that the last expression above is
dominated by the right side of (\ref{mustshow'}) provided we have the
condition, for $\alpha\in\mathcal{T}_{n}.$
\begin{equation}
\sum_{\eta:\eta\geq\alpha}\left\{  \sum_{k>0}\sum_{D\in\mathcal{R}_{n}}%
\sum_{C\in\mathcal{C}^{\left(  k-1\right)  }\left(  D\right)  }2^{d\left(
C\right)  -2k}\sum_{\left(  +\right)  }I^{\ast}\mu\left(  \eta\right)
I^{\ast}\mu\left(  \eta^{\prime}\right)  \right\}  \leq CI^{\ast}\mu\left(
\alpha\right)  \label{needyfull}%
\end{equation}

As before, this condition is implied by the simple condition (\ref{Arvsimple})
together with the restriction to $k>0$ in the split tree condition
(\ref{splittree}):
\begin{equation}
\sum_{k>0}\sum_{\gamma\geq\alpha}2^{d\left(  \gamma\right)  -k}\sum
_{\substack{\eta,\eta^{\prime}\in\mathcal{G}^{\left(  k\right)  }\left(
\gamma\right)  }}I^{\ast}\mu\left(  \eta\right)  I^{\ast}\mu\left(
\eta^{\prime}\right)  \leq CI^{\ast}\mu\left(  \alpha\right)
,\;\;\;\;\;\alpha\in\mathcal{T}_{n}, \label{ArvCar''}%
\end{equation}
where we recall the notation from Definition \ref{grand},
\[
\mathcal{G}^{\left(  k\right)  }\left(  \gamma\right)  =\left\{  \left(
\eta,\eta^{\prime}\right)  \in\mathcal{G}^{\left(  k\right)  }\left(
\gamma\right)  \times\mathcal{G}^{\left(  k\right)  }\left(  \gamma\right)  :%
\genfrac{.}{.}{0pt}{}{\eta\wedge\eta^{\prime}=\gamma,\left[  A^{2}\eta\right]
=\left[  A^{2}\eta^{\prime}\right]  }{d^{\ast}\left(  \left[  \eta\right]
,\left[  \eta^{\prime}\right]  \right)  \geq2}%
\right\}  .
\]
We now show that (\ref{needyfull}) is implied by the simple condition
(\ref{Arvsimple}) together with (\ref{ArvCar''}). The proof is analogous to
the argument used to establish that (\ref{Arvsimple}) and (\ref{ArvCar}) imply
(\ref{needy}) above. We rewrite the left side of (\ref{needyfull}) as
\[
\sum_{k>0}\sum_{\gamma\geq\alpha}2^{d\left(  \gamma\right)  -k}\sum
_{\substack{\left(  \eta,\eta^{\prime}\right)  \in\mathcal{G}^{\left(
k\right)  }\left(  \gamma\right)  }}I^{\ast}\mu\left(  \eta\right)  I^{\ast
}\mu\left(  \eta^{\prime}\right)  +\mathsf{REST}.
\]
Now the terms in $\mathsf{REST}$ that have $\eta=\alpha$ are dominated by
\begin{align*}
&  \sum_{k>0}2^{d\left(  \alpha\right)  -2k}I^{\ast}\mu\left(  \alpha\right)
\sum_{\substack{\eta^{\prime}\in\left[  \eta\right]  \\d\left(  \eta\wedge
\eta^{\prime}\right)  =d\left(  \alpha\right)  -k}}I^{\ast}\mu\left(
\eta^{\prime}\right) \\
&  \qquad\qquad\qquad\leq\sum_{k>0}2^{d\left(  \alpha\right)  -2k}I^{\ast}%
\mu\left(  \alpha\right)  \sum_{\substack{\eta^{\prime}\in\left[  \eta\right]
\\d\left(  \eta\wedge\eta^{\prime}\right)  =d\left(  \alpha\right)
-k}}C2^{-d\left(  \alpha\right)  }\\
&  \qquad\qquad\qquad\leq C\sum_{k>0}2^{-k}I^{\ast}\mu\left(  \alpha\right)
\leq CI^{\ast}\mu\left(  \alpha\right)  ,
\end{align*}
as required. However, we must also sum over the terms having simultaneously
$\eta>\alpha$ and not $\eta^{\prime}>\alpha$. We organize this sum by summing
over the pairs $\left(  \eta,\eta^{\prime}\right)  \in\mathcal{G}^{\left(
k\right)  }\left(  \gamma\right)  $ for which $\eta\wedge\eta^{\prime}$ equals
a given $\gamma\in\left[  o,A\alpha\right]  $, and then splitting this sum
over those $\eta\in\mathcal{C}^{\left(  \ell\right)  }\left(  \alpha\right)
$, $\ell>0$, so that $d\left(  \alpha\right)  +\ell=d\left(  \eta\right)
=d\left(  \gamma\right)  +k$, and obtain the following:
\begin{align*}
&  \sum_{\gamma<\alpha}\sum_{\ell>0}\sum_{\eta\in\mathcal{C}^{\left(
\ell\right)  }\left(  \alpha\right)  }\sum_{\substack{\eta^{\prime}%
:d(\eta^{\prime})=d\left(  \eta\right)  \\\eta\wedge\eta^{\prime}=\gamma
}}2^{d\left(  \eta\right)  -2\left[  d\left(  \alpha\right)  +\ell-d\left(
\gamma\right)  \right]  }I^{\ast}\mu\left(  \eta\right)  I^{\ast}\mu\left(
\eta^{\prime}\right) \\
&  =\sum_{\gamma<\alpha}2^{2d\left(  \gamma\right)  -d\left(  \alpha\right)
}\sum_{\ell>0}2^{-\ell}\sum_{\eta\in\mathcal{C}^{\left(  \ell\right)  }\left(
\alpha\right)  }\sum_{\substack{\eta^{\prime}:d(\eta^{\prime})=d\left(
\eta\right)  \\\eta\wedge\eta^{\prime}=\gamma}}I^{\ast}\mu\left(  \eta\right)
I^{\ast}\mu\left(  \eta^{\prime}\right) \\
&  \leq\sum_{\gamma<\alpha}2^{2d\left(  \gamma\right)  -d\left(
\alpha\right)  }\sum_{\ell>0}2^{-\ell}I^{\ast}\mu\left(  \alpha\right)
I^{\ast}\mu\left(  \gamma\right) \\
&  \leq\left\{  C\sum_{\gamma<\alpha}2^{d\left(  \gamma\right)  -d\left(
\alpha\right)  }\right\}  I^{\ast}\mu\left(  \alpha\right)  =CI^{\ast}%
\mu\left(  \alpha\right)  .
\end{align*}

Combining the above with (\ref{geosum}) and (\ref{kineq}) we obtain
\[
\sum_{C\in\mathcal{R}_{n}}\sum_{\substack{A,B\in\mathcal{R}_{n}\\A\curlywedge
B=C}}\left\langle \mathbb{P}_{C}\left(  P_{C}\left(  f_{A}\mu\right)  \right)
,P_{C}\left(  g_{B}\mu\right)  \right\rangle _{C}\leq C\left\Vert f\right\Vert
_{\ell^{2}\left(  \mu\right)  }\left\Vert g\right\Vert _{\ell^{2}\left(
\mu\right)  },
\]
and hence (\ref{biin}), provided that (\ref{Arvsimple}), (\ref{ArvCar}) and
(\ref{ArvCar''}) all hold. We have thus obtained the following
characterization of (\ref{discnew}) taken over all unitary rotations of a
fixed Bergman tree.

\begin{proposition}
\label{treereduction}A positive measure $\mu$ on $\mathbb{B}_{n}$ satisfies
(\ref{discnew}), where $\mathcal{T}_{n}$ ranges over all unitary rotations of
a fixed Bergman tree, if and only if $\mu$ satisfies the simple condition
(\ref{Arvsimple}) and the following split tree condition,
\begin{equation}
\sum_{k\geq0}\sum_{\gamma\geq\alpha}2^{d\left(  \gamma\right)  -k}%
\sum_{\substack{\left(  \delta,\delta^{\prime}\right)  \in\mathcal{G}^{\left(
k\right)  }\left(  \gamma\right)  }}I^{\ast}\mu\left(  \delta\right)  I^{\ast
}\mu\left(  \delta^{\prime}\right)  \leq CI^{\ast}\mu\left(  \alpha\right)
,\;\;\;\;\;\alpha\in\mathcal{T}_{n}, \label{fullsplittreecondition}%
\end{equation}
taken over all unitary rotations of the same Bergman tree, and where
$\mathcal{G}^{\left(  k\right)  }$ is given by Definition \ref{grand}.
Moreover,
\begin{align*}
c_{n}\sup_{\mathcal{T}_{n}}\left\Vert \mu\right\Vert _{Carleson\left(
\mathcal{T}_{n}\right)  }  &  \leq\sup_{\alpha\in\mathcal{T}_{n}}%
\sqrt{2^{d\left(  \alpha\right)  }I^{\ast}\mu\left(  \alpha\right)  }\\
&  +\sup_{\substack{\alpha\in\mathcal{T}_{n}\\I^{\ast}\mu\left(
\alpha\right)  >0}}\sqrt{\frac{1}{I^{\ast}\mu\left(  \alpha\right)  }%
\sum_{k\geq0}\sum_{\gamma\geq\alpha}2^{d\left(  \gamma\right)  -k}%
\sum_{\substack{\left(  \delta,\delta^{\prime}\right)  \in\mathcal{G}^{\left(
k\right)  }\left(  \gamma\right)  }}I^{\ast}\mu\left(  \delta\right)  I^{\ast
}\mu\left(  \delta^{\prime}\right)  }\\
&  \leq C_{n}\sup_{\mathcal{T}_{n}}\left\Vert \mu\right\Vert _{Carleson\left(
\mathcal{T}_{n}\right)  },
\end{align*}
where the supremum is taken over all $\alpha\in\mathcal{T}_{n}$ and
$\mathcal{T}_{n}$ ranges over all unitary rotations of a fixed Bergman tree.
\end{proposition}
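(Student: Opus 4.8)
The proof amounts to assembling the pieces developed in Subsubsection \ref{split} and those before it into a clean two-directional statement, together with a constant-counting argument for the norm equivalence. I would organize it as follows.

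\emph{Necessity.} Lemma \ref{simpnecc} already shows that a Carleson measure for $H_n^2$ satisfies the simple condition (\ref{Arvsimple}), and by Theorem \ref{Achar} together with Proposition \ref{discreduction} this transfers to every unitary rotation of the fixed Bergman tree. For the split tree condition, specialize the bilinear inequality (\ref{biin}) --- equivalently (\ref{discnew}) in the bilinear form (\ref{donedeal}) --- to $f=g=\chi_{S(\eta)}$, reorganize the left-hand side by summing first over rings $A,B\in\mathcal{R}_n$, and then restrict to the more restrictive meet $A\curlywedge B=C$, replacing $2^{d^{\ast}(A\wedge B)}$ by $2^{d(C)}$ exactly as in Subsubsection \ref{split}. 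This produces (\ref{fullsplittreecondition}) (equivalently the pair (\ref{ArvCar}) and (\ref{ArvCar''})), with constant bounded by the Carleson constant; since (\ref{discnew}) is assumed for all rotations, so is the split tree condition.

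\emph{Sufficiency.} This is where the long computation above is used essentially verbatim. Starting from the bilinear form of (\ref{discnew}), use the averaging-over-$\mathcal{U}_n$ inequality (the strengthened form noted after (\ref{suffprove})) to reduce to estimating $\sum_{C}\sum_{A\curlywedge B=C}\langle \mathbb{P}_C(P_C(f_A\mu)),P_C(g_B\mu)\rangle_C$ on a fixed tree, then expand $\mathbb{P}_C$ via the geometric sum (\ref{geosum}) into the averaging operators $\mathbb{A}_C^k$. For $k=0$ the two non-split pieces reduce, through Theorem \ref{Tars}, to the simple condition (\ref{Arvsimple}), while the genuinely split piece is controlled by Theorem \ref{unity} applied to the auxiliary measure $\sigma$ of (\ref{needy}), whose domination $I^{\ast}\sigma\le CI^{\ast}\mu$ is precisely (\ref{ArvCar}) together with (\ref{Arvsimple}). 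For $k>0$ decompose $(\ast)$ into the three sums $I$, $II$, $III$; the first two are controlled by (\ref{Arvsimple}) alone via the kernel identity $K(\gamma,\beta)=2^{2d(\gamma\wedge\beta)-\min\{d(\gamma),d(\beta)\}}$ and the case $r=1$ of Theorem \ref{simplesuffices}, and $III$ is controlled by Theorem \ref{unity} once (\ref{needyfull}) is verified, which follows from (\ref{Arvsimple}) and the $k>0$ part (\ref{ArvCar''}) of the split tree condition. Summing the resulting geometric series in $k$ and unwinding the reductions gives (\ref{biin}), hence (\ref{discnew}) for all rotations.

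\emph{Norm equivalence, and the main obstacle.} Every inequality above is quantitative, so the two-sided bound is obtained by bookkeeping: the lower bound comes from testing (\ref{biin}) on $\chi_{S(\alpha)}$, which bounds both $2^{d(\alpha)}I^{\ast}\mu(\alpha)$ and the split-tree quotient in the statement by a dimensional multiple of $\Vert\mu\Vert_{Carleson}^{2}$ (using Theorem \ref{Achar} to relate $\Vert\mu\Vert_{Carleson}$ to the discrete inequality); the upper bound follows since in the sufficiency argument each piece was dominated by a dimensional constant times the square of the sum of the two suprema on the right, times $\Vert f\Vert_{\ell^{2}(\mu)}\Vert g\Vert_{\ell^{2}(\mu)}$, with the $k$-series converging with an absolute constant. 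The step I expect to be the main obstacle is not any single estimate but keeping \emph{all} constants uniform in the unitary rotation $U$: one must check that the Haar-measure lower bounds $|\Sigma|\ge c>0$, the boundedness in the passage $A\wedge B\mapsto A\curlywedge B$, and the exit-point/slab estimates behind (\ref{numest}) are independent of $U$, so that integrating over $\mathcal{U}_n$ does not degrade the bound, and --- more subtly --- that requiring the split tree condition \emph{simultaneously} for all rotations is exactly what lets one run the sufficiency argument on each rotated tree. Threading this uniformity through the geometric-sum decomposition is the delicate part of the proof.
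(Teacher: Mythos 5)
Your proposal is correct and follows essentially the same route as the paper: the necessity via testing on $f=g=\chi_{S(\eta)}$, summing over rings and passing from $A\wedge B$ to $A\curlywedge B$, and the sufficiency via the averaging over $\mathcal{U}_{n}$, the geometric-sum decomposition of $\mathbb{P}_{C}$ into the operators $\mathbb{A}_{C}^{k}$, with the $k=0$ and $k>0$ pieces handled exactly as in Subsubsection \ref{split} through Theorems \ref{Tars}, \ref{unity} and the $r=1$ case of Theorem \ref{simplesuffices}, and the auxiliary conditions (\ref{ArvCar}) and (\ref{ArvCar''}). The quantitative bookkeeping and the uniformity in the rotation $U$ that you flag are indeed the only points requiring care, and they are handled in the paper exactly as you describe.
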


We note that Theorems \ref{unity} and \ref{simplesuffices} and Lemma
\ref{simpnecc} are independent of dimension, but the argument given above to
establish the equivalence of (\ref{discnew}) with
(\ref{fullsplittreecondition}) and (\ref{Arvsimple}) does depend on dimension.

Combining the three propositions above, we obtain the following
characterization of Carleson measures for the Drury-Arveson space.

\begin{theorem}
\label{Arvcom}A positive measure $\mu$ on the ball $\mathbb{B}_{n}$ is
$H_{n}^{2}$-Carleson if and only if $\mu$ satisfies the simple condition
(\ref{Arvsimple}) and the split tree condition (\ref{fullsplittreecondition})
taken over all unitary rotations of a fixed Bergman tree. Moreover, we have
\begin{align*}
c_{n}\left\Vert \mu\right\Vert _{Carleson}  &  \leq\sup_{\alpha\in
\mathcal{T}_{n}}\sqrt{2^{d\left(  \alpha\right)  }I^{\ast}\mu\left(
\alpha\right)  }\\
&  +\sup_{\substack{\alpha\in\mathcal{T}_{n} \\I^{\ast}\mu\left(
\alpha\right)  >0}}\sqrt{\frac{1}{I^{\ast}\mu\left(  \alpha\right)  }%
\sum_{k\geq0}\sum_{\gamma\geq\alpha}2^{d\left(  \gamma\right)  -k}%
\sum_{\substack{\left(  \delta,\delta^{\prime}\right)  \in\mathcal{G}^{\left(
k\right)  }\left(  \gamma\right)  }}I^{\ast}\mu\left(  \delta\right)  I^{\ast
}\mu\left(  \delta^{\prime}\right)  }\\
&  \leq C_{n}\left\Vert \mu\right\Vert _{Carleson},
\end{align*}
where the supremum is also taken over all unitary rotations $\mathcal{T}_{n}$
of a fixed Bergman tree.
\end{theorem}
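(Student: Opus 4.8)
The plan is to assemble Theorem~\ref{Arvcom} by concatenating the three propositions already in hand, keeping careful track of the norm estimates at each stage. First I would invoke Proposition~\ref{realreduction}: applying Lemma~\ref{carllem} to the reproducing kernel $\frac{1}{1-\overline{z}\cdot z^{\prime}}$ of $H_{n}^{2}$ shows that $\mu$ is $H_{n}^{2}$-Carleson exactly when the positive bilinear inequality (\ref{Rebil}) holds, and moreover $\left\Vert \mu\right\Vert _{Carleson}$ is comparable to the best constant in (\ref{Rebil}) with constants \emph{independent of $n$} — this is where it matters that we use the $H_{n}^{2}$ norm rather than the $B_{2}^{1/2}\left(  \mathbb{B}_{n}\right)  $ norm.

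Next I would apply Proposition~\ref{discreduction}, which (examining its proof, again with constants independent of the rotation and of dimension) says that (\ref{Rebil}) is equivalent to the family of discrete inequalities (\ref{discnew}) for the operator $T_{\mu}$ holding on every unitary rotation of a fixed Bergman tree $\mathcal{T}_{n}$ together with its associated ring tree $\mathcal{R}_{n}$. This replaces the analytic problem on the ball by a combinatorial one on the rotated trees; Theorem~\ref{Achar} already records this composite equivalence as the equivalence of its conditions 1, 2, 3. Then I would invoke Proposition~\ref{treereduction}: the inequalities (\ref{discnew}) over all rotations hold if and only if $\mu$ satisfies the simple condition (\ref{Arvsimple}) and the split tree condition (\ref{fullsplittreecondition}) over all rotations (the necessity of the two conditions having been shown in Lemma~\ref{simpnecc} and Subsection~\ref{split}, and their sufficiency in the same subsection), and Proposition~\ref{treereduction} already supplies the two-sided estimate relating $\sup_{\mathcal{T}_{n}}\left\Vert \mu\right\Vert _{Carleson\left(  \mathcal{T}_{n}\right)  }$ to the sum of the two suprema on the right of the displayed estimate in Theorem~\ref{Arvcom}.

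Chaining the three biconditionals proves the characterization, and composing the three norm comparisons yields the asserted chain $c_{n}\left\Vert \mu\right\Vert _{Carleson}\leq(\text{simple term})+(\text{split term})\leq C_{n}\left\Vert \mu\right\Vert _{Carleson}$. Since the reductions of Propositions~\ref{realreduction} and~\ref{discreduction} are dimension-free, the only source of the dimensional constants $c_{n},C_{n}$ is Proposition~\ref{treereduction}; this is exactly the point flagged in the remark following that proposition, namely that the passage between (\ref{discnew}) and the pair (\ref{Arvsimple}), (\ref{fullsplittreecondition}) relies on the geometry of $\mathbb{B}_{n}$ (the averaging over $\mathcal{U}_{n}$ and the estimates (\ref{numest})) and cannot be made uniform in $n$.

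I do not expect any genuine obstacle at this stage: the substance lives in the proofs of Propositions~\ref{discreduction} and~\ref{treereduction}. What must be watched is bookkeeping — that ``$\mu$ Carleson on the ball'' is matched to ``(\ref{discnew}) for $\widehat{\mu}$ on \emph{every} rotated tree'' on the correct side of each equivalence, and that the quantifier ``taken over all unitary rotations of a fixed Bergman tree'' is carried consistently from (\ref{Rebil}) through (\ref{discnew}) down to the simple and split tree conditions, so that the final norm equivalence really is stated with the supremum over $\alpha\in\mathcal{T}_{n}$ and over all such rotations.
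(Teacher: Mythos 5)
Your proposal is correct and is essentially the paper's own proof: the paper obtains Theorem \ref{Arvcom} precisely by combining Propositions \ref{realreduction}, \ref{discreduction} and \ref{treereduction} (with Theorem \ref{Achar} recording the intermediate equivalence), exactly as you chain them. One small correction to your side remark: the paper attributes the dimensional dependence of $c_{n},C_{n}$ to \emph{both} Propositions \ref{discreduction} and \ref{treereduction} (each uses an averaging over unitary rotations, via the lower bounds (\ref{lowbound}) and (\ref{numest}) respectively), not to Proposition \ref{treereduction} alone, though this does not affect the validity of your argument.
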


\begin{remark}
We can recast the above characterization on the ball as follows. For
$w\in\mathbb{B}_{n}$ let $T\left(  w\right)  $ be the Carleson tent associated
to $w,$
\[
T\left(  w\right)  =\left\{  z\in\mathbb{B}_{n}:\left\vert 1-\overline{z}\cdot
Pw\right\vert \leq1-\left\vert w\right\vert \right\}  ,
\]
and $Pw$ denotes radial projection of $w$ onto the sphere $\partial
\mathbb{B}_{n}$.The $H_{n}^{2}$-Carleson norm of a positive measure $\mu$ on
$\mathbb{B}_{n}$satisfies
\begin{align*}
c_{n}\left\Vert \mu\right\Vert _{Carleson}  &  \leq\sup_{w\in\mathbb{B}_{n}%
}\sqrt{\left(  1-\left\vert w\right\vert ^{2}\right)  ^{-1}\mu\left(  T\left(
w\right)  \right)  }\\
&  +\sup_{\substack{w\in\mathbb{B}_{n} \\\mu\left(  T\left(  w\right)
\right)  >0}}\sqrt{\frac{1}{\mu\left(  T\left(  w\right)  \right)  }%
\int_{T\left(  w\right)  }\left\vert \int_{T\left(  w\right)  }\left(
\operatorname{Re}\frac{1}{1-\overline{z}\cdot z^{\prime}}\right)  d\mu\left(
z^{\prime}\right)  \right\vert ^{2}d\mu\left(  z\right)  }\\
&  \leq C_{n}\left\Vert \mu\right\Vert _{Carleson}.
\end{align*}

\end{remark}

The comparability constants $c_{n}$ and $C_{n}$ in Theorem \ref{Arvcom} depend
on the dimension $n$ because of Propositions \ref{discreduction} and
\ref{treereduction}, which both use an averaging process over all unitary
rotations of a fixed Bergman tree. Indeed, Proposition \ref{discreduction}
uses the lower bound (\ref{lowbound}), for a fixed proportion of rotations,
for the \emph{denominator} of the real part of the reproducing kernel in
(\ref{realcal}), while Proposition \ref{treereduction} uses the lower bound in
(\ref{numest}), for a \emph{different} fixed proportion of rotations, for the
\emph{numerator} in (\ref{realcal}). The subsequent averaging is essentially
equivalent to a covering lemma whose comparability constants depend on
dimension. On the other hand Proposition \ref{realreduction} gives a
characterization that is independent of dimension. It would be of interest,
especially of view of Theorem \ref{completeNP}, to find a more geometric
characterization which is independent of dimension. In particular, we do not
know if the constants in the geometric characterization in the previous remark
can be taken to be independent of dimension.

\subsection{Related inequalities\label{related}}

Inequality (\ref{extremes}) implies
\[
d\left(  \alpha\wedge\alpha^{\prime}\right)  \leq d^{\ast}\left(  \left[
\alpha\right]  \wedge\left[  \alpha^{\prime}\right]  \right)  \leq\min\left\{
d\left(  \alpha\right)  ,d\left(  \alpha^{\prime}\right)  \right\}  ,
\]
which has the following interpretation relative to the kernel
\[
K\left(  \alpha,\alpha^{\prime}\right)  =2^{2d\left(  \alpha\wedge
\alpha^{\prime}\right)  -d^{\ast}\left(  \left[  \alpha\right]  \wedge\left[
\alpha^{\prime}\right]  \right)  }%
\]
of the operator $T_{\mu}$ in (\ref{deffracnew}).

If we replace $d^{\ast}\left(  \left[  \alpha\right]  \wedge\left[
\alpha^{\prime}\right]  \right)  $ by the lower bound $d\left(  \alpha
\wedge\alpha^{\prime}\right)  $ in the kernel $K\left(  \alpha,\alpha^{\prime
}\right)  $, then $T_{\mu}$ becomes
\begin{equation}
T_{\mu}g\left(  \alpha\right)  =\sum_{\alpha^{\prime}\in\mathcal{T}_{n}%
}2^{d\left(  \alpha\wedge\alpha^{\prime}\right)  }g\left(  \alpha^{\prime
}\right)  \mu\left(  \alpha^{\prime}\right)  , \label{donebig}%
\end{equation}
whose boundedness on $\ell^{2}\left(  \mu\right)  $ is equivalent to $\mu$
being a Carleson measure for $B_{2}^{1/2}\left(  \mathcal{T}_{n}\right)  $,
which is in turn equivalent to the tree condition (\ref{treeArv}).
(Alternatively, the above kernel is the discretization of the continuous
kernel $\left\vert \frac{1}{1-\overline{z}\cdot z^{\prime}}\right\vert $,
whose Carleson measures are characterized by the tree condition). This
observation is at the heart of Proposition \ref{embedd'} given earlier that
shows the tree condition characterizes Carleson measures supported on a
$2$-manifold that meets the boundary transversely and in the complex
directions (so that $d^{\ast}\left(  \left[  \alpha\right]  \wedge\left[
\alpha^{\prime}\right]  \right)  \approx d\left(  \alpha\wedge\alpha^{\prime
}\right)  $ for $\alpha,\alpha^{\prime}$ in the support of the measure). In
addition, we can see from this observation that the simple condition
(\ref{Arvsimple}) is not sufficient for $\mu$ to be a $B_{2}^{1/2}\left(
\mathcal{T}_{n}\right)  $-Carleson measure. Indeed, let $\mathcal{Y}$ be any
dyadic subtree of $\mathcal{T}_{n}$ with the properties that the two children
$\alpha_{+}$ and $\alpha_{-}$ of each $\alpha\in\mathcal{Y}$ are also children
of $\alpha$ in $\mathcal{T}_{n}$, and such that no two tree elements in
$\mathcal{Y}$ are equivalent. Now let $\mu$ be any measure supported on
$\mathcal{Y}$ that satisfies the simple condition
\[
2^{d\left(  \alpha\right)  }I^{\ast}\mu\left(  \alpha\right)  \leq
C,\;\;\;\;\;\alpha\in\mathcal{Y},
\]
but not the tree condition
\[
\sum_{\beta\in\mathcal{Y}:\beta\geq\alpha}\left[  2^{d\left(  \beta\right)
/2}I^{\ast}\mu\left(  \beta\right)  \right]  ^{2}\leq CI^{\ast}\mu\left(
\alpha\right)  <\infty,\;\;\;\;\;\alpha\in\mathcal{Y}.
\]
For $\alpha,\alpha^{\prime}\in\mathcal{Y}$, we have $d\left(  \left[
\alpha\right]  \wedge\left[  \alpha^{\prime}\right]  \right)  =d\left(
\alpha\wedge\alpha^{\prime}\right)  $, and so $\mu$ is a $B_{2}^{1/2}\left(
\mathcal{T}_{n}\right)  $-Carleson measure if and only if the operator $T$ in
(\ref{donebig}) is bounded on $\ell^{2}\left(  \mu\right)  $, which is
equivalent to the above tree condition, which we have chosen to fail. Finally,
to transplant this example to the ball $\mathbb{B}_{n}$, we take $d\mu\left(
z\right)  =\sum_{\alpha\in\mathcal{Y}}\mu\left(  \alpha\right)  \delta
_{c_{\alpha}}\left(  z\right)  $ and show that the above tree condition fails
on a positive proportion of the rotated trees $U^{-1}\mathcal{T}_{n}$,
$U\in\mathcal{U}_{n}$.

If on the other hand, we replace $d^{\ast}\left(  \left[  \alpha\right]
\wedge\left[  \alpha^{\prime}\right]  \right)  $ in the kernel $K\left(
\alpha,\alpha^{\prime}\right)  $ by the upper bound $\min\left\{  d\left(
\alpha\right)  ,d\left(  \alpha^{\prime}\right)  \right\}  $, then $T_{\mu}$
becomes
\begin{equation}
T_{\mu}g\left(  \alpha\right)  =\sum_{\alpha^{\prime}\in\mathcal{T}_{n}%
}2^{2d\left(  \alpha\wedge\alpha^{\prime}\right)  -\min\left\{  d\left(
\alpha\right)  ,d\left(  \alpha^{\prime}\right)  \right\}  }g\left(
\alpha^{\prime}\right)  \mu\left(  \alpha^{\prime}\right)  , \label{donesmall}%
\end{equation}
whose boundedness on $\ell^{2}\left(  \mu\right)  $ is shown in Theorem
\ref{simplesuffices} below to be implied by the simple condition
(\ref{Arvsimple}). Thus we see that the simple condition (\ref{Arvsimple})
characterizes Carleson measures supported on a slice (when $d^{\ast}\left(
\left[  \alpha\right]  \wedge\left[  \alpha^{\prime}\right]  \right)
=\min\left\{  d\left(  \alpha\right)  ,d\left(  \alpha^{\prime}\right)
\right\}  $ for $\alpha,\alpha^{\prime}$ in the support of the measure). In
particular, this provides a new proof that the simple condition
(\ref{Arvsimple}) characterizes Carleson measures for the Hardy space
$H^{2}\left(  \mathbb{B}_{1}\right)  =B_{2}^{1/2}\left(  \mathbb{B}%
_{1}\right)  $ in the unit disc. A more general result based on this type of
estimate was given in Proposition \ref{embedd}.

For the sake of completeness, we note that the above inequalities
(\ref{donebig}) and (\ref{donesmall}) correspond to the two extreme estimates
in (\ref{extremes}) for the \emph{second} terms on the right sides of
(\ref{below''}) and (\ref{unittree}). The \emph{first} term $c$ on the right
side of (\ref{below''}) leads to the operator
\[
T_{\mu}g\left(  \alpha\right)  =\sum_{\alpha^{\prime}\in\mathcal{T}_{n}%
}g\left(  \alpha^{\prime}\right)  \mu\left(  \alpha^{\prime}\right)  ,
\]
whose boundedness on $\ell^{2}\left(  \mu\right)  $ is trivially characterized
by finiteness of the measure $\mu$.

As a final instance of the split tree condition simplifying when there is
additional geometric information we consider measures which are invariant
under the natural action of the circle on the ball. Here we extend the
language of \cite{AhCo} where measures on spheres were considered and say a
measure $\nu$ on $\mathbb{B}_{n}$ is \emph{invariant} if
\[
\int_{\mathbb{B}_{n}}f\left(  e^{i\theta_{1}}z_{1},...,e^{i\theta_{n}}%
z_{n}\right)  d\nu\left(  z\right)  =\int_{\mathbb{B}_{n}}f\left(  z\right)
d\nu\left(  z\right)
\]
for all continuous functions $f$ on the ball. We will also abuse the
terminology and use it for the discretization of such a measure.

We want to know when there is a Carleson embedding for such a measure. In
fact, when $\mu$ is invariant, the operator $T_{\mu}$ in (\ref{deffracnew}) is
bounded on $\ell^{2}\left(  \mu\right)  $ if and only if $\mu$ is finite. To
see this we need the \textquotedblleft Poisson kernel\textquotedblright%
\ estimate
\begin{equation}
\sum_{\beta\in B}2^{2d\left(  \alpha\wedge\beta\right)  }\approx2^{d\left(
B\right)  +d\left(  \left[  \alpha\right]  \wedge B\right)  },\;\;\;\;\;\alpha
\in\mathcal{T}_{n},B\in\mathcal{R}_{n}. \label{Pke}%
\end{equation}
With $A=\left[  \alpha\right]  $ and $\alpha^{\ast}=E_{A\wedge B}\alpha$,
(\ref{Pke}) follows from
\begin{align*}
\sum_{\beta\in B}2^{2d\left(  \alpha\wedge\beta\right)  }  &  =\sum_{\gamma\in
A\wedge B}\sum_{\substack{\beta\in B \\\beta\geq\gamma}}2^{2d\left(
\alpha^{\ast}\wedge\gamma\right)  }\\
&  =2^{d\left(  B\right)  -d\left(  A\wedge B\right)  }\sum_{j=0}^{d\left(
A\wedge B\right)  }2^{2j}2^{d\left(  A\wedge B\right)  -j}\\
&  \approx2^{d\left(  B\right)  -d\left(  A\wedge B\right)  }2^{2d\left(
A\wedge B\right)  }.
\end{align*}
Now with $\mu\left(  A\right)  =\sum_{\alpha\in A}\mu\left(  \alpha\right)  $,
and recalling that $\mu$ is invariant, we have for $\alpha\in A$,
\begin{align*}
T_{\mu}f\left(  \alpha\right)   &  \leq\sum_{B\in\mathcal{R}_{n}}\sum
_{\beta\in B}2^{2d\left(  \alpha\wedge\beta\right)  -d\left(  A\wedge
B\right)  }f\left(  \beta\right)  \mu\left(  \beta\right) \\
&  \approx\sum_{B\in\mathcal{R}_{n}}\mu\left(  B\right)  2^{-d\left(  A\wedge
B\right)  -d\left(  B\right)  }\sum_{\beta\in B}2^{2d\left(  \alpha\wedge
\beta\right)  }f\left(  \beta\right)  .
\end{align*}
Using (\ref{Pke}) we compute that $T_{\mu}1$ is bounded (and hence a Schur
function):
\[
T_{\mu}1\left(  \alpha\right)  \approx\sum_{B\in\mathcal{R}_{n}}\mu\left(
B\right)  2^{-d\left(  A\wedge B\right)  -d\left(  B\right)  }\sum_{\beta\in
B}2^{2d\left(  \alpha\wedge\beta\right)  }\approx\sum_{B\in\mathcal{R}_{n}}%
\mu\left(  B\right)  =\left\Vert \mu\right\Vert .
\]
Thus $T_{\mu}$ is bounded on $\ell^{\infty}\left(  \mu\right)  $ with norm at
most $\left\Vert \mu\right\Vert $, and by duality also on $\ell^{1}\left(
\mu\right)  $. Interpolation now yields that $T_{\mu}$ is bounded on $\ell
^{2}\left(  \mu\right)  $ with norm at most $\left\Vert \mu\right\Vert $.

\medskip

\begin{theorem}
\label{simplesuffices}Let $0<r<\infty$. A positive measure $\mu$ satisfies the
bilinear inequality
\begin{align}
&  \sum_{\alpha,\alpha^{\prime}\in\mathcal{T}_{n}}2^{\left(  1+r\right)
d\left(  \alpha\wedge\alpha^{\prime}\right)  -r\min\left\{  d\left(
\alpha\right)  ,d\left(  \alpha^{\prime}\right)  \right\}  }f\left(
\alpha\right)  \mu\left(  \alpha\right)  g\left(  \alpha^{\prime}\right)
\mu\left(  \alpha^{\prime}\right) \label{done'}\\
&  \qquad\qquad\qquad\leq C\left\Vert f\right\Vert _{\ell^{2}\left(
\mathcal{T}_{n};\mu\right)  }\left\Vert g\right\Vert _{\ell^{2}\left(
\mathcal{T}_{n};\mu\right)  },
\end{align}
if $\mu$ satisfies the simple condition (\ref{Arvsimple}). Moreover, the
constant implicit in this statement is independent of $n$.
\end{theorem}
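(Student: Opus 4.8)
The plan is to reduce the bilinear estimate \eqref{done'} to the boundedness on $\ell^2(\mu)$ of the positive operator
\[
T_\mu g(\alpha)=\sum_{\alpha'\in\mathcal{T}_n}2^{(1+r)d(\alpha\wedge\alpha')-r\min\{d(\alpha),d(\alpha')\}}g(\alpha')\mu(\alpha'),
\]
and then to prove this boundedness by exhibiting a Schur function. First I would observe that the kernel $K(\alpha,\alpha')=2^{(1+r)d(\alpha\wedge\alpha')-r\min\{d(\alpha),d(\alpha')\}}$ is symmetric and positive, so by the Cauchy--Schwarz/duality argument used repeatedly above it suffices to show $T_\mu$ is bounded on $\ell^2(\mu)$. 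For a symmetric positive kernel the Schur test says it is enough to find a positive function $h$ on $\mathcal{T}_n$ and a constant $M$ with
\[
\sum_{\alpha'\in\mathcal{T}_n}K(\alpha,\alpha')h(\alpha')\mu(\alpha')\le M\,h(\alpha),\qquad\alpha\in\mathcal{T}_n,
\]
and then $\|T_\mu\|_{\ell^2(\mu)\to\ell^2(\mu)}\le M$. Because the simple condition \eqref{Arvsimple} is scale invariant under dyadic level, the natural candidate is a pure power of the scale, $h(\alpha)=2^{-\epsilon d(\alpha)}$ for a small $\epsilon=\epsilon(r)>0$ to be chosen.

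The key computation is then to estimate $\sum_{\alpha'}K(\alpha,\alpha')2^{-\epsilon d(\alpha')}\mu(\alpha')$. I would split the sum over $\alpha'$ according to the value $j=d(\alpha\wedge\alpha')$, which ranges over $0\le j\le d(\alpha)$, and within each level-$j$ block further organize by the level $m=d(\alpha')\ge j$ of $\alpha'$. Writing $\gamma=\alpha\wedge\alpha'$ (a fixed ancestor of $\alpha$ at level $j$), for fixed $j$ and $m$ one has $\min\{d(\alpha),d(\alpha')\}=\min\{d(\alpha),m\}$ and the inner sum over all $\alpha'\ge\gamma$ at level $m$ equals $\sum_{\alpha'\ge\gamma,\ d(\alpha')=m}\mu(\alpha')$, which is at most $I^{*}\mu(\gamma)$ and, in the regime $m\ge d(\alpha)$, can be summed in $m$ against $2^{-\epsilon m}$ to leave a harmless factor times $I^{*}\mu(\gamma)$. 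The simple condition gives $I^{*}\mu(\gamma)\le C2^{-d(\gamma)}=C2^{-j}$. Combining, the $j$-block contributes roughly $2^{(1+r)j}\cdot 2^{-r\min\{d(\alpha),\cdot\}}\cdot 2^{-\epsilon(\cdot)}\cdot C2^{-j}$; after the $m$-summation the worst contribution is of the order $2^{rj-rd(\alpha)-\epsilon j}$ (from $\alpha'$ deeper than $\alpha$) together with a term of order $2^{(1-\epsilon)(j-d(\alpha))}\cdot 2^{-\epsilon d(\alpha)}$ (from $\alpha'$ shallower), and summing the geometric series in $j$ from $0$ to $d(\alpha)$ produces a bound $M2^{-\epsilon d(\alpha)}=Mh(\alpha)$ provided $0<\epsilon<\min\{1,r\}$. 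I would carry this out carefully keeping the two regimes $m\le d(\alpha)$ and $m>d(\alpha)$ separate, since the $\min$ in the exponent behaves differently in each.

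The main obstacle I anticipate is bookkeeping the two competing geometric series so that no borderline (non-summable) exponent appears: one must check that the exponent $rj-rd(\alpha)$ coming from deep $\alpha'$ and the exponent $(j-d(\alpha))$ coming from shallow $\alpha'$ both decay in $j$ after inserting the $-\epsilon$ correction, and that the $m$-sum in the deep regime really does converge (it does, since $m\ge d(\alpha)$ and the integrand carries $2^{-\epsilon m}$, the factor $2^{-rm}$ being replaced by $2^{-rd(\alpha)}$ there). A secondary point worth stating explicitly is that every estimate used---the Schur test, the identity $\min\{d(\alpha),d(\alpha')\}$ decomposition, and the simple condition \eqref{Arvsimple} which is dimension-free---involves no quantity depending on $n$, so the resulting constant $M=M(r)$ is independent of dimension, as claimed. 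Finally, I note that the case $r=1$ of this theorem is exactly what was invoked above in \eqref{mustshow'} to control the kernel $2^{2d(\gamma\wedge\beta)-\min\{d(\gamma),d(\beta)\}}$, so the present argument closes that gap as well.
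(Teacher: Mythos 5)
Your reduction of (\ref{done'}) to the $\ell^{2}(\mu)$-boundedness of the positive operator with kernel $K(\alpha,\alpha')=2^{(1+r)d(\alpha\wedge\alpha')-r\min\{d(\alpha),d(\alpha')\}}$ is fine, and a Schur-type test is indeed the right tool, but the specific test function $h(\alpha)=2^{-\epsilon d(\alpha)}$ cannot verify the Schur inequality, so the heart of your argument fails. The problem is the shallow regime. For fixed $j=d(\alpha\wedge\alpha')$ and $d(\alpha')=m\leq d(\alpha)$ the summand is $2^{(1+r)j-(r+\epsilon)m}\mu(\alpha')$, and the simple condition (\ref{Arvsimple}) only gives total mass $\lesssim 2^{-j}$ below the ancestor of $\alpha$ at depth $j$; nothing in this regime produces any decay in $d(\alpha)$, so the correct bound for the $j$-block is $C2^{-\epsilon j}$ and the sum over $j$ is a constant, not $O(2^{-\epsilon d(\alpha)})$ (your claimed factor $2^{(1-\epsilon)(j-d(\alpha))}2^{-\epsilon d(\alpha)}$ has no source). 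Concretely, let $\mu$ put mass $2^{-1}$ at a node $\beta_{0}$ with $d(\beta_{0})=1$ and mass $2^{-k}$ at nodes $\alpha_{k}$, $d(\alpha_{k})=k$, along a geodesic ray avoiding $S(\beta_{0})$; this $\mu$ satisfies (\ref{Arvsimple}), yet at the support points $\alpha_{k}$ one has $\sum_{\alpha'}K(\alpha_{k},\alpha')h(\alpha')\mu(\alpha')\geq K(\alpha_{k},\beta_{0})h(\beta_{0})\mu(\beta_{0})=2^{-r-\epsilon-1}$, a constant, while $h(\alpha_{k})=2^{-\epsilon k}\rightarrow 0$. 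The same obstruction rules out any fixed radial weight: a decreasing weight fails on mass at bounded depth off the branch of $\alpha$, an increasing weight $2^{+\epsilon d(\alpha)}$ fails because (\ref{Arvsimple}) gives no decay in $m$ of the mass of $S(\beta)$ at depth $m$, and $h\equiv 1$ loses a factor $d(\alpha)$.

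What is needed is a test function adapted to the base point, and that is exactly what the paper's proof supplies: it invokes the Vinogradov--Sen\u{\i}\u{c}kin test quoted from \cite{Nik}, which for a symmetric nonnegative kernel amounts to using $k(a,\cdot)$ itself as a Schur test function for every $a$. The paper first replaces $K$ by the half-kernel $k(\alpha,\beta)=2^{(1+r)d(\alpha\wedge\beta)-rd(\beta)}\chi_{\{d(\alpha)\geq d(\beta)\}}$ (so the form of interest is $B(f,g)+B(g,f)$), and then verifies $\sum_{\alpha}k(\alpha,\beta)k(\alpha,\gamma)\mu(\alpha)\leq c\left(k(\beta,\gamma)+k(\gamma,\beta)\right)$ by splitting the sum over $\alpha$ according to where the geodesic $[o,\alpha]$ first meets the three geodesic segments joining $\beta$ to $\beta\wedge\gamma$, $\gamma$ to $\beta\wedge\gamma$, and $\beta\wedge\gamma$ to the root, applying the simple condition at each point of those segments and summing geometric series; this is also where the $O(1/r)$ behavior of Remark \ref{blowup} comes from, and it is dimension-free. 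If you wish to salvage your outline, recast it as verifying this base-point-dependent inequality rather than a fixed-weight Schur test; as written, the proposal has a genuine gap.
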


\begin{remark}
\label{blowup}The proof below shows that the ratio of the constant $C$ in
(\ref{done'}) to that in (\ref{Arvsimple}) is $O\left(  \frac{1}{r}\right)  $.
The theorem actually fails if $r=0$. Indeed, (\ref{done'}) is then equivalent
to the boundedness of (\ref{donebig}) on $\ell^{2}\left(  \mu\right)  $, which
as we noted above is equivalent to the tree condition (\ref{treecond}) with
$\sigma=1/2$ and $p=2$.
\end{remark}

\begin{remark}
Using the argument on pages 538-542 of \cite{Saw}, it can be shown that the
case $r=1$ of the bilinear inequality (\ref{done'}) holds if and only if the
following pair of dual conditions hold:
\begin{align}
\sum_{\beta\geq\alpha}\left|  \mathcal{I}2^{d}\left(  \chi_{S\left(
\alpha\right)  }\mu\right)  \left(  \beta\right)  \right|  ^{2}\mu\left(
\beta\right)  \leq C\sum_{\beta\geq\alpha}\mu\left(  \beta\right)
<\infty,\;\;\;\;\;\alpha\in\mathcal{T}_{n},\label{conditions}\\
\sum_{\beta\geq\alpha}\left|  2^{d}\mathcal{I}\left(  \chi_{S\left(
\alpha\right)  }2^{-d}\mu\right)  \left(  \beta\right)  \right|  ^{2}%
\mu\left(  \beta\right)  \leq C\sum_{\beta\geq\alpha}2^{-2d\left(
\beta\right)  }\mu\left(  \beta\right)  ,\;\;\;\;\;\alpha\in\mathcal{T}%
_{n},\nonumber
\end{align}
where $\mathcal{I}$ is the fractional integral of order one on the Bergman
tree given by,
\begin{equation}
\mathcal{I}\nu\left(  \alpha\right)  =\sum_{\beta\in\mathcal{T}_{n}%
}2^{-d\left(  \alpha,\beta\right)  }\nu\left(  \beta\right)  ,\;\;\;\;\;\alpha
\in\mathcal{T}_{n}. \label{deffrac}%
\end{equation}
We leave the lengthy but straightforward details to the interested reader. One
can also use the argument given below, involving segments of geodesics, to
show that the simple condition implies both conditions in (\ref{conditions}).
\end{remark}

We shall use the following simple sufficient condition of Schur type for the
proof of Theorem \ref{simplesuffices}. Recall that a measure space $\left(
Z,\mu\right)  $ is $\sigma$-finite if $Z=\cup_{N=1}^{\infty}Z_{N}$ where
$\mu\left(  Z_{N}\right)  <\infty$, and that a function $k$ on $Z\times Z$ is
$\sigma$-bounded if $Z=\cup_{N=1}^{\infty}Z_{N}$ where $k$ is bounded on
$Z_{N}\times Z_{N}$.

\begin{lemma}
(Vinogradov-Sen\u{\i}\u{c}kin Test, pg. 151 of \cite{Nik}) Let $\left(
Z,\mu\right)  $ be a $\sigma$-finite measure space and $k$ a nonnegative
$\sigma$-bounded function on $Z\times Z$ satisfying
\begin{equation}
\int\int_{Z\times Z}k\left(  s,t\right)  k\left(  s,x\right)  d\mu\left(
s\right)  \leq M\left(  \frac{k\left(  t,x\right)  +k\left(  x,t\right)  }%
{2}\right)  \text{\ for }\mu\text{-a.e. }\left(  t,x\right)  \in Z\times Z.
\label{vino}%
\end{equation}
Then the linear map $T$ defined by
\[
Tg(s)=\int_{Z}k\left(  s,t\right)  g\left(  t\right)  d\mu\left(  t\right)
\]
is bounded on $L^{2}\left(  \mu\right)  $ with norm at most $M$.
\end{lemma}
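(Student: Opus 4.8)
The statement to be proved is the Vinogradov--Sen\u{\i}\u{c}kin test (the Schur-type test stated as the final lemma). The plan is to verify that the hypothesis (\ref{vino}) forces $T$ to be bounded on $L^{2}(\mu)$ with norm at most $M$, by producing a Schur function from $k$ itself via a limiting argument. The $\sigma$-boundedness and $\sigma$-finiteness hypotheses are there precisely to legitimize this limiting argument, so the first thing I would do is reduce to the case where $k$ is bounded and $\mu$ is finite: exhaust $Z=\cup_N Z_N$ with $\mu(Z_N)<\infty$ and $k$ bounded on $Z_N\times Z_N$, prove the norm bound for the truncated kernels $k_N=k\chi_{Z_N\times Z_N}$ with constant $M$ independent of $N$ (note (\ref{vino}) is inherited by $k_N$ since the left side only decreases and the right side is unchanged on $Z_N\times Z_N$), and then let $N\to\infty$ by monotone convergence. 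So it suffices to treat bounded $k$ on a finite measure space.

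\textbf{Key steps.} With $k$ bounded and $\mu$ finite, consider the iterated kernels and the function $h=k(x_0,\cdot)$ for a fixed reference point, or more robustly, apply the standard Schur test with test function obtained by iterating $k$. Concretely, I would set $\varphi(x)=\bigl(\int_Z k(x,s)\,d\mu(s)\bigr)^{1/2}$ or, better adapted to (\ref{vino}), simply use $\psi(x)=\int_Z k(x,s)\,d\mu(s)$ is not quite symmetric, so the cleaner route is: the hypothesis says the symmetrized kernel $\widetilde k(t,x)=\tfrac12(k(t,x)+k(x,t))$ dominates $\int_Z k(s,t)k(s,x)\,d\mu(s)$. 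I would then run Schur's test in the form: for $T$ with kernel $k$ to be bounded by $M$ on $L^2(\mu)$, it suffices to find $p>0$ with $\int k(s,t)p(t)\,d\mu(t)\le M p(s)$ and $\int k(s,t)p(s)\,d\mu(s)\le M p(t)$. The candidate is $p(s)=\bigl(T^{*}\!\mathbf 1\bigr)(s)^{1/2}\cdot(\text{something})$; however the self-improving nature of (\ref{vino}) suggests instead estimating $\|T\|$ via $\|T\|^2=\|T^*T\|$ and the kernel of $T^*T$ is exactly $(s,t)\mapsto\int k(u,s)k(u,t)\,d\mu(u)$ (for real $k$), which by (\ref{vino}) is pointwise $\le M\widetilde k$. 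Thus $T^*T\le M\widetilde K$ where $\widetilde K$ is the integral operator with the symmetric kernel $\widetilde k$, and $\widetilde K=\tfrac12(T+T^*)$ as operators, so $\|T^*T\|\le M\|\tfrac12(T+T^*)\|\le M\|T\|$. Since $T$ is bounded (as $k$ is bounded on a finite measure space), $\|T\|^2=\|T^*T\|\le M\|T\|$ gives $\|T\|\le M$, which is exactly the claim.

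\textbf{Main obstacle.} The delicate point is the reduction to bounded kernels and finite measure: one must check that $\|T_N\|\le M$ for the truncations with a uniform $M$ and then pass to the limit. For the truncations this is fine because (\ref{vino}) restricts correctly and $T_N$ is Hilbert--Schmidt hence bounded, so the operator identity $T_N^*T_N\le M\,\widetilde K_N=\tfrac{M}{2}(T_N+T_N^*)$ holds and yields $\|T_N\|\le M$. Passing to the limit: for $g\ge0$, $T_Ng\uparrow Tg$ pointwise by monotone convergence, so $\|Tg\|_{L^2(\mu)}=\lim\|T_Ng\|_{L^2(\mu)}\le M\|g\|_{L^2(\mu)}$, and then splitting a general $g$ into positive and negative parts (and real/imaginary parts) gives the bound for all $g$ with constant $M$ (the positivity of $k$ is used here, so no factor of $2$ is lost if one is slightly careful, but even a harmless constant would not affect the statement since the lemma as used only needs boundedness). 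The one genuinely nontrivial inequality is the operator comparison $T^*T\le M\,\widetilde K$: this is the pointwise domination of kernels by (\ref{vino}) combined with the elementary fact that domination of nonnegative kernels implies domination of the associated quadratic forms. I expect this step, together with correctly handling the a.e.\ qualifier in (\ref{vino}) (which only affects a null set and hence not the quadratic forms), to be where the care is needed; everything else is routine.
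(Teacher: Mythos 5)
Your proposal is correct and follows essentially the same route as the paper: truncate to $k_N=k\chi_{Z_N\times Z_N}$, observe that (\ref{vino}) gives the pointwise kernel domination of $T_N^{\ast}T_N$ by $\tfrac{M}{2}$ times the kernel of $T_N+T_N^{\ast}$, deduce $\left\Vert T_N\right\Vert^{2}\leq M\left\Vert T_N\right\Vert$ hence $\left\Vert T_N\right\Vert\leq M$, and pass to the limit by monotone convergence. No gaps.
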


%

\proof
Let $Z=\cup_{N=1}^{\infty}Z_{N}$ where $\mu\left(  Z_{N}\right)  <\infty$ and
$k$ is bounded on $Z_{N}\times Z_{N}$. The kernels
\[
k_{N}\left(  s,t\right)  =k\left(  s,t\right)  \chi_{Z_{N}\times Z_{N}}\left(
s,t\right)
\]
satisfy (\ref{vino}) uniformly in $N$, and the corresponding operators
\[
T_{N}g(s)=\int_{Z}k_{N}\left(  s,t\right)  g\left(  t\right)  d\mu\left(
t\right)
\]
are bounded on $L^{2}\left(  \mu\right)  $ (with norms depending on
$\mu\left(  Z_{N}\right)  $ and the bound for $k$ on $Z_{N}\times Z_{N}$).
However, (\ref{vino}) for $k_{N}$ implies that the integral kernel of the
operator $T_{N}^{\ast}T_{N}$ is dominated pointwise by $\frac{M}{2}$ times
that of $T_{N}^{\ast}+T_{N}$, and this gives $\left\Vert T_{N}\right\Vert
^{2}=\left\Vert T_{N}^{\ast}T_{N}\right\Vert \leq\frac{M}{2}\left\Vert
T_{N}^{\ast}+T_{N}\right\Vert \leq M\left\Vert T_{N}\right\Vert $, and hence
$\left\Vert T_{N}\right\Vert \leq M$. Now let $N\rightarrow\infty$ and use the
monotone convergence theorem to obtain $\left\Vert T\right\Vert \leq M$.

\begin{remark}
If $k\left(  x,y\right)  =k\left(  y,x\right)  $ is symmetric, then
(\ref{vino}) ensures that for any choice of $a$, $k(a,\cdot)$ can be used as a
test function for Schur's Lemma.
\end{remark}

%

\proof
(of Theorem \ref{simplesuffices}) We will show that (\ref{done'}) holds but we
first note that it suffices to consider a modified bilinear form.\ Set
\[
k(\alpha,\beta)=2^{\left(  1+r\right)  d\left(  \alpha\wedge\beta\right)
-rd\left(  \beta\right)  }\chi_{\left\{  d(\alpha)\geq d(\beta)\right\}  }.
\]
We will consider (cf. page 152 of \cite{Nik})%

\[
B(f,g)=\sum_{\alpha,\beta\in\mathcal{T}_{n}}k(\alpha,\beta)f\left(
\alpha\right)  \mu\left(  \alpha\right)  g\left(  \beta\right)  \mu\left(
\beta\right)
\]
because (modulo double bookkeeping on the diagonal) the form of interest is
$B(f,g)+B(g,f)$. The result will follow from the lemma if we show that
\[
\sum_{\alpha\in\mathcal{T}_{n}}k(\alpha,\beta)k(\alpha,\gamma)\mu(\alpha)\leq
c(k(\beta,\gamma)+k\left(  \gamma,\beta\right)  ).
\]
The sum on the right dominates our original kernel.\ Thus it suffices to show
that
\begin{align*}
&  \sum_{\alpha}2^{\left(  1+r\right)  d\left(  \alpha\wedge\beta\right)
-rd\left(  \beta\right)  }\chi_{\left\{  d(\alpha)\geq d(\beta)\right\}
}2^{\left(  1+r\right)  d\left(  \alpha\wedge\gamma\right)  -rd\left(
\gamma\right)  }\chi_{\left\{  d(\alpha)\geq d(\gamma)\right\}  }\mu(\alpha)\\
&  \;\;\;\;\;\leq c2^{\left(  1+r\right)  d\left(  \beta\wedge\gamma\right)
-r\min\left\{  d\left(  \beta\right)  ,d\left(  \gamma\right)  \right\}  }.
\end{align*}
Select $\beta,\gamma\in\mathcal{T}_{n}.$ Without loss of generality we assume
$d\left(  \beta\right)  \leq d\left(  \gamma\right)  .$ We consider three
segments of geodesics in $\mathcal{T}_{n};$ $\Gamma_{1}$ connecting $\beta$ to
$\beta\wedge\gamma,$ $\Gamma_{2}$ connecting $\gamma$ to $\beta\wedge\gamma
,$\ and $\Gamma_{3}$ connecting $\beta\wedge\gamma$ to the root $o.$ Denote
the lengths of the three by $k_{i},$ $i=1,2,3.$ (It is not a problem if some
segments are degenerate.) Set $\Gamma=$ $\Gamma_{1}\cup\Gamma_{2}\cup
\Gamma_{3}.$ We consider three subsums, those where the geodesic from $\alpha$
to $o$ first encounters $\Gamma$ at a point in $\Gamma_{i},$ $i=1,2,3.$

We first consider the case $i=1.$ Let $\delta_{k_{3}},\delta_{k_{3}+1}%
,$...,$\delta_{k_{3}+k_{1}}$ be an enumeration of the points of $\Gamma_{1}$
starting at $\beta\wedge\gamma$; thus $d(\delta_{j})=j.$ For $\alpha\in
S(\delta_{j})$ we have $\alpha\wedge\beta=\delta_{j}$ and $\alpha\wedge
\gamma=\beta\wedge\gamma=\delta_{k_{3}}.$ Thus
\begin{align*}
\sum(1,j)  &  =\sum_{\alpha\in S(\delta_{j})}2^{\left(  1+r\right)  d\left(
\alpha\wedge\beta\right)  -rd\left(  \beta\right)  }\chi_{\left\{
d(\alpha)\geq d(\beta)\right\}  }2^{\left(  1+r\right)  d\left(  \alpha
\wedge\gamma\right)  -rd\left(  \gamma\right)  }\chi_{\left\{  d(\alpha)\geq
d(\gamma)\right\}  }\mu(\alpha)\\
&  \leq\sum_{\alpha\in S(\delta_{j})}2^{\left(  1+r\right)  j-rd\left(
\beta\right)  }2^{\left(  1+r\right)  2k_{3}-rd\left(  \gamma\right)  }%
\mu(\alpha)\\
&  =\sum_{\alpha\in S(\delta_{j})}2^{\left(  1+r\right)  j-r(k_{3}%
+k_{1})+\left(  1+r\right)  k_{3}-r(k_{3}+k_{2})}\mu(\alpha)\\
&  \leq c2^{\left(  1+r\right)  j-r(k_{3}+k_{1})+\left(  1+r\right)
k_{3}-r(k_{3}+k_{2})-j}\\
&  =c2^{rj-rk_{1}-rk_{2}+\left(  1-r\right)  k_{3}}%
\end{align*}
where the second inequality uses the simple condition on $\mu$. Summing these
estimates gives
\begin{align*}
\sum_{j=k_{3}}^{k_{3}+k_{1}}\sum(1,j)  &  \leq c2^{r\left(  k_{3}%
+k_{1}\right)  -rk_{1}-rk_{2}+\left(  1-r\right)  k_{3}}\\
&  =c2^{k_{3}-rk_{2}}\\
&  \leq c2^{\left(  1+r\right)  k_{3}-r\min\left\{  k_{3}+k_{1},k_{3}%
+k_{2}\right\}  }\\
&  =c2^{\left(  1+r\right)  d\left(  \beta\wedge\gamma\right)  -r\min\left\{
d\left(  \beta\right)  ,d\left(  \gamma\right)  \right\}  }%
\end{align*}
as required.

The other two cases are similar. Let $\tau_{k_{3}},...,\tau_{k_{3}+k_{2}}$ be
a listing of the points of $\Gamma_{2}$ starting at the top. Then
\begin{align*}
\sum(2,j)  &  =\sum_{\alpha\in S(\tau_{j})}2^{\left(  1+r\right)  d\left(
\alpha\wedge\beta\right)  -rd\left(  \beta\right)  }\chi_{\left\{
d(\alpha)\geq d(\beta)\right\}  }2^{\left(  1+r\right)  d\left(  \alpha
\wedge\gamma\right)  -rd\left(  \gamma\right)  }\chi_{\left\{  d(\alpha)\geq
d(\gamma)\right\}  }\mu(\alpha)\\
&  \leq\sum_{\alpha\in S(\tau_{j})}2^{\left(  1+r\right)  k_{3}-rd\left(
\beta\right)  }2^{\left(  1+r\right)  j-rd\left(  \gamma\right)  }\mu
(\alpha)\\
&  \leq c2^{\left(  1+r\right)  j-r(k_{3}+k_{1})+\left(  1+r\right)
k_{3}-r(k_{3}+k_{2})-j}.
\end{align*}
Hence
\begin{align*}
\sum_{j=k_{3}}^{k_{3}+k_{2}}\sum(2,j)  &  \leq c2^{r\left(  k_{3}%
+k_{2}\right)  -rk_{1}-rk_{2}+\left(  1-r\right)  k_{3}}\\
&  =c2^{k_{3}-rk_{1}}\\
&  \leq c2^{\left(  1+r\right)  k_{3}-r\min\left\{  k_{3}+k_{1},k_{3}%
+k_{2}\right\}  }\\
&  =c2^{\left(  1+r\right)  d\left(  \beta\wedge\gamma\right)  -r\min\left\{
d\left(  \beta\right)  ,d\left(  \gamma\right)  \right\}  }%
\end{align*}
as required.

In the final case, let $\rho_{0},...,\rho_{k_{3}}$ be a listing of the points
of $\Gamma_{3}$ starting at the top. Then
\begin{align*}
\sum(3,j)  &  =\sum_{\alpha\in S(\rho_{j})}2^{\left(  1+r\right)  d\left(
\alpha\wedge\beta\right)  -rd\left(  \beta\right)  }\chi_{\left\{
d(\alpha)\geq d(\beta)\right\}  }2^{\left(  1+r\right)  d\left(  \alpha
\wedge\gamma\right)  -rd\left(  \gamma\right)  }\chi_{\left\{  d(\alpha)\geq
d(\gamma)\right\}  }\mu(\alpha)\\
&  =\sum_{\alpha\in S(\rho_{j})}2^{\left(  1+r\right)  j-rd\left(
\beta\right)  }\chi_{\left\{  d(\alpha)\geq d(\beta)\right\}  }2^{\left(
1+r\right)  j-rd\left(  \gamma\right)  }\chi_{\left\{  d(\alpha)\geq
d(\gamma)\right\}  }\mu(\alpha)\\
&  \leq\sum_{\alpha\in S(\rho_{j})}2^{\left(  1+r\right)  j-rd\left(
\beta\right)  }2^{\left(  1+r\right)  j-rd\left(  \gamma\right)  }\mu
(\alpha)\\
&  \leq c2^{\left(  1+r\right)  j-r(k_{3}+k_{1})+\left(  1+r\right)
j-r(k_{3}+k_{2})-j}=c2^{\left(  1+2r\right)  j-r(k_{3}+k_{1})-r(k_{3}+k_{2})},
\end{align*}
and hence
\begin{align*}
\sum_{j=0}^{k_{3}}\sum(3,j)  &  \leq c2^{\left(  1+2r\right)  k_{3}-r\left(
k_{1}+k_{2}\right)  -2rk_{3}}\\
&  =c2^{k_{3}-r\left(  k_{1}+k_{2}\right)  }\\
&  \leq c2^{\left(  1+r\right)  k_{3}-r\min\left\{  k_{3}+k_{1},k_{3}%
+k_{2}\right\}  }\\
&  =c2^{\left(  1+r\right)  d\left(  \beta\wedge\gamma\right)  -r\min\left\{
d\left(  \beta\right)  ,d\left(  \gamma\right)  \right\}  }%
\end{align*}
and we are done.

\section{Appendix: Nonisotropic potential spaces}

Define the nonisotropic potential spaces $\mathcal{P}_{\alpha}^{2}\left(
\mathbb{B}_{n}\right)  $, $0<\alpha<n$, to consist of all potentials
$K_{\alpha}f$ of $L^{2}$ functions on the sphere $\mathbb{S}_{n}%
=\partial\mathbb{B}_{n}$, $f\in L^{2}\left(  d\sigma_{n}\right)  $, where
\[
K_{\alpha}f\left(  z\right)  =\int_{\mathbb{S}_{n}}\frac{f\left(
\zeta\right)  }{\left\vert 1-\overline{\zeta}\cdot z\right\vert ^{n-\alpha}%
}d\sigma_{n}\left(  \zeta\right)  ,\;\;\;\;\;z\in\mathbb{B}_{n}.
\]
Thus, with $\alpha=2\gamma,$ these spaces are closely related to the spaces of
holomorphic functions $J_{\gamma}^{2}\left(  \mathbb{B}_{n}\right)  $ defined
in the introduction. It is pointed out in \cite{CaOr} that Carleson measures
$\mu$ for the potential space $\mathcal{P}_{\alpha}^{2}\left(  \mathbb{B}%
_{n}\right)  $, i.e. those measures $\mu$ satisfying
\begin{equation}
\int_{\mathbb{B}_{n}}\left\vert K_{\alpha}f\left(  z\right)  \right\vert
^{2}d\mu\left(  z\right)  \leq C\int_{\mathbb{S}_{n}}\left\vert f\left(
\zeta\right)  \right\vert ^{2}d\sigma_{n}\left(  \zeta\right)  ,
\label{potCar}%
\end{equation}
can be characterized by a capacitary condition involving a nonisotropic
capacity $C_{\alpha}\left(  A\right)  $ and nonisotropic tents $T\left(
A\right)  $ defined for open subsets $A$ of $\mathbb{S}_{n}$:
\begin{equation}
\mu\left(  T\left(  A\right)  \right)  \leq CC_{\alpha}\left(  A\right)
,\;\;\;\;\;\text{for all }A\;\text{open in }\mathbb{S}_{n}. \label{nonisocap}%
\end{equation}
The dual of the Carleson measure inequality for the nonisotropic potential
space $\mathcal{P}_{\frac{n}{2}-\sigma}^{2}\left(  \mathbb{B}_{n}\right)  $
is
\begin{equation}
\left\Vert T_{\mu}^{\sigma}g\right\Vert _{L^{2}\left(  \sigma_{n}\right)
}\leq C\left\Vert g\right\Vert _{L^{2}\left(  \mu\right)  },\;\;\;\;\;g\in
L^{2}\left(  \mu\right)  , \label{dualpotCar}%
\end{equation}
where the operator $T_{\mu}^{\sigma}$ is given by
\[
T_{\mu}^{\sigma}g\left(  w\right)  =\int_{\mathbb{S}_{n}}\frac{1}{\left\vert
1-\overline{z}\cdot w\right\vert ^{\frac{n}{2}+\sigma}}g\left(  z\right)
d\mu\left(  z\right)  .
\]
The Carleson measure inequality for $B_{2}^{\sigma}\left(  \mathbb{B}%
_{n}\right)  $ is equivalent to
\[
\left\Vert S_{\mu}^{\sigma}g\right\Vert _{L^{2}\left(  \lambda_{n}\right)
}\leq C\left\Vert g\right\Vert _{L^{2}\left(  \mu\right)  },\;\;\;\;\;g\in
L^{2}\left(  \mu\right)  ,
\]
where the operator $S_{\mu}^{\sigma}$ is given by
\[
S_{\mu}^{\sigma}g\left(  w\right)  =\int_{\mathbb{B}_{n}}\left(  1-\left\vert
w\right\vert ^{2}\right)  ^{-\sigma}\left(  \frac{1-\left\vert w\right\vert
^{2}}{1-\overline{z}\cdot w}\right)  ^{\frac{n+1+\alpha}{2}+\sigma}g\left(
z\right)  d\mu\left(  z\right)  ,
\]
for any choice of $\alpha>-1$. It is easy to see that the tree condition
(\ref{treecondo}) characterizes the inequality
\begin{equation}
\left\Vert T_{\mu}^{\sigma,\alpha}g\right\Vert _{L^{2}\left(  \lambda
_{n}\right)  }\leq C_{\alpha}\left\Vert g\right\Vert _{L^{2}\left(
\mu\right)  },\;\;\;\;\;g\in L^{2}\left(  \mu\right)  , \label{modCar}%
\end{equation}
where the operator $T_{\mu}^{\sigma,\alpha}$ is given by
\[
T_{\mu}^{\sigma,\alpha}g\left(  w\right)  =\int_{\mathbb{B}_{n}}\frac{\left(
1-\left\vert w\right\vert ^{2}\right)  ^{\frac{n+1+\alpha}{2}}}{\left\vert
1-\overline{z}\cdot w\right\vert ^{\frac{n+1+\alpha}{2}+\sigma}}g\left(
z\right)  d\mu\left(  z\right)  .
\]
Moreover, the constants $C_{\alpha}$ in (\ref{modCar}) and $C$ in
(\ref{treecondo}) satisfy
\begin{equation}
C_{\alpha}^{2}\approx\left(  1+\alpha\right)  ^{-1}C,\;\;\;\;\;\alpha>-1.
\label{app}%
\end{equation}
Now if we use (\ref{app}) to rewrite (\ref{modCar}) for $g\geq0$ as
\begin{align}
&  \int_{\mathbb{B}_{n}}\left\{  \int_{\mathbb{B}_{n}}\frac{1}{\left\vert
1-\overline{z}\cdot w\right\vert ^{\frac{n+1+\alpha}{2}+\sigma}}g\left(
z\right)  d\mu\left(  z\right)  \right\}  ^{2}\left(  1+\alpha\right)  \left(
1-\left\vert w\right\vert ^{2}\right)  ^{\alpha}dw\label{rewrite}\\
&  \;\;\;\;\;\leq C\int_{\mathbb{B}_{n}}g\left(  z\right)  ^{2}d\mu\left(
z\right)  ,\nonumber
\end{align}
we obtain the following result.

\begin{theorem}
\label{nps}Inequality (\ref{rewrite}) holds for \emph{some} $\alpha>-1$ if and
only if (\ref{rewrite}) holds for \emph{all} $\alpha>-1$ if and only if
(\ref{dualpotCar}) holds if and only if the tree condition (\ref{treecond}) holds.
\end{theorem}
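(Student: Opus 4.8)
\textbf{Proof proposal for Theorem \ref{nps}.}

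The plan is to prove the chain of equivalences by going through the limiting case $\alpha = -1$ as an organizing principle, but really the substance is a single observation: all the inequalities in the statement are the same Carleson-measure inequality in different disguises. First I would establish that (\ref{rewrite}) for a fixed $\alpha > -1$ is literally (\ref{modCar}) rewritten — this is immediate from the definition of $T_\mu^{\sigma,\alpha}$, pulling the factor $(1-|w|^2)^{(n+1+\alpha)/2}$ out of the squared integrand and combining it with $d\lambda_n(w) = (1-|w|^2)^{-n-1}dw$; the $(1+\alpha)$ weight in (\ref{rewrite}) versus the absence of it in (\ref{modCar}) is exactly the normalization accounted for by the asymptotic (\ref{app}). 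So the first bullet-point equivalence, ``(\ref{rewrite}) for some $\alpha$ iff (\ref{rewrite}) for all $\alpha$,'' reduces to showing: the tree condition (\ref{treecond}) characterizes (\ref{modCar}) for every $\alpha > -1$ with the quantitative relation (\ref{app}). This is asserted in the excerpt as ``easy to see,'' and I would carry it out by discretizing the kernel $(1-|w|^2)^{(n+1+\alpha)/2}|1-\bar z\cdot w|^{-(n+1+\alpha)/2-\sigma}$ on the Bergman tree $\mathcal{T}_n$: on $K_\alpha \times K_\beta$ the kernel is comparable to $2^{-((n+1+\alpha)/2)d(\alpha)}\,2^{((n+1+\alpha)/2+\sigma)d(\alpha\wedge\beta)}$ (using $1-|w|^2\approx 2^{-d}$ and the standard estimate $|1-\bar z\cdot w|\approx 2^{-d(\alpha\wedge\beta)}$), so the bilinear form becomes, up to constants and after the usual averaging over unitary rotations, a sum governed by Theorem \ref{Tars} with $w(\alpha) = \mu(\alpha)$ and $v(\alpha) = 2^{-2\sigma d(\alpha)}$; the resulting two-weight tree condition is exactly (\ref{treecond}).

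Next I would handle the equivalence with (\ref{dualpotCar}). The operator $T_\mu^\sigma$ lands in $L^2(\mathbb{S}_n, d\sigma_n)$ rather than $L^2(\mathbb{B}_n, d\lambda_n)$, so the point is to pass from an integral over the sphere to an integral over the ball. This is precisely the content of the nonisotropic potential-space picture: by the reproducing/extension machinery one relates the norm $\|T_\mu^\sigma g\|_{L^2(\sigma_n)}$ to a weighted ball integral, and the standard device is that the sphere integral $\int_{\mathbb{S}_n}|1-\bar z\cdot\zeta|^{-(n/2+\sigma)}|1-\bar w\cdot\zeta|^{-(n/2+\sigma)}\,d\sigma_n(\zeta)$ is comparable to $|1-\bar z\cdot w|^{-2\sigma}$ (a Forelli--Rudin type estimate), which is the boundary restriction of the ball kernel appearing in (\ref{rewrite}). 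Integrating in the radial variable with the weight $(1+\alpha)(1-|w|^2)^\alpha$ reconstitutes the sphere integral up to uniform constants — this is why the $(1+\alpha)$ normalization appears and why the equivalence holds for all $\alpha$ simultaneously. I would phrase this as: $\langle T_\mu^\sigma g, T_\mu^\sigma g\rangle_{L^2(\sigma_n)} \approx \int\int |1-\bar z\cdot z'|^{-2\sigma} g(z) g(z')\,d\mu(z)\,d\mu(z')$, and the same double integral is what (\ref{rewrite}) controls after expanding the square and integrating out $w$. Thus (\ref{dualpotCar}) $\Leftrightarrow$ (\ref{rewrite}), and combined with the previous paragraph, both are equivalent to (\ref{treecond}).

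The main obstacle I anticipate is the second paragraph — making rigorous the passage between the sphere inner product in (\ref{dualpotCar}) and the ball inner product in (\ref{rewrite}). One has to be careful that the Forelli--Rudin estimate is used in the correct range ($0 < \sigma$, so that $n/2+\sigma$ sits in the convergent regime for the relevant dimension), and that the radial integration genuinely produces two-sided bounds with the stated $(1+\alpha)$-dependence rather than merely one-sided ones; the constant-tracking needed for (\ref{app}) is the delicate part, since a naive estimate would lose the sharp $(1+\alpha)^{-1}$ factor. I would handle this by an explicit computation of $\int_0^1 (1+\alpha)(1-t^2)^\alpha\,|1-t\,\overline{z}\cdot\zeta|^{-(n/2+\sigma)-\text{(correction)}}\,t\,dt$ against the known asymptotics, isolating the $\alpha\to -1^+$ behavior; everything else (the discretization in paragraph one, the appeal to Theorem \ref{Tars}, the unitary averaging as in the proof of Theorem \ref{Besov'}) is routine and already available in the excerpt. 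Once the sphere-to-ball identity is pinned down with uniform constants, the theorem follows by assembling the three two-sided comparisons.
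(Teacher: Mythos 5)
Your argument is correct in substance for the relevant range $0<\sigma<n/2$, but it is a genuinely different route from the paper's. You identify both (\ref{dualpotCar}) and (\ref{rewrite}) with boundedness on $L^{2}\left(  \mu\right)  $ of the positive bilinear form with kernel $\left\vert 1-\overline{z}\cdot z^{\prime}\right\vert ^{-2\sigma}$, via the two-kernel Forelli--Rudin type estimates $\int_{\mathbb{S}_{n}}\left\vert 1-\overline{z}\cdot\zeta\right\vert ^{-\left(  \frac{n}{2}+\sigma\right)  }\left\vert 1-\overline{z^{\prime}}\cdot\zeta\right\vert ^{-\left(  \frac{n}{2}+\sigma\right)  }d\sigma_{n}\left(  \zeta\right)  \approx\left\vert 1-\overline{z}\cdot z^{\prime}\right\vert ^{-2\sigma}$ and its weighted ball analogue, and then feed that into the discretization/Theorem \ref{Tars} machinery already used for Theorem \ref{Besov'}; since this makes each statement, for each fixed $\alpha$, equivalent to (\ref{treecond}), the whole chain follows, and in fact you never need the sharp asymptotics (\ref{app}) for the equivalences themselves (your first paragraph overstates its role). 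The paper never touches product-kernel estimates: it uses the equivalence of (\ref{rewrite}) with (\ref{treecond}) \emph{with constant independent of} $\alpha$ (this is exactly where (\ref{app}) matters), obtains (\ref{dualpotCar}) by letting $\alpha\rightarrow-1^{+}$ and using the weak convergence $\left(  1+\alpha\right)  \left(  1-\left\vert w\right\vert ^{2}\right)  ^{\alpha}dw\rightarrow c_{n}d\sigma_{n}$, and returns from (\ref{dualpotCar}) to (\ref{rewrite}) by the dilation bound $T_{\mu}^{\sigma}g\left(  rw\right)  \leq CT_{\mu}^{\sigma}g\left(  w\right)  $ for $g\geq0$, $w\in\mathbb{S}_{n}$, $0<r<1$, followed by integration in $r$ against $\left(  1+\alpha\right)  \left(  1-r^{2}\right)  ^{\alpha}$. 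What each buys: your route is kernel-explicit, avoids any limiting argument, and localizes all the work in standard (but unproved-in-the-paper) two-kernel lemmas whose hypotheses you must check ($\sigma>0$; at $\sigma=0$ your sphere integral degenerates to a logarithm and the argument needs patching), whereas the paper's limit/dilation argument needs no such estimates, is insensitive to the borderline value of $\sigma$, and produces as a byproduct the $\alpha$-uniformity that motivates the $\left(  1+\alpha\right)  $ normalization in (\ref{rewrite}).
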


%

\proof
If (\ref{rewrite}) holds for some $\alpha>-1$, then the tree condition
(\ref{treecond}) holds. If the tree condition (\ref{treecond}) holds, then
(\ref{rewrite}) holds for all $\alpha>-1$ with a constant $C$ independent of
$\alpha$. If we let $\alpha\rightarrow-1$ and note that
\[
\left(  1+\alpha\right)  \left(  1-\left\vert w\right\vert ^{2}\right)
^{\alpha}dw\rightarrow c_{n}d\sigma_{n},\;\;\;\;\;as\;\alpha\rightarrow-1,
\]
we obtain that (\ref{dualpotCar}) holds. Finally, if (\ref{dualpotCar}) holds,
then it also holds with $T_{\mu}^{\sigma}g\left(  rw\right)  $ in place of
$T_{\mu}^{\sigma}g\left(  w\right)  $ for $g\geq0$ and all $0<r<1$, and an
appropriate integration in $r$ now yields (\ref{rewrite}) for all $\alpha>-1$.

\end{document}